\theoremstyle{plain}
\newtheorem{theorem}{Theorem}
\newtheorem{corollary}{Corollary}
\newtheorem{lemma}{Lemma}
\newtheorem*{lemma*}{Lemma}
\newtheorem{proposition}{Proposition}
\theoremstyle{remark}
\newtheorem{remark}{Remark}
\newtheorem{exmp}{Example} 
\newtheorem{condi}{Condition} 
\newtheorem*{condi*}{Condition} 
\newtheorem{defn}{Definition} 
\newtheorem*{defn*}{Definition} 
\DeclareMathOperator*{\argmin}{argmin}
\renewcommand{\hat}{\widehat}
\newlist{steps}{enumerate}{1}
\setlist[steps, 1]{label = Step \arabic*:}
\DeclarePairedDelimiter\ceil{\lceil}{\rceil}
\DeclarePairedDelimiter\floor{\lfloor}{\rfloor}
\renewcommand{\top}{^T}
\renewcommand{\dots}{\cdots}
\let\oldFootnote\footnote
\newcommand\nextToken\relax
\renewcommand\footnote[1]{%
	\oldFootnote{#1}\futurelet\nextToken\isFootnote}
\newcommand\isFootnote{%
	\ifx\footnote\nextToken\textsuperscript{,}\fi}
\newcommand\blfootnote[1]{%
	\begingroup
	\renewcommand\thefootnote{}\footnote{#1}%
	\addtocounter{footnote}{-1}%
	\endgroup
}
\begin{document}
	
	\begin{frontmatter}
		\title{Asymptotic Properties of High-Dimensional Random Forests}
		
		\runtitle{High-Dimensional Random Forests}

  			\author{\fnms{Chien-Ming} \snm{Chi$^1$}}, 
  			\author{\fnms{Patrick} \snm{Vossler$^1$}}, 
  			\author{\fnms{Yingying} \snm{Fan$^1$}}, 
  			\and 
  			\author{\fnms{Jinchi} \snm{Lv$^1$}}
  			\address{$^1$Data Sciences and Operations Department, Marshall School of Business, University of Southern California, Los Angeles, CA 90089,} 
			
			\blfootnote{This work was supported by NSF Grant DMS-1953356 and a grant from the Simons Foundation.  Co-corresponding authors: Yingying Fan (fanyingy@usc.edu) and Jinchi Lv (jinchilv@usc.edu). The authors sincerely thank the Co-Editor, Associate Editor, and referees for their constructive comments that have helped improve the paper substantially.}
 			
		
		\begin{abstract}
			As a flexible nonparametric learning tool,  the random forests algorithm has been widely applied to various real applications with appealing empirical performance, even in the presence of high-dimensional feature space. Unveiling the underlying mechanisms has led to some important recent theoretical results on the consistency of the random forests algorithm and its variants. However, to our knowledge, almost all existing works concerning random forests consistency in high dimensional setting were established for various modified random forests models where the splitting rules are independent of the response; a few exceptions assume simple data generating models with binary features. In light of this, in this paper we derive the consistency rates for the random forests algorithm associated with the sample CART splitting criterion, which is the one used in the original version of the algorithm \cite{Breiman2001}, in a general high-dimensional nonparametric regression setting through a bias-variance decomposition analysis. Our new theoretical results show that random forests can indeed adapt to high dimensionality and allow for discontinuous regression function. Our bias analysis characterizes explicitly how the random forests bias depends on the sample size, tree height, and column subsampling parameter. Some limitations on our current results are also discussed.
		\end{abstract}
		
		\begin{keyword}[class=MSC2020]
			\kwd[Primary ]{62G08}
			\kwd{62G05}
			\kwd[; secondary ]{62H12, 62G20}
		\end{keyword}
		
		\begin{keyword}
			\kwd{Random forests}
			\kwd{nonparametric learning}
			\kwd{high dimensionality} 
			\kwd{consistency} 
			\kwd{rate of convergence}
			\kwd{sparsity}
		\end{keyword}
		
	\end{frontmatter}
	
	\section{Introduction} \label{Sec1}

	As an ensemble method for prediction and classification tasks first introduced in \cite{Breiman2001, Breiman2002}, the random forests algorithm has received a rapidly growing amount of attention from many researchers and practitioners over recent years. It has been well demonstrated as a flexible,  nonparametric learning tool with appealing empirical performance in various real applications involving high-dimensional feature spaces. The main idea of random forests is to build a large number of decision trees independently using the training sample, and output the average of predictions from individual trees as the forests prediction at a test point. Such an intuitive algorithm has been applied successfully to many areas such as finance~\cite{khaidem2016predicting}, bioinformatics~\cite{qi2012random, diaz2006gene}, and multi-source remote sensing~\cite{gislason2006random}, to name a few. The fact that random forests has only a few tuning parameters also makes it often favored in practice \cite{varian2014big, howard2012two}. In addition to prediction and classification, random forests has also been exploited for other statistical applications such as feature selection with importance measures~\cite{Goldstein2011, Mentch2014} and survival analysis~\cite{Ishwaran2008, ishwaran2010consistency}. See, e.g., \cite{Biau2016} for a recent overview of different applications of random forests.
	
	The empirical success and popularity of random forests raise a natural question of how to understand its underlying mechanisms from the theoretical perspective. There is a relatively limited but important line of recent work on the consistency of random forests. Some of the earlier consistency results in \cite{Biau2012, bai2005maxima, genuer2012variance, ishwaran2010consistency, zhu2015reinforcement} usually considered certain simplified versions of the original random forests algorithm, where the splitting rules are assumed to be independent of the response. Recently, \cite{Biau2015} made an important contribution to the consistency of the original version of the random forests algorithm in the classical setting of fixed-dimensional ambient feature space. As mentioned before, random forests can deal with high-dimensional feature space with promising empirical performance. To understand such a phenomenon, \cite{Biau2012, klusowski2019sharp} established consistency results for simplified versions of the random forests algorithm where the rates of convergence depend on the number of informative features in sparse models. In a special case where all features are binary,  \cite{syrgkanis2020estimation} derived high-dimensional consistency rates for random forests  without column subsampling. 
	Additional results along this line include the pointwise consistency~\cite{klusowski2019sharp, Wager2018}, asymptotic distribution~\cite{Wager2018}, and confidence intervals for predictions~\cite{wager2014confidence}.
	
	Despite the aforementioned existing theory for random forests, it remains largely unclear how to characterize the consistency rate for the original version of the random forests algorithm in a general high-dimensional nonparametric regression setting. In this work, the ``original version of random forests,'' or simply ``random forests" hereafter, refers to the random forests algorithm that 1) grows trees with Breiman's CART~\cite{Breiman2001}, and 2) utilizes column subsampling (the key distinction of random forests~\cite{Breiman2001} from bagging \cite{breiman1996bagging}). Our main contribution is to characterize such a consistency rate for random forests with non-fully grown trees. To this end, we introduce a new condition,  the sufficient impurity decrease (SID), on the underlying regression function and the feature distribution, to assist our technical analysis. Assuming regularity conditions and SID, we show that the random forests estimator can be consistent with a rate of polynomial order of sample size,  provided that the feature dimensionality increases at most polynomially with the sample size. To the best of our knowledge, this is the first result on high-dimensional consistency rate for the original version of random forests. Thanks to the bias-variance decomposition of the prediction loss, we establish upper bounds on the squared bias (i.e., the approximation error) and variance separately.    Our bias analysis reveals some new and interesting understanding of how the bias depends on the sample size, column subsampling parameter, and forest height. The latter two are the most important model complexity parameters.  We also establish the convergence rate of the estimation variance,  which is less precise than our bias results in terms of characterizing the effects of model complexity parameters.  Such limitation is largely due to technical challenges.

	The SID, formally defined in Condition~\ref{P}, is a key assumption for obtaining the desired consistency rates.  
	We discover that, if conditional on  each cell in the feature space  there exists one global split of the cell such that a sufficient amount of estimation bias (we use the terms ``impurity," ``bias," and ``approximation error" interchangeably in this work) can be reduced, then the desired convergence rate results follow. This finding greatly helps us understand how random forests controls the bias.  
	The SID condition can accommodate  discontinuous regression functions and dependent covariates, a nice flexibility making our results applicable to a wide range of applications. The SID condition is new to the literature, and a concurrent work \cite{syrgkanis2020estimation} exploits it independently in a simpler setting with all binary features. At a high level, the idea of SID roots in a frequently used concept called impurity decrease for measuring importance of individual features; we use it in this paper for a different purpose of proving random forests consistency.  A related well-known feature importance is the mean decrease impurity (MDI)~\cite{louppe2013understanding}, which evaluates the importance of a feature by calculating how much variance of the response can be reduced by using this feature in random forests learning.  Details on the MDI and other feature importance measures can be found in~\cite{louppe2013understanding, scornet2020trees}. 
	
	In terms of technical innovations, we take a global view of biases from all individual trees in the forests, and then with the SID condition, we obtain a precise characterization on how the column subsampling affects bias. This global approach is one of our major technical innovations in obtaining high-dimensional random forests consistency rate. Our variance analysis of the forests uses the ``grid" discretization approach which is also new to the random forests literature. Despite the technical innovation in our variance analysis, we take a local view and bound the forests variance by establishing variance bound for individual trees. A caveat of this local approach is that the resulting variance upper bound is the most conservative one working for all column-subsampling parameters, and hence less precise. It remains open on how to establish the global control of the forests variance.     
	
	We provide in Table~\ref{table:comparison} a comparison of our consistency theory with some closely related results in the literature. The consistency of the original version of the random forests algorithm was first investigated in the seminal work \cite{Biau2015} under the setting of a continuous additive regression function and independent covariates with fixed dimensionsality, and no explicit rate of convergence was provided.  The results therein cover random forests with fully-grown trees where each terminal node contains exactly one data point, and demonstrate the importance of row subsampling in achieving random forests consistency.  By considering a variant of random forests, \cite{Biau2012} analyzed the consistency of {centered random forests} in a fixed-dimensional feature space and derived the rate of convergence which depends on the number of relevant features,  assuming a Lipschitz continuous regression function.  
	Recently, \cite{klusowski2019sharp} improved over \cite{Biau2012} on the consistency of centered random forests. \cite{mourtada2018minimax} established the minimax rate of convergence for a variant of random forests, Mondrian random forests, under the assumptions of fixed dimensionality, H\"{o}lder continuous regression function, and dependent covariates. A fundamental difference between the original random forests algorithm and these variants is that the original version uses the response to guide the splits, while these variants do not.

	\begin{table}[h]
		\centering	
		\caption{Comparison of 
			consistency rates}
		\begin{tabular}{ | m{4.1em} | m{1.3cm}| m{2cm} | m{6cm} | m{1.85cm}|} 
			\hline
			& $p \gg n$ & Consistency rate & Conditions & Original algorithm\\ 
			\hline
			Our work & Yes& Yes & The SID assumption (Section \ref{Sec3.1}) & Yes\\ 
			\hline
			\cite{Biau2015} & No & No & Independent covariates and continuous additive regression function& Yes\\ 
			\hline
			\cite{klusowski2019sharp} & No & Yes & Dependent covariates and Lipschitz continuous function& No\\ 
			\hline
			\cite{mourtada2018minimax} & No & Yes & Dependent covariates and H\"{o}lder continuous functions& No\\ 
			\hline
		\end{tabular}
		\label{table:comparison}
	\end{table}

	The rest of the paper is organized as follows. Sections~\ref{Sec2.1}--\ref{Sec2.2} introduce the model setting and the random forests algorithm. Section~\ref{Sec2.3} gives a roadmap of the bias-variance decomposition analysis of random forests, which is fundamental to our main results. We then present  SID in Section~\ref{Sec3.1} with examples justifying its usefulness. The main results on the consistency rates are provided in Section~\ref{Sec3.2}. To further appreciate our main results, we also give consistency rates under a simple example with binary features in Section~\ref{Sec3.32}. There, with restrictive model assumptions, we derive sharper  convergence rates. As a way of further motivating SID, in Section~\ref{Sec3.4}, we discuss the relationships between SID and the relevance of active features. In Section~\ref{Sec3.5}, we compare our results to recent related works. We detail our analysis of approximation error and estimation variance in Sections \ref{Sec4} and \ref{Sec5}, respectively. Section \ref{Sec6} discusses some implications and extensions of our work. All the proofs and technical details are provided in the Supplementary Material.

	\subsection{Notation} \label{Sec1.1}
	To facilitate the technical presentation, we first introduce some necessary notation that will be used throughout the paper. Let $(\Omega, \mathcal{F}, \mathop{{}\mathbb{P}})$ be the underlying probability space.   Denote by $a_{n} = o(b_{n})$ if $\lim_{n} a_{n}/ b_{n} = 0$  for some real $a_{n}$ and $b_{n}$, and $a_{n} = O(b_{n})$ if $\lim\sup_{n} |a_{n}| / |b_{n}| < \infty$. The number of elements in a set $S$ is denoted as $\# S$, and for an interval $t$, we define $|t| \coloneqq \sup t - \inf t$. When the summation is over an empty set, we define its value as zero; also, we define $\frac{0}{0} = 0$. For simplicity, we frequently denote a sequence of elements $A_{1}, \dots, A_{k}$ as $A_{1:k}$. Unless otherwise noted, all logarithms used in this work are logarithms with base $2$.
	
	\section{Random forests}\label{Sec2}
	
	\subsection{Model setting and random forests algorithm}\label{Sec2.1}
	Let us denote by $m(\boldsymbol{X})$ the measurable nonparametric regression function with $p$-dimensional random vector $\boldsymbol{X}$ taking values in $[0, 1]^{p}$. The random forests algorithm aims to learn the regression function in a nonparametric fashion based on the observations $\boldsymbol{x}_{i} \in [0, 1]^{p}, y_{i}\in \mathbb{R}, i = 1, \dots, n$, from the nonparametric regression model 
	\begin{equation} \label{new.eq.001}
		y_{i} = m(\boldsymbol{x}_{i}) + \varepsilon_{i},
	\end{equation}
	where $\boldsymbol{X}, \boldsymbol{x}_{i}, \varepsilon_{i}, i = 1,\dots, n$, are independent, and $\{\boldsymbol{x}_{i}\}$ and  $\{\varepsilon_{i}\}$ are two sequences of identically distributed random variables. In addition, $\boldsymbol{x}_{1}$ is distributed identically as $\boldsymbol{X}$.


	\begin{figure}[h]\centering
		{  \label{h1}
			\begin{tikzpicture}[level distance = 4em,
				every node/.style = {shape=rectangle, rounded corners,
					draw, align=center,
					top color=white, bottom color=blue!0}]]
				\tikzstyle{level 1}=[sibling distance=60mm] 
				\tikzstyle{level 2}=[sibling distance=30mm] 
				\node { Level $0$\\$\boldsymbol{t}_{0} \coloneqq [0, 1]^{p}$\\ Split: $(j_{1}\in\Theta_{1, 1}, c_{1})$}
				child { node { Cell $\boldsymbol{t}_{1, 1}$\\ Split: $(j_{2} \in \Theta_{2, 1}, c_{2})$} 
					child { node {Cell $\boldsymbol{t}_{2, 1}$} }
					child { node {Cell $\boldsymbol{t}_{2, 2}$ } } }
				child { node {Cell $\boldsymbol{t}_{1, 2}$ \\ Split: $(j_{3} \in \Theta_{2, 2}, c_{3})$}
					child { node {Cell $\boldsymbol{t}_{2, 3}$} }
					child { node {Cell $\boldsymbol{t}_{2, 4}$ } }
				};
		\end{tikzpicture}}		
		\caption{A level $2$ (height $3$) tree example. Each node represents a cell and defines the point where we split the current cell and produce new cells. We denote the sets of features eligible for splitting cells at level $k-1$ as $\Theta_{k} \coloneqq \{\Theta_{k, 1}, \dots, \Theta_{k, 2^{k-1}}\}$ with $\Theta_{k, s}\subset \{1, \dots, p\}$.}  \label{fig:1}
	\end{figure}
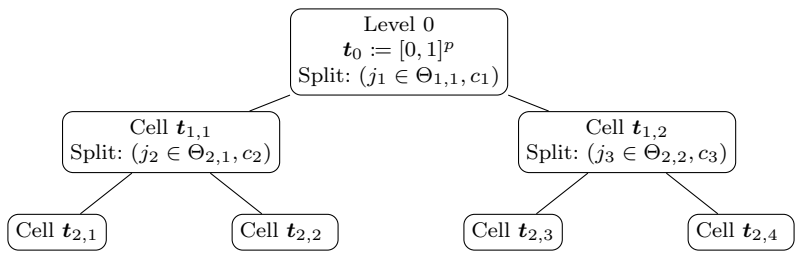

	In what follows, we introduce our random forests estimates and begin with a quick review of how a tree algorithm grows a tree using a given splitting criterion. The algorithm recursively partitions the root cell, using the splitting criterion to determine where to split each cell. This split involves two components: the direction or feature to split $j$ and the feature's value to split on $c$. In addition, the algorithm restricts the set of available features used to decide the direction of each split. This procedure repeats until the tree height reaches a predetermined level; the last grown cells are called the tree's end cells. 
	
	Next, we introduce the notation for the structure of a tree and its cells. A cell is defined as a rectangle such that  $\boldsymbol{t} = \times_{ j=1}^{p}t_{j}\coloneqq t_{1}\times\dots \times t_{p}$, where $\times$ denotes the Cartesian product and each $t_{j}$ is a closed or half closed interval in $[0, 1]$. For a cell $\boldsymbol{t}$, its two daughter cells, $t_{1}\times \dots \times t_{j-1}\times (t_{j}\cap[0, c))\times t_{j+1}\times \dots \times t_{p}$ 
	and  $t_{1}\times \dots \times t_{j-1}\times (t_{j}\cap[ c, 1]) \times t_{j+1}\times \dots \times t_{p}$,
	are obtained by splitting $\boldsymbol{t}$ according to the split $(j, c)$ with direction $j\in \{1, \dots, p\}$ and point $c\in t_{j}$.  We use $\Theta_{k} \coloneqq \{\Theta_{k, 1}, \dots, \Theta_{k, 2^{k-1}}\}$ to denote the sets of available features for the $2^{k-1}$ splits at level $k-1$ that grow the $2^{k}$ cells at level $k$ of the tree. See Figure~\ref{fig:1} for a graphical example. A split is also referred to as a cut, and we use $\boldsymbol{t}(j, c)$ to denote one of the daughter cells of $\boldsymbol{t}$ after the split $(j, c)$.

	Among the many existing splitting criteria, we are particularly interested in analyzing the statistical properties of the classification and regression tree (CART)-split criterion~\cite{Breiman2001, Breiman2002} used in the original random forests algorithm and formally introduced in Section~\ref{Sec2.2}. In this work, we use deterministic splitting criteria to characterize the statistical properties of CART-splits. A deterministic splitting criterion gives a split for a given cell $\boldsymbol{t}$ and set of available features $\Theta\subset\{1, \dots, p\}$ without being subject to  variation of the observed sample $(\boldsymbol{x}_{i}, y_{i})$'s. An important example of deterministic splitting criterion, known as the theoretical CART-split criterion in the literature~\cite{Biau2015,scornet2020trees}, is introduced in Section~\ref{Sec3.1}; the splits made by the theoretical CART-split criterion are not affected by the observed sample.  Some other deterministic splitting criteria are introduced in Section~\ref{Sec4.1}.  Next, we introduce the notation needed for our technical analysis.

	In light of how a tree algorithm grows trees, given any (deterministic) splitting criterion and a set of $\Theta_{1:k}$, we can grow all cells in a tree at each level until level $k$. We introduce a \emph{tree growing rule} for recording these cells. A tree growing rule denoted as $T$ is associated with this splitting criterion and given $\Theta_{1:k}$,  $T(\Theta_{1:k})$ denotes the collection of all sequences of cells connecting the root to the end cells at level $k$ of this tree.  Precisely, for each end cell of this tree, we can list a unique $k$-dimensional tuple of cells  that connects the root cell $\boldsymbol{t}_{0}\coloneqq[0, 1]^{p}$ to this end cell. These tuples of cells can be thought of as ``tree branches" that trace down from the root cell. An example of a tree branch in Figure~\ref{fig:1} is $( \boldsymbol{t}_{1, 1}, \boldsymbol{t}_{2, 2})$ that connects the root cell to end cell $\boldsymbol{t}_{2, 2}$.  In particular, the collection $T(\Theta_{1:k})$ contains $2^{k}$ such $k$-dimensional  tuples of cells.
	
	Given any $T$ (and hence the associated splitting criterion) and $\Theta_{1:k}$, the tree estimate denoted as $\hat{m}_{T(\Theta_{1:k})}$ for a test point $\boldsymbol{c}\in [0, 1]^{p}$ is defined as 
	\begin{equation}
		\label{eq.new.2}
		\hat{m}_{T(\Theta_{1:k})}(\boldsymbol{c}, \mathcal{X}_{n}) \coloneqq \sum_{(\boldsymbol{t}_{1}, \dots, \boldsymbol{t}_{k})\in T(\Theta_{1:k})} \boldsymbol{1}_{\boldsymbol{c} \in \boldsymbol{t}_{k}} \left( \frac{\sum_{i \in \{i : \boldsymbol{x}_{i} \in \boldsymbol{t}_{k}\}} y_{i}}{\# \{i : \boldsymbol{x}_{i} \in \boldsymbol{t}_{k}\}}\right),
	\end{equation}
	where $\mathcal{X}_{n}\coloneqq \{\boldsymbol{x}_{i}, y_{i}\}_{i=1}^{n}$, the fraction is defined as zero when no sample is in the cell $\boldsymbol{t}_{k}$, and $\boldsymbol{1}_{\boldsymbol{c} \in \boldsymbol{t}_{k}}$ is an indicator function taking value 1 if $\boldsymbol{c} \in \boldsymbol{t}_{k}$ and 0 otherwise. In (\ref{eq.new.2}), each test point in $[0, 1]^{p}$ belongs to one end cell since $\{\boldsymbol{t}_{k}: (\boldsymbol{t}_{1}, \dots, \boldsymbol{t}_{k}) \in T(\Theta_{1:k})\}$ for each integer $k > 0$ is a partition of $[0, 1]^{p}$.

	
	Now that we have introduced individual trees, we discuss how to  combine trees to form a forest. As mentioned previously, for each cell of a tree, only a subset of all features are used to choose a split. Every set of available features $\Theta_{l, s}, l = 1, \dots, k, s = 1, \dots, 2^{l-1}$, in this work has $\ceil{\gamma_{0}p}$ distinct integers among $1, \dots, p$ with $\ceil{\cdot}$ the ceiling function for some $0 < \gamma_{0} \le 1$, which is a predetermined constant parameter. The default parameter value $\gamma_{0} = 1/3$ is used in most implementations of the random forests algorithm. Given a growing rule $T$, each sequence of sets of available features $\Theta_{1:k}$ can be used for growing a level $k$ tree, and a sequence of distinct $\Theta_{1:k}$ results in a distinct tree. Given $k$, $p$, and $\gamma_{0}$, the forests considered in this work consist of all possible distinct trees, in the sense that all possible sets of available features, $\Theta_{1:k}$,  are considered.
	
	The prediction of random forests is the average over predictions of all tree models in the forests. To have a precise definition, we introduce the boldface random mappings $\boldsymbol{\Theta}_{1:k}$, which are independent and uniformly distributed over all possible $\Theta_{1:k}$ for each integer $k$. The random forests estimate for $\boldsymbol{c}$ with the observations $\mathcal{X}_{n}$ is given by
	\begin{equation}
		\label{eq.new.4}
		\begin{aligned}
			& \mathbb{E}(\hat{m}_{T(\boldsymbol{\Theta}_{1:k})}(\boldsymbol{c}, \mathcal{X}_{n}) \ \vert \ \mathcal{X}_{n}) = \sum_{\Theta_{1:k}}\mathbb{P}( \cap_{s=1}^{k}\{\boldsymbol{\Theta}_{s}= \Theta_{s}\})\hat{m}_{T(\Theta_{1:k})}(\boldsymbol{c}, \mathcal{X}_{n}).
		\end{aligned}
	\end{equation}
	That is, we take expectation over sets of available features\footnote{For clarity, given the tree height $k + 1$, the number of features $p$, and $\gamma_{0}$, the number of distinct $\Theta_{1:k}$ is $\binom{p}{\ceil{\gamma_{0}p}}^{2^{k}-1}$. Moreover, for each set of available features, $\mathbb{P}( \cap_{s=1}^{k}\{\boldsymbol{\Theta}_{s} = \Theta_{s}\} ) =  \binom{p}{\ceil{\gamma_{0}p}}^{1-2^{k}}$.
	}\footnote{When $\gamma_{0} =1$, the expectation is redundant.}. This step of tree aggregation is called column subsampling. In practice, there can be more than one way of using column subsampling; in this work, we particularly use all possible trees in tree aggregation for  column subsampling in (\ref{eq.new.4}). 
	\begin{remark}\label{r1}
		In contrast to the conventional notation for estimates, the notation for forests estimates (\ref{eq.new.4}) is with the conditional expectation. Alternatively, we can define the finite discrete parameter space $\mathcal{Q}$ and write (\ref{eq.new.4}) as $(\#\mathcal{Q})^{-1}\sum_{\Theta \in \mathcal{Q}}\hat{m}_{T(\Theta)}(\boldsymbol{c}, \mathcal{X}_{n})$, where $\Theta$ denotes a sequence of sets of available features. However, we choose to use the definition stated in (\ref{eq.new.4}) since the exact definition of the discrete parameter space and the cardinality of this space are not strictly relevant to our technical analysis. We define other forests estimates likewise.
	\end{remark}
	\begin{remark}
		We use the conditional expectation for compact notation for random forests estimates. However, by definition, for the conditional expectation to exist, the first moment of the integrand is required to exist. Rigorously, some regularity conditions are needed for this purpose. For details, see Section~\ref{SecA.3} of Supplementary Material.
	\end{remark}
	
	In addition to column subsampling, the random forests algorithm also resamples observations for making predictions. Let $A = \{a_{1}, \dots, a_{B}\}$ be a set of subsamples with each $a_{i}$ consisting of $\ceil{bn}$ observations (indices)  drawn without replacement from $\{1, \dots, n\}$ for some positive integer $B$ and $0 < b\le 1$; in addition, each $a_{i}$  is independent of model training. The default values of parameters $B$ and $b$ are $500$ and $0.632$, respectively, in the \texttt{randomForest} R package~\cite{rf}\footnote{Another default setup sets $b = 1$ but draws observations with replacement.}. The tree estimate using subsample $a$ is defined as 
	\begin{equation}
		\label{eq.new.6}\hat{m}_{T(\Theta_{1:k}), a}(\boldsymbol{c}, \mathcal{X}_{n}) \coloneqq \sum_{(\boldsymbol{t}_{1}, \dots, \boldsymbol{t}_{k})\in T(\Theta_{1:k})} \boldsymbol{1}_{\boldsymbol{c} \in \boldsymbol{t}_{k}}  \left(\frac{\sum_{i \in a\cap\{i : \boldsymbol{x}_{i} \in \boldsymbol{t}_{k}\}} y_{i}}{\# (a\cap\{i : \boldsymbol{x}_{i} \in \boldsymbol{t}_{k}\}) }\right).
	\end{equation}
	The random forests estimate given $A$ is then defined as 
	\begin{equation}
		\label{eq.new.5}
		B^{-1}\sum_{a \in A}\mathbb{E}(\hat{m}_{T, a}(\boldsymbol{\Theta}_{1:k}, \boldsymbol{c}, \mathcal{X}_{n}) \ \vert \ \mathcal{X}_{n}) \coloneqq B^{-1}\sum_{a \in A}\mathbb{E}(\hat{m}_{T(\boldsymbol{\Theta}_{1:k}), a}(\boldsymbol{c}, \mathcal{X}_{n}) \ \vert \ \mathcal{X}_{n}),
	\end{equation}
	where we move the boldface notation up into the parenthesis for simplicity and the conditional expectation above 
	is with respect to $\boldsymbol{\Theta}_{1:k}$ \footnote{
		In particular, for independent random variables such as $\boldsymbol{\Theta}_{k}$'s, we use an expectation with a subscript to indicate which variables the expectation is with respect to, which is equivalent to the expectation conditional on all other variables. We use conditional expectations to make the expressions in alignment with those in the technical proofs, where we repeatedly manipulate the conditional expectations.}.
	
	The estimate in (\ref{eq.new.5}) is an abstract random forests estimate since a generic tree growing rule $T$ is used. The benefit of using abstract random forests estimates can be seen in Theorem~\ref{theorem3} in Section~\ref{Sec4.1} for analyzing the bias of random forests.  In addition to abstract random forests estimates, we consider the sample random forests estimate introduced in (\ref{estimate2}) in Section~\ref{Sec2.2}. For simplicity, we refer to both versions as random forests estimates unless the distinction is necessary. 

	\subsection{CART-split criterion}\label{Sec2.2}

	Given  a cell $\boldsymbol{t}$, a subset of observation indices $a$, and a set of available features $\Theta\subset\{1, \dots, p\}$, the CART-split is defined as 
	\begin{equation}
		\begin{split} 
			\label{new.eq.004}
			(\,\widehat{j}, \widehat{c}\,) \coloneqq {} \argmin_{j\in \Theta, \, c \in \{x_{ij}   : \ \boldsymbol{x}_{i}\in\boldsymbol{t}, i \in a   \}} \left[
			\sum_{i \in a \cap P_{L} } \left(\bar{y}_{L}    - y_{i}\right)^{2}  + \sum_{i \in a\cap P_{R} } \left( \bar{y}_{R}  - y_{i}\right)^{2}  \right],
		\end{split}
	\end{equation} 
	where $P_{L} \coloneqq \{i : \ \boldsymbol{x}_{i}\in\boldsymbol{t}, \, x_{ij} < c  \}$, $P_{R} \coloneqq \{i : \ \boldsymbol{x}_{i}\in\boldsymbol{t}, \, x_{ij} \geq c  \}$, and
	\begin{equation*}
		\begin{split}
			\bar{y}_{L}  \coloneqq\ \sum_{i \in a\cap P_{L}} \frac{y_{i} }{  \#(a\cap P_{L} ) }, \quad \bar{y}_{R}  \coloneqq \sum_{i \in a\cap P_{R}}\frac{y_{i} }{ \#(a\cap P_{R}) }.
		\end{split}
	\end{equation*}
	The criterion breaks ties randomly; to simplify the analysis, we assume that the criterion splits on a random point if $\# \{x_{ij}   :  \boldsymbol{x}_{i}\in\boldsymbol{t}, i \in a   \}\le 1$, a situation where both summations in \eqref{new.eq.004} are zeros (summation over an empty index set is defined as zero). Furthermore,  we define the criterion in the way such that every split results in two non-empty (an empty cell has zero volume) daughter cells\footnote{This statement can be made rigorous with a more sophisticated definition of CART-splits, but we omit the details for simplicity.}. It is worth mentioning that given a sample, the CART-split criterion conditional on the sample is a deterministic (except for random splits due to ties) splitting criterion; conditioning on another sample leads to another deterministic splitting criterion.

	Let us define $\hat{T}_{a}$ as the sample tree growing rule that is associated with a splitting criterion following (\ref{new.eq.004}).  In (\ref{eq.new.2}) and (\ref{eq.new.6}), we have introduced the tree estimates based on tree growing rules associated with deterministic splitting criteria. The tree estimates using $\hat{T}_{a}$ can be similarly defined  because the sample tree growing rule is a deterministic tree growing rule when the sample $\mathcal{X}_{n}$ is given.  Specifically, we have\footnote{The notation for \eqref{estimate2} is not suitable for cases such as the honest trees~\cite{Wager2018} where the sample for growing trees and that for prediction are different.}
	\begin{equation}\label{estimate2}
		\hat{m}_{\widehat{T}_{a}(\Theta_{1:k})}(\boldsymbol{c}, \mathcal{X}_{n}) \coloneqq \sum_{(\boldsymbol{t}_{1}, \dots, \boldsymbol{t}_{k})\in \widehat{T}_{a}(\Theta_{1:k})} \boldsymbol{1}_{\boldsymbol{c} \in \boldsymbol{t}_{k}} \left( \frac{\sum_{i \in \{i : \boldsymbol{x}_{i} \in \boldsymbol{t}_{k}\}} y_{i}}{\# \{i : \boldsymbol{x}_{i} \in \boldsymbol{t}_{k}\}}\right),
	\end{equation}
	and the definition is the same for $\hat{m}_{\hat{T}_{a}, a}$. Hence, the random forests estimate for a test point $\boldsymbol{c} \in [0, 1]^{p}$ is given by
	\begin{equation}
		\label{estimate1}
		B^{-1}\sum_{a \in A}\mathbb{E} \Big(\hat{m}_{\hat{T}_{a}, a}(\boldsymbol{\Theta}_{1:k}, \boldsymbol{c}, \mathcal{X}_{n}) \ \vert \  \mathcal{X}_{n} \Big),
	\end{equation}
	where the average and conditional expectation correspond to the sample and column subsamplings, respectively. Note that the average and conditional expectation are interchangeable.

	\subsection{Roadmap of bias-variance decomposition analysis}\label{Sec2.3}
	
	We are now ready to introduce the bias-variance decomposition analysis for random forests, whose $\mathbb{L}^{2}$ prediction loss is defined as
	\begin{equation}\label{eq.new.7}
		\mathbb{E} \Big(m(\boldsymbol{X}) - B^{-1}\sum_{a \in A}\mathbb{E} \Big(\hat{m}_{\hat{T}_{a}, a}(\boldsymbol{\Theta}_{1:k}, \boldsymbol{X}, \mathcal{X}_{n}) \ \vert \ \boldsymbol{X}, \mathcal{X}_{n} \Big) \Big)^{2}.
	\end{equation}
	Let us define some notation for a further illustration. For a tree growing rule $T$ and $\Theta_{1:k}$, the population version of (\ref{eq.new.2})  is defined as 
	\begin{equation}\label{eq.new.1}
		m_{T(\Theta_{1:k})}^{*}(\boldsymbol{c}) \coloneqq \sum_{(\boldsymbol{t}_{1}, \dots, \boldsymbol{t}_{k})\in T(\Theta_{1:k})} \boldsymbol{1}_{\boldsymbol{c} \in \boldsymbol{t}_{k}} \mathbb{E}(m(\boldsymbol{X})\ \vert \ \boldsymbol{X} \in \boldsymbol{t}_{k})
	\end{equation}
	for each test point $\boldsymbol{c} \in [0, 1]^{p}$. For simplicity, we use $m_{T}^{*}(\Theta_{1:k}, \boldsymbol{c})$ to denote the left-hand side (LHS) of (\ref{eq.new.1}). The population estimate (\ref{eq.new.1}) can also be used with the sample tree growing rule; $m_{\widehat{T}_{a}}^{*}$ is defined likewise with $\hat{T}_{a}$ in place of $T$ in (\ref{eq.new.1}). To simplify the notation, let us temporarily consider the case that uses the full sample $a = \{1, \dots, n\}$ and denote $\hat{T}_{a}$ and $\hat{m}_{\hat{T}_{a}, a}$ as $\hat{T}$ and $\hat{m}_{\hat{T}}$, respectively. Since we utilize the full sample, the sample subsampling and the average $B^{-1}\sum_{a\in A}(\cdot)$ in the random forests estimate in (\ref{eq.new.7}) are no longer needed. Thus, \eqref{eq.new.7} becomes
	\begin{equation*}
		\begin{split}
			\mathbb{E} \Big(m(\boldsymbol{X}) - \mathbb{E} \Big(\hat{m}_{\hat{T}}(\boldsymbol{\Theta}_{1:k}, \boldsymbol{X}, \mathcal{X}_{n}) \ \vert \ \boldsymbol{X}, \mathcal{X}_{n} \Big) \Big)^{2}.
		\end{split}
	\end{equation*}
	
	By Jensen's inequality and the Cauchy--Schwarz inequality (precisely, we need conditional Jensen's inequality; for simplicity, we omit ``conditional'' when no confusion arises),  we can deduce that 
	\begin{equation}
		\begin{split}
			\label{decom1}
			& \frac{1}{2}	\mathbb{E} \Big(m(\boldsymbol{X}) - \mathbb{E} \Big(\hat{m}_{\hat{T}}(\boldsymbol{\Theta}_{1:k}, \boldsymbol{X}, \mathcal{X}_{n}) \ \vert \ \boldsymbol{X}, \mathcal{X}_{n} \Big) \Big)^{2} \\ & \leq \mathbb{E} \Big(m(\boldsymbol{X})  -m_{ \widehat{T} }^{*} (\boldsymbol{\Theta}_{1:k}, \boldsymbol{X})\Big)^{2} +  \mathbb{E}\Big(m_{ \widehat{T} }^{*} (\boldsymbol{\Theta}_{1:k} , \boldsymbol{X})- \widehat{m}_{ \widehat{T}}(\boldsymbol{\Theta}_{1:k} , \boldsymbol{X}, \mathcal{X}_{n} ) \Big)^{2},
		\end{split}
	\end{equation}
	where the right-hand side (RHS) is a summation of approximation error (the first term, which is also referred to as squared bias) and  estimation variance (the second term). Details on deriving (\ref{decom1}) can be found in Section~\ref{SecA.2} of Supplementary Material. The first term on the RHS of (\ref{decom1}) is referred to as the approximation error.  By the definition of $m_{T}^{*}$ in (\ref{eq.new.1}), it holds that for any tree growing rule $T$ and each $\Theta_{1:k}$, on $\cap_{l =1}^{k}\{\boldsymbol{\Theta}_{l} = \Theta_{l}\}$,
	\begin{equation}
		\begin{split}\label{t.new.5}
			&\mathbb{E}\Big(\Big(m(\boldsymbol{X}) - {m}_{T}^{*}\left(\boldsymbol{\Theta}_{1:k}, \boldsymbol{X}\right) \Big)^{2}  \Big\vert   \boldsymbol{\Theta}_{1:k}= \Theta_{1:k}\Big) \\
			& =  \sum_{ (\boldsymbol{t}_{1}, \dots, \boldsymbol{t}_{k}) \in T(\Theta_{1:k})  }  \mathbb{P}(\boldsymbol{X}\in \boldsymbol{t}_{k} )\textnormal{Var} (m(\boldsymbol{X}) \vert  \boldsymbol{X}\in\boldsymbol{t}_{k}).
	\end{split}\end{equation}
	The RHS of (\ref{t.new.5}) is the average approximation error resulting from $\mathbb{L}^{2}$-approximating $m(\boldsymbol{X})$  by the class of step functions $\{f(\boldsymbol{X}) : f(\boldsymbol{X}) =  \sum_{(\boldsymbol{t}_{1}, \dots, \boldsymbol{t}_{k}) \in T(\Theta_{1:k})} c(\boldsymbol{t}_{k}) \boldsymbol{1}_{\boldsymbol{X}\in \boldsymbol{t}_{k}}, c(\boldsymbol{t}_{k}) \in \mathbb{R}\}$. Observe that the approximation error in \eqref{decom1} is also subject to sample variation because sample CART-splits are used to build trees.

	In Section~\ref{Sec3.2}, we will obtain the desired convergence rates for random forests consistency by bounding the two terms in (\ref{decom1}), and introduce these results in Theorem~\ref{theorem1} and Corollary~\ref{corollary1}. In Section~\ref{Sec4}, we will analyze and establish an upper bound for the average approximation error in Lemma~\ref{lemma1}. The estimation variance term is then analyzed in Section~\ref{Sec5}, where we will introduce the high-dimensional random forests estimation foundation to establish the convergence rate for estimation variance in Lemma~\ref{con3}. Furthermore, $\gamma_{0}$ is a predetermined constant parameter and we do not specify the value of $\gamma_{0}$ if it is not directly relevant to the results (e.g., theorems or lemmas).  


	\section{Main results} \label{Sec3}

	\subsection{Definitions and technical conditions} \label{Sec3.1}
	For a cell $\boldsymbol{t}$ and its two daughter cells $\boldsymbol{t}^{'}$ and $\boldsymbol{t}^{''}$, let us define \begin{equation}
		\begin{split}
			\label{I1}
			& (I)_{\boldsymbol{t}, \boldsymbol{t}^{'}}  \coloneqq \mathbb{P} (\boldsymbol{X}\in\boldsymbol{t}^{'}   \vert  \boldsymbol{X} \in \boldsymbol{t} )\textnormal{Var}(m(\boldsymbol{X})  \vert  \boldsymbol{X}\in\boldsymbol{t}^{'}) \\
			&\qquad+ \mathbb{P} (\boldsymbol{X}\in\boldsymbol{t}^{''}   \vert  \boldsymbol{X} \in \boldsymbol{t} ) \textnormal{Var}(m(\boldsymbol{X})  \vert  \boldsymbol{X}\in\boldsymbol{t}^{''}),
	\end{split}\end{equation}
	and
	\begin{equation}\begin{split}
			\label{I2}
			& (II)_{\boldsymbol{t}, \boldsymbol{t}^{'}}\coloneqq \mathbb{P} (\boldsymbol{X}\in\boldsymbol{t}^{'}   \vert  \boldsymbol{X} \in \boldsymbol{t} )
			\Big(\mathbb{E} (m(\boldsymbol{X})  \vert  \boldsymbol{X} \in \boldsymbol{t}^{'}) - \mathbb{E} (m(\boldsymbol{X})  \vert  \boldsymbol{X} \in \boldsymbol{t} ) \Big)^{2} \\
			& +  \mathbb{P} (\boldsymbol{X}\in\boldsymbol{t}^{''}   \vert  \boldsymbol{X} \in \boldsymbol{t} )
			\Big(\mathbb{E} (m(\boldsymbol{X})  \vert  \boldsymbol{X} \in \boldsymbol{t}^{''}) - \mathbb{E} (m(\boldsymbol{X})  \vert  \boldsymbol{X} \in \boldsymbol{t} ) \Big)^{2},
	\end{split}\end{equation}
	and have $(I)_{\boldsymbol{t}, \boldsymbol{t}^{''}}$ and $(II)_{\boldsymbol{t}, \boldsymbol{t}^{''}}$ defined the same as $(I)_{\boldsymbol{t}, \boldsymbol{t}^{'}}$ and $(II)_{\boldsymbol{t}, \boldsymbol{t}^{'}}$, respectively. To facilitate our technical analysis, we introduce some natural regularity conditions and their intuitions below. 
	\begin{condi} 
		\label{P}
		There exists some $\alpha_{1}\ge  1$  such that for each cell $\boldsymbol{t} = t_{1}\times \dots\times t_{p}$,
		\begin{equation*}
			\textnormal{Var}(m(\boldsymbol{X}) \ \vert \ \boldsymbol{X} \in \boldsymbol{t} ) \le \alpha_{1} \sup_{j\in \{1, \dots, p\}, c\in t_{j}} (II)_{\boldsymbol{t}, \boldsymbol{t}(j, c)}.
		\end{equation*}
	\end{condi}

	\begin{condi} 
		\label{abc}
		The distribution of $\boldsymbol{X}$ has a density function $f$ that is bounded away from $0$ and $\infty$.
	\end{condi}
	
	\begin{condi} 
		\label{ME}
		Assume model (\ref{new.eq.001}) and $p=O(n^{K_{0}})$ for some positive constant $K_{0}$.  In addition, assume a symmetric distribution around $0$ for $\varepsilon_{1}$ and  $\mathbb{E}|\varepsilon_{1}|^{q} < \infty$ for sufficiently large $q > 0$ whose value will be specified depending on the contexts. 
	\end{condi}

	\begin{condi} 
		\label{BO}
		Assume that $\sup_{\boldsymbol{c} \in [0, 1]^{p}}|m(\boldsymbol{c})| \le M_{0}$ for some $M_{0} > 0$.
	\end{condi}
	
We name Condition~\ref{P} above as the sufficient impurity decrease (SID), and it is new to the literature. Overall, Conditions \ref{abc}--\ref{BO} are some basic assumptions in nonparametric regression models. In particular, our technical analysis allows for polynomially growing dimensionality $p$. The symmetric distribution on the model error is a technical assumption that can be relaxed. The SID assumption introduced in Condition~\ref{P} plays a key role in our technical analysis and we motivate the need for this condition as follows. Consider two tree models: $f_{1}(\boldsymbol{X}) = \boldsymbol{1}_{\boldsymbol{X}\in \boldsymbol{t}_{0}}\mathbb{E}(m(\boldsymbol{X})| \boldsymbol{X} \in \boldsymbol{t}_{0})$ and $f_{2}(\boldsymbol{X}) = \boldsymbol{1}_{\boldsymbol{X}\in \boldsymbol{t}^{'}}\mathbb{E}(m(\boldsymbol{X})| \boldsymbol{X} \in \boldsymbol{t}^{'}) + \boldsymbol{1}_{\boldsymbol{X}\in \boldsymbol{t}^{''}}\mathbb{E}(m(\boldsymbol{X})| \boldsymbol{X} \in \boldsymbol{t}^{''})$, where $\boldsymbol{t}^{'}$ and $\boldsymbol{t}^{''}$ are daughter cells of $\boldsymbol{t}_{0}$ after some split. The squared biases given these tree models are respectively $\mathbb{E} (m(\boldsymbol{X}) - f_{1}(\boldsymbol{X}))^2 = \textnormal{Var}(m(\boldsymbol{X})|\boldsymbol{X} \in \boldsymbol{t}_{0})$ and $\mathbb{E} (m(\boldsymbol{X}) - f_{2}(\boldsymbol{X}))^2  = (I)_{\boldsymbol{t}_{0},\boldsymbol{t}^{'}}$. We see that $(I)_{\boldsymbol{t}_{0},\boldsymbol{t}^{'}}$ is the squared bias for approximating $m(\boldsymbol{X})$ with $f_{2}(\boldsymbol{X})$. Since it is the  squared bias remaining after the split on $\boldsymbol{t}_{0}$, it is also called the remaining bias; analogously, we extend the definition to an arbitrary cell $\boldsymbol{t}$ and one of its daughter cell $\boldsymbol{t}^{'}$ and use $(I)_{\boldsymbol{t},\boldsymbol{t}^{'}}$ to denote the ``conditional remaining bias''. The term $(II)_{\boldsymbol{t},\boldsymbol{t}^{'}}$ and $\textnormal{Var}( m(\boldsymbol{X})  \vert \boldsymbol{X} \in \boldsymbol{t})$ are respectively called the ``conditional bias decrease'' (or conditional impurity decrease) and ``conditional total bias'' because $\textnormal{Var}( m(\boldsymbol{X})  \vert \boldsymbol{X} \in \boldsymbol{t}) = (I)_{\boldsymbol{t}, \boldsymbol{t}^{'}} + (II)_{\boldsymbol{t}, \boldsymbol{t}^{'}}$. 
	
Intuitively, having large conditional bias decrease on each cell is a desired property for achieving a good control of the squared bias of random forests estimate. 
This naturally motivates the SID condition. Notice that SID only requires a nontrivial lower bound for the maximum conditional bias decrease, and that the split $(j^*, c^*)=\arg\sup_{j\in \Theta, c\in t_{j}} (II)_{\boldsymbol{t}, \boldsymbol{t}(j, c)}$ with the column restriction $\Theta$ is the theoretical CART~\cite{Biau2015, scornet2020trees, klusowski2019analyzing}.

In Section~\ref{Sec3.2}, we establish the convergence rate for random forests estimate with $m(\boldsymbol x)$ coming from the functional class 
	$$\textnormal{SID}(\alpha)\coloneqq\{ m(\boldsymbol{X}): m(\boldsymbol{X})\textnormal{ satisfies SID with } \alpha_{1} \le \alpha \}.$$ 
The size of $\textnormal{SID}(\alpha)$ is non-decreasing in $\alpha\ge 1$: if $m(\boldsymbol{X})\in \textnormal{SID}(\alpha - c)$ for some $\alpha - c \ge 1$ and $c>0$, then $m(\boldsymbol{X})\in \textnormal{SID}(\alpha )$. In Section~\ref{Sec3.1.1}, we verify that many popular regression functions can belong to the above functional class and derive the corresponding values of $\alpha$; 
these examples show that the SID condition can accommodate non-additive and/or discontinuous regression functions, and allows for dependent features. In Section~\ref{Sec3.3}, we illustrate an important relation between SID and the model sparsity.

	\subsubsection{Examples satisfying SID}\label{Sec3.1.1}

	\begin{exmp}\label{new.example1}
		Consider $m(\boldsymbol{X}) = \boldsymbol{1}_{X_{1} \in [b, 1]}$ for some $0\le b\le 1$, and the distribution of $\boldsymbol{X}$ is arbitrary. Then, $m(\boldsymbol{X}) \in \textnormal{SID}(1).$ 
	\end{exmp}

	\begin{exmp}\label{new.example2}
		Let $\boldsymbol{X}$ have uniform distribution over $[0, 1]^p$ and $0<s^*\le p$ be a given integer. Consider the regression function defined as 
		$$m(\boldsymbol{X}) = \beta_{0} + \sum_{j=1}^{s^*} \Big(\beta_{j}X_{j} + \sum_{l \ge j}^{s^*}\beta_{lj}X_{l}X_{j} \Big),$$ 
		where if  $\beta_{lj} \not = 0$ for some $j< l \le s^*$, then $a\times b\ge 0$ for every $\{a, b\}\subset \{\beta_{j}, \beta_{1j}, \dots, \beta_{s^*j} \}$, where $\beta_{j_{2}j_{1}} = \beta_{j_{1}j_{2}}$; the coefficients are otherwise arbitrary. Then, $m(\boldsymbol{X}) \in \textnormal{SID}(c s^*)$ for some constant $c>0$ independent of model coefficients here. 
	\end{exmp}
	
	\begin{exmp}\label{new.example3}
Let $\boldsymbol{X}$ be uniformly distributed over $[0, 1]^p$. Consider $m(\boldsymbol{X})$ with $(\frac{\partial m(\boldsymbol{z})}{\partial z_{1}}, \dots, \frac{\partial m(\boldsymbol{z})}{\partial z_{p}})$ being continuous in $[0, 1]^p$. In addition, \textnormal{1)} for each $j\in S^*$, either $M_{1}\le \frac{\partial m(\boldsymbol{z})}{\partial z_{j}}\le M_{2}$ for every $\boldsymbol{z}\in[0, 1]^p$ or $ M_{1}\le -\frac{\partial m(\boldsymbol{z})}{\partial z_{j}}\le M_{2}$ for every $\boldsymbol{z}\in[0, 1]^p$, where $M_{2}\ge M_{1}>0$ are constants and $S^*$ is some subset of $\{1, \dots, p\}$, and \textnormal{2)} for each $j\not\in S^*$, $\frac{\partial m(\boldsymbol{z})}{\partial z_{j}} = 0$. Then,
		$m(\boldsymbol{X}) \in \textnormal{SID} (c(\# S^*)^2 )$ for some constant $c>0$ depending only on $M_{1}, M_{2}$. Furthermore, if $m(\boldsymbol{X}) = \sum_{j=1}^{s^*} m_{j}(X_{j})$ for some positive integer $s^*$, then
		$m(\boldsymbol{X}) \in \textnormal{SID} (cs^*)$.
	\end{exmp}

	\begin{exmp}\label{new.example4}
		Let $\boldsymbol{X}$ be uniformly distributed over $[0, 1]^p$. Consider  $m(\boldsymbol X) = \sum_{j=1}^{s^*}m_j(X_{j})$ with $1\le s^*\le p$ an integer. Suppose that for some $c_{0}>0$ and $\frac{1}{2}<\lambda < 1$, it holds that for every $1\le j\le s^*$ and every $(a,b)\subset [0,1]$,			
		\begin{equation}
			\begin{split}			 					 
				\label{tool.6}
				&\sup_{x\in \Lambda(a,b)} \left(\frac{1}{x-a}\int_a^x\left(m_j\left(\left(\frac{b-x}{x-a}\right)(z-a)+x\right)-m_j(z)\right)dz \right)^2 \\
				& \geq c_{0}\textnormal{Var}(m_{j}(X_{j})|X_{j}\in (a, b)),
			\end{split}
		\end{equation}
		where $\Lambda(a, b) = [\lambda a + (1-\lambda) b, (1 - \lambda)a + \lambda b]$. Then, $m(\boldsymbol{X})\in \textnormal{SID}(cs^*)$ for some $c>0$ depending on $c_{0}, \lambda$. In particular, \eqref{tool.6} holds if $m_{j}(z)$ is differentiable on $[0, 1]$ and that for some $c_{1}>0$ and every $(a, b)\subset[0, 1]$,
		\begin{equation}
			\begin{split}			 					 
				\label{tool.7}
				&\textnormal{LHS of \eqref{tool.6}} \geq c_{1}\sup_{z\in (a,b)}|m_j'(z)|^2 (b-a)^2.
			\end{split}
		\end{equation}
	\end{exmp}


	
	Example~\ref{new.example1} assures that SID allows for dependent features. 
	Example~\ref{new.example2} shows that SID is satisfied in high-dimensional sparse quadratic models. 
	Example~\ref{new.example3} 
	considers a general structure for $m(\boldsymbol{X})$ which can include some special cases of cumulative distribution functions, linear functions, logistic functions, and non-additive polynomial functions. Example~\ref{new.example4} provides some sufficient conditions ensuring SID with uniform $\boldsymbol{X}$ under the sparse additive model setting. In particular, if $m(\boldsymbol X)$ in Examples~\ref{new.example2}--\ref{new.example3} are also additive, then they can be verified to satisfy the requirements of Example~\ref{new.example4} as well.  
	Similar sufficient conditions to those in Example \ref{new.example4} can be derived for nonadditive models but take much more complicated forms; the additive structure in Example \ref{new.example4} is imposed for simplified forms of these conditions.
	More examples, including logistic regression functions, higher order polynomial functions with interactions, additive piecewise linear models, and linear combinations of indicator functions of hyperrectangles can be found in Section~\ref{SecA.7} of the Supplementary Material. Proofs for these examples and proofs for the example in Remark~\ref{remark3} below are respectively in Sections~\ref{SecE.3}--\ref{SecE.7} of the Supplementary Material.

	\begin{remark}	\label{remark3}

		The coefficient restriction is necessary for SID to hold in Example~\ref{new.example2}. A counterexample violating the coefficient restriction of Example~\ref{new.example2} is $m(\boldsymbol{X}) = X_{1}X_{2} -0.5X_{1} - 0.5X_{2} + 0.25$ with uniformly distributed  $\boldsymbol{X}$; see Supplementary Material for a formal proof that this example violates SID. 
		Section 5 of~\cite{syrgkanis2020estimation} studied inconsistency of random forests under similar model settings, suggesting that certain coefficient restriction is necessary for good performance of CART in these cases.  Identifying the necessary condition for random forests consistency,  and studying how far SID is from such a condition is an open question for future study.
		
	\end{remark}

	\subsubsection{SID and model sparsity: sparsity parameter $\alpha_{1}$}\label{Sec3.3}
	
	A smaller value of  $\alpha_{1}$ in SID implies that the optimal split can reduce more impurity in terms of conditional total bias given each cell. On the other hand, in sparse models, the examples in the previous section show that the required value of $\alpha_{1}$ for $m(\boldsymbol{X}) \in \textnormal{SID}(\alpha_{1})$ is at least linearly proportional to the number of active features in $m(\boldsymbol{X})$. These results echo the intuition on how CART works in reducing impurity: more active features 
	implies that on average each split contributes less (proportionally) to impurity reduction. Here, we remark that  the previous examples aim for generality and hence the derived $\alpha_{1}$ may not be optimal. For example, in Example~\ref{new.example3}, with the additional assumption of additive model,  the value of $\alpha_{1}$ depends linearly  on the number of active features, compared to the quadratic dependence without such an assumption. In addition, the values of $\alpha_{1}$ in these examples may be smaller for certain specific model coefficients.

	\subsection{Convergence rates}\label{Sec3.2}
	We are now ready to characterize the explicit convergence rates for the consistency of random forests in a fairly general high-dimensional nonparametric regression setting.  Recall that $\hat{T}_{a}$ and $\gamma_{0} $ are defined in Sections \ref{Sec2.2} and \ref{Sec2.1}, respectively; $B$ is the number of trees regarding row subsampling and $0 <b\le 1$ is the proportion of training data used for row subsampling, whose definitions can be found right before  \eqref{eq.new.6}. Details on the random forests estimates $\mathbb{E}( \widehat{m}_{ \widehat{T}_{a}, a}(\dots )\ \big\vert \ \boldsymbol{X}, \mathcal{X}_{n} )$ and $\frac{1}{B}\sum_{a \in A}\mathbb{E}( \widehat{m}_{ \widehat{T}_{a}, a}(\dots )\ \big\vert \ \boldsymbol{X}, \mathcal{X}_{n} )$ can be found in  Section~\ref{Sec2.2}.  
	\begin{theorem}\label{theorem1}
		Assume that Conditions \ref{P}--\ref{BO}  hold and let $0<b\le 1 $, $0 < \gamma_{0}\le 1$, $\alpha_{2} > 1$, $0<  \eta < 1/8$, $0 < c< 1/4$, and $\delta > 0$ be given with $2\eta<\delta <\frac{1}{4}$. 
Let $A = \{a_{1}, \dots, a_{B}\}$ with $\# a_{i}=\ceil{bn}$ for $i=1,\cdots, B$ and $a\in A$ be given. Then, there exists some $C>0$ such that  for all large $n$ and each $1 \le k \le   c\log_{2}{\ceil{bn}}$,
		\begin{equation}\label{main1}
			\begin{split}&  \mathbb{E}\Big(m(\boldsymbol{X}) - \mathbb{E}\Big( \widehat{m}_{ \widehat{T}_{a}, a}(\boldsymbol{\Theta}_{1:k}, \boldsymbol{X}, \mathcal{X}_{n} )\ \Big\vert \ \boldsymbol{X}, \mathcal{X}_{n} \Big) \Big)^{2} \\
				&\quad \le C\Big(  \alpha_{1} (\ceil{bn})^{-\eta}+ (1 - \gamma_{0} (\alpha_{1} \alpha_{2})^{-1})^{k} +   (\ceil{bn})^{-\delta + c}   \Big).
		\end{split}\end{equation}
		In addition, when we also aggregate over row subsamples (i.e., over $a \in A$), we have
		\begin{equation}\label{main2}\begin{split}& \mathbb{E} \Big(m(\boldsymbol{X}) - \frac{1}{B}\sum_{a\in A}\mathbb{E}\Big(\widehat{m}_{ \widehat{T}_{a}, a}(\boldsymbol{\Theta}_{1:k}, \boldsymbol{X}, \mathcal{X}_{n} )\ \Big\vert \ \boldsymbol{X}, \mathcal{X}_{n} \Big) \Big)^{2} \\
				&\quad \le C\Big(  \alpha_{1} (\ceil{bn})^{-\eta} + (1 - \gamma_{0} (\alpha_{1} \alpha_{2})^{-1})^{k} + (\ceil{bn})^{-\delta + c}  \Big).
		\end{split}\end{equation}

	\end{theorem}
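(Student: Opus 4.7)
The plan is to reduce the bound \eqref{main1} to the two building blocks already forecast in the roadmap: the approximation-error bound (Lemma~\ref{lemma1} in Section~\ref{Sec4}) and the estimation-variance bound (Lemma~\ref{con3} in Section~\ref{Sec5}). Starting from \eqref{decom1}, but applied with $\widehat{T}=\widehat{T}_{a}$ and the subsample estimator $\widehat{m}_{\widehat{T}_{a},a}$ so that the effective sample size is $\lceil bn\rceil$, the LHS of \eqref{main1} is dominated (up to a factor of $2$) by
\[
\mathbb{E}\bigl(m(\boldsymbol X)-m_{\widehat T_a}^{*}(\boldsymbol\Theta_{1:k},\boldsymbol X)\bigr)^{2}\ +\ \mathbb{E}\bigl(m_{\widehat T_a}^{*}(\boldsymbol\Theta_{1:k},\boldsymbol X)-\widehat m_{\widehat T_a,a}(\boldsymbol\Theta_{1:k},\boldsymbol X,\mathcal X_n)\bigr)^{2}.
\]
So the whole proof amounts to bounding these two pieces and then invoking Jensen's inequality to pass from \eqref{main1} to \eqref{main2}.

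\textbf{Bias piece.} I would invoke Lemma~\ref{lemma1}, which under Conditions~\ref{P}--\ref{BO} should yield a bound of the shape $C\bigl[\alpha_{1}(\lceil bn\rceil)^{-\eta}+(1-\gamma_{0}(\alpha_{1}\alpha_{2})^{-1})^{k}\bigr]$. The geometric term comes from iterating SID across the $k$ tree levels: Condition~\ref{P} gives on each cell $\boldsymbol t$ a single direction $(j^{*},c^{*})$ such that $(II)_{\boldsymbol t,\boldsymbol t(j^{*},c^{*})}\ge \alpha_{1}^{-1}\textnormal{Var}(m(\boldsymbol X)|\boldsymbol X\in\boldsymbol t)$, hence $(I)_{\boldsymbol t,\boldsymbol t(j^{*},c^{*})}\le(1-\alpha_{1}^{-1})\textnormal{Var}(m(\boldsymbol X)|\boldsymbol X\in\boldsymbol t)$. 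Under column subsampling, $j^{*}\in\Theta_{l,s}$ with probability at least $\gamma_{0}$, so averaging over $\boldsymbol\Theta_{l}$ compounds a per-level multiplicative reduction of order $1-\gamma_{0}/\alpha_{1}$. The inflation $\alpha_{1}\to\alpha_{1}\alpha_{2}$ is the ``slack'' absorbing the gap between sample CART (the actual splitting rule in $\widehat T_{a}$) and theoretical CART (the SID-optimal rule), while the residual term $\alpha_{1}(\lceil bn\rceil)^{-\eta}$ quantifies the uniform concentration cost of matching these two rules over all cells that appear in the tree.

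\textbf{Variance piece and aggregation.} The constraint $k\le c\log_{2}\lceil bn\rceil$ enforces at most $(\lceil bn\rceil)^{c}$ end cells, and by Condition~\ref{abc} each cell captures on the order of $(\lceil bn\rceil)^{1-c}$ subsample points, so the cell-wise averaging in \eqref{eq.new.6} concentrates; combined with the moment/symmetry assumptions in Condition~\ref{ME} and boundedness of $m$ from Condition~\ref{BO}, Lemma~\ref{con3} should deliver $C(\lceil bn\rceil)^{-\delta+c}$ for any $\delta<1/4$, via the grid discretization argument announced in Section~\ref{Sec1}. Summing the two pieces proves \eqref{main1}. For \eqref{main2}, apply Jensen's inequality to pull the squared outside the average $B^{-1}\sum_{a\in A}$:
\[
\mathbb{E}\Bigl(m(\boldsymbol X)-\tfrac{1}{B}\sum_{a\in A}\mathbb{E}(\widehat m_{\widehat T_a,a}(\cdots)\mid\boldsymbol X,\mathcal X_n)\Bigr)^{2}\le \tfrac{1}{B}\sum_{a\in A}\mathbb{E}\bigl(m(\boldsymbol X)-\mathbb{E}(\widehat m_{\widehat T_a,a}(\cdots)\mid\boldsymbol X,\mathcal X_n)\bigr)^{2},
\]
and since all subsamples have the same size $\lceil bn\rceil$, each summand satisfies \eqref{main1} uniformly in $a$, giving \eqref{main2} with the same $C$.

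\textbf{Main obstacle.} The hard part is the bias analysis underpinning Lemma~\ref{lemma1}. The combinatorial factor $\gamma_{0}/(\alpha_{1}\alpha_{2})$ requires a \emph{global} treatment of the random column-sampling sequences $\boldsymbol\Theta_{1:k}$ — a per-tree analysis would lose the $\gamma_{0}$ factor because on any \emph{single} tree one cannot guarantee that the SID-optimal direction is ever available. Equally delicate is quantifying the sample-vs-theoretical-CART discrepancy: one must control, uniformly over the (data-dependent) set of all cells that could appear in $\widehat T_{a}$, the deviation between empirical and population impurity decreases, and choose the slack $\alpha_{2}$ so that a split that is $(\alpha_{2})^{-1}$-close to optimal in sample is still $\gamma_{0}/(\alpha_{1}\alpha_{2})$-good in population. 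The $\alpha_{1}(\lceil bn\rceil)^{-\eta}$ term is precisely the price paid for this uniform concentration, and the interplay between $\eta$, $\delta$, and $c$ in the hypothesis $2\eta<\delta<1/4$ is what keeps the three terms in \eqref{main1} simultaneously controllable.
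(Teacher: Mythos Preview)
Your proposal is correct and follows essentially the same route as the paper's proof: the bias--variance decomposition \eqref{decom1}, then Lemma~\ref{lemma1} for the approximation error and Lemma~\ref{con3} for the estimation variance, and finally Jensen's inequality over the row subsamples to pass from \eqref{main1} to \eqref{main2}. One small inaccuracy in your bookkeeping: the ``uninteresting'' term $(\lceil bn\rceil)^{-\delta+c}$ does not come solely from the variance lemma---Lemma~\ref{lemma1} itself carries an $8M_{0}^{2}n^{-\delta}2^{k}$ contribution (from the modification $\widehat T\to\widehat T_{\zeta}$ with $\zeta=n^{-\delta}$), while Lemma~\ref{con3} actually delivers $n^{-\eta}+C\,2^{k}n^{-1/2+\nu}$, the second part of which is then absorbed into $n^{-\delta+c}$ by choosing $\nu$ small; this does not affect the validity of your argument, only the attribution of which lemma produces which piece of the final bound.
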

	To the best of our knowledge, Theorem \ref{theorem1} above is the first result on the consistency rates for the original version of the random forests algorithm; see Table~\ref{table:comparison} in the Introduction for detailed discussions. The constant $\alpha_2  > 1$ is arbitrary and is needed to account for the estimation error from using sample CART-splits. Although Condition \ref{con3} restricts the feature dimensionality $p$, the upper bounds in Theorem \ref{theorem1} do not depend on $p$ explicitly. See Remark~\ref{p} for the implicit dependence of the rates on $p$; also, see Section~\ref{Sec3.32} for more informative convergence rates depending on $p$ for models with binary features. Our results provide no interesting information about the tuning parameter $b$.  The technical reason is that we have used the Cauchy--Schwarz inequality when deriving \eqref{main2} from \eqref{main1},  which holds even in the worst case when all trees are highly correlated with each other.  In this sense, \eqref{main2} only gives a highly conservative upper bound.  Because random forests improves upon bagging~\cite{breiman1996bagging} using column subsampling, in what follows we focus on random forests using only column subsampling with the full sample (i.e., $b = 1$ and $a = \{1, \dots, n\}$).  The notation for the case with $b= 1$ can be found in Section~\ref{Sec2.3}. 
	
	To gain more in-depth understandings on the upper bounds in Theorem \ref{theorem1},  we provide the following Corollary~\ref{corollary1} restating the results in Theorem~\ref{theorem1} with more emphasis on the bias-variance decomposition. 
	\begin{corollary}\label{corollary1}
		Under all the conditions of Theorem~\ref{theorem1},  for all large $n$ and each $1 \le k \le   c\log_{2}{n}$, it holds for the two terms on the RHS of \eqref{decom1} that
		\begin{equation}\begin{split}\label{main4}
				\textnormal{Squared bias} & \coloneqq \mathbb{E} \Big(m(\boldsymbol{X})  -m_{ \widehat{T} }^{*} (\boldsymbol{\Theta}_{1:k}, \boldsymbol{X})\Big)^{2} \\
				& \le  O\Big(n^{-\eta} + \underbrace{(1 - \gamma_{0} (\alpha_{1} \alpha_{2})^{-1})^{k}}_{\textnormal{Main term of bias}}\Big) + \underbrace{O(n^{-\delta + c})}_{\textnormal{Uninteresting error}}
		\end{split}\end{equation}
		and 
		\begin{equation}\begin{split}\label{main5}		
				\textnormal{Estimation variance} & \coloneqq  \mathbb{E}\Big(m_{ \widehat{T} }^{*} (\boldsymbol{\Theta}_{1:k} , \boldsymbol{X})- \widehat{m}_{ \widehat{T}}(\boldsymbol{\Theta}_{1:k} , \boldsymbol{X}, \mathcal{X}_{n} ) \Big)^{2}\\& \le O(n^{-\eta} ) + \underbrace{O(n^{-\delta + c})}_{\textnormal{Uninteresting error}}.
		\end{split}\end{equation}
	\end{corollary}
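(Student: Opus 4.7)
\bigskip

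\noindent\textbf{Proof proposal for Corollary~\ref{corollary1}.} The plan is to read Corollary~\ref{corollary1} as the ``transparent'' version of Theorem~\ref{theorem1}: after specializing to the full-sample case $b=1$, $a=\{1,\dots,n\}$ discussed just before the statement, the inequality \eqref{decom1} already decomposes the prediction loss into the squared bias $\mathbb{E}(m(\boldsymbol X)-m_{\widehat T}^{*}(\boldsymbol\Theta_{1:k},\boldsymbol X))^{2}$ plus the estimation variance $\mathbb{E}(m_{\widehat T}^{*}(\boldsymbol\Theta_{1:k},\boldsymbol X)-\widehat m_{\widehat T}(\boldsymbol\Theta_{1:k},\boldsymbol X,\mathcal X_{n}))^{2}$. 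My task therefore reduces to controlling each of the two pieces separately, in such a way that adding them back up reproduces the upper bound in Theorem~\ref{theorem1}.

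First I would invoke the bias analysis of Section~\ref{Sec4}, i.e.\ Lemma~\ref{lemma1}, applied with the sample tree growing rule $\widehat T$ and the constants $\alpha_{1},\alpha_{2},\eta,\delta,c$ from the statement. The key algebraic fact used there is that on each cell $\boldsymbol t$ the SID Condition~\ref{P} gives $\textnormal{Var}(m(\boldsymbol X)\mid\boldsymbol X\in\boldsymbol t)\le\alpha_{1}\sup_{j,c}(II)_{\boldsymbol t,\boldsymbol t(j,c)}$, so a theoretical-CART split reduces the conditional total bias by at least a factor $1-\alpha_{1}^{-1}$; after column subsampling over $\boldsymbol\Theta_{l}$ this factor becomes $1-\gamma_{0}\alpha_{1}^{-1}$, and after accounting for the discrepancy between sample and theoretical CART by inflating $\alpha_{1}$ to $\alpha_{1}\alpha_{2}$ on an event of high probability, a recursion in the tree depth $k$ yields the geometric term $(1-\gamma_{0}(\alpha_{1}\alpha_{2})^{-1})^{k}$. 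The deviation event on which sample CART and theoretical CART disagree contributes the $O(n^{-\eta})$ overshoot, while truncation/concentration of the empirical cell-probabilities and the cell means gives the residual $O(n^{-\delta+c})$. These three ingredients together deliver \eqref{main4}.

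Next I would invoke the variance analysis of Section~\ref{Sec5}, namely Lemma~\ref{con3}, which bounds $\mathbb{E}(m_{\widehat T}^{*}(\boldsymbol\Theta_{1:k},\boldsymbol X)-\widehat m_{\widehat T}(\boldsymbol\Theta_{1:k},\boldsymbol X,\mathcal X_{n}))^{2}$ by $O(n^{-\eta})+O(n^{-\delta+c})$ uniformly over $1\le k\le c\log_{2}n$. This is the ``local'' bound obtained by discretizing the space of possible cells on a grid of resolution roughly $n^{-1}$, counting the number of end cells of depth at most $k\le c\log_{2}n$ (which is polynomial in $n$ since $c<1/4$), and applying a Bernstein-type concentration on each cell with at least $\sim n^{1-c}$ sample points to control $\frac{1}{\#\{i:\boldsymbol x_{i}\in\boldsymbol t_{k}\}}\sum_{i}(y_{i}-\mathbb{E}(m(\boldsymbol X)\mid\boldsymbol X\in\boldsymbol t_{k}))$; the symmetric error and moment assumption in Condition~\ref{ME} are exactly what is needed for this step. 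The bound \eqref{main5} follows from this after a standard union bound across grid cells and depths.

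Finally, combining \eqref{main4} and \eqref{main5} through \eqref{decom1} recovers the right-hand side of \eqref{main1} up to constants (after absorbing the $\alpha_{1}$ factor into $C$), so Corollary~\ref{corollary1} is consistent with Theorem~\ref{theorem1}. The corollary itself is not where the heavy lifting happens; all genuine difficulty sits inside Lemma~\ref{lemma1} (the ``global'' recursion using SID and column subsampling) and Lemma~\ref{con3} (the grid-discretization variance argument). The only subtle point in writing the corollary is keeping track of how the additive $\alpha_{1} n^{-\eta}$ term from Theorem~\ref{theorem1} gets allocated between the two components: it goes entirely into the squared bias, since it originates from the sample-vs.-theoretical CART discrepancy, whereas the $n^{-\eta}$ sitting inside \eqref{main5} is the purely stochastic variance term. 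This allocation is the main bookkeeping obstacle and must be made explicit to avoid double-counting when summing \eqref{main4} and \eqref{main5}.
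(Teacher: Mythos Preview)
Your approach is correct and essentially identical to the paper's: the proof of Corollary~\ref{corollary1} in the paper simply points back to the two intermediate inequalities already established inside the proof of Theorem~\ref{theorem1}, namely the bias bound coming from Lemma~\ref{lemma1} and the variance bound coming from Lemma~\ref{con3}, which are exactly the two ingredients you invoke.

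One small correction to your final paragraph: there is no real ``allocation'' subtlety or risk of double-counting. Lemma~\ref{lemma1} produces its own $O(\alpha_{1}\alpha_{2}n^{-\eta})$ term (from the sample-versus-theoretical CART discrepancy), and Lemma~\ref{con3} independently produces its own $O(n^{-\eta})$ term (from the grid concentration argument); these are separate contributions with separate origins, and the corollary simply records both as $O(n^{-\eta})$ after absorbing constants. You do not need to track which part of the $\alpha_{1}n^{-\eta}$ in Theorem~\ref{theorem1} goes where---it arises as the sum of two distinct $n^{-\eta}$ terms, one per lemma. Also, the concentration step in the variance argument uses Hoeffding's inequality (after truncating the errors) rather than Bernstein, though this is immaterial to the corollary itself.
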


	The third term in \eqref{main1}, which is also displayed in \eqref{main4} and \eqref{main5} as the ``uninteresting error,'' is caused by technical difficulty and does not carry too much meaning; see Remark \ref{reason1} below for details.  It is possible that such term can be eliminated by more refined technical analysis. Thus, the discussion in this section ignores this term for simplicity.

	Our results provide a fresh understanding of random forests,  especially in terms of how random forests controls the bias. The second term on the RHS of \eqref{main4} is the main term of random forests bias, whereas the first term $n^{-\eta}$ upper-bounds the error caused by the sample CART-splits. If theoretical CART-splits (see Section~\ref{Sec3.1} for its formal definition) are used, then the first term on the RHS of \eqref{main4} vanishes and the second term has $\alpha_2=1$; see Remark \ref{good1} in Section \ref{Sec4.1}.   We contribute to characterizing quantitatively how the use of column subsampling controls the bias when $k\le c\log_{2}{n}$; the exponential decay rate in the second term on the RHS of \eqref{main4} is derived through a \textit{global control} on the bias of all trees in the forests and the SID condition makes such precise quantification possible.  For fixed sample size $n$,  the first term $n^{-\eta}$ in the  bias upper bound does not vanish as tree height increases.  For the bias upper bound to decrease to zero, it requires both $n, k\rightarrow\infty$. Tuning a higher $\gamma_{0}$ decreases the second term in the bias decomposition, because a larger $\gamma_0$ makes each split more likely to be on relevant features (see Definition \ref{nonparametric_relevance} for formal definition) and hence decreases the bias faster. On the other hand, it is likely that the first term on the RHS of \eqref{main4}, $O(n^{-\eta})$, is a conservative upper bound on the error caused by sample CART-splits because this term is invariant to $\gamma_{0}$ and $k$. Accurately characterizing how this error  term depends on $\gamma_0$,  $k$ as well as $n$ is an interesting future research topic. Moreover, it is necessary to point out that this error term and the first term of estimation variance can generally have different convergence rates; $O(n^{-\eta})$ is a common upper bound for both terms for simplicity of the technical presentation.

	Our upper bound of estimation variance in \eqref{main5} is less informative compared to our bias upper bound in the sense that it does not reflect any effects of $\gamma_{0}$ and $k$.  In fact, the variance upper bound is conservative due to certain technical difficulties -- we bound the variance of random forests estimate by establishing a uniform upper bound for the variances of individual trees, a great distinction from the global approach in our bias analysis using SID.   It was discovered in the literature that random forests with column subsampling 
	is closely related to the adaptive nearest neighbors method \citep{lin2006random} and adaptive weighted estimation \citep{athey2019generalized}.  Let us draw analogy to the adaptive nearest neighbors method to assist the understanding.  As revealed in \cite{lin2006random}, the forest estimate has the representation of adaptive weighted nearest neighbors estimator.  The column subsampling rate $\gamma_0$ controls the adaptive weights assigned to the nearest neighbors,  with $\gamma_0=1$ producing uniform weights on only the nearest neighbors (under some distance metric) and zero weights on further-away ones, and smaller $\gamma_0$ producing more adaptive and distributed weights extended even to some further-away neighbors.  This suggests that the estimation variance should depend on $\gamma_0$.   However, our upper bound is a conservative one holding for all values of $\gamma_0$.  It is also commonly believed that larger $k$ usually causes larger variance for the forest estimate because the end cells tend to contain smaller numbers of observations.  In Section~\ref{Sec3.32}, we illustrate the effect of $k$ on estimation variance under a simplified model setting. In general, it is a challenging and interesting future research topic to precisely characterize how variance depends on $\gamma_0$ and $k$. As a result, the fact that the upper bound in Theorem \ref{theorem1} decreases with $\gamma_0$ is a coincidence rather than reality; it is caused by the crude upper bounds for several parts in the bias-variance decomposition.    
	

Theorem \ref{theorem1} holds (nonuniformly) for each combination of $(k, \eta,\delta, c)$ satisfying $1\leq k\leq c\log_2 n$, $0<\eta<1/8$, $\delta\in (2\eta, 1/4)$ and $0<c<1/4$.
If we set $\eta = \frac{1}{8} - \epsilon$, $\delta = \frac{1}{4} - \epsilon$, $c = \frac{1}{8}$, and $k= \floor{\frac{1}{8}\log_{2}(n)}$ in Theorem~\ref{theorem1}, we obtain more informative convergence rate as shown in Corollary \ref{corollary2} below.  
	\begin{corollary}\label{corollary2}
		Assume that Conditions \ref{abc}--\ref{BO}  hold and let  $0<\epsilon<\frac{1}{8}$, $\alpha_{1}\ge 1$, $\alpha_{2} >1$, and $0< \gamma_{0}\le1$ be given. For all large $n$ and tree height $k=\floor{ \frac{1}{8}\log_{2}{n}}$,
		\begin{equation*}\begin{split} &\sup_{m(\boldsymbol{X})\in\textnormal{SID}(\alpha_{1})} \left[\mathbb{E} \Big(m(\boldsymbol{X}) - \mathbb{E}\Big(\widehat{m}_{ \widehat{T}}(\boldsymbol{\Theta}_{1:k}, \boldsymbol{X}, \mathcal{X}_{n} )\ \Big\vert \ \boldsymbol{X}, \mathcal{X}_{n} \Big) \Big)^{2} \right]\\
				&\qquad\le O\Big(n^{-\frac{1}{8} +  \epsilon} + \underbrace{(1-\gamma_{0}(\alpha_{1}\alpha_{2})^{-1})^{\floor{\frac{\log_{2}(n)}{8} }}}_{\textnormal{Main term of bias}} \Big)
				\le O\Big(n^{-\frac{1}{8} +  \epsilon} + n^{-\frac{\log_{2}(e)}{8}\times\frac{\gamma_{0}}{\alpha_{1}\alpha_{2}}}  \Big),
		\end{split}\end{equation*}
		where the term $n^{-1/8+\epsilon}$ above is a common  bound of  all terms on the RHS of \eqref{main4} and \eqref{main5} except for the ``Main term of bias.''
	\end{corollary}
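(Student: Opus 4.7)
The plan is to obtain Corollary~\ref{corollary2} as a direct specialization of Theorem~\ref{theorem1} (in the $b=1$, $a=\{1,\dots,n\}$ version described at the end of Section~\ref{Sec2.3}), by choosing the free parameters $\eta, \delta, c, k$ to balance the three terms in \eqref{main1}, and by taking the supremum over $m(\boldsymbol{X})\in\textnormal{SID}(\alpha_{1})$ on both sides.

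First I would check that the prescribed choices $\eta = \frac{1}{8} - \epsilon$, $\delta = \frac{1}{4} - \epsilon$, $c = \frac{1}{8}$ and $k = \lfloor \frac{1}{8}\log_{2} n\rfloor$ satisfy the assumptions of Theorem~\ref{theorem1}. Since $0 < \epsilon < \frac{1}{8}$, we get $0 < \eta < \frac{1}{8}$ and $0 < c < \frac{1}{4}$; the condition $2\eta < \delta < \frac{1}{4}$ reduces to $\frac{1}{4} - 2\epsilon < \frac{1}{4} - \epsilon < \frac{1}{4}$, which holds; and $k \le c\log_{2}(n)$ is immediate from the floor. Because the constant $C$ in Theorem~\ref{theorem1} depends only on $(b, \gamma_{0}, \alpha_{2}, \eta, c, \delta)$ and the constants in Conditions \ref{abc}--\ref{BO}, and none of these depend on the particular $m(\boldsymbol{X})\in\textnormal{SID}(\alpha_{1})$ (the SID parameter $\alpha_{1}$ enters the bound only through the displayed factors), the bound is uniform over the class $\textnormal{SID}(\alpha_{1})$; hence the supremum on the LHS can be dropped onto the RHS.

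Plugging the chosen parameters into \eqref{main1} with $b=1$ yields three terms: $\alpha_{1} n^{-1/8+\epsilon}$, $\bigl(1 - \gamma_{0}(\alpha_{1}\alpha_{2})^{-1}\bigr)^{\lfloor \log_{2}(n)/8\rfloor}$, and $n^{-\delta+c} = n^{-1/8+\epsilon}$. Absorbing $\alpha_{1}$ into the implicit constant gives the first displayed inequality of Corollary~\ref{corollary2}, with the two error terms collected into $O(n^{-1/8+\epsilon})$ and the geometric term kept explicit.

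For the second displayed inequality, I would use the elementary bound $1 - x \le e^{-x}$ for $x = \gamma_{0}(\alpha_{1}\alpha_{2})^{-1} \in (0,1]$, together with $\lfloor \log_{2}(n)/8\rfloor \ge \log_{2}(n)/8 - 1$, to obtain
\begin{equation*}
\bigl(1 - \gamma_{0}(\alpha_{1}\alpha_{2})^{-1}\bigr)^{\lfloor \log_{2}(n)/8\rfloor}
\le \bigl(1 - \gamma_{0}(\alpha_{1}\alpha_{2})^{-1}\bigr)^{-1} \exp\!\Bigl(-\tfrac{\gamma_{0}}{\alpha_{1}\alpha_{2}} \cdot \tfrac{\log_{2}(n)}{8}\Bigr).
\end{equation*}
The prefactor is a finite constant (absorbed into $O(\cdot)$), and the conversion $\exp(-a\log_{2}(n)) = n^{-a\log_{2}(e)}$ (since $\log_{2}(n) = \log_{2}(e)\ln n$) delivers the claimed rate $n^{-\log_{2}(e)\gamma_{0}/(8\alpha_{1}\alpha_{2})}$. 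There is no real obstacle here: this corollary is essentially a bookkeeping step that makes the tradeoff between bias and variance in Theorem~\ref{theorem1} concrete; the only point requiring minor care is uniformity of $C$ over the class $\textnormal{SID}(\alpha_{1})$, which follows because Theorem~\ref{theorem1} is stated pointwise in $\alpha_{1}$ with a constant independent of the specific regression function within the class.
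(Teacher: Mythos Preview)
Your proof is correct and follows essentially the same route as the paper: specialize Theorem~\ref{theorem1} with $\eta=\tfrac18-\epsilon$, $\delta=\tfrac14-\epsilon$, $c=\tfrac18$, $k=\lfloor\tfrac18\log_2 n\rfloor$, then bound the geometric term via $1-x\le e^{-x}$ and convert to a power of $n$ using $\log_2 n=\log_2(e)\ln n$. The only point worth noting is that your claim about $C$ being independent of the particular $m(\boldsymbol X)\in\textnormal{SID}(\alpha_1)$ is exactly where the paper is slightly more explicit: it remarks that one must re-examine Lemmas~\ref{lemma1}--\ref{lemma3} and Theorems~\ref{theorem4}--\ref{theorem5} to verify their constants are uniform over the class (and omits the routine details), whereas Theorem~\ref{theorem1} itself is stated only pointwise in $m$.
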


	\begin{remark}\label{p}
		
		The feature dimensionality $p$ and tree height $k$ decide the number of all possible cells when growing trees.  In deriving the consistency rates in Theorem \ref{theorem1},  we have to account for the probabilistic deviations between the sample moments (e.g., means) conditional on all possible cells and their corresponding population counterparts.  This is easy to understand for the variance analysis.  For the bias analysis,  we also need to account for such deviations because of the use of sample CART-splits. Roughly speaking, the deviations between the sample and population moments on each cell can be quantified by Hoeffding's inequality.   But to achieve uniform control over all possible cells,  we need to restrict the number of cells which is upper-bounded by $2^{c\log_{2}{n}}\left(p (\lceil n^{1+\rho_{1}}\rceil + 1)\right)^{c\log_{2}{n}}$ for some $\rho_{1} > 0$ when height $k\le c\log_{2}{n}$; see \eqref{numberOf} in Supplementary Material for details.   The condition of $p=O(n^{K_{0}})$ is sufficient for restricting the number of all possible cells for our analyses.  It also shows that  both bias and variance depend implicitly on $p$ through $n$ in our upper bounds.  
	\end{remark}
	
	\begin{remark}\label{reason1}
		We explain  briefly how the third term in the upper bound in Theorem \ref{theorem1} results from our technical analysis. When a tree is grown up to level $k$, there are $2^{k}$ cells, which form a partition of the $p$-dimensional hypercube. Among these cells, if a cell $\boldsymbol{t}$ is too small such that $\mathbb{P}(\boldsymbol{X} \in \boldsymbol{t}) \le c_{n}$ with $c_n$ depending only on $n$,  there will not be enough observations in $\bm{t}$ with high probability,  and consequently we cannot use Hoeffding's inequality to control the probabilistic difference between $\mathbb{E}(m(\boldsymbol{X}) | \boldsymbol{X} \in \boldsymbol{t})$ and its sample counterpart. Since the total probability of all these small cells is less than $c_{n}2^{k}$, when $c_{n}$ and $k\le c\log_{2}{n}$ are sufficiently small,  we use  Conditions~\ref{ME} and \ref{BO} to establish upper bounds for the mean differences on all these small cells.  
		This is one of the reasons why we need the brute-force analysis method; for details, see Lemma~\ref{lemma1} in Section~\ref{Sec4}. 
		We note that this is also one of the reasons why we limit the tree height parameter $c<1/4$. In addition, another reason for the third term in the rates is due to the use of our high-dimensional estimation framework. Details on this can be found in Lemma~\ref{con3} in Section~\ref{Sec5}.	Whether it is possible and how to eliminate this term by a finer analysis is an interesting future research topic. 
	\end{remark}

	\subsection{Sharper convergence rates with binary features}\label{Sec3.32}
	
	In this section, we demonstrate that our bias-variance decomposition analysis technique can yield a sharper upper bound under some simplified model setting. We confine ourselves to Example~\ref{binary} below,  and assume the absence of row subsampling in this section for simplicity. 
	\begin{exmp}
		\label{binary}
		Assume that $X_{1}, \dots, X_{p}$ are independent and $\mathbb{P} (X_{j} = 1)= \mathbb{P} (X_{j} = 0) = \frac{1}{2}$ for all $j$, and
		that $m(\boldsymbol{X}) = \sum_{j=1}^{s^*} \beta\boldsymbol{1}_{X_{j} = 1}$ for some $|\beta|>0$ and $s^*\le p$. 
	\end{exmp}

Let us gain some intuitions about how binary features can greatly simplify the problem. Consider any end cell $\boldsymbol t_k$ and the branch $(\boldsymbol t_{0} \ldots, \boldsymbol t_{k})$ connecting it to the root cell $\boldsymbol{t}_0$. Along this branch,    once a coordinate $j$ gets a split with  $c\in (0,1]$, any further split on this coordinate results in zero decrease in population impurity. In addition, since each sample CART split is on some data point, the split can only be either $(j, 1)$ or  $(j, 0)$ for some $j$, with the latter $(j, 0)$ resulting in an empty daughter cell and zero population impurity decrease; thus, split $(j,0)$ can be safely removed from the consideration. In summary, the CART essentially tries to minimize \eqref{new.eq.004} only with regard to $j\in \{1,\cdots, p\}$ for finding a split of the form $(j,1)$. For these reasons, the problem is much simplified. 

Nevertheless, by definition, the sample CART could still lead to a split of form $(j,0)$ when the event $\#\{i:x_{ij} = 1\} = 0$ occurs for some $j$, making the analysis tedious. To avoid this, the CART in this section is redefined as follows: for a cell $\boldsymbol{t}$ and feature restriction $\Theta$, the split is $(\widehat{j}, 1)$ such that 
\begin{equation*}
		\begin{split} 
			\widehat{j} \coloneqq  \argmin_{j\in \Theta} \left[
			\sum_{i \in P_{L} } \left( \frac{\sum_{i \in  P_{L}} y_{i} }{  \# P_{L}  }    - y_{i}\right)^{2}  + \sum_{i \in P_{R} } \left( \frac{\sum_{i \in  P_{R}} y_{i} }{ \# P_{R} }  - y_{i}\right)^{2}  \right],
		\end{split}
	\end{equation*} 
where $P_{L} \coloneqq \{i : \ \boldsymbol{x}_{i}\in\boldsymbol{t}, \, x_{ij} < 1  \}$, $P_{R} \coloneqq \{i : \ \boldsymbol{x}_{i}\in\boldsymbol{t}, \, x_{ij} \geq 1  \}$. CART stops splitting when all available coordinates have been split. To have equal height for each tree branch, we may consider trivial splits that give empty daughter cells; see the proof of Proposition~\ref{proposition2} for details.

	\begin{proposition}\label{proposition2}
Consider Example~\ref{binary} and  i.i.d. observations from  \eqref{new.eq.001} with $|\varepsilon_{1}|\le M_{\varepsilon}$ for some $M_{\varepsilon}>0$ and $\mathbb{E}(\varepsilon_{1}) = 0$.  Let $0<\gamma_{0}\le 1$, $0 < \eta< 1$, and $\epsilon>0$ be given, and suppose $(\log_{e}p)^{2+\epsilon} = o(n^{1-\eta})$. Then,  \textnormal{1)} $m(\boldsymbol{X})\in  \textnormal{SID}(s^*)$, and \textnormal{2)} for all large $n$ and every $0\le k\le \eta\log_{2}(n)$,
		{\small\begin{equation}
			\begin{split}\label{proposition2.1}
				&\mathbb{E} \Big(m(\boldsymbol{X}) - \mathbb{E}\Big(\widehat{m}_{ \widehat{T}}(\boldsymbol{\Theta}_{1:k}, \boldsymbol{X}, \mathcal{X}_{n} )\ \Big\vert \ \boldsymbol{X}, \mathcal{X}_{n} \Big) \Big)^{2} \\
				& \le \underbrace{2(1-\gamma_{0}(s^*)^{-1})^k \textnormal{Var} (m(\boldsymbol{X}))}_{\textnormal{Squared bias}} + \underbrace{2( 3M_{0} + 2M_{\varepsilon})^2\frac{2^{k}\big(\log_{e}({\max\{n, p\}})\big)^{2+\epsilon}}{n}}_{\textnormal{Estimation variance}}  + \ o(n^{-1}),
			\end{split} 
		\end{equation}}
		
		\noindent where $M_{0} = \sup_{\boldsymbol{c}\in[0, 1]^p}|m(\boldsymbol{c})|$, and  $\widehat{m}_{ T } (\boldsymbol{\Theta}_{1:0} , \boldsymbol{X}, \mathcal{X}_{n} )$ is defined to be $n^{-1}\sum_{i=1}^{n} y_{i}$. Particularly, if $\gamma_{0} = 1$, then \eqref{proposition2.1} holds with the squared bias bound replaced with $\max\big\{(s^*-k)\frac{\beta^2}{2}, 0\big\}$.
	\end{proposition}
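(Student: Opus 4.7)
The plan is to prove the two assertions separately, exploiting the product structure of binary features and the additivity of $m$. For assertion \textnormal{1)} (the SID claim), I would compute both sides of Condition~\ref{P} explicitly. For a cell $\boldsymbol{t}=t_{1}\times\dots\times t_{p}$, let $R(\boldsymbol{t}):=\{j\le s^{*}: \{0,1\}\subset t_{j}\}$ denote the set of active coordinates still undetermined in $\boldsymbol{t}$. By independence of the coordinates and additivity of $m$, a direct calculation gives $\textnormal{Var}(m(\boldsymbol{X})\mid \boldsymbol{X}\in \boldsymbol{t})=|R(\boldsymbol{t})|\,\beta^{2}/4$; simultaneously, for any $j\in R(\boldsymbol{t})$ the split $(j,1)$ perfectly separates $\{X_{j}=0\}$ from $\{X_{j}=1\}$, so $(II)_{\boldsymbol{t},\boldsymbol{t}(j,1)}=\beta^{2}/4$. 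Hence $\textnormal{Var}(m(\boldsymbol{X})\mid\boldsymbol{X}\in\boldsymbol{t})\le |R(\boldsymbol{t})|\sup_{j,c}(II)_{\boldsymbol{t},\boldsymbol{t}(j,c)}\le s^{*}\sup_{j,c}(II)_{\boldsymbol{t},\boldsymbol{t}(j,c)}$, proving $m(\boldsymbol{X})\in\textnormal{SID}(s^{*})$.

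For the squared bias in \eqref{decom1}, the formula $V_{\boldsymbol{t}}=|R(\boldsymbol{t})|\beta^{2}/4$ combined with \eqref{t.new.5} reduces the problem to bounding $\mathbb{E}[r_{k}]$, where $r_{k}:=|R(\widehat{\boldsymbol{t}}_{k}(\boldsymbol{X}))|$ and $\widehat{\boldsymbol{t}}_{k}(\boldsymbol{X})$ is the level-$k$ cell containing $\boldsymbol{X}$ in the sample tree. The heart of the argument is the recursion $\mathbb{E}[r_{k+1}\mid r_{k}]\le r_{k}(1-\gamma_{0}/s^{*})$, which I would establish from two facts. First, a uniform Hoeffding argument over the at-most $(2p)^{k}$ possible cells (enabled by $(\log_{e} p)^{2+\epsilon}=o(n^{1-\eta})$ and $k\le \eta\log_{2}n$) shows that with probability $1-o(n^{-1})$ the sample CART picks a feature in $R(\widehat{\boldsymbol{t}}_{k})$ whenever $\boldsymbol{\Theta}_{k+1}\cap R(\widehat{\boldsymbol{t}}_{k})\neq \emptyset$, because each such coordinate carries population impurity decrease $\beta^{2}/4$ while any other carries zero. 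Second, the hypergeometric probability $q_{r}:=\mathbb{P}(\boldsymbol{\Theta}\cap R\neq \emptyset)$ with $|R|=r\le s^{*}$ satisfies $q_{r}\ge 1-(1-\gamma_{0})^{r}\ge \gamma_{0}r/s^{*}$, the last inequality coming from the concavity in $r$ of $g(r):=1-(1-\gamma_{0})^{r}-\gamma_{0}r/s^{*}$ on $[0,s^{*}]$ combined with $g(0)=0$ and $g(s^{*})\ge 0$. Iterating yields $\mathbb{E}[r_{k}]\le s^{*}(1-\gamma_{0}/s^{*})^{k}$, and multiplying by $\beta^{2}/4$ and the factor $2$ from \eqref{decom1} gives the squared bias bound. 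When $\gamma_{0}=1$, $\boldsymbol{\Theta}\equiv\{1,\dots,p\}$ always, so a free active feature is selected at every level at which one remains, and $r_{k}=\max(s^{*}-k,0)$ holds deterministically, giving the sharper $\max\{(s^{*}-k)\beta^{2}/2,0\}$.

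For the estimation variance in \eqref{decom1}, I would bound $\widehat\mu(t)-\mu(t):=\widehat\mu(t)-\mathbb{E}(m(\boldsymbol{X})\mid\boldsymbol{X}\in t)$ uniformly over cells $t$ that can appear in any tree of height $k$. Since each binary-feature cell is specified by fixing at most $k$ coordinates to $0$ or $1$, there are at most $(2p)^{k}$ such cells. A union bound applied to Hoeffding's inequality (using $|y_{i}|\le M_{0}+M_{\varepsilon}$) yields $(\widehat\mu(t)-\mu(t))^{2}=O((\log_{e}(\max\{n,p\}))^{2+\epsilon}/N_{t})$ uniformly on a high-probability event, where $N_{t}$ is the cell count; cells whose population probability is too small to guarantee enough observations are handled deterministically using $|\widehat\mu(t)|\le M_{0}+M_{\varepsilon}$ and $|\mu(t)|\le M_{0}$, contributing only $o(n^{-1})$ aggregate error thanks to a multinomial concentration bound under $(\log_{e}p)^{2+\epsilon}=o(n^{1-\eta})$. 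Summing per-cell contributions across the $2^{k}$ cells of a tree yields the claimed $2^{k}(\log_{e}(\max\{n,p\}))^{2+\epsilon}/n$ rate, with the prefactor $(3M_{0}+2M_{\varepsilon})^{2}$ absorbing the deterministic bound on $|m(\boldsymbol{X})-\widehat\mu(t)|$. The main obstacle is the first step of the bias recursion: a single high-probability event on which, uniformly over all $2^{k}p$ (cell, feature) pairs, the sample impurity decrease preserves the population ordering between active and inactive coordinates. The margin $\beta^{2}/4$ must dominate the Hoeffding noise uniformly, and this is precisely what the hypothesis $(\log_{e}p)^{2+\epsilon}=o(n^{1-\eta})$ with $k\le\eta\log_{2}n$ is calibrated to ensure, with residual failure probability $o(n^{-1})$ absorbed by the final slack in \eqref{proposition2.1}.
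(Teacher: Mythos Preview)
Your proposal is correct and follows the same overall architecture as the paper: a bias--variance split via \eqref{decom1}, a high-probability event $U_n$ (built from Bernstein/Hoeffding bounds over the $O((2p)^k)$ possible binary cells) on which sample CART selects an active coordinate whenever one is available in the column subset, a recursive bias bound on $U_n$, and uniform concentration for the variance term. The one genuine difference is in the bias recursion. The paper first aligns $\widehat T$ with the theoretical-CART rule $T^*$ on $U_n$ (Lemma~\ref{upper.lemma.1}(iii)) and then reuses the machinery of Theorem~\ref{theorem3}: the ``good'' event is that $\boldsymbol{\Theta}$ contains an optimal feature, lower-bounded crudely by $\gamma_0$, combined with the SID reduction $V\mapsto(1-1/s^*)V$. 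You instead track $r_k=|R(\widehat{\boldsymbol t}_k(\boldsymbol X))|$ directly, using the exact decrement $r_{k+1}=r_k-1$ on the good event together with the finer probability estimate $q_r\ge\gamma_0 r/s^*$. Both routes produce the same factor $(1-\gamma_0/s^*)$; yours is more model-specific and sidesteps the tie-breaking alignment between $\widehat T$ and $T^*$ that the paper has to arrange, while the paper's route illustrates that its general bias theorem already delivers the answer once SID is checked. Two minor remarks: your concavity argument for $q_r\ge\gamma_0 r/s^*$ is more than needed, since $q_r\ge\gamma_0\ge\gamma_0 r/s^*$ directly whenever $r\le s^*$; and the ``small-cell'' contingency in your variance sketch never actually arises here, because every nonempty level-$k$ binary cell has probability at least $2^{-k}\ge n^{-\eta}$.
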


	Let us compare Proposition \ref{proposition2} with Theorem \ref{theorem1} (the continuous feature assumption in Theorem \ref{theorem1} can be easily relaxed to accommodate binary features). The upper bound here is free of uninteresting errors, and the estimation variance here depends on tree level $k$ more explicitly, thanks to the simpler model setting with binary features.  The squared bias term on the RHS of \eqref{proposition2.1} explicitly depends on the column subsampling parameter $\gamma_{0}$, tree level $k$,  and  sparsity parameter $s^*$ (note that $\alpha_{1}=s^*$ here). It is worth mentioning that the squared bias here does not depend on the sample size $n$ and $\alpha_{2}$, because under this simplified model setting we can show that sample CART approximates theoretical CART perfectly on a high probability event. Our upper-bound in Proposition \ref{proposition2} shows the explicit bias-variance trade-off with respect to tree level $k$, which is not present in Theorem \ref{theorem1}. The trade-off with respect to $\gamma_{0}$, however, is still not reflected in the over all upper bound because our estimation variance bound is universal for all $0 < \gamma_{0}\le 1$; such caveat is caused by different technical approaches used in our bias and variance analyses, where we globally control bias via SID but bound forests estimation variance via bounding individual tree estimation variance. See more detailed discussion at the end of the proof of Proposition~\ref{proposition2}. 
	
	The special upper-bound 
	when $\gamma_0=1$ corresponds to the regression tree model; in such case the optimal convergence rate (according to our Proposition \ref{proposition2}) is achieved when  $k=s^*$ (i.e., squared bias = 0)  and is of order $\frac{2^{s^*}}{n}(\log_{e}({\max\{n, p\}}))^{2+\epsilon}$. This optimal rate is faster than the best rate obtained by minimizing the RHS of \eqref{proposition2.1} with respect to $k$. This is because \eqref{proposition2.1} is proved by applying our general bias analysis technique developed for proving Theorem \ref{theorem1} and hence makes no use of the specific model structure in Example~\ref{binary}. Our optimal rate when $\gamma_0=1$ is consistent with those in Theorems 3.3 and 4.4 of~\cite{syrgkanis2020estimation}, up to a logarithmic factor. Their main results concern a more general model than the linear model in Example~\ref{binary}, but rely on the same binary features assumption and are confined to the case of $\gamma_{0}=1$.

	\subsection{Role of relevant features}\label{Sec3.4}
	
	In this section, we formally study the role of relevant features for SID. We show that if the regularity conditions and SID are assumed, then for some cells, only the splits along the relevant feature directions can reduce a sufficient amount of bias. To be precise,  we introduce a variant of SID with some $S_{0}\subset \{1,\dots, p\}$ below. 
	\begin{condi*}
		\label{P2}
		There exists some $\alpha_{1}\ge  1$  such that for each cell $\boldsymbol{t} = t_{1}\times \dots \times t_{p}$,
		\begin{equation*}
			\textnormal{Var}(m(\boldsymbol{X}) \ \vert \ \boldsymbol{X} \in \boldsymbol{t} ) \le \alpha_{1} \sup_{j\in S_{0}, c\in t_{j}} (II)_{\boldsymbol{t}, \boldsymbol{t}(j, c)}.
		\end{equation*}
	\end{condi*}	
	
	For simplicity, we refer to the above condition as SID2. The difference from SID is that the supremum in SID2  is taken only over the features in $S_{0}$. In what follows, we will see that when the regularity conditions on the underlying regression function and SID are assumed, SID2 holds only if $S_{0}$ includes all relevant features. We begin with a formal definition of relevant features.

	\begin{defn}\label{nonparametric_relevance}
		A feature $j$ is said to be relevant for regression function $m(\boldsymbol{X})$ if and only if there exists some constant $\iota>0$ such that
		\[\mathop{{}\mathbb{E}} (\textnormal{Var} (m(\boldsymbol{X}) \ \vert \ X_{s}, s \in \{1, \dots, p\} \backslash \{j\})) >\iota.\]
	\end{defn}
	
	In Theorem \ref{theorem2} below, we characterize the magnitude of the $\mathbb{L}^{2}$ loss when a relevant feature is left out during the model training. 
	\begin{theorem}
		\label{theorem2}
		Assume that Conditions \ref{ME}--\ref{BO} hold and some relevant feature $j$ is not involved in the random forests model training procedure. Then we have 
		\[\mathbb{E} \Big(m(\boldsymbol{X}) - \frac{1}{B}\sum_{a\in A}\mathbb{E}\Big(\widehat{m}_{\widehat{T}_{a}, a}(\boldsymbol{\Theta}_{1}, \dots, \boldsymbol{\Theta}_{k}, \boldsymbol{X}, \mathcal{X}_{n} )\ \Big\vert \ \boldsymbol{X}, \mathcal{X}_{n} \Big) \Big)^{2} \ge \iota. \]
	\end{theorem}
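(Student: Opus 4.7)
The plan is to exploit the simple observation that if feature $j$ is never considered during training, then the entire random forests predictor at a test point $\boldsymbol{X}$ depends on $\boldsymbol{X}$ only through $\boldsymbol{X}_{-j} := (X_s)_{s \in \{1,\dots,p\}\setminus\{j\}}$. For every admissible $\Theta_{1:k}$, the tree-growing rule $\widehat{T}_a(\Theta_{1:k})$ produces splits of the form $(j',c')$ with $j'\neq j$, so the partition of $[0,1]^p$ induced by each tree does not separate points that differ only in their $j$-th coordinate. Consequently, both the indicator $\boldsymbol{1}_{\boldsymbol{X}\in\boldsymbol{t}_k}$ and the cell-membership of the training observations $\{\boldsymbol{x}_i\}$ depend on the first argument only through its $-j$ coordinates, and the same is true after averaging over the column-subsampling randomness $\boldsymbol{\Theta}_{1:k}$ and the row subsamples $a\in A$.

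With this in hand, I would write the forests estimator as $g(\boldsymbol{X}_{-j},\mathcal{Y}_n)$, where $\mathcal{Y}_n$ collects the training data (which is independent of the fresh test point $\boldsymbol{X}$). Conditioning on $\mathcal{Y}_n$ freezes $g(\cdot,\mathcal{Y}_n)$ to a deterministic measurable function of $\boldsymbol{X}_{-j}$, so the standard $\mathbb{L}^2$ projection inequality yields
\begin{equation*}
\mathbb{E}\Big[\big(m(\boldsymbol{X})-g(\boldsymbol{X}_{-j},\mathcal{Y}_n)\big)^{2}\;\Big|\;\mathcal{Y}_n\Big]\;\ge\;\mathbb{E}\Big[\big(m(\boldsymbol{X})-\mathbb{E}[m(\boldsymbol{X})\,|\,\boldsymbol{X}_{-j}]\big)^{2}\Big],
\end{equation*}
since $\mathbb{E}[m(\boldsymbol{X})\mid\boldsymbol{X}_{-j}]$ is the $\mathbb{L}^{2}$-best predictor of $m(\boldsymbol{X})$ among all measurable functions of $\boldsymbol{X}_{-j}$. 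Taking expectation over $\mathcal{Y}_n$ and identifying the right-hand side with $\mathbb{E}\big[\textnormal{Var}(m(\boldsymbol{X})\mid\boldsymbol{X}_{-j})\big]$ then gives the claimed lower bound through Definition~\ref{nonparametric_relevance}, which asserts precisely that this quantity exceeds $\iota$.

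The only real work is bookkeeping, not analysis: I need to verify that the successive averages $B^{-1}\sum_{a\in A}\mathbb{E}(\cdot\mid\boldsymbol{X},\mathcal{X}_n)$ over $\boldsymbol{\Theta}_{1:k}$ and $a$ preserve the property ``depends on $\boldsymbol{X}$ only through $\boldsymbol{X}_{-j}$,'' which follows because each summand has this property and the mixing weights are independent of $\boldsymbol{X}$. I expect this to be the main, though modest, obstacle, because the formal statement requires carefully interpreting the phrase ``feature $j$ is not involved'' as excluding $j$ from every set of eligible features $\Theta_{l,s}$ generated by the column-subsampling mechanism; once that is done, none of Conditions~\ref{ME}--\ref{BO} or the SID machinery is actually needed in the argument, and the bound follows from the variance characterization of conditional expectation alone.
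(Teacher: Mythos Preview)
Your proposal is correct and follows essentially the same route as the paper's proof: establish that the random forests estimator is $(\boldsymbol{X}_{-j},\mathcal{X}_n)$-measurable, invoke the $\mathbb{L}^2$ projection property of conditional expectation, use independence of $\boldsymbol{X}$ and $\mathcal{X}_n$ to reduce $\textnormal{Var}(m(\boldsymbol{X})\mid\boldsymbol{X}_{-j},\mathcal{X}_n)$ to $\textnormal{Var}(m(\boldsymbol{X})\mid\boldsymbol{X}_{-j})$, and apply Definition~\ref{nonparametric_relevance}. One small correction: Conditions~\ref{ME}--\ref{BO} are not entirely dispensable, as the paper invokes them precisely to guarantee $\mathbb{E}\big|\widehat{m}_{\widehat{T}_a,a}(\boldsymbol{\Theta}_{1:k},\boldsymbol{X},\mathcal{X}_n)\big|<\infty$, so that the conditional expectations defining the estimator are well-posed; beyond that integrability check, you are right that neither SID nor any further structure is used.
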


	By Theorem~\ref{theorem1}, the regularity conditions and SID2 are sufficient for high-dimensional random forests consistency using only features in set $S_{0}$. Furthermore, if SID2 holds with some $S_{0}$, it holds with each $S_{1}$ such that $S_{0}\subset S_{1}$. Thus, the result in Theorem~\ref{theorem2} suggests that with the regularity conditions and SID assumed, for SID2 to hold with some $S_{0}$, all relevant features must be included in $S_{0}$. Otherwise, let us assume that $j^{*}\not\in S_{0}$ is the index of some relevant feature. From previous discussions, we see that SID holds and that SID2 holds with $S_{1} = \{1, \dots, p\}\backslash \{j^{*}\}$, which imply random forests consistency with or without the relevant feature $j^{*}$ and then lead to a contradiction to Theorem~\ref{theorem2}.  
	
	The non-inclusion assumption of a relevant feature in Theorem \ref{theorem2} is a convenient way of assuming that a relevant feature never gets split in random forests training (which is a random event depending on the training data complicatedly), and thus may be unnecessarily strong for our purpose. Nevertheless, our goal is to delieve the key message that random forests needs to split on every relevant feature direction to control the bias. 
	
	\begin{remark}
		Definition~\ref{nonparametric_relevance} provides a natural measurement of feature importance. Alternative definitions have been considered in the literature. For example, for nonparametric feature screening, \cite{Fan2011} assumed that $\mathbb{E}(m(\boldsymbol{X})) = 0$ and for some constant $\iota > 0$, it holds that for each relevant feature $j$,
		\begin{equation}\label{fan1}\textnormal{Var}(m(\boldsymbol{X})) - \mathbb{E} \Big( \textnormal{Var}(m(\boldsymbol{X})\ \vert \ X_{j}) \Big) \ge \iota. \end{equation}		
		The difference is that Definition~\ref{nonparametric_relevance} measures the conditional importance of each feature given all other features, while (\ref{fan1}) measures the marginal importance of features. 
	\end{remark}
	
	\subsection{Related works}\label{Sec3.5}
	We outline the difference between our random forests estimate and standard random forests software packages, and provide a detailed comparison of our consistency results with the existing results from the recent literature. Our setting differs from standard random forests software packages in the number of trees grown and the height of trees. In practice, random forests packages first randomly draw a set of subsamples $a$ with $\#a = \ceil{bn}$ (two subsampling modes are available; see~\cite{rf} for details) and available columns for splitting. Then these packages follow (\ref{new.eq.004}) to split cells and they stop splitting a cell if and only if the cell contains one observation. By default, these packages grow $500$ such independent  trees.  As for our work, for each $l \in \{1,\dots, B\}$ with an arbitrary integer $B>0$, $a_{l}$ contains $\ceil{bn}$ distinct indices  in $\{1, \dots, n\}$; these indices can be chosen in any way independent of the training sample. Then we grow a forest with all possible trees defined in Section~\ref{Sec2.1} for each $a_{l}$. Besides, we consider trees of height at most $c\log_{2}{n}$ for some possibly small $c > 0$ and our sample trees defined in Section~\ref{Sec2.2} continue to grow cells for a cell with one observation.

	Next, we compare our consistency results in Section~\ref{Sec3.2} to the existing ones from the recent literature. For easier comparison, let us focus on the case of $k=c\log_{2}{n}$ and drop the uninteresting error (i.e., the third term of $(\ceil{bn})^{-\delta + c} $) from the upper bounds in Theorem \ref{theorem1}.  With such convention,  noting that  $(1 - \gamma_{0}(\alpha_{1}\alpha_{2})^{-1})^{c\log_{2}{n} }  \approx n^{-c\gamma_{0} (\alpha_{1} \alpha_{2})^{-1}}$,  our rate of convergence becomes $n^{-\frac{c\gamma_{0}}{\alpha_{1}\alpha_{2}}} + n^{-\eta}$. Table~\ref{table:rate} summarizes our rate of convergence and the ones for two modified versions of the random forests algorithm,  the centered random forests~\cite{klusowski2019sharp} and Mondrian random forests~\cite{mourtada2018minimax}, where $s$ represents the number of informative features and $\beta > 0$ denotes the exponent for the H\"{o}lder continuity condition.  As mentioned in the Introduction, these modified versions of the random forests algorithm use splitting methods that are independent of the response in the training sample, which is a departure from the original version of the random forests algorithm proposed 
	in \cite{Breiman2001}.

	We see that both Theorem~\ref{theorem1} and~\cite{klusowski2019sharp} have taken into account the sparsity parameter in a similar fashion, 
	whereas~\cite{mourtada2018minimax} did not consider the sparsity.  The result in \cite{mourtada2018minimax} achieved the minimax rate under a class of H\"{o}lder continuous functions with parameter $\beta$; their rate depends on dimensionality $p$ nontrivially  and becomes uninformative for a large $p$.  Our consistency result is the only result that allows for the original random forests algorithm, growing sparsity parameter, and growing ambient dimensionality so far. Furthermore, our rates consider explicitly the effect of column subsampling for random forests and hence $\gamma_{0}$ appears in the rates. Such differences make our consistency result unique and useful for understanding the original random forests algorithm.  However,  we do acknowledge that the rate of convergence given in Theorem~\ref{theorem1} is 
	not optimal due to the technical difficulties discussed in Section \ref{Sec3.2}.
	
	\begin{table}[h]
		\centering	
		\caption{Comparison of consistency rates}
		\begin{tabular}{ | m{2.8cm} | m{3.7cm}|m{3cm}|m{4cm}|} 
			\hline
			& Rate of convergence & Growing sparsity parameter &  Explicit dependence on dimensionality $p$ \\ 
			\hline 
			Our Theorem~\ref{theorem1} & $ \underbrace{n^{-\frac{c\gamma_{0}}{\alpha_{1}\alpha_{2}}} + n^{-\eta}}_{\textnormal{Squared bias}} + \underbrace{n^{-\eta}}_{\textnormal{Variance}}\Bigg.$ & Yes & No\\ 
			\hline
			Centered RF~\cite{klusowski2019sharp} & $(n(\sqrt{\log_{2}{n}})^{s-1} )^{-\frac{1}{s\log{2} + 1}}\Bigg.$ & No & No \\ [3ex]
			\hline
			Mondrian RF~\cite{mourtada2018minimax} & $n^{-\frac{2\beta}{p + 2\beta}}\Bigg.$ & No & Yes \\ 
			\hline
		\end{tabular}
		\label{table:rate}
	\end{table}


	\section{Approximation theory} \label{Sec4}
	
	In this section, we aim to build the approximation theory of random forests in two steps. We first derive in Theorem~\ref{theorem3} the decreasing rates of approximation error resulting from approximating $m(\boldsymbol{X})$. We approximate $m(\boldsymbol{X})$ by a class of theoretical forests estimates, each of which is associated with a tree growing rule from a class of tree growing rules denoted by $\mathcal{T}$ (see below for its definition). Each growing rule in $\mathcal{T}$ is then associated with a deterministic splitting criterion comparable to the theoretical CART-split criterion in terms of impurity decrease. Then we verify in Theorem~\ref{theorem4} that on a high probability event, a version of the sample tree growing rule conditional on the observed sample is an instance of $\mathcal{T}$. In other words, we will show that the sample CART-splits are comparable to the theoretical CART-splits in terms of impurity decrease. We start with presenting in Lemma~\ref{lemma1} below the bound on the approximation error of (\ref{decom1}), which plays a key role in establishing the consistency rate in Theorem \ref{theorem1}.
	
	\begin{lemma}\label{lemma1}
		Assume that Conditions \ref{P}--\ref{BO} hold and let $0 < \gamma_{0}\le 1$, $\alpha_{2} > 1$, $0 < \eta < \frac{1}{8}$, $\delta$ with $2\eta<\delta <\frac{1}{4}$, and $  c >0$ be given. Then, for all large $n$ and each $1\le k \le   c\log_{2}{n} $,
		\[\mathbb{E}  \left(  m(\boldsymbol{X}) - m_{\widehat{T} }^{*} (\boldsymbol{\Theta}_{1:k} , \boldsymbol{X})\right)^{2}  \le 8M_{0}^{2}n^{-\delta} 2^{k} +2\alpha_{1}\alpha_{2}n^{-\eta} + 2M_{0}^{2}(1 - \gamma_{0}(\alpha_{1}\alpha_{2})^{-1})^{k}  + 2n^{-1}.\]
	\end{lemma}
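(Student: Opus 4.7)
My plan is to control the squared bias through an SID-driven geometric contraction of conditional variance, combined with a uniform concentration argument that couples sample CART-splits to theoretical CART-splits. Let $R_l := \sum_{\boldsymbol{t}}\mathbb{P}(\boldsymbol{X}\in\boldsymbol{t})\textnormal{Var}(m(\boldsymbol{X})\mid \boldsymbol{X}\in\boldsymbol{t})$, with the sum taken over the level-$l$ cells of a tree. The core deterministic observation, which I would isolate as an abstract approximation theorem applicable to any growing rule in the class $\mathcal{T}$ described later in Section~\ref{Sec4}, is that if, for every cell, the chosen split achieves impurity decrease at least $\alpha_2^{-1}$ times the theoretical maximum available in the sampled column set, then SID (Condition~\ref{P}) combined with the fact that each fixed feature belongs to the column subsample with probability at least $\gamma_0$ yields the per-level contraction
\[
\mathbb{E}\bigl[R_{l+1}\,\big|\,\boldsymbol{\Theta}_{1:l}\bigr] \le \bigl(1-\gamma_0(\alpha_1\alpha_2)^{-1}\bigr)R_l.
\]
Iterating $k$ times with $R_0=\textnormal{Var}(m(\boldsymbol{X}))\le M_0^2$ produces the main-bias term $M_0^2(1-\gamma_0(\alpha_1\alpha_2)^{-1})^k$.

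The second step is to verify the hypothesis of this contraction on a high-probability event. I would define a good event $E_n$ on which, uniformly over all cells $\boldsymbol{t}$ realisable by some deterministic growing rule of height $\le c\log_2 n$ with $\mathbb{P}(\boldsymbol{X}\in\boldsymbol{t})\ge n^{-\delta}$, and over all admissible sample splits $(j,c)$, the empirical impurity decrease lies within $n^{-\eta}$ of its population counterpart and the empirical cell mean is within $n^{-\eta}$ of $\mathbb{E}(m(\boldsymbol{X})\mid \boldsymbol{X}\in\boldsymbol{t})$. Condition~\ref{ME} (polynomial $p$ and finite $q$th moment of $\varepsilon_1$) together with Hoeffding/Bernstein-type inequalities applied after truncating the responses bound the number of candidate cells by a polynomial in $n$ (the same count alluded to in Remark~\ref{p}), and a union bound then gives $\mathbb{P}(E_n^c)\le Cn^{-1}$ by taking the moment order $q$ large enough. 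On $E_n$ every sample CART-split on a sufficiently large cell achieves at least $\alpha_2^{-1}$ times the theoretical maximum impurity decrease (the extra factor $\alpha_2>1$ absorbs the additive $n^{-\eta}$ slack), so $\widehat{T}$ satisfies the hypothesis of the contraction. Accumulating the per-step slack through the geometric series with rate $1-\gamma_0(\alpha_1\alpha_2)^{-1}$, and noting that the slack in the expected per-step decrease is of order $\gamma_0 n^{-\eta}$ because of the column-subsampling factor, contributes the $2\alpha_1\alpha_2 n^{-\eta}$ term.

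The remaining two summands come from exceptional situations. For cells with $\mathbb{P}(\boldsymbol{X}\in\boldsymbol{t})<n^{-\delta}$ the concentration argument is unusable, but at most $2^k$ such cells can appear at level $k$, so their total probability is $\le 2^k n^{-\delta}$; combining this with $|m|\le M_0$ from Condition~\ref{BO} yields the $8M_0^2 2^k n^{-\delta}$ term. Off the event $E_n$, the boundedness of $m$ together with $\mathbb{P}(E_n^c)\le Cn^{-1}$ produces the $2n^{-1}$ term once the constants are absorbed. Adding the four contributions reproduces the stated inequality.

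The main obstacle is the uniform comparison between sample and theoretical CART-splits across all cells arising from all deterministic growing rules up to level $c\log_2 n$. Three subtleties must be handled simultaneously: (i) enumerating the realisable cells and splits, which are data-dependent but can be bounded by a polynomial in $n$; (ii) choosing a truncation level for $\varepsilon$ together with a union-bound threshold so that $\mathbb{P}(E_n^c)=O(n^{-1})$ while keeping the concentration slack at most $n^{-\eta}$; and (iii) verifying that the parameter constraints $0<\eta<1/8$ and $2\eta<\delta<1/4$ are precisely what is needed so that the small-cell, truncation, and concentration errors remain at worst of the same order as the main SID bias term, rather than dominating it.
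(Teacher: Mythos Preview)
Your overall strategy matches the paper's: an abstract SID-driven contraction (Theorem~\ref{theorem3}) applied to a growing rule in the class of Condition~\ref{tree}, a concentration event on which sample CART nearly achieves the theoretical impurity decrease (Theorem~\ref{theorem4}), separate handling of small cells, and a crude bound off the good event. Two steps in your execution, however, do not go through as written.

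First, you claim that on $E_n$ the rule $\widehat T$ itself ``satisfies the hypothesis of the contraction.'' It does not: Condition~\ref{tree} can fail at cells with $\mathbb P(\boldsymbol X\in\boldsymbol t)<n^{-\delta}$, and Theorem~\ref{theorem3} requires the condition along the \emph{entire} branch. Your suggested fix---restricting the level-$k$ sum to large end cells and running the recursion only there---cannot invoke Theorem~\ref{theorem3} as a black box, because the sibling-pairing step in its proof may pair a large daughter with a small one. The paper's device is the semi-sample rule $\widehat T_\zeta$ (Section~\ref{Sec4.2}): follow $\widehat T$ until the first cell of probability $<\zeta=n^{-\delta}$, then switch to theoretical CART below it. On $\boldsymbol U_n$ this modified rule satisfies Condition~\ref{tree} at \emph{every} cell (Theorem~\ref{theorem4}), so Theorem~\ref{theorem3} applies to it verbatim; the term $8M_0^2 n^{-\delta}2^k$ then enters via the triangle inequality $\|m-m^*_{\widehat T}\|^2\le 2\|m-m^*_{\widehat T_\zeta}\|^2+2\|m^*_{\widehat T}-m^*_{\widehat T_\zeta}\|^2$, with the second term bounded by $4M_0^2\cdot 2^k n^{-\delta}$ since the two rules differ only on a region of total probability at most $2^k n^{-\delta}$.

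Second, your account of the $2\alpha_1\alpha_2 n^{-\eta}$ term as a geometric accumulation of per-step slack $\gamma_0 n^{-\eta}$ at rate $1-\gamma_0(\alpha_1\alpha_2)^{-1}$ is internally inconsistent: once you use the multiplicative factor $\alpha_2$ to absorb the additive concentration error and obtain that contraction rate, there is no residual per-step slack left to accumulate. In the paper the term arises by a different mechanism. The proof of Theorem~\ref{theorem3} splits branches into $T^\dagger$ and $T_\varepsilon$: on $T^\dagger$ one has $(II)_{\boldsymbol t_{l-1},\boldsymbol t_l}>\varepsilon$ at every level, Condition~\ref{tree} yields $\sup(II)\le\alpha_2(II)$, and the pure contraction $(1-\gamma_0(\alpha_1\alpha_2)^{-1})^k$ holds with no additive error; on $T_\varepsilon$, as soon as some ancestor has $\sup_{j,c}(II)\le\alpha_2\varepsilon$, SID gives $\textnormal{Var}(m(\boldsymbol X)\mid\boldsymbol X\in\boldsymbol t_{l-1})\le\alpha_1\alpha_2\varepsilon$ directly, and this bound is inherited by all descendants---so the total $T_\varepsilon$ contribution is $\le\alpha_1\alpha_2\varepsilon$ once, not summed over levels.
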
	
	Roughly speaking,	the main idea for proving the desired upper bound in Lemma~\ref{lemma1} is to find a class of deterministic tree growing rules  $\mathcal{T}$ such that given an event $\boldsymbol{U}_n$ of asymptotic probability one, a slightly modified version of the sample rule $\hat{T}$ is an instance of $\mathcal{T}$.  Hence,  we can obtain that 
	\begin{equation}
		\begin{split}\label{decom3}
			& \mathbb{E}  [ ( m(\boldsymbol{X}) - m_{\widehat{T} }^{*} (\boldsymbol{\Theta}_{1:k} , \boldsymbol{X}) )^{2}   \boldsymbol{1}_{ \boldsymbol{U}_{n}} |  \mathcal{X}_{n} ] \lesssim \sup_{T \in \mathcal{T}} \mathbb{E} [ (  m(\boldsymbol{X}) - m_{T }^{*} (\boldsymbol{\Theta}_{1:k} , \boldsymbol{X}))^{2} \boldsymbol{1}_{ \boldsymbol{U}_{n}} |  \mathcal{X}_{n}  ]\\
			& \leq \sup_{T \in \mathcal{T}} \mathbb{E} (  m(\boldsymbol{X}) - m_{T }^{*} (\boldsymbol{\Theta}_{1:k} , \boldsymbol{X}))^{2},\end{split}
	\end{equation}
	where the expectation is with respect to $\boldsymbol{\Theta}_{1:k} $ and $\boldsymbol{X}$,  and we use the notation $\lesssim$ in the first step above to emphasize that a slightly modified version of 
	$\widehat T$ is involved in the rigorous derivation. 
	The last step holds because $\mathcal{T}$ consists of growing rules associated with deterministic splitting criteria.  
	With the above inequalities,  it remains to bound the very last term in (\ref{decom3}). Then, since $\mathbb{P}(\boldsymbol{U}_n^c)$ is sufficiently small for all large $n$, we obtain the desired result in Lemma~\ref{lemma1}. We will provide the definition of $\mathcal{T}$ in Section~\ref{Sec4.1} and discuss the slightly modified version of $\widehat T$ and how to bound the RHS of \eqref{decom3} in Section~\ref{Sec4.2}.

	\subsection{Main results} \label{Sec4.1}	
	
	Given parameters $\varepsilon$, $\alpha_{2}$, and $k$, all tree growing rules satisfying Condition~\ref{tree} below form a class of tree growing rules (i.e., $\mathcal{T}$), each of which is associated with an abstract deterministic splitting criterion. 
	\begin{condi} 
		\label{tree} 
		For the tree growing rule $T$, there exist some $\varepsilon \ge 0$, $ \alpha_{2}\ge 1$, and positive integer $k$ such that for any sets of available features $\Theta_{1}, \dots, \Theta_{k}$, 
		each $(\boldsymbol{t}_{1}, \dots, \boldsymbol{t}_{k}) \in T(\Theta_{1}, \dots, \Theta_{k})$,
		and each $1 \leq l\le k$,
		\begin{enumerate}
			\item[1)] if $(II)_{\boldsymbol{t}_{l-1}, \boldsymbol{t}_{l}} \le \varepsilon, \textnormal{then } \sup_{ (j \in \Theta_{l}, c)}(II)_{\boldsymbol{t}_{l-1}, \boldsymbol{t}_{l-1}(j,c )} \le \alpha_{2}\varepsilon$;
			\item[2)] if $(II)_{\boldsymbol{t}_{l-1}, \boldsymbol{t}_{l}} > \varepsilon$, \textnormal{then} $\sup_{(j\in\Theta_{l}, c)} (II)_{\boldsymbol{t}_{l-1}, \boldsymbol{t}_{l-1}(j, c)} \le \alpha_{2}(II)_{\boldsymbol{t}_{l-1}, \boldsymbol{t}_{l}}$,
		\end{enumerate}
		\noindent
		where we do not specify to which $\Theta_{l, 1}, \dots, \Theta_{l, 2^{l-1}}$ in $\Theta_{l}$ feature $j$ belongs in the supremum for simplicity.
	\end{condi}
	When $\alpha_{2} = 1$ and $\varepsilon = 0$, for each integer $k>0$, there is only one tree growing rule satisfying Condition~\ref{tree}, that is, the one associated with the
	theoretical CART-split criterion. The parameters $\alpha_2 > 1$ and $\varepsilon > 0$  are introduced to account for the statistical estimation error when using the sample CART-splits to estimate the theoretical CART-splits. We will show in Theorem~\ref{theorem4} in Section~\ref{Sec4.2} that with high probability, a slightly modified version of the sample tree growing rule satisfies Condition \ref{tree}. 

	\begin{theorem}\label{theorem3} 
		Assume that Condition \ref{P} holds with $\alpha_{1} \ge 1 $, $\textnormal{Var}(m(\boldsymbol{X})) < \infty$, and the tree growing rule $T$ satisfies Condition~\ref{tree} with some integer $k > 0$, $\varepsilon \ge 0$, and $\alpha_{2} \ge 1$. Then for each $0 < \gamma_{0}\le 1$, we have 
		\begin{equation*}\begin{split} 
				& \mathbb{E}\Big(m(\boldsymbol{X}) - {m}_{T}^{*}\left(\boldsymbol{\Theta}_{1}, \dots, \boldsymbol{\Theta}_{k}, \boldsymbol{X}\right) \Big)^{2} \le \alpha_{1} \alpha_{2} \varepsilon+ \left(1  -  \gamma_{0}(\alpha_{1}\alpha_{2})^{-1}  \right)^{k}\textnormal{Var}(m(\boldsymbol{X})). \end{split}\end{equation*}
	\end{theorem}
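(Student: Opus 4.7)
The plan is to use identity~\eqref{t.new.5} to reduce the left-hand side to controlling the expected total impurity at depth $k$,
\[b_k \coloneqq \sum_{(\boldsymbol{t}_1,\dots,\boldsymbol{t}_k)\in T(\boldsymbol{\Theta}_{1:k})} \mathbb{P}(\boldsymbol{X}\in\boldsymbol{t}_k)\, \textnormal{Var}(m(\boldsymbol{X}) \vert \boldsymbol{X}\in\boldsymbol{t}_k),\]
and then to establish the one-step geometric contraction
\[\mathbb{E}[b_k] \leq (1-\rho)\, \mathbb{E}[b_{k-1}] + \rho\, \alpha_1\alpha_2\varepsilon,\qquad \rho\coloneqq \gamma_0/(\alpha_1\alpha_2).\]
Writing $V(\boldsymbol{t})\coloneqq \textnormal{Var}(m(\boldsymbol{X})\vert \boldsymbol{X}\in\boldsymbol{t})$, the decomposition $V(\boldsymbol{t}) = (I)_{\boldsymbol{t},\boldsymbol{t}'} + (II)_{\boldsymbol{t},\boldsymbol{t}'}$ applied at every level-$(k-1)$ cell immediately yields
\[b_k = b_{k-1} - \sum_{\boldsymbol{t}\text{ at level }k-1} \mathbb{P}(\boldsymbol{X}\in\boldsymbol{t})\cdot (II)_{\boldsymbol{t},\text{chosen daughter}},\]
so the recursion will follow once I prove the pointwise lower bound $\mathbb{E}_{\boldsymbol{\Theta}_{k,s}}[(II)_{\boldsymbol{t},\text{chosen}}] \geq \rho V(\boldsymbol{t}) - \rho \alpha_1\alpha_2\varepsilon$ for every cell $\boldsymbol{t}$.

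\textbf{The pointwise estimate.} By SID (Condition~\ref{P}) I may fix some $j^*(\boldsymbol{t})\in\{1,\dots,p\}$ with $\sup_{c} (II)_{\boldsymbol{t},\boldsymbol{t}(j^*,c)} \geq V(\boldsymbol{t})/\alpha_1$. Since $\boldsymbol{\Theta}_{k,s}$ of cardinality $\lceil \gamma_0 p\rceil$ is drawn uniformly from the $p$ features, the event $\{j^*(\boldsymbol{t})\in \boldsymbol{\Theta}_{k,s}\}$ has probability at least $\gamma_0$; on this event the in-column maximum $M_{\boldsymbol{\Theta}_{k,s}}(\boldsymbol{t})\coloneqq \sup_{j\in \boldsymbol{\Theta}_{k,s}, c}(II)_{\boldsymbol{t},\boldsymbol{t}(j,c)}$ is at least $V(\boldsymbol{t})/\alpha_1$. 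If additionally $V(\boldsymbol{t}) > \alpha_1\alpha_2\varepsilon$, then $M_{\boldsymbol{\Theta}_{k,s}}(\boldsymbol{t}) > \alpha_2\varepsilon$, which rules out clause~1 of Condition~\ref{tree} and forces clause~2, giving the chosen cut an impurity decrease of at least $M_{\boldsymbol{\Theta}_{k,s}}(\boldsymbol{t})/\alpha_2 \geq V(\boldsymbol{t})/(\alpha_1\alpha_2)$. Combining, $\mathbb{E}_{\boldsymbol{\Theta}_{k,s}}[(II)_{\boldsymbol{t},\text{chosen}}] \geq \gamma_0 V(\boldsymbol{t})/(\alpha_1\alpha_2) = \rho V(\boldsymbol{t})$. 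In the complementary regime $V(\boldsymbol{t}) \leq \alpha_1\alpha_2\varepsilon$, the trivial lower bound of $0$ already satisfies $0 \geq \rho V(\boldsymbol{t}) - \rho \alpha_1\alpha_2\varepsilon$, so the desired estimate holds in both regimes.

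\textbf{Unrolling and the main obstacle.} Combining the pointwise estimate with the decomposition of $b_k$ above, taking conditional expectation over $\boldsymbol{\Theta}_k$ given $\boldsymbol{\Theta}_{1:k-1}$ (exploiting the mutual independence of the $\boldsymbol{\Theta}_{k,s}$ across $s$), and using $\sum_{\boldsymbol{t}\text{ at level }k-1}\mathbb{P}(\boldsymbol{X}\in\boldsymbol{t}) = 1$, I obtain the claimed recursion. Iterating $k$ times from $\mathbb{E}[b_0] = \textnormal{Var}(m(\boldsymbol{X}))$ and bounding the geometric remainder $\rho\alpha_1\alpha_2\varepsilon \sum_{j=0}^{k-1}(1-\rho)^j \leq \alpha_1\alpha_2\varepsilon$ produces the stated upper bound. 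The main delicate point is converting the per-step slack $\varepsilon$ of Condition~\ref{tree} into a single, $k$-free additive residual $\alpha_1\alpha_2\varepsilon$; this is precisely what the dichotomy in the pointwise estimate achieves, by ensuring that small-variance cells contribute essentially no error while large-variance cells enjoy the full geometric contraction driven by $\gamma_0$ and SID. A minor technical care is required for the selection of $j^*(\boldsymbol{t})$ (the outer supremum is a maximum over finitely many features, so an attaining $j^*$ exists) and for the fact that Condition~\ref{tree} is phrased cellwise, allowing the clean conditional argument level by level.
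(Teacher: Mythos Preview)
Your argument is correct and reaches the stated bound, but it proceeds differently from the paper's proof in Section~\ref{SecA.4}. The paper partitions the tree branches into a set $T_\varepsilon(\Theta_{1:k})$ (branches that at some level $l$ satisfy $\sup_{j,c}(II)_{\boldsymbol t_{l-1},\boldsymbol t_{l-1}(j,c)}\le\alpha_2\varepsilon$) and its complement $T^\dagger$; it bounds the $T_\varepsilon$ contribution by $\alpha_1\alpha_2\varepsilon$ in one shot via SID, and then runs the geometric recursion \emph{without any additive error} on $T^\dagger$ alone, where the ``good state'' argument at every level is guaranteed to trigger clause~2 of Condition~\ref{tree}. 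You instead keep all branches together and absorb the slack into a single affine recursion $\mathbb{E}[b_k]\le(1-\rho)\mathbb{E}[b_{k-1}]+\rho\alpha_1\alpha_2\varepsilon$ by the cellwise dichotomy on $V(\boldsymbol t)\gtrless\alpha_1\alpha_2\varepsilon$. Your route is shorter and avoids the branch-tracking bookkeeping (in particular the two structural properties of $T^\dagger$ the paper verifies), at the cost of carrying the additive term through every step and summing the geometric series at the end; the paper's route makes the two sources of error ($\varepsilon$-slack versus contraction) visibly separate, which is convenient when one later wants the clean recursion~\eqref{t.new.2} for $\varepsilon=0$. Both hinge on the same key step---the contrapositive of clause~1 in Condition~\ref{tree} combined with SID on the event $\{j^*(\boldsymbol t)\in\boldsymbol\Theta_{k,s}\}$ of probability at least $\gamma_0$---so the difference is organizational rather than conceptual.
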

	\begin{remark}\label{good1}
		If we set $\alpha_{2} = 1$ and $\varepsilon = 0$, we see that Theorem~\ref{theorem3} applies only to the growing rule associated with
		the theoretical CART-split criterion. In this sense, Condition~\ref{tree} can be understood as a way for extending the applicability of Theorem~\ref{theorem3} to a wider class of tree growing rules in $\mathcal T$.  
		Indeed, we set $\varepsilon$ as $n^{-\eta}$, which accounts for the estimation error due to sample CART-splits, when proving Theorem \ref{theorem1} by exploiting Theorem \ref{theorem3}.
	\end{remark}

	Condition~\ref{tree} enables us to apply Theorem~\ref{theorem3} to an abstract tree growing rule obtained by slightly modifying the tree growing rule associated with the sample CART splitting criterion.  In Section~\ref{Sec4.2}, we will discuss this abstract tree growing rule in detail.  The exponential upper bound $\left(1  -  \gamma_{0}(\alpha_{1}\alpha_{2})^{-1}  \right)^{k}$ in Theorem \ref{theorem3} is obtained by a recursive analysis.  To appreciate the recursive analysis, let us consider the special case of $\varepsilon=0$. Then we can show that
	\begin{equation}\begin{split}
			\label{t.new.2}
			& \mathbb{E}\left(m(\boldsymbol{X}) - {m}_{T}^{*}\left(\boldsymbol{\Theta}_{1:k}, \boldsymbol{X}\right) \right)^{2}  \le \left(1  - \frac{ \gamma_{0}}{ \alpha_{1}\alpha_{2}} \right) \mathbb{E}\left(m(\boldsymbol{X}) - {m}_{T}^{*}\left(\boldsymbol{\Theta}_{1:k-1}, \boldsymbol{X}\right) \right)^{2}.
		\end{split}
	\end{equation}	
	In (\ref{t.new.2}), we clearly see the recursive structure for controlling the approximation error. See Section~\ref{SecA.4} of Supplementary Material for more details of the recursive inequality.  
	
	Theorem~\ref{theorem3} is the key result that makes our approximation error analysis unique and practical.  It differs sharply from the existing literature in the sense that our technical analysis is more specific to random forests and does not rely on general methods of data-independent partition such as Stone's theorem~\cite{Stone1977} or data-dependent partition such as 
	\cite{nobel1996histogram}. This is in contrast to most existing works~\cite{Biau2015, Wager2018, Biau2012, Biau2008}.

	\subsection{Sample tree growing rule}\label{Sec4.2}
	
	In Theorem~\ref{theorem4}, we will analyze a version of the sample tree growing rule defined below and show that conditional on the observed sample, on a high probability $\mathcal{X}_{n}$-measurable event, this rule satisfies Condition~\ref{tree} with $\varepsilon = \varepsilon_{n}$ decreasing to zero. Let sets of available features $\Theta_{1:k}$ be given for some positive integer $k$. Consider the following procedure of modifying a subtree with some $\zeta > 0$. For each $(\boldsymbol{t}_{1},\dots, \boldsymbol{t}_{k})$ $\in \widehat{T}(\Theta_{1:k})$ with $\mathop{{}\mathbb{P}}(\boldsymbol{X} \in \boldsymbol{t}_{k-1}) < \zeta$, let us fix  $l_{0} \coloneqq \min \{ l -1 : \mathbb{P}(\boldsymbol{X}\in\boldsymbol{t}_{l -1})<\zeta, 1 \le l\le k\}$. Then we trim the descendant cells of $\boldsymbol{t}_{l_{0}}$ off from $\widehat{T}(\Theta_{1:k})$ and grow new descendant cells back in such a way that each new descendant cell $\boldsymbol{t}^{'}$ and its parent cell $\boldsymbol{t}$ satisfy $\sup_{(j\in\Theta, c)} (II)_{\boldsymbol{t}, \boldsymbol{t}(j, c)} = (II)_{\boldsymbol{t}, \boldsymbol{t}^{'}}$, where sets of available features are those in $\Theta_{1:k}$ and we do not specify them in the supremum. That is, each new descendant cell of $\boldsymbol{t}_{l_{0}}$ is grown according to the theoretical CART-split criterion; a graphical illustration is given in Figure~\ref{fig:cut1}.  
	\begin{figure}[th]
		\subfloat[Trim the subtree after the specified cell.]{\includegraphics[width=6cm]{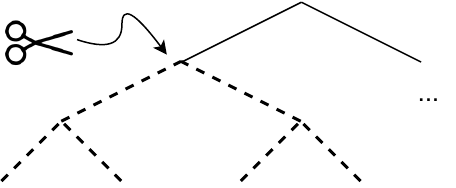}}
		\hspace{2cm}
		\subfloat[Split the cells $A$, $B$, and $C$ by the theoretical CART-split criterion with the corresponding sets of available features.]{\includegraphics[width=6cm]{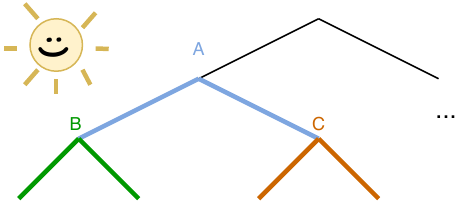}}
		\centering
		\caption{Trim a subtree and grow it back.}
		\label{fig:cut1}
	\end{figure}

	We next define the modified version of the sample tree as follows. For each cell path in $\hat{T}(\Theta_{1:k})$, there is at most one cell $\boldsymbol{t}_{l_{0}}$ as defined previously. We collect these cells in a subset, perform the previously described  procedure accordingly, and obtain the modified sample tree. We denote such a new tree as $\widehat{T}_{\zeta}(\Theta_{1:k})$ and refer to it as the semi-sample tree growing rule.
	
	\begin{theorem} \label{theorem4}
		Assume that Conditions \ref{abc}--\ref{BO} hold and let $\alpha_{2} > 1$, $0 < \eta < \frac{1}{8}$, $c > 0$, and $\delta$ with $2\eta < \delta < \frac{1}{4}$  be given. Then there exists an $\mathcal{X}_{n}$-measurable event $\boldsymbol{U}_{n}$ with $\mathop{{}\mathbb{P}} (\boldsymbol{U}_{n}^{c}) = o( n^{-1})$ such that conditional on $\mathcal{X}_{n}$, on event $\boldsymbol{U}_{n}$ and for all large $n$, $ \widehat{T}_{\zeta}$ with $\zeta=n^{-\delta}$ satisfies Condition~\ref{tree} with  $ k = \floor{c\log_{2}{n}}$, $\varepsilon = n^{-\eta}$, and  $\alpha_{2}$. 
	\end{theorem}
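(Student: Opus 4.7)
The plan is to build an $\mathcal{X}_{n}$-measurable good event $\boldsymbol{U}_{n}$ of probability $1-o(n^{-1})$ on which, conditional on $\mathcal{X}_{n}$, every sample-CART split appearing along $\widehat{T}_{\zeta}$ is nearly theoretical-CART-optimal, uniformly. Recall that $\widehat{T}_{\zeta}$ is composed of two kinds of splits: sample-CART splits occurring at cells $\boldsymbol{t}$ with $\mathbb{P}(\boldsymbol{X}\in\boldsymbol{t})\ge\zeta=n^{-\delta}$, and theoretical-CART splits used to regrow the descendants of any trimmed cell with $\mathbb{P}(\boldsymbol{X}\in\boldsymbol{t})<\zeta$. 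Splits of the second kind satisfy Condition~\ref{tree} trivially with $\alpha_{2}=1$, so only the sample-CART portion requires real analysis.

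First, I would enumerate the universe $\mathcal{C}_{n}$ of cells that can appear at any level $l\le k=\lfloor c\log_{2}n\rfloor$ of any sample tree. Because each axis-aligned sample split selects a coordinate $j\in\{1,\dots,p\}$ and a threshold $x_{ij}$, one has $\#\mathcal{C}_{n}\le (2pn+1)^{k}$, which is only mildly super-polynomial under $p=O(n^{K_{0}})$ and $k=O(\log n)$. For every $\boldsymbol{t}\in\mathcal{C}_{n}$ with $\mathbb{P}(\boldsymbol{X}\in\boldsymbol{t})\ge\zeta$ and every candidate split $(j,c)$ with $c\in\{x_{ij}:\boldsymbol{x}_{i}\in\boldsymbol{t}\}$, I would apply Hoeffding's inequality together with Conditions~\ref{ME} and \ref{BO} to the empirical proportions $\#\{i:\boldsymbol{x}_{i}\in\boldsymbol{t}(j,c)\}/\#\{i:\boldsymbol{x}_{i}\in\boldsymbol{t}\}$ and to the sample conditional means $\bar{y}_{L},\bar{y}_{R}$ and $\bar{y}$ on the corresponding sub-cells. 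Since any cell with $\mathbb{P}(\boldsymbol{X}\in\boldsymbol{t})\ge\zeta$ contains $\Omega(n^{1-\delta})$ observations with overwhelming probability, the concentration scale is $O(n^{-(1-\delta)/2}\sqrt{\log n})$. Taking $\boldsymbol{U}_{n}$ to be the intersection of all these concentration events and union-bounding over $\mathcal{C}_{n}$ and the $O(pn)$ candidate splits per cell gives $\mathbb{P}(\boldsymbol{U}_{n}^{c})=o(n^{-1})$, since the exponential Hoeffding tail dominates the $n^{O(\log n)}$ cardinality.

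Second, on $\boldsymbol{U}_{n}$ I would translate sample-CART optimality into approximate theoretical-CART optimality. Rearranging \eqref{new.eq.004} shows that sample CART maximizes the empirical impurity decrease, and by the concentrations above this empirical criterion differs from the population $(II)_{\boldsymbol{t},\boldsymbol{t}(j,c)}$ by $o(n^{-\eta})$ uniformly in $(j,c)$ over sample-point thresholds, the latter being where the condition $2\eta<\delta$ enters to ensure the bound lies below the target precision $n^{-\eta}$. Hence the chosen daughter $\boldsymbol{t}_{l}=\boldsymbol{t}_{l-1}(\widehat{j},\widehat{c})$ attains $(II)$ within an additive $o(n^{-\eta})$ of the maximum over all sample-point splits. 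A short density argument based on Condition~\ref{abc} then passes from discrete to continuous $c$: with bounded density, the typical gap between consecutive sample values in coordinate $j$ within $\boldsymbol{t}$ is $O(\log n/n^{1-\delta})$, and changing $c$ by that amount perturbs $(II)$ by the same order, again $o(n^{-\eta})$. Combining, $(II)_{\boldsymbol{t}_{l-1},\boldsymbol{t}_{l}}$ is within $(\alpha_{2}-1)n^{-\eta}$ of $\sup_{(j,c)}(II)_{\boldsymbol{t}_{l-1},\boldsymbol{t}_{l-1}(j,c)}$, which is algebraically equivalent to the two-case form of Condition~\ref{tree} with $\varepsilon=n^{-\eta}$ and the prescribed $\alpha_{2}$.

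The hardest part will be simultaneously reconciling the four scales at play -- the concentration radius $n^{-(1-\delta)/2}\sqrt{\log n}$, the discretization gap $\log n/n^{1-\delta}$, the trimming threshold $\zeta=n^{-\delta}$, and the target precision $\varepsilon=n^{-\eta}$ -- uniformly over the super-polynomial family $\mathcal{C}_{n}$. This is precisely why $2\eta<\delta<1/4$ with $c<1/4$ is imposed. A subtle secondary point is that the "borderline-large" cells, whose population probability is just above $\zeta$, are exactly where Hoeffding is tightest and where an unfortunate choice of split could populate a daughter too thinly to be controlled; one must argue that the trimming rule keeps the sample-CART splits uniformly in the regime where every sub-cell under consideration still has enough observations for the concentration in Step~1 to apply.
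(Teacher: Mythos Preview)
Your high-level plan---build a high-probability event on which sample-CART is uniformly near theoretical-CART, then verify Condition~\ref{tree}---matches the paper's. But there is a genuine gap in your Step~1: you propose to apply Hoeffding and then union-bound over the family $\mathcal{C}_{n}$ of cells with boundaries at the data points $x_{ij}$. This family is \emph{random}---it depends on the very sample you are trying to concentrate---so the indicators $\boldsymbol{1}_{\boldsymbol{x}_{i}\in\boldsymbol{t}}$ for $\boldsymbol{t}\in\mathcal{C}_{n}$ are not sums of i.i.d.\ terms, and a naive union bound over $(2pn+1)^{k}$ ``events'' is not meaningful because the events themselves are data-dependent. The paper resolves this by introducing a \emph{deterministic} grid with spacing $n^{-(1+\rho_{1})}$ (Section~\ref{Sec5} and~\ref{SecA.1}): every random cell $\boldsymbol{t}$ is rounded to a grid cell $\boldsymbol{t}^{\#}$, concentration via Lemma~\ref{CI1} is applied only to the finitely many deterministic grid cells, and the events $\mathcal{A}_{1},\mathcal{A}_{2},\mathcal{A}_{3},\mathcal{A}$ together with the noise-truncation event $\boldsymbol{C}_{n}$ are intersected to form $\boldsymbol{U}_{n}$.

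Your density argument for passing from sample to continuous thresholds is also too loose. The paper does not rely on nearest-neighbor spacings of order $\log n/n^{1-\delta}$; instead it fixes, around the theoretical optimum $c^{*}(\boldsymbol{t})$, an interval $I^{*}(\boldsymbol{t})$ of \emph{conditional} probability exactly $n^{-\delta}$, and on $\boldsymbol{U}_{n}$ shows this interval contains at least one data point $c^{\dagger}(\boldsymbol{t})$ (Lemma~\ref{T4_1}). This is what makes the probability perturbation in \eqref{control1} exactly $n^{-\delta}$ and is where the constraint $\eta<\delta/2$ (not merely $2\eta<\delta$ for the concentration radius, as you suggest) actually enters via~\eqref{T4_0}. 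The comparison $(II)_{\boldsymbol{t}_{l-1},\boldsymbol{t}_{l}}-(II)_{\boldsymbol{t}_{l-1},\boldsymbol{t}^{*}}$ is then split into eight pieces~\eqref{T4_3}, routed through both $\boldsymbol{t}^{\dagger}$ and the grid $\#$-versions, and each piece is bounded by Lemmas~\ref{T4_1}--\ref{T4_2}; the daughter-too-thin issue you flag is handled there by a separate case analysis according to whether $\mathbb{P}(\boldsymbol{X}\in\boldsymbol{t}'\mid\boldsymbol{X}\in\boldsymbol{t})\gtrless n^{-\delta}$.
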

	\begin{remark}
		Since Theorem~\ref{theorem4} above is a result for the semi-sample tree growing rule instead of the sample tree growing rule, the tree height parameter $c>0$  is an arbitrary constant in Theorem~\ref{theorem4}. To use the result of Theorem~\ref{theorem4} for Lemma~\ref{lemma1}, we need to control the $\mathbb{L}^{2}$ difference between the population version random forests estimates using these two rules, which is the reason for the first term in Lemma~\ref{lemma1}. Such term is bounded by $O(n^{-\delta + c})$ and as a result, the value of $c$ is required to be limited when applying Lemma~\ref{lemma1} to obtain Theorem~\ref{theorem1}. There is a similar remark for Lemma~\ref{lemma3} in Section~\ref{Sec5}.
	\end{remark}
	\begin{remark}\label{r4}
		Thanks to Theorem \ref{theorem4}, we can apply Theorem \ref{theorem3} to $\hat{T}_{\zeta}$ with $\zeta=n^{-\delta}$ and obtain the following inequality 
		\begin{equation*}
			\mathbb{E}\left[\left(  m(\boldsymbol{X}) - m_{ \widehat{T}_{\zeta}}^{*} (\boldsymbol{\Theta}_{1:k} , \boldsymbol{X})\right)^{2}  \ \Big\vert \ \mathcal{X}_{n} \right] \boldsymbol{1}_{ \boldsymbol{U}_{n}} \le \alpha_{1}\alpha_{2}n^{-\eta} + (1 - \gamma_{0}(\alpha_{1}\alpha_{2})^{-1})^{k} \textnormal{Var}(m(\boldsymbol{X})),
		\end{equation*}
		where the estimate $m_{ \widehat{T}_{\zeta }}^{*}$ is defined similarly as $m_{ \widehat{T}}^{*}$. For more details, see the proof of Lemma~\ref{lemma1} in Section~\ref{SecC.2} in Supplementary Material. As a result, we do not need Condition~\ref{tree} in Lemma~\ref{lemma1} since it is a consequence of Theorem~\ref{theorem4} that the sample tree growing rule is an instance of Condition~\ref{tree} in a probabilistic sense. This is one of our main contributions to proving such a result in Theorem~\ref{theorem4} instead of assuming that the sample tree growing rule satisfies Condition~\ref{tree}.
	\end{remark}
	
	\section{Upper bounds for statistical estimation error}
	\label{Sec5}
	
	In this section, we aim to develop a general high-dimensional estimation foundation for analyzing random forests consistency and use it to derive the convergence rate for the estimation variance (i.e., the second term in (\ref{decom1})) in the lemma below.
	
	\begin{lemma}\label{con3}
		Assume that Conditions~\ref{abc}--\ref{BO} hold and let $0<  \eta < 1/4$, $0 < c< 1/4$, and $\nu >0$ be given. Then there exists some constant $C>0$ such that for all large $n$ and each $1\le k \le c\log_{2}{n}$,
		\begin{equation}\label{estimation1}\mathop{{}\mathbb{E}}      \Big(m_{ \widehat{T}}^{*}(\boldsymbol{\Theta}_{1}, \dots, \boldsymbol{\Theta}_{k} , \boldsymbol{X}) - \widehat{m}_{\widehat{T}}(\boldsymbol{\Theta}_{1}, \dots, \boldsymbol{\Theta}_{k} , \boldsymbol{X}, \mathcal{X}_{n} ) \Big)^{2} \le   n^{-\eta} + C2^{k}n^{-\frac{1}{2} + \nu}. \end{equation}
	\end{lemma}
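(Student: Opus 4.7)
The strategy is to take the ``local'' approach indicated in the discussion after Corollary~\ref{corollary1}: rather than exploit the averaging over $\boldsymbol{\Theta}_{1:k}$, bound the estimation variance tree by tree. To this end, I would start by writing, for each realization of $\boldsymbol{\Theta}_{1:k}$,
\begin{equation*}
\bigl(m^{*}_{\widehat T}(\boldsymbol{\Theta}_{1:k},\boldsymbol{X})-\widehat m_{\widehat T}(\boldsymbol{\Theta}_{1:k},\boldsymbol{X},\mathcal{X}_n)\bigr)^{2}=\sum_{(\boldsymbol{t}_1,\dots,\boldsymbol{t}_k)\in\widehat T(\boldsymbol{\Theta}_{1:k})}\boldsymbol{1}_{\boldsymbol{X}\in\boldsymbol{t}_k}\bigl(\mathbb{E}(m(\boldsymbol{X})\mid\boldsymbol{X}\in\boldsymbol{t}_k)-\bar y(\boldsymbol{t}_k)\bigr)^{2},
\end{equation*}
where $\bar y(\boldsymbol{t}_k)$ is the sample mean of $y_i$'s over $\{i:\boldsymbol{x}_i\in\boldsymbol{t}_k\}$, and then further split $y_i=m(\boldsymbol{x}_i)+\varepsilon_i$ to separate a ``signal'' deviation $\bar m(\boldsymbol{t}_k)-\mathbb{E}(m(\boldsymbol{X})\mid\boldsymbol{X}\in\boldsymbol{t}_k)$ from a ``noise'' deviation $\frac{1}{N(\boldsymbol{t}_k)}\sum_{i:\boldsymbol{x}_i\in\boldsymbol{t}_k}\varepsilon_i$, where $N(\boldsymbol{t}_k)$ is the cell count.

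The central technical device is a grid discretization. CART splits occur at sample points, which are arbitrary real numbers, so the family of end cells is not a priori finite. I would snap every potential split coordinate in $[0,1]$ to a regular grid of resolution $n^{-1-\rho_1}$ for some small $\rho_1>0$, which by Condition~\ref{abc} perturbs cell probabilities by at most $O(n^{-\rho_1})$ uniformly. Then every cell $\boldsymbol{t}_k$ produced by $\widehat T$ (after this snapping) lies in a deterministic discrete collection $\mathcal{C}_k$ whose cardinality is at most $2^{k}(C p n^{1+\rho_1})^{2^{k}}$; for $k\le c\log_2 n$ with $c<1/4$ and $p=O(n^{K_0})$, this gives $\log\#\mathcal C_k=O(n^c\log n)$.

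Next, I would work on a high-probability event $\boldsymbol{V}_n$ (constructed analogously to $\boldsymbol{U}_n$ of Theorem~\ref{theorem4}, with $\mathbb{P}(\boldsymbol{V}_n^c)=o(n^{-1})$) on which, uniformly over all $\boldsymbol{t}\in\mathcal C_k$ with $\mathbb{P}(\boldsymbol{X}\in\boldsymbol{t})\ge\zeta:=n^{-\delta_*}$ for a suitable $\delta_*$, both of the following hold: the cell count satisfies $N(\boldsymbol{t})\ge\frac{1}{2}n\,\mathbb{P}(\boldsymbol{X}\in\boldsymbol{t})$, and the signal deviation $|\bar m(\boldsymbol{t})-\mathbb{E}(m(\boldsymbol{X})\mid\boldsymbol{X}\in\boldsymbol{t})|$ is of order $\sqrt{(\log\#\mathcal C_k)/N(\boldsymbol{t})}$ by Hoeffding (using $|m|\le M_0$ from Condition~\ref{BO}), while the noise deviation $|\frac{1}{N(\boldsymbol{t})}\sum\varepsilon_i|$ admits the same order by a Bernstein/Nagaev-type tail bound using the moment hypothesis in Condition~\ref{ME} (choosing $q$ large enough to absorb the union-bound factor). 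Integrating against $\boldsymbol{1}_{\boldsymbol{X}\in\boldsymbol{t}_k}$ gives a per-tree contribution of order $2^k\cdot n^{-1+c+\delta_*}\log n$, which for any chosen $\nu>0$ can be absorbed into the stated $C\,2^k n^{-1/2+\nu}$ once $\delta_*,c$ are small.

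Finally, the remaining pieces are absorbed into the $n^{-\eta}$ term: the cells with $\mathbb{P}(\boldsymbol{X}\in\boldsymbol{t})<\zeta$ contribute at most $(2M_0)^2\cdot 2^k\zeta$ to the expectation via Condition~\ref{BO}, the grid snapping error contributes $O(2^k n^{-\rho_1})$, and the event $\boldsymbol{V}_n^c$ contributes at most $4M_0^2\,\mathbb{P}(\boldsymbol{V}_n^c)=o(n^{-1})$; the noise second moment off $\boldsymbol{V}_n$ is handled by Cauchy--Schwarz with the moment condition on $\varepsilon_1$. I expect the main obstacle to be the \emph{simultaneous} concentration across the discrete family $\mathcal C_k$ for the noise term, because $\varepsilon_i$ is only assumed to have high enough finite moments (not sub-Gaussian), so one must carefully calibrate $q$ in Condition~\ref{ME} against the $2^k\log(pn)$ union-bound cost; this is also precisely why the resulting rate is the conservative $2^k n^{-1/2+\nu}$ rather than the $2^k/n$ one might naively hope for.
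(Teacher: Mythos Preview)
Your overall strategy—grid discretization followed by uniform concentration over the resulting finite cell family—is precisely the paper's approach. But there is a genuine gap in the execution.

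The cardinality bound $\#\mathcal C_k\le 2^{k}(Cpn^{1+\rho_1})^{2^{k}}$ is too pessimistic: you are counting as if all $2^{k}-1$ internal splits of the tree are free parameters for a single end cell. An end cell $\boldsymbol t_k$ is determined solely by the $k$ cuts on its root-to-leaf path, so the correct bound (equation~\eqref{numberOf}) is $\#\mathcal C_k\le 2^{k}\bigl(p(\lceil n^{1+\rho_1}\rceil{+}1)\bigr)^{k}$, giving $\log\#\mathcal C_k=O((\log n)^2)$ rather than your $O(n^{c}\log n)$. This matters downstream because you then write that the large-cell term ``can be absorbed into $C\,2^{k}n^{-1/2+\nu}$ once $\delta_*,c$ are small''—but $c\in(0,\tfrac14)$, $\eta\in(0,\tfrac14)$, and $\nu>0$ are all \emph{given}, not tunable. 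Tracking your constraints (after truncating $|\varepsilon_i|\le n^{s}$): the small-cell piece going into $n^{-\eta}$ forces $\delta_*>c+\eta+2s$, while your large-cell concentration with the inflated union-bound cost forces $c+\delta_*+2s<\tfrac12+\nu$; together these require $2c+\eta+4s<\tfrac12+\nu$, which fails when $c,\eta$ are near $\tfrac14$ and $\nu$ is small. With the correct polylogarithmic cardinality the extra factor $n^{c}$ disappears from the Hoeffding step and the combined constraint becomes $c+\eta+4s<\tfrac12+\nu$, which does hold for all admissible $(c,\eta,\nu)$.

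As a secondary point, the paper organizes the pieces differently and this makes the bookkeeping cleaner: it introduces the grid-snapped rule $\widehat T^{\#}$ and splits the error into (i) $m^{*}_{\widehat T}-m^{*}_{\widehat T^{\#}}$ and $\widehat m_{\widehat T^{\#}}-\widehat m_{\widehat T}$, controlled by Lemma~\ref{lemma3}, which is where the $2^{k}$ factor legitimately enters (there are $2^{k}$ end cells, each incurring an $O(n^{\Delta-1})$ snapping cost with $\Delta\in(\tfrac12,\tfrac12+\nu)$); and (ii) $m^{*}_{\widehat T^{\#}}-\widehat m_{\widehat T^{\#}}$, controlled by Theorem~\ref{theorem5}, where the correct cardinality plus a large/small-cell split deliver the $n^{-\eta}$ rate. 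Your extra $2^{k}$ after ``integrating against $\boldsymbol 1_{\boldsymbol X\in\boldsymbol t_k}$'' is spurious—the end-cell probabilities sum to one—and reflects a conflation of the snapping cost (where $2^{k}$ belongs) with the concentration step (where it does not).
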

	
	Let us gain some insights into the challenge associated with \eqref{estimation1}. To analyze the estimation variance, essentially we have to control the probabilistic difference between every conditional mean and average on each cell grown by the sample tree growing rule. This is a challenging task because these cells have random boundaries. A naive consideration of all possible cells in $[0, 1]^{p}$ takes every cell into account but results in an uncountably infinite set, which precludes the application of standard concentration inequalities such as Hoeffding's inequality. In what follows, we describe in detail our approach to overcoming such a challenge.

	Instead of considering estimation of conditional means on all possible cells with random boundaries in $[0, 1]^{p}$ directly, we estimate only conditional means on each of a set of deterministic cells from \emph{a predetermined grid}, which we will formally define next. The grid contains many cells such that for an arbitrary cell $\boldsymbol{t}$, there is a cell $\boldsymbol{t}^{\#}$ on the grid being so close to $\boldsymbol{t}$ that the values of their theoretical conditional means, $\mathbb{E}(m(\boldsymbol{X})| \boldsymbol{X} \in \boldsymbol{t})$ and $\mathbb{E}(m(\boldsymbol{X})| \boldsymbol{X} \in \boldsymbol{t}^{\#})$, are very close, and the values of their empirical conditional means, 
	$\frac{\sum_{\boldsymbol{x}_{i} \in \boldsymbol{t}} y_{i}}{ \#\{ i: \boldsymbol{x}_{i} \in \boldsymbol{t}\}}$  and $\frac{\sum_{\boldsymbol{x}_{i} \in \boldsymbol{t}^{\#}} y_{i}}{ \#\{ i: \boldsymbol{x}_{i} \in \boldsymbol{t}^{\#}\}}$,  are also close.  Since the number of cells on the grid is not too large,  we can show that the theoretical conditional mean $\mathbb{E}(m(\boldsymbol{X})| \boldsymbol{X} \in \boldsymbol{t}^{\#})$ and its empirical counterpart  $\frac{\sum_{\boldsymbol{x}_{i} \in \boldsymbol{t}^{\#}} y_{i}}{ \#\{ i: \boldsymbol{x}_{i} \in \boldsymbol{t}^{\#}\}}$ are uniformly close using Hoeffding's inequality. Combining all these results can yield Lemma \ref{con3}.

	The grid mentioned before is defined as follows. Let $ \rho_{1} $ be a given positive constant and consider a sequence of $b_{i} = \frac{i}{\lceil n^{1+\rho_{1}}\rceil}$ with $0 \leq  i  \leq \lceil n^{1+\rho_{1}}\rceil$. We construct hyperplanes such that along each $j$th coordinate, each point $b_{i}$ is crossed by one and only one of the hyperplanes and this hyperplane is perpendicular to the $j$th axis. The result is exactly $(\lceil n^{1+\rho_{1}}\rceil + 1)^{p}$ distinct hyperplanes and each boundary of the root cell $[0, 1]^{p}$ is also one of these hyperplanes.  Naturally these hyperplanes form a grid on $[0, 1]^{p}$ and we refer to each of these hyperplanes as a grid hyperplane or a grid line.  For a cell $\boldsymbol{t}$,  we define the cell $ \boldsymbol{t}^{\#}$ by moving all boundaries  of $\boldsymbol{t}$ to the corresponding nearest grid lines; see Figure \ref{fig:rf} for a graphical illustration. For a tree growing rule $T$, we define $T^{\#}$ such that for each $\Theta_{1:k}$, $(\boldsymbol{t}_{1}^{\#}, \dots, \boldsymbol{t}_{k}^{\#}) \in T^{\#}(\Theta_{1:k})$ if $(\boldsymbol{t}_{1}, \dots, \boldsymbol{t}_{k}) \in T(\Theta_{1:k})$. Let us observe two important properties of the sharp notation. First, for each cell $\boldsymbol{t}$,
	if $\boldsymbol{t}^{'}$ and $\boldsymbol{t}^{''}$ are its daughter  cells, then $(\boldsymbol{t}^{'})^{\#}$ and $(\boldsymbol{t}^{''})^{\#}$ are daughter cells of $\boldsymbol{t}^{\#}$. Second, for each integer $k > 0$, the collection of end cells $\boldsymbol{t}_{k}^{\#}$ at level $k$ is a partition of $[0, 1]^{p}$. As a result, $T^{\#}$ can be understood as a tree growing rule (induced by $T$). The same definition of the sharp notation goes for the sample tree growing rule.

	\begin{figure}[t]
		\includegraphics[width=15cm]{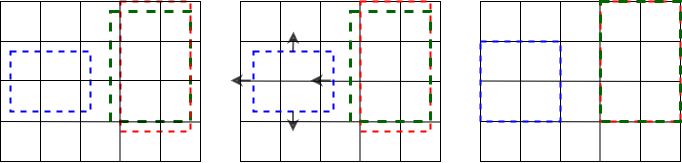}
		\centering
		\caption{From the left panel to the right, we move the original boundaries  to the nearest grid lines.}
		\label{fig:rf}
	\end{figure}
	
	We now demonstrate how to use the grid for obtaining the result in Lemma~\ref{con3}. To control the $\mathbb{L}^{2}$ loss between $m^{*}_{\hat{T}}$ and $\hat{m}_{\hat{T}}$ as in (\ref{estimation1}), we decompose the squared loss into three terms as 
	\begin{equation}
		\begin{split}\label{T4_11}
			& \underbrace{\textnormal{ $\mathbb{L}^{2}$ loss between } m^{*}_{\hat{T}}  \textnormal{ and  } m^{*}_{\hat{T}^{\#}}}_{\textnormal{Controlled by (\ref{consistency2})}} \leftarrow\rightarrow 
			\underbrace{\textnormal{ $\mathbb{L}^{2}$ loss between }  m^{*}_{\hat{T}^{\#}}  \textnormal{ and  } \hat{m}_{\hat{T}^{\#}}}_{\textnormal{Controlled by Theorem~\ref{theorem5}}} \\
			& \qquad \leftarrow\rightarrow 
			\underbrace{\textnormal{ $\mathbb{L}^{2}$ loss between }  \hat{m}_{\hat{T}^{\#}}  \textnormal{ and  } \hat{m}_{\hat{T}}}_{\textnormal{Controlled by (\ref{consistency1})}},
	\end{split}\end{equation}
	and establish bounds for each of them in Theorem~\ref{theorem5}, (\ref{consistency2}), and (\ref{consistency1}) below, respectively, using the grid.
	In particular, in Theorem~\ref{theorem5} we will see that the grid helps us bound the LHS of (\ref{new.eq.015}) below uniformly over all possible tree growing rules $T$'s. This approach provides a solution to a fundamental estimation problem in proving random forests consistency that involves infinitely many possible $T$'s. By (\ref{eq.new.2}) and (\ref{eq.new.1}), for any $T$ and $\Theta_{1:k}$, we can deduce that on $\cap_{i = 1}^{k}\{\boldsymbol{\Theta}_{i} = \Theta_{i}\}$,
	\begin{equation}
		\begin{split}
			\label{new.eq.015}
			&  \mathop{{}\mathbb{E}}    \left[     \Big(m_{ T^{\#}}^{*}(\boldsymbol{\Theta}_{1:k}, \boldsymbol{X}) - \widehat{m}_{ T^{\#}}(\boldsymbol{\Theta}_{1:k}, \boldsymbol{X}, \mathcal{X}_{n} ) \Big)^{2} \ \Big\vert \ \boldsymbol{\Theta}_{1:k}= \Theta_{1:k}, \mathcal{X}_{n} \right] \\
			& = \sum_{ (\boldsymbol{t}_{1}, \dots, \boldsymbol{t}_{k}) \in T^{\#} (\Theta_{1:k}) } \mathbb{P}(\boldsymbol{X}\in\boldsymbol{t}_{k})  \left(  \mathbb{E}(m(\boldsymbol{X}) \ \vert \ \boldsymbol{X} \in \boldsymbol{t}_{k}) - \frac{\sum_{i\in \{i : \boldsymbol{x}_{i} \in \boldsymbol{t}_{k }\}} y_{i}} {\#\{i : \boldsymbol{x}_{i} \in \boldsymbol{t}_{k }\}}  \right)^{2}.
	\end{split}\end{equation}
	From the expression on the RHS of (\ref{new.eq.015}) above, we see that with the grid, we only need to deal with estimation of conditional means on  each cell in the set 
	\[\{\boldsymbol{t}^{\#} : \boldsymbol{t} \in \{\textnormal{All the end cells grown by all possible growing rules given } \Theta_{1}, \dots, \Theta_{k}\} \}.\]
	Such a set contains only finitely many distinct cells given $k$, $n$, and $p$. This set can be further enlarged to consider all possible growing rules and sets of available features (i.e., the collection of end cells grown by all possible $T^{\#}(\Theta_{1:k})$'s). According to (\ref{numberOf}) in Supplementary Material, the number of distinct cells of the enlarged set is bounded by $2^{k} \left(p (\lceil n^{1+\rho_{1}}\rceil + 1)\right)^{k}$.  
	
	\begin{theorem}\label{theorem5} 
		Assume that Conditions \ref{ME}--\ref{BO} hold and let $0 < \eta < \frac{1}{4}$ and $0 < c < \frac{1}{4}$ be given. Then for all large $n$ and each $1\le k \le c\log_{2}{n}$, we have 
		\[\mathbb{E} \left\{\sup_{T}  \mathop{{}\mathbb{E}}    \left[     \Big(m_{ T^{\#}}^{*}(\boldsymbol{\Theta}_{1:k}, \boldsymbol{X}) - \widehat{m}_{T^{\#}}(\boldsymbol{\Theta}_{1:k}, \boldsymbol{X}, \mathcal{X}_{n} ) \Big)^{2} \ \Big\vert \ \boldsymbol{\Theta}_{1:k}, \mathcal{X}_{n} \right] \right\} \le n^{-\eta},\]
		where the supremum is over all possible tree growing rules. Note that due to the use of the grid, the supremum can be simplified to a max over a finitely many tree growing rules.
	\end{theorem}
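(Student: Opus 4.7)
The plan is to turn the supremum over the uncountable family of tree growing rules $T$ into a supremum over a finite, deterministic class of level-$k$ grid partitions of $[0,1]^p$, and then to control the resulting finite maximum via Hoeffding's inequality combined with a union bound. I would start from the identity (\ref{new.eq.015}), which expresses the inner conditional expectation for each fixed $T$ and $\Theta_{1:k}$ as $\sum_{(\boldsymbol{t}_1,\ldots,\boldsymbol{t}_k)\in T^{\#}(\Theta_{1:k})}\mathbb{P}(\boldsymbol{X}\in\boldsymbol{t}_k)\Delta^2(\boldsymbol{t}_k)$, where $\Delta(\boldsymbol{t}):=\mu(\boldsymbol{t})-\widehat{\mu}(\boldsymbol{t})$ with $\mu(\boldsymbol{t}):=\mathbb{E}(m(\boldsymbol{X})\mid\boldsymbol{X}\in\boldsymbol{t})$, $\widehat{\mu}(\boldsymbol{t}):=n_{\boldsymbol{t}}^{-1}\sum_{i:\boldsymbol{x}_i\in\boldsymbol{t}}y_i$, and $n_{\boldsymbol{t}}:=\#\{i:\boldsymbol{x}_i\in\boldsymbol{t}\}$. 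Since every end cell lies in the finite grid class $\mathcal{G}_k$ of level-$k$ grid cells whose cardinality is at most $N_k:=2^k(p(\lceil n^{1+\rho_1}\rceil+1))^k$, and $\log N_k=O(\log^2 n)$ when $k\le c\log_2 n$, it suffices to control $\Delta(\boldsymbol{t})^2$ uniformly over $\mathcal{G}_k$.

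Next, I fix a probability threshold $\tau_n:=n^{-1+\eta+\epsilon}$ for a small $\epsilon>0$ and, for each partition $T^{\#}(\Theta_{1:k})$, split the weighted sum by whether $\mathbb{P}(\boldsymbol{t}_k)\ge \tau_n$. Because the end cells form a partition (so their probabilities sum to $1$) and there are at most $2^k$ of them, the big-cell part is bounded by $\max_{\boldsymbol{t}\in\mathcal{G}_k,\,\mathbb{P}(\boldsymbol{t})\ge\tau_n}\Delta^2(\boldsymbol{t})$ and the small-cell part by $2^k\tau_n\cdot \max_{\boldsymbol{t}\in\mathcal{G}_k}\Delta^2(\boldsymbol{t})$, both uniformly in $T$. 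To handle the unbounded noise, I truncate each $\varepsilon_i$ at level $M_\varepsilon:=n^{\beta}$; by Condition~\ref{ME} and a union bound, $\mathbb{P}(\exists\,i:|\varepsilon_i|>M_\varepsilon)=O(n^{1-\beta q})=o(n^{-1})$ once $q$ is large enough. On the truncation event, I decompose
\[
\Delta(\boldsymbol{t}) = \Bigl(\tfrac{1}{n_{\boldsymbol{t}}}\sum_{\boldsymbol{x}_i\in\boldsymbol{t}}m(\boldsymbol{x}_i)-\mu(\boldsymbol{t})\Bigr) + \tfrac{1}{n_{\boldsymbol{t}}}\sum_{\boldsymbol{x}_i\in\boldsymbol{t}}\varepsilon_i,
\]
apply Hoeffding's inequality to each piece conditionally on $(\boldsymbol{x}_i)$ (with summand bounds $M_0$ and $M_\varepsilon$, respectively), use a Chernoff lower bound $n_{\boldsymbol{t}}\ge n\mathbb{P}(\boldsymbol{t})/2\ge n\tau_n/2$ on big cells, and union-bound over $\mathcal{G}_k$. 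This yields $\max_{\mathbb{P}(\boldsymbol{t})\ge\tau_n}\Delta^2(\boldsymbol{t})\lesssim M_\varepsilon^2\log(N_k)/(n\tau_n)$ with probability $1-o(n^{-1})$, while on the same event $\Delta^2(\boldsymbol{t})\le C(M_0+M_\varepsilon)^2$ deterministically.

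The concluding balancing step makes the big-cell bound $O(n^{2\beta-\eta-\epsilon}\log^2 n)$ and the small-cell bound $O(n^{c+2\beta-1+\eta+\epsilon})$; with $c,\eta<1/4$, I can pick $\epsilon,\beta>0$ small (and $q$ correspondingly large) so that both are $O(n^{-\eta})$. Passing to the outer expectation over $\mathcal{X}_n$ and $\boldsymbol{\Theta}_{1:k}$ converts this high-probability bound into the claimed inequality, with the $o(n^{-1})$ off-event mass absorbed by bounding $\Delta^2$ through polynomial moments of $\varepsilon_1$ supplied by Condition~\ref{ME}. The main obstacle is the joint calibration of $\tau_n$ and $\beta$ against the super-polynomial cardinality $N_k=n^{O(\log n)}$, which forces Hoeffding to pay a $\log^2 n$ factor: $\tau_n$ and $\beta$ must simultaneously satisfy the big-cell concentration constraint, the small-cell $2^k\tau_n$ constraint, and the truncation-tail constraint, a calibration that is feasible precisely because the hypotheses $c<1/4$ and $\eta<1/4$ leave sufficient margin and because Condition~\ref{ME} lets us take $q$ as large as needed.
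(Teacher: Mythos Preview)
Your proposal is correct and follows essentially the same route as the paper's proof: reduce to the finite grid class, truncate the noise at a polynomial level, split end cells by a probability threshold, bound the small-cell contribution crudely by $2^k\tau_n(M_0+M_\varepsilon)^2$, and control the big-cell maximum via Hoeffding plus a union bound over $G_{n,k}$ (the paper packages this last step as the event $\mathcal{A}_1(k,\Delta)$ from Lemma~\ref{CI1}, with threshold $n^{\Delta-1}$, $\Delta>1/2$, in place of your $\tau_n$). One wording slip: for the regression piece $\tfrac{1}{n_{\boldsymbol{t}}}\sum_{\boldsymbol{x}_i\in\boldsymbol{t}}m(\boldsymbol{x}_i)-\mu(\boldsymbol{t})$, Hoeffding must be applied conditionally on the index set $\{i:\boldsymbol{x}_i\in\boldsymbol{t}\}$ (so that the $m(\boldsymbol{x}_i)$'s become i.i.d.\ with mean $\mu(\boldsymbol{t})$), not conditionally on all of $(\boldsymbol{x}_i)$, which would render the piece deterministic---this is exactly what the paper does in Lemma~\ref{CI4}.
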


	\begin{lemma} \label{lemma3}
		Assume that Conditions~\ref{abc}--\ref{BO} hold and let $1/2< \Delta < 1$ and $c > 0$ be given. Then there exists some constant $C > 0$ such that for all large $n$ and each $1\le k \le c\log_{2}{n}$,
		\begin{equation}\label{consistency2}
			\mathbb{E}\left(m_{ \widehat{T}^{\#}}^{*} (\boldsymbol{\Theta}_{1:k}, \boldsymbol{X}) - m_{\widehat{T}}^{*} (\boldsymbol{\Theta}_{1:k}, \boldsymbol{X})\right)^{2} \le C2^{k}n^{\Delta -1} \end{equation}
		and
		\begin{equation}\label{consistency1}
			\mathbb{E}\left(\widehat{m}_{\widehat{T}^{\#}} (\boldsymbol{\Theta}_{1:k}, \boldsymbol{X}, \mathcal{X}_{n}) - \widehat{m}_{\widehat{T}} (\boldsymbol{\Theta}_{1:k}, \boldsymbol{X}, \mathcal{X}_{n})\right)^{2} \le C2^{k}n^{\Delta -1} .\end{equation}
	\end{lemma}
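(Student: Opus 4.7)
The plan is to exploit the geometric closeness between $\widehat{T}$ and $\widehat{T}^{\#}$: each of the at most $2^k-1$ internal split hyperplanes of $\widehat{T}$ is moved by at most $n^{-1-\rho_{1}}$ to reach the nearest grid line, so every end-cell pair satisfies $\operatorname{vol}(\boldsymbol{t}_k \Delta \boldsymbol{t}_k^{\#}) \le k\, n^{-1-\rho_{1}}$, and the aggregate ``strip region'' $\bigcup_{\boldsymbol{t}_k}(\boldsymbol{t}_k\Delta\boldsymbol{t}_k^{\#})$ has probability at most $C_1\cdot 2^k n^{-1-\rho_{1}}$ under Condition~\ref{abc}. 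Outside this strip, any test point $\boldsymbol{X}$ lies in combinatorially matched cells of the two trees.

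For the population statement \eqref{consistency2}, I would classify the at most $2^k$ end cells of $\widehat{T}(\Theta_{1:k})$ as ``small'' if $\mathbb{P}(\boldsymbol{X}\in\boldsymbol{t}_k)<n^{\Delta-1}$ and ``large'' otherwise. Small cells carry at most $2^k n^{\Delta-1}$ aggregate mass and, since $|m_{\widehat{T}}^{*}|,\,|m_{\widehat{T}^{\#}}^{*}|\le M_0$ by Condition~\ref{BO}, contribute the desired $O(2^k n^{\Delta-1})$. For each large cell, the strip bound gives $\mathbb{P}(\boldsymbol{t}_k\Delta\boldsymbol{t}_k^{\#})=O(kn^{-1-\rho_{1}})=o(n^{\Delta-1})$, so $\mathbb{P}(\boldsymbol{t}_k^{\#})\ge \tfrac12 \mathbb{P}(\boldsymbol{t}_k)$ for large $n$; the elementary inequality $|\mathbb{E}(m\mid\boldsymbol{t}_k)-\mathbb{E}(m\mid\boldsymbol{t}_k^{\#})|\le 2M_0\mathbb{P}(\boldsymbol{t}_k\Delta\boldsymbol{t}_k^{\#})/\mathbb{P}(\boldsymbol{t}_k^{\#})$ then produces a total large-cell contribution of $O(2^k k^2 n^{-1-2\rho_{1}-\Delta})$, which is $o(2^k n^{\Delta-1})$ once $\Delta>1/2$. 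The contribution from test points inside the strip is bounded by $(2M_0)^2\cdot C_1 2^k n^{-1-\rho_{1}}$, again negligible.

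For the sample statement \eqref{consistency1}, sample averages $\bar{y}(\boldsymbol{t})$ are not bounded a priori, so I would condition on a good event $E_n$, $\mathcal{X}_n$-measurable with $\mathbb{P}(E_n^c)=o(n^{-1})$, on which (i) $\max_i|\varepsilon_i|\le n^{\nu/2}$, via the moment condition in Condition~\ref{ME} for $q$ sufficiently large, and (ii) no two training coordinates $x_{ij},x_{i'j}$ with $i\neq i'$ lie within $2n^{-1-\rho_{1}}$ of each other in any coordinate direction, via a pairwise union bound using $\rho_{1}$ large relative to $K_0$. On $E_n$, property (ii) prevents any training sample from falling in a grid-gap strip, so $\widehat{T}$ and $\widehat{T}^{\#}$ partition $\mathcal{X}_n$ \emph{identically}, yielding $\bar{y}(\boldsymbol{t}_k)=\bar{y}(\boldsymbol{t}_k^{\#})$ as numerical values; consequently $\widehat{m}_{\widehat{T}}(\boldsymbol{X})=\widehat{m}_{\widehat{T}^{\#}}(\boldsymbol{X})$ whenever $\boldsymbol{X}$ is outside the test-strip region. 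For $\boldsymbol{X}$ inside the strip, property (i) bounds the integrand by $(2M_0+2n^{\nu/2})^2$, and multiplying by the $O(2^k n^{-1-\rho_{1}})$ strip probability yields $O(2^k n^{\nu-1-\rho_{1}})$, absorbed in $O(2^k n^{\Delta-1})$ for $\nu$ small. On $E_n^c$, Cauchy--Schwarz combined with the crude bound $|\widehat{m}_{\widehat{T}}|^{2q}\le \max_i|y_i|^{2q}$ and the moment bound $\mathbb{E}|y_1|^{2q}<\infty$ absorbs the remainder, provided $\rho_{1}$ is chosen sufficiently large.

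The main obstacle is \eqref{consistency1}: the cell-size classification used for \eqref{consistency2} does not transfer, because sample means on tiny random cells fluctuate wildly through the noise. The event $E_n$ is the key technical device, translating the $O(n^{-1-\rho_{1}})$ geometric proximity of $\widehat{T}$ and $\widehat{T}^{\#}$ into identical combinatorial partitions of $\mathcal{X}_n$; this reduces the entire sample-level discrepancy to the thin test-point strip region, where the noise can still be controlled thanks to the truncation sub-event.
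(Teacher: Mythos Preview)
Your argument for \eqref{consistency2} is essentially the paper's: classify end cells as small/large by the threshold $n^{\Delta-1}$, use $|m|\le M_0$ on the small cells, and on the large cells combine the strip bound $\mathbb{P}(\boldsymbol t_k\Delta\boldsymbol t_k^{\#})\le k(\sup f)\,n^{-1-\rho_1}$ with the elementary conditional-mean perturbation inequality.

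For \eqref{consistency1} there is a genuine gap. Your claim that on event (ii) the trees $\widehat T$ and $\widehat T^{\#}$ induce identical partitions of $\mathcal X_n$, hence $\bar y(\boldsymbol t_k)=\bar y(\boldsymbol t_k^{\#})$, is not correct. A CART split on a cell $\boldsymbol t$ is placed at $c=x_{ij}$ for some training point $\boldsymbol x_i\in\boldsymbol t$, and by the convention $P_R=\{x_{j}\ge c\}$ this very point sits in the right daughter. If the nearest grid line $c^{\#}$ satisfies $c^{\#}>c$, then $x_{ij}=c<c^{\#}$ and $\boldsymbol x_i$ crosses to the left daughter under the $\#$ operation. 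Your separation condition (ii) concerns only pairs with $i\neq i'$ and does nothing to prevent this; along a root-to-leaf path up to $k$ such split points can migrate. (Rounding down instead of to the nearest grid line would eliminate the issue, but the $\#$ operator in the paper is defined via nearest rounding, so that would change the statement.)

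The paper does not attempt exact equality of the sample partitions. It works on the event $\boldsymbol E_n=\boldsymbol C_n\cap\mathcal A_3(\lfloor c\log_2 n\rfloor,\bar\Delta)\cap\mathcal A$ with $\bar\Delta=\Delta-2s$: the event $\mathcal A$ of \eqref{eventA.1} gives $\#\{i:\boldsymbol x_i\in\boldsymbol t_k\Delta\boldsymbol t_k^{\#}\}<k(\log n)^{1+\rho_2}$, while $\mathcal A_3$ gives $\#\{i:\boldsymbol x_i\in\boldsymbol t_k^{\#}\}\ge n^{1/2}$ for large cells. Writing $\bar y(\boldsymbol t_k^{\#})-\bar y(\boldsymbol t_k)$ as a telescoping ratio and inserting these two counts together with $|y_i|\le M_0+n^s$ yields $|\bar y(\boldsymbol t_k^{\#})-\bar y(\boldsymbol t_k)|\le 2(M_0+n^s)c(\log n)^{2+\rho_2}n^{-1/2}$, whose square is $o(n^{\Delta-1})$; small cells are handled crudely by $(2(M_0+n^s))^2\cdot 2^{k}\cdot 2n^{\bar\Delta-1}$. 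Your event (ii) can at best replace the crossing bound $k(\log n)^{1+\rho_2}$ by $k$, but you still need a lower bound on the cell count to convert this into a bound on the average difference, which brings you back to the paper's large-cell classification and the event $\mathcal A_3$.
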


	\section{Discussions} \label{Sec6}
	In this paper, we have investigated the asymptotic properties of the widely used method of random forests in a high-dimensional feature space. In contrast to existing theoretical results, our asymptotic analysis has considered the original version of the random forests algorithm in a general high-dimensional nonparametric regression setting in which the covariates can be dependent, and the underlying true regression function can be discontinuous.  Explicit rates of convergence have been established for the high-dimensional consistency of random forests, justifying its theoretical advantages as a flexible nonparametric learning tool in high dimensions. We provide a new technical analysis for polynomially growing dimensionality through natural regularity conditions that characterize the intrinsic learning behavior of random forests at the population level.
	
	Our technical analysis has been based on the bias-variance decomposition  of random forests prediction loss, where we have analyzed the bias and variance  separately. 
	Despite some limitations (see Section~\ref{Sec3.2}),  the current bias-variance analysis has revealed some great details on how random forests bias depends on the sample size, tree height, and the column subsampling parameter $\gamma_{0}$.  Our current results apply only to random forests with non-fully-grown trees.  It would be interesting to extend our results to the case of fully-grown trees. When the scale of the problem becomes very large in terms of the growth of dimensionality (e.g., of nonpolynomial order of sample size), it would be appealing to incorporate the ideas of two-scale learning and inference with feature screening \cite{FanLv2008,FanFan2008,FanLv2018}. In addition, it is important to provide the asymptotic distributions for different tasks of statistical inference with random forests. These problems are beyond the scope of the current paper and are interesting topics for future research.


	
	\bibliographystyle{imsart-number} 
	\bibliography{references}       
	

	
	
	
	
	

\newpage
	
	\appendix
	\setcounter{page}{1}
	\setcounter{section}{0}
	\renewcommand{\theequation}{A.\arabic{equation}}
	\setcounter{equation}{0}

	\begin{center}{\bf \large Supplement to  ``Asymptotic Properties of High-Dimensional Random Forests"}
		
		\bigskip
		
		Chien-Ming Chi, Patrick Vossler, Yingying Fan and Jinchi Lv
		\medskip
	\end{center}
	
	\noindent This Supplementary Material contains the proofs of all main results and technical lemmas, and some additional technical details. All the notation is the same as defined in the main body of the paper. We use $C$ to denote a generic positive constant whose value may change from line to line.
	
	
	\section{Proofs of main results} \label{SecA}

	\subsection{Technical preparation} \label{SecA.1}

	We now describe further details of the grid introduced in Section \ref{Sec5} for the analysis purpose. For any $s\in \{1, \dots, p\}$ and integer $0 \le q<\lceil n^{1+\rho_{1}}\rceil$, define\footnote{We can let the interval $[b_{\lceil n^{1+\rho_{1}} \rceil- 1}, b_{\lceil n^{1+\rho_{1}} \rceil}]$ have a closed right end. Since we assume that the density of the distribution of $\boldsymbol{X}$ exists, it does not affect our technical analysis.}
	\[\boldsymbol{t}(s, q)\coloneqq  [0, 1]^{s-1} \times [b_{q}, b_{q+1}) \times [0, 1]^{p - s},\]
	where we recall that $b_i= \frac{i}{\ceil*{n^{1+\rho_1}} }$'s are the grid points defined in Section \ref{Sec5}; the parameter $\rho_{1}$ is defined in the same section. Let us assume that Condition~\ref{abc} is satisfied with $f(\cdot)$ denoting the density function of the distribution of $\boldsymbol{X}$. Then it holds that for each $n \ge 1$,
	\begin{equation}\label{grid2}
		\sup_{s, \, q}\mathop{{}\mathbb{P}}(\boldsymbol{X} \in \boldsymbol{t}(s, q)) \le \frac{\sup f}{ \lceil n^{1+\rho_{1}}\rceil}.
	\end{equation}
	Thus, for each  $n\ge 1$, positive integer $k$, and each cell $\boldsymbol{t}$ with 
at most $k$ boundaries not on the grid hyperplanes (e.g., for the left plot in Figure~\ref{fig:rf}, the blue cell has $4$ boundaries not on the grid hyperplanes, whereas the red one has only $2$), we have 
	\begin{equation}
		\label{grid3}
		\sup_{\boldsymbol{t}} \mathbb{P}(\boldsymbol{X} \in \boldsymbol{t}\Delta\boldsymbol{t}^{\#}) \le k\times\frac{\sup f}{ \lceil n^{1+\rho_{1}}\rceil},
	\end{equation}
	where the supremum is over all possible such $\boldsymbol{t}$ and $A \Delta B \coloneqq (A \cap B^{c} )\cup (A^{c} \cap B)$ for any two sets $A$ and $B$. Observe that (\ref{grid3}) applies to all cells constructed by at most $k$ cuts.
	
	Let $p$-dimensional random vectors $\boldsymbol{x}_{i}, i = 1, \dots, n$, be independent and identically distributed (i.i.d.) with the same distribution as $\boldsymbol{X}$. Let $\rho_{2} > 0$ be given. We next show that if Condition~\ref{abc} holds, it follows from (\ref{grid2}) that for each  $p \ge 1$ and all large $n$,	
	\begin{equation}
		\label{eventA}
		\mathbb{P}  \Big(\cup_{s, \, q} \Big\{\#\{i : \boldsymbol{x}_{i} \in \boldsymbol{t}(s, q) \} \ge \lceil(\log{n})^{1+\rho_{2}}\rceil \Big\}\Big) \le  p \lceil n^{1+\rho_{1}} \rceil  \left(\frac{\sup f}{  n^{\rho_{1}} }\right)^{(\log{n})^{1+\rho_{2}}},
	\end{equation}
where the union is over all possible $s\in\{1, \dots, p\}$ and $0 \le q<\lceil n^{1+\rho_{1}}\rceil$. To develop some intuition for \eqref{eventA}, note that if $\mathbb{P}(\boldsymbol{x}_{i} \in \boldsymbol{t}(s, q)) = c n^{-1}$ for some constant $c > 0$, then $\#\{i : \boldsymbol{x}_{i} \in \boldsymbol{t}(s, q) \}$ has an asymptotic Poisson distribution with mean $c$. Moreover, the probability upper bound in \eqref{grid2} is in fact much smaller than $n^{-1}$ asymptotically. To establish \eqref{eventA}, a direct calculation shows that 
	\begin{equation}\begin{split}\label{eventA.3}
			\mathbb{P} & \Big(\cup_{s, \, q} \Big\{\#\{i : \boldsymbol{x}_{i} \in \boldsymbol{t}(s, q) \} \ge \lceil(\log{n})^{1+\rho_{2}}\rceil \Big\}\Big) \\
			& \quad \le p \lceil n^{1+\rho_{1}}\rceil \sup_{s, \, q} \mathbb{P}\Big( \#\{i : \boldsymbol{x}_{i} \in \boldsymbol{t}(s, q) \} \ge \lceil(\log{n})^{1+\rho_{2}}\rceil \Big) \\
			& \quad = p \lceil n^{1+\rho_{1}}\rceil \times \sup_{s,q }\left(\sum_{l\ge l_{0}}^{n}\binom{n}{l} \Big(\mathbb{P}(\boldsymbol{x}_{i} \in \boldsymbol{t}(s, q)) \Big)^{l}\Big(1 -   \mathbb{P}(\boldsymbol{x}_{i} \in \boldsymbol{t}(s, q))   \Big)^{n-l} \right),
		\end{split}
	\end{equation}
	where $l_{0} = \lceil(\log{n})^{1+\rho_{2}}\rceil$. Since the cumulative probability inside the parentheses on the RHS of \eqref{eventA.3} is an increasing function of $\mathbb{P}(\boldsymbol{x}_{i} \in \boldsymbol{t}(s, q))$, it follows from \eqref{grid2} and Condition~\ref{abc} that for all large $n$,
	\begin{equation}	
		\begin{split}\label{eventA.2}
			\textnormal{RHS of \eqref{eventA.3} } &  \le p \lceil n^{1+\rho_{1}}\rceil \times \left( \sum_{l\ge l_{0}}^{n}\binom{n}{l} \left(\frac{\sup f}{ \lceil n^{1+\rho_{1}}\rceil} \right)^{l}\left(1 - \frac{\sup f}{ \lceil n^{1+\rho_{1}}\rceil}\right)^{n-l} \right)\\ 
			& \le p \lceil n^{1+\rho_{1}}\rceil \left( (l_{0}!)^{-1}\sum_{l \ge l_{0}}^{n} \left(  \frac{\sup f}{n^{\rho_{1}}}\right)^{l} \right) \\
			&\le p \lceil n^{1+\rho_{1}}\rceil  \left( \frac{\sup f}{n^{\rho_{1}}}\right)^{l_{0}} \left(1 - \frac{\sup f}{n^{\rho_{1}}}\right)^{-1} (l_{0}!)^{-1}\\
			&\le p \lceil n^{1+\rho_{1}}\rceil  \left( \frac{\sup f}{n^{\rho_{1}}}\right)^{l_{0}},
	\end{split}\end{equation}
	where $l! \coloneqq 1\times \dots \times l$. This completes the proof of \eqref{eventA}.
	
	We denote the event on the LHS of \eqref{eventA} by $\mathcal{A}$ with $\rho_{1}, \rho_{2} >0$ as follows.
	\begin{equation}
		\begin{split}					
		\label{eventA.1}
	\mathcal{A} &\coloneqq \Big(\cup_{s\in\{1, \dots, p\}, \, 0 \le q<\lceil n^{1+\rho_{1}}\rceil} \Big\{\#\{i : \boldsymbol{x}_{i} \in \boldsymbol{t}(s, q) \} \ge \lceil(\log{n})^{1+\rho_{2}}\rceil \Big\}\Big)^c\\
	& = \cap_{s\in\{1, \dots, p\}, \, 0 \le q<\lceil n^{1+\rho_{1}}\rceil} \Big\{\#\{i : \boldsymbol{x}_{i} \in \boldsymbol{t}(s, q) \}< (\log{n})^{1+\rho_{2}}\Big\}.
\end{split}\end{equation}
	On event $\mathcal{A}$, it holds that for each cell $\boldsymbol{t}$ constructed using at most $k$ cuts,
	\begin{equation}\label{grid1}
		\#\{i: \boldsymbol{x}_{i}\in \boldsymbol{t}\Delta\boldsymbol{t}^{\#} \} < k(\log{n})^{1 + \rho_{2}}.
	\end{equation}
	We next provide an upper bound on the number of conditional means required to be estimated. Define $G_{n,k}$ as the set containing all cells constructed by at most $k$ cuts with cuts all on the grid hyperplanes. We can see that there are at most $(p(\ceil{n^{1+\rho_{1}}} + 1))^{k}$ distinct choices of $k$ cuts on the grid hyperplanes. Furthermore, each of these $k$ cuts results in at most $2^{k}$ cells, which are all possible cells grown by the given $k$ cuts. Thus, we can obtain that 
	\begin{equation}\begin{split}\label{numberOf}
			\#G_{n,k}\le 2^k\left(p (\lceil n^{1+\rho_{1}}\rceil + 1)\right)^{k}.
		\end{split}
	\end{equation}

	\subsection{Additional examples for SID}\label{SecA.7}
	We provide three additional examples for showing the flexibility of SID. In particular, Example~\ref{new.example6} below is an example of Example~\ref{new.example3}, and Example~\ref{new.example7} considers regression function $m(\boldsymbol{X})$ that is not monotonic. Example~\ref{new.example8} is a non-additive model with a linear combination of intercepts. The proofs for these examples are respectively in Sections~\ref{SecE.8}--\ref{SecE.10}.
	
	\begin{exmp}\label{new.example6}
		Assume that $\boldsymbol{X}$ is uniformly distributed in $[0, 1]^p$, and let $S^*$ be some subset of $\{1, \dots, p\}$.
		\begin{itemize}
			\item[1)] Let $m(\boldsymbol{X}) =  \frac{\exp(\sum_{j\in S^*}\beta_{j} X_{j}) }{1 + \exp(\sum_{j\in S^*}\beta_{j} X_{j} ) }$ be given with $|\beta_{j}|\not = 0$. Then, $$m(\boldsymbol{X})\in \textnormal{SID}\Big( 4\left(\frac{\#S^* \max_{j \in S^*}|\beta_{j}|}{\min_{j \in S^*}|\beta_{j}|} \right)^2
			\times \exp{\Big(2\sum_{j\in S^*}|\beta_{j}|\Big)}\Big) .$$
			\item [2)] Let $m(\boldsymbol{X}) = \sum_{k=1}^{k_{1}} \beta_{kk}\Pi_{j\in T_{k}} X_{j}^{r_{jk}} + \sum_{j\in S^*}\beta_{j}X_{j}$  be given
			with $r_{jk}$'s being positive integers, $\cup_{k=1}^{k_{1}} T_{k} \subset S^*$, and all positive (or all negative) $\beta_{kk}$'s and $\beta_{j}$'s. Then 
			$$m(\boldsymbol{X})\in \textnormal{SID} \Big(4(\# S^*)^2\Big(\frac{ (\max_{j,k}r_{jk})\sum_{k=1}^{k_{1}}|\beta_{kk}| + \max_{j\in S^*}|\beta_{j}| }{ \min_{j\in S^*}|\beta_{j}| } \Big)^2\Big).$$
		\end{itemize}
	\end{exmp} 
	
	\begin{exmp} 
		\label{new.example7} 
			Assume that $\boldsymbol{X}$ is uniformly distributed in $[0, 1]^{p}$ with $p\ge s^*$ for some positive integer $s^*$. The regression function is defined as  
			$ m(\boldsymbol{X}) \coloneqq \sum_{j=1}^{s^*} m_{j}(X_{j})$, where for each $j\le s^*$ and $x\in[0, 1]$,
			\[m_{j}(x) \coloneqq h_{j, K}(x)\boldsymbol{1}_{[b_{j, K-1}, b_{j, K}] } + \sum_{k= 1}^{K-1} h_{j, k}(x)\boldsymbol{1}_{[b_{j, k-1}, b_{j, k}) } \]
			with some integer $K > 0$, linear functions $h_{j, 1}, \dots, h_{j, K}$ such that $m_{j}(x)$ is continuous and that $r \le |\frac{d h_{j, k}(x)}{d x}| \le R$ for all $j, k$ with some $R\ge r>0$, and constants $b_{j, k}$'s such that $0 = b_{j, 0}  < \dots< b_{j, K} = 1$.  Then, $m(\boldsymbol{X})\in \textnormal{SID} \Big(s^*\left(\frac{1024R^5}{(b^*)^3 r^5}\right)\Big)$, where
			$b^*\coloneqq\min_{j\le s^*, 1 \le k\le K} (b_{j, k} - b_{j,k-1})$. 
	\end{exmp}

\begin{exmp}\label{new.example8}
	Assume that $\boldsymbol{X}$ is uniformly distributed in $[0, 1]^p$ with $p\ge s^*$ for some positive integer $s^*$. Let positive integer $k_{j}$ be given and $0 = c_{0}^{(j)}<  \dots < c_{k_{j}}^{(j)} = 1$ be real numbers for each $j = 1\dots, s^*$. Let $\beta(i_{1}, \dots, i_{s^*})$ with $1\le i_{j}\le k_{j}$ and $1\le j\le s^*$ be real coefficients such that for some $\iota>0$, it holds that for each $j$, either \textnormal{1)} for every $(i_{1}, \dots, i_{s^*})$ with $i_{j}\ge 2$, $\Delta\beta\coloneqq \beta(i_{1}, \dots, i_{j},\dots,  i_{s^*}) - \beta(i_{1}, \dots, i_{j}-1,\dots, i_{s^*})\ge\iota$, or \textnormal{2)} for every $(i_{1}, \dots, i_{s^*})$ with $i_{j}\ge 2$, $\Delta\beta\le -\iota$. The regression function is defined to be 
	$$m(\boldsymbol{X}) = \sum_{i_{1}=1}^{k_{1}}\dots \sum_{i_{s^*}=1}^{k_{s^*}} \beta(i_{1}, \dots, i_{s^*})\Pi_{j=1}^{s^*}\boldsymbol{1}_{X_{j} \in [c_{i_{j} - 1}^{(j)}, c_{i_{j} }^{(j)})}.$$
	In addition, assume that $\sup_{\boldsymbol{c} \in [0, 1]^{p}}|m(\boldsymbol{c})|\le M_{0}$. Then, $m(\boldsymbol{X})\in\textnormal{SID}\Big(  \frac{s^*}{c^\dagger(1-c^\dagger)}  \left(\frac{2M_{0}}{\iota}\right)^2\Big),$ where $c^\dagger\coloneqq \min\{\frac{1}{4}, \min_{j\le s^*, 1\le i\le k_{j}}\{c_{i}^{(j)} - c_{i-1}^{(j)}\}\}$.
\end{exmp}

	\subsection{Proof of Theorem~\ref{theorem1}} \label{SecA.2}
	We begin with considering the case when $a$ contains the full sample. We will apply standard inequalities to separate the $\mathbb{L}^{2}$ loss into two terms that can be dealt with by Lemmas~\ref{lemma1} and~\ref{con3}, respectively, to obtain the conclusion in (\ref{main1}). Observe that the results in these lemmas are applicable to the case of any $a$ with $\# a = \ceil{bn}$ and without replacement. The other case with sample subsampling can be dealt with similarly by an application of Jensen's inequality. 
	
	Let us first examine the case without sample subsampling. By Jensen's inequality and the triangle inequality, we can deduce that 
	\begin{equation}\begin{split}\label{t.new.eq.2}
			& \mathop{{}\mathbb{E}} \Big(m(\boldsymbol{X}) - \mathop{{}\mathbb{E}}\Big(\widehat{m}_{ \widehat{T}}(\boldsymbol{\Theta}_{1}, \dots, \boldsymbol{\Theta}_{k}, \boldsymbol{X}, \mathcal{X}_{n} )\ \Big\vert \ \boldsymbol{X}, \mathcal{X}_{n} \Big) \Big)^{2} \\
			& \le \mathop{{}\mathbb{E}} \Big(m(\boldsymbol{X}) - \widehat{m}_{ \widehat{T}}(\boldsymbol{\Theta}_{1}, \dots, \boldsymbol{\Theta}_{k} , \boldsymbol{X}, \mathcal{X}_{n} ) \Big)^{2}\\
			& = \mathop{{}\mathbb{E}} \Big(m(\boldsymbol{X})  -m_{\widehat{T} }^{*} (\boldsymbol{\Theta}_{1}, \dots, \boldsymbol{\Theta}_{k} , \boldsymbol{X}) \\
			&\quad+ m_{ \widehat{T} }^{*} (\boldsymbol{\Theta}_{1}, \dots, \boldsymbol{\Theta}_{k} , \boldsymbol{X})- \widehat{m}_{ \widehat{T}}(\boldsymbol{\Theta}_{1}, \dots, \boldsymbol{\Theta}_{k} , \boldsymbol{X}, \mathcal{X}_{n} ) \Big)^{2}  \\
			& \le 2\Big( \mathbb{E} \Big(m(\boldsymbol{X})  -m_{ \widehat{T} }^{*} (\boldsymbol{\Theta}_{1}, \dots, \boldsymbol{\Theta}_{k} , \boldsymbol{X})\Big)^{2} \\
			& \qquad+  \mathbb{E}\Big(m_{ \widehat{T} }^{*} (\boldsymbol{\Theta}_{1}, \dots, \boldsymbol{\Theta}_{k} , \boldsymbol{X})- \widehat{m}_{ \widehat{T}}(\boldsymbol{\Theta}_{1}, \dots, \boldsymbol{\Theta}_{k} , \boldsymbol{X}, \mathcal{X}_{n} ) \Big)^{2}  \Big).
	\end{split}\end{equation}
	This result is also shown in (\ref{decom1}).
	
	By Lemma~\ref{lemma1}, it holds that for all large $n$ and each $1\le k \le c\log{n}$,
	\begin{equation}
		\begin{split}
			\label{t.new.eq.1}
			& \mathbb{E}  \left(  m(\boldsymbol{X}) - m_{\widehat{T} }^{*} (\boldsymbol{\Theta}_{1}, \dots, \boldsymbol{\Theta}_{k} , \boldsymbol{X})\right)^{2} \\
			& \qquad \le 8M_{0}^{2}n^{-\delta} 2^{k} +2\alpha_{1}\alpha_{2}n^{-\eta} + 2M_{0}^{2}(1 - \gamma_{0}(\alpha_{1}\alpha_{2})^{-1})^{k}  + 2n^{-1}.
	\end{split}\end{equation}
	Let $\nu > 0$ be sufficiently small. Then by Lemma~\ref{con3}, there exists some constant $C>0$ such that for all large $n$ and each $1\le k \le c\log_{2}{n}$,
	\begin{equation}\label{t.new.eq.3}
		\mathbb{E} \Big(m_{ \widehat{T}}^{*}(\boldsymbol{\Theta}_{1}, \dots, \boldsymbol{\Theta}_{k} , \boldsymbol{X}) - \widehat{m}_{\widehat{T}}(\boldsymbol{\Theta}_{1}, \dots, \boldsymbol{\Theta}_{k} , \boldsymbol{X}, \mathcal{X}_{n} ) \Big)^{2} \le   n^{-\eta} + C2^{k}n^{-\frac{1}{2} + \nu}. \end{equation}
	In view of (\ref{t.new.eq.2})--(\ref{t.new.eq.3}), we can conclude that there exists some constant $C > 0$ such that for all large $n$ and each $1\le k \le c\log_{2}{n}$,
	\[ \mathop{{}\mathbb{E}} \Big(m(\boldsymbol{X}) - \mathop{{}\mathbb{E}}\Big(\widehat{m}_{ \widehat{T}}(\boldsymbol{\Theta}_{1}, \dots, \boldsymbol{\Theta}_{k}, \boldsymbol{X}, \mathcal{X}_{n} )\ \Big\vert \ \boldsymbol{X}, \mathcal{X}_{n} \Big) \Big)^{2}  \le C\Big( \alpha_{1}\alpha_{2}n^{-\eta}  + (1 - \gamma_{0} (\alpha_{1} \alpha_{2})^{-1})^{k} + n^{-\delta  +c}  \Big). \]
	The above result uses the fact that $n^{-\frac{1}{2} + \nu + c} = o(n^{-\delta + c})$ due to a small $\nu$. Thus, replacing $n$ with $\ceil{bn}$ leads to (\ref{main1}).

	To show the second assertion, we use Jensen's inequality to obtain that 
	\begin{equation*}
		\begin{split}
			&  \mathbb{E} \Big(m(\boldsymbol{X}) - B^{-1}\sum_{a \in A}\mathbb{E} \Big( \widehat{m}_{ \widehat{T}_{a}, a}(\boldsymbol{\Theta}_{1}, \dots, \boldsymbol{\Theta}_{k}, \boldsymbol{X}, \mathcal{X}_{n} )\ \Big\vert \ \boldsymbol{X}, \mathcal{X}_{n} \Big) \Big)^{2} \\
			&  =  \mathbb{E} \Big[ B^{-1}\sum_{a \in A} \Big(m(\boldsymbol{X}) - \mathbb{E}\Big( \widehat{m}_{ \widehat{T}_{a}, a}(\boldsymbol{\Theta}_{1}, \dots, \boldsymbol{\Theta}_{k}, \boldsymbol{X}, \mathcal{X}_{n} )\ \Big\vert \ \boldsymbol{X}, \mathcal{X}_{n} \Big) \Big) \Big]^{2} \\
			& \le B^{-1}\sum_{a \in A}  \mathbb{E}  \Big(m(\boldsymbol{X}) - \mathbb{E}\Big( \widehat{m}_{ \widehat{T}_{a}, a}(\boldsymbol{\Theta}_{1}, \dots, \boldsymbol{\Theta}_{k}, \boldsymbol{X}, \mathcal{X}_{n} )\ \Big\vert \ \boldsymbol{X}, \mathcal{X}_{n} \Big) \Big)^{2}.
		\end{split}
	\end{equation*}
	Combining this result and the first assertion completes the proof of Theorem~\ref{theorem1}.

	
	
	\subsection{Proof of Theorem \ref{theorem2}} \label{SecA.3}

	Recall that $\mathcal{X}_{n}= \{\boldsymbol{x}_{i}, y_{i}\}_{i=1}^{n}$ are i.i.d. training data, and $\boldsymbol{X}$ is the independent copy of $\boldsymbol{x}_{1} = (x_{11},  \dots, x_{1p})^{\top}$. When the $j$th feature is not involved in the random forests model training procedure, the random forests estimate (\ref{estimate1}) is trained on $\{(y_{i}, x_{i1}, \dots, x_{i(j-1)}, x_{i(j+1)}, \dots, x_{ip})\}_{i=1}^{n}$. We first show that such a random forests estimate is $(\boldsymbol{X}_{-j}, \mathcal{X}_{n})$-measurable, where $\boldsymbol{X}_{-j} \coloneqq (X_{1}, \dots, X_{j-1}, X_{j+1}, \dots, X_{p})^{\top}$. Then by the independence between $\boldsymbol{X}$ and $\mathcal{X}_{n}$, we can resort to the projection theorem to obtain the desired conclusion. Let us begin the formal proof.
	
	We denote such a random forest estimate by
	\begin{equation}
		\label{re1}
		\frac{1}{B}\sum_{a\in A}\mathbb{E}\Big(\widehat{m}_{ \widehat{T}_{a}, a}(\boldsymbol{\Theta}_{1}, \dots, \boldsymbol{\Theta}_{k}, \boldsymbol{X}, \mathcal{X}_{n} )\ \Big\vert \ \boldsymbol{X}, \mathcal{X}_{n} \Big).
	\end{equation}
	In order that the conditional expectation in (\ref{re1}) is well defined, we use Conditions~\ref{ME}--\ref{BO} to ensure the existence of the first moment of the integrand in (\ref{re1}). Specifically, by Conditions~\ref{ME}--\ref{BO}, it holds that for each $a\in A$,
	\begin{equation*}
		\begin{split}
			\mathbb{E}\Big(\Big|\widehat{m}_{ \widehat{T}_{a}, a}(\boldsymbol{\Theta}_{1}, \dots, \boldsymbol{\Theta}_{k}, \boldsymbol{X}, \mathcal{X}_{n} ) \Big|\Big)   & < \infty,
		\end{split}
	\end{equation*}
	and hence (\ref{re1}) is well defined.

	By assumption, during the training phase, the $j$th feature is not involved, which entails that $\widehat{m}_{ \widehat{T}_{a}, a}(\boldsymbol{\Theta}_{1}, \dots, \boldsymbol{\Theta}_{k}, \boldsymbol{c}_{1}, \mathcal{X}_{n} ) = \widehat{m}_{ \widehat{T}_{a}, a}(\boldsymbol{\Theta}_{1}, \dots, \boldsymbol{\Theta}_{k}, \boldsymbol{c}_{2}, \mathcal{X}_{n} )$ for each $\boldsymbol{c}_{i} \coloneqq (c_{i1}, \dots, c_{ip})^{\top}\in [0, 1]^{p}, i = 1, 2$, with $c_{1l} = c_{2l}$ for $l\not=j$. Then it follows that 
	\begin{equation*}\begin{split} & \frac{1}{B}\sum_{a\in A}\mathbb{E}\Big( \widehat{m}_{ \widehat{T}_{a}, a}(\boldsymbol{\Theta}_{1:k}, \boldsymbol{X}, \mathcal{X}_{n} )\ \Big\vert \ \boldsymbol{\Theta}_{1:k}, \boldsymbol{X}, \mathcal{X}_{n} \Big) \\
			& = \frac{1}{B}\sum_{a\in A}\mathbb{E}\Big(\widehat{m}_{ \widehat{T}_{ a}, a}(\boldsymbol{\Theta}_{1:k}, \boldsymbol{X}, \mathcal{X}_{n} )\ \Big\vert \ \boldsymbol{\Theta}_{1:k}, \boldsymbol{X}_{-j}, \mathcal{X}_{n} \Big). \end{split}\end{equation*}
	In view of this result, we can see that 
	\begin{equation}\label{project1} \frac{1}{B}\sum_{a\in A}\mathbb{E}\Big(\widehat{m}_{ \widehat{T}_{a}, a}(\boldsymbol{\Theta}_{1}, \dots, \boldsymbol{\Theta}_{k}, \boldsymbol{X}, \mathcal{X}_{n} )\ \Big\vert \ \boldsymbol{X}, \mathcal{X}_{n} \Big) \textnormal{ is } (\boldsymbol{X}_{-j}, \mathcal{X}_{n})\textnormal{-measurable}.\end{equation}
	Since $\mathcal{X}_{n}$ is independent of $\boldsymbol{X}$, we have 
	\begin{equation}\label{project2}
		\textnormal{Var} \Big(m(\boldsymbol{X}) \ \vert \ \boldsymbol{X}_{-j}, \mathcal{X}_{n} \Big)  = \textnormal{Var} \Big(m(\boldsymbol{X}) \ \vert \ \boldsymbol{X}_{-j} \Big).\end{equation}

	By the definition of relevant features, we can deduce that 
	\begin{equation*}
		\begin{split} & \mathbb{E} \Big(m(\boldsymbol{X}) - \frac{1}{B}\sum_{a\in A}\mathbb{E} \Big(\widehat{m}_{\widehat{T}_{a}, a}(\boldsymbol{\Theta}_{1}, \dots, \boldsymbol{\Theta}_{k}, \boldsymbol{X}, \mathcal{X}_{n} )\ \Big\vert \ \boldsymbol{X}, \mathcal{X}_{n} \Big) \Big)^{2} \\
			&= \mathbb{E}\Big\{ \mathbb{E} \Big[  \Big(m(\boldsymbol{X}) - \frac{1}{B}\sum_{a\in A}\mathbb{E} \Big(\widehat{m}_{\widehat{T}_{a}, a}(\boldsymbol{\Theta}_{1}, \dots, \boldsymbol{\Theta}_{k}, \boldsymbol{X}, \mathcal{X}_{n} )\ \Big\vert \ \boldsymbol{X}, \mathcal{X}_{n} \Big) \Big)^{2} \ \Big\vert \ \boldsymbol{X}_{-j}, \mathcal{X}_{n} \Big]\Big\}\\
			& \ge  \mathbb{E} \Big(\textnormal{Var} \Big(m(\boldsymbol{X}) \ \vert \  \boldsymbol{X}_{-j}, \mathcal{X}_{n}\Big)\Big) \\
			& = \mathbb{E} \Big(\textnormal{Var} \Big(m(\boldsymbol{X}) \ \vert \ X_{s}, s \in \{1, \dots, p\} \backslash \{j\}\Big)\Big)
			\\&  \ge \iota.
	\end{split}\end{equation*}
	Here, in the first inequality, we apply (\ref{project1}) and the  projection theorem. For the second equality, we resort to (\ref{project2}). This concludes the proof of Theorem \ref{theorem2}.

	\subsection{Proof of Theorem~\ref{theorem3}} \label{SecA.4}
	Let $0 < \gamma_{0}\le 1$ be given. We deal with the case where there are no random splits first (see the end of this proof for details). Let us begin with a closed-form expression for the $\mathbb{L}^{2}$ approximation error in (\ref{t6.1}) below obtained using (\ref{t.new.5}). We argue that in the expression
	\begin{equation}
		\begin{split}
			\label{t6.1}
			& \mathbb{E}\Big(m(\boldsymbol{X}) - m_{T}^{*}\left(\boldsymbol{\Theta}_{1:k}, \boldsymbol{X}\right)\Big)^{2} \\
			& = \sum_{\Theta_{1:k}}  \mathop{{}\mathbb{P}}(\boldsymbol{\Theta}_{1:k} = \Theta_{1:k})\sum_{(\boldsymbol{t}_{1}, \dots, \boldsymbol{t}_{k}) \in T(\Theta_{1:k})} \mathbb{P}(\boldsymbol{X}\in \boldsymbol{t}_{k})  \textnormal{Var} (m(\boldsymbol{X})\ \vert\ \boldsymbol{X}\in\boldsymbol{t}_{k}),\\
	\end{split} \end{equation}
     the (average) conditional variance on the end cells at the last level can be bounded by the (average) conditional variance at the one to the last level multiplied by a factor $(1 - \gamma_{0}(\alpha_{1}\alpha_{2})^{-1})$, and hence we have the recursive argument as in (\ref{t.new.2}). However, according to Condition~\ref{tree}, for the cell with too small probabilities, we need to use a different approach to deal with the case, which results in an additional term $\varepsilon\alpha_{1}\alpha_{2}$ in Theorem~\ref{theorem3}.

	In what follows, $T(\Theta_{1:k})$ is categorized into two groups, where upper bounds are constructed accordingly.  Let $\varepsilon \ge 0$ be given.  Then we introduce a set of tuples denoted as $T_{\varepsilon}$.  For each $\Theta_{1}, \dots, \Theta_{k}$, define a set of $k$-dimensional tuples $T_{\varepsilon}(\Theta_{1}, \dots, \Theta_{k})$ such that  if the following two properties hold:
	\begin{enumerate}
		\item[1)] $(\boldsymbol{t}_{1}, \dots, \boldsymbol{t}_{k})\in T(\Theta_{1}, \dots, \Theta_{k})$,
		\item[2)] There exists some positive integer $l\le k$  such that  $\sup_{j, c}(II)_{\boldsymbol{t}_{l-1}, \boldsymbol{t}_{l -1}(j, c) } \le \alpha_{2}\varepsilon$ with the supremum over all possible $(j, c)$'s,
	\end{enumerate}
	then $(\boldsymbol{t}_{1}, \dots, \boldsymbol{t}_{k})\in T_{\varepsilon}(\Theta_{1}, \dots, \Theta_{k})$. In view of the definition of $T_{\varepsilon}$, we can deduce that
	\begin{equation}\begin{split}\label{t6}
			&\textnormal{RHS of (\ref{t6.1})}
			\\&= \sum_{\Theta_{1:k}}  \mathop{{}\mathbb{P}}(\boldsymbol{\Theta}_{1:k} = \Theta_{1:k}) \Bigg[ \sum_{(\boldsymbol{t}_{1}, \dots, \boldsymbol{t}_{k}) \in T_{\varepsilon}(\Theta_{1:k})} \mathbb{P}(\boldsymbol{X}\in \boldsymbol{t}_{k})  \textnormal{Var} (m(\boldsymbol{X})\ \vert\ \boldsymbol{X}\in\boldsymbol{t}_{k})\\
			& \hspace{1em} + \sum_{(\boldsymbol{t}_{1}, \dots, \boldsymbol{t}_{k}) \in \big(T(\Theta_{1:k}) \big\backslash T_{\varepsilon}(\Theta_{1:k})\big) } \mathbb{P}(\boldsymbol{X}\in \boldsymbol{t}_{k})  \textnormal{Var} (m(\boldsymbol{X})\ \vert\ \boldsymbol{X}\in\boldsymbol{t}_{k})  \Bigg],
	\end{split}\end{equation}
	where the summation is over all possible $\Theta_{1:k}$.

	 For simplicity, define $T^{\dagger}(\Theta_{1:k}) \coloneqq T(\Theta_{1:k}) \big\backslash T_{\varepsilon}(\Theta_{1:k})$ and $\textnormal{V}(\boldsymbol{t}) \coloneqq \textnormal{Var} (m(\boldsymbol{X}) \vert \boldsymbol{X}\in\boldsymbol{t})$. We can observe two properties of $T^{\dagger}$. First, if $(\boldsymbol{t}_{1}, \dots, \boldsymbol{t}_{k}) \in T^{\dagger}(\Theta_{1:k})$, then we have $(\boldsymbol{t}_{1}, \dots, \boldsymbol{t}_{l}) \in T^{\dagger}(\Theta_{1:l})$ for each $1 \le l< k$, but not the other way around. Second, if $(\boldsymbol{t}_{1}, \dots, \boldsymbol{t}_{k}) \in T^{\dagger}(\Theta_{1:k})$ and $\boldsymbol{t}_{k}^{'}$ is the other daughter cell of $\boldsymbol{t}_{k-1}$ in $T(\Theta_{1:k})$, then it holds that $(\boldsymbol{t}_{1}, \dots, \boldsymbol{t}_{k-1}, \boldsymbol{t}_{k}^{'}) \in T^{\dagger}(\Theta_{1:k})$; that is, daughter cells are included in $T^{\dagger}(\Theta_{1:k})$ as a pair. It is worth emphasizing that $T^{\dagger}(\Theta_{1:k})$ and  $T_{\varepsilon}(\Theta_{1:k})$ are two sets of tuples such that  $\{\boldsymbol{t}_{k}: (\boldsymbol{t}_{1}, \dots, \boldsymbol{t}_{k})\in T^{\dagger}(\Theta_{1:k})\}$ and $\{\boldsymbol{t}_{k}: (\boldsymbol{t}_{1}, \dots, \boldsymbol{t}_{k})\in T_{\varepsilon}(\Theta_{1:k})\}$ are mutually exclusive, and collectively they are a partition of feature space.
	 
	With these notations, let us deal with the second term on the RHS of (\ref{t6}) first. Simple calculations show that  
	{\small\begin{equation}\begin{split}\label{t7.1}
			&\textnormal{The second term on the RHS of (\ref{t6})}\\
			& = \sum_{\Theta_{1:k}}  \mathbb{P}(\boldsymbol{\Theta}_{1:k} = \Theta_{1:k}) \sum_{(\boldsymbol{t}_{1:k}) \in T^{\dagger}(\Theta_{1:k}) }\mathbb{P}( \boldsymbol{X} \in \boldsymbol{t}_{k-1})\mathbb{P}(  \boldsymbol{X} \in \boldsymbol{t}_{k} | \boldsymbol{X}\in \boldsymbol{t}_{k-1} )  \textnormal{V} (\boldsymbol{t}_{k})\\
			& = \sum_{\Theta_{1:k}}  \mathop{{}\mathbb{P}}(\boldsymbol{\Theta}_{1:k} = \Theta_{1:k})\sum_{(\boldsymbol{t}_{1}, \dots, \boldsymbol{t}_{k}) \in T^{\dagger}(\Theta_{1:k}) }\mathbb{P}( \boldsymbol{X} \in \boldsymbol{t}_{k-1}) \frac{(I)_{\boldsymbol{t}_{k-1}, \boldsymbol{t}_{k}}}{2}\\
			& = \sum_{\Theta_{1:k-1}} \mathop{{}\mathbb{P}}(\boldsymbol{\Theta}_{1:k-1} = \Theta_{1:k-1}) \sum_{\Theta_{k}} \mathbb{P}(\boldsymbol{\Theta}_{k} = \Theta_{k}) \sum_{(\boldsymbol{t}_{1}, \dots, \boldsymbol{t}_{k}) \in T^{\dagger}(\Theta_{1:k}) }\mathbb{P}( \boldsymbol{X} \in \boldsymbol{t}_{k-1}) \frac{(I)_{\boldsymbol{t}_{k-1}, \boldsymbol{t}_{k}}}{2},
	\end{split}\end{equation}}
	
	\noindent where  the second equality is due to the definition of $(I)_{\boldsymbol{t}_{k-1}, \boldsymbol{t}_{k}}$ in \eqref{I1} and the second property of $T^{\dagger}$, and the third equality is due to the independence of random parameters.

	\begin{figure}[t]		
		\centering
		\includegraphics[width=8cm]{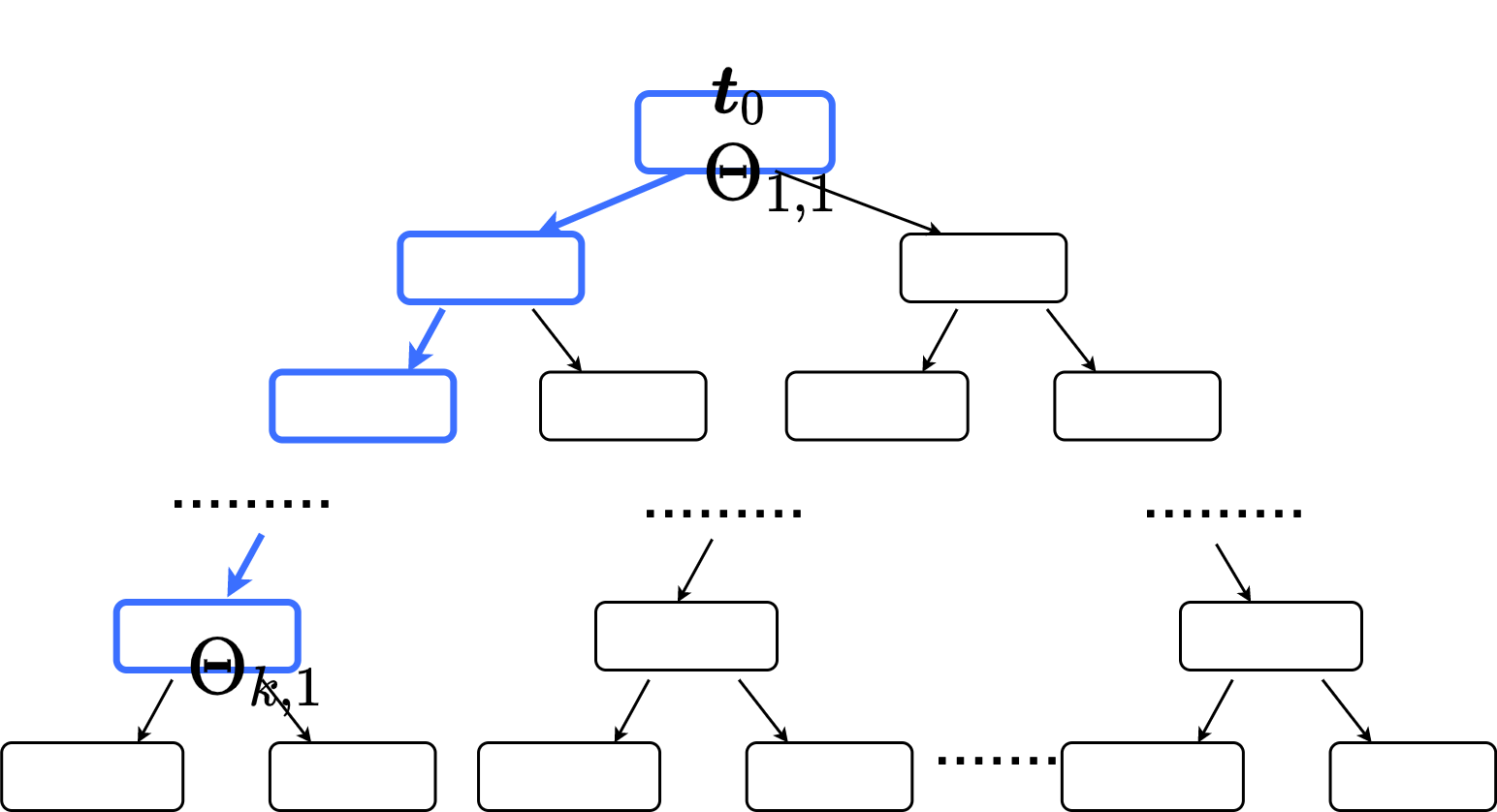}
		\caption{The thick blue tree branch is the first tree branch of $T(\Theta_{1:k})$.}
		\label{fig:firstTreeBranch}
	\end{figure}
	
	To deal with the RHS of \eqref{t7.1}, we consider tree branches of the tree $T(\Theta_{1:k})$ (not $T^{\dagger}(\Theta_{1:k})$) as follows. There are $2^k$ distinct tree branches $\boldsymbol{t}_{1:k}$ in $T(\Theta_{1:k})$, and we call the first two of these tree branches ``the first tree branch of $T(\Theta_{1:k})$,'' whose corresponding last column set restriction is $\Theta_{k, 1}$ (recall that $\Theta_{k} = \{\Theta_{k, 1}, \dots, \Theta_{k, 2^{k-1}}\}$). See Figure~\ref{fig:firstTreeBranch} for a graphical illustration. Note that there are two daughter cells of the first tree branch. In addition, note that it is possible that some tree branches of $T(\Theta_{1:k})$ are not included in $T^{\dagger}(\Theta_{1:k})$; in such cases, the corresponding summations (e.g., see \eqref{new.bias.5} below) ignore these tree branches since we have defined that summations  over empty sets are zeros.

	 Now, with the definition of tree branches, we write the inner term on the RHS of \eqref{t7.1} as follows.
	 {\small\begin{equation}
	 		\begin{split}\label{new.bias.5}
	 			&\sum_{\Theta_{k}} \mathbb{P}(\boldsymbol{\Theta}_{k} = \Theta_{k}) \sum_{ (\boldsymbol{t}_{1}, \dots, \boldsymbol{t}_{k}) \in T^{\dagger}(\Theta_{1:k}) }\mathbb{P}( \boldsymbol{X} \in \boldsymbol{t}_{k-1}) \frac{(I)_{\boldsymbol{t}_{k-1}, \boldsymbol{t}_{k}}}{2}\\
	 			& =  \sum_{\Theta_{k}}\mathbb{P}(\boldsymbol{\Theta}_{k, 1} = \Theta_{k, 1}, \dots, \boldsymbol{\Theta}_{k, 2^{k-1}} = \Theta_{k, 2^{k-1}})  \\
	 			&\qquad\times\bigg[\bigg(\sum_{\substack{ (\boldsymbol{t}_{1},\dots,\boldsymbol{t}_{k} )\in T^{\dagger}(\Theta_{1:k}) \textnormal{ where}\\ (\boldsymbol{t}_{1},\dots,\boldsymbol{t}_{k-1} ) \textnormal{ is the first tree branch in } T(\Theta_{1:k}) } }\mathbb{P}( \boldsymbol{X} \in \boldsymbol{t}_{k-1}) \frac{(I)_{\boldsymbol{t}_{k-1}, \boldsymbol{t}_{k}}}{2} \bigg)\\
	 			&\qquad\qquad+  \bigg( \sum_{\substack{ (\boldsymbol{t}_{1},\dots,\boldsymbol{t}_{k} )\in T^{\dagger}(\Theta_{1:k}) \textnormal{ where}\\ (\boldsymbol{t}_{1},\dots,\boldsymbol{t}_{k-1} ) \textnormal{ is not the first tree branch in } T(\Theta_{1:k}) } }\mathbb{P}( \boldsymbol{X} \in \boldsymbol{t}_{k-1}) \frac{(I)_{\boldsymbol{t}_{k-1}, \boldsymbol{t}_{k}}}{2}\bigg)\bigg],
	 		\end{split}
	 \end{equation}}
 
	 \noindent and then, since the first tree branch is related to only the feature restriction $\boldsymbol{\Theta}_{k, 1}$ and the other tree branches are only subject to $\boldsymbol{\Theta}_{k, 2}, \dots, \boldsymbol{\Theta}_{k, 2^{k-1}}$, and that $\boldsymbol{\Theta}_{k ,l}$'s are independent,
	{\small\begin{equation}
		\begin{split}\label{new.bias.3}
			&\textnormal{RHS of \eqref{new.bias.5} }\\
			& =  \sum_{\Theta_{k, 1}}\mathbb{P}(\boldsymbol{\Theta}_{k, 1} = \Theta_{k, 1}) \\
			&\qquad\qquad\times\bigg(\sum_{\substack{ (\boldsymbol{t}_{1},\dots,\boldsymbol{t}_{k} )\in T^{\dagger}(\Theta_{1:k}) \textnormal{ where}\\ (\boldsymbol{t}_{1},\dots,\boldsymbol{t}_{k-1} ) \textnormal{ is the first tree branch in } T(\Theta_{1:k}) } }\mathbb{P}( \boldsymbol{X} \in \boldsymbol{t}_{k-1}) \frac{(I)_{\boldsymbol{t}_{k-1}, \boldsymbol{t}_{k}}}{2} \bigg)\\
			&\qquad+ \sum_{\Theta_{k, 2}, \dots, \Theta_{k, 2^{k-1}}}\mathbb{P}(\boldsymbol{\Theta}_{k, 2} = \Theta_{k, 2}, \dots, \boldsymbol{\Theta}_{k, 2^{k-1}} = \Theta_{k, 2^{k-1}})\\
			& \qquad\qquad\times \bigg( \sum_{\substack{ (\boldsymbol{t}_{1},\dots,\boldsymbol{t}_{k} )\in T^{\dagger}(\Theta_{1:k}) \textnormal{ where}\\ (\boldsymbol{t}_{1},\dots,\boldsymbol{t}_{k-1} ) \textnormal{ is not the first tree branch in } T(\Theta_{1:k}) } }\mathbb{P}( \boldsymbol{X} \in \boldsymbol{t}_{k-1}) \frac{(I)_{\boldsymbol{t}_{k-1}, \boldsymbol{t}_{k}}}{2}\bigg).
		\end{split}
	\end{equation}}
	
	With \eqref{new.bias.3}, we can focus on the summation with the first tree branch in the following; without loss of generality, we suppose the first tree branch of $T(\Theta_{1:k})$ is also included in $T^{\dagger}(\Theta_{1:k})$ (otherwise, we can consider some other tree branch). Observe that there exists at least one optimal feature $j^{*}$ such that  $\sup_{c} (II)_{\boldsymbol{t}_{k-1}, \boldsymbol{t}_{k-1}(j^{*}, c)} = \sup_{j , c} (II)_{\boldsymbol{t}_{k-1}, \boldsymbol{t}_{k-1}(j, c)}$, where the the supremum on the RHS is the unconstrained supremum.  It is not difficult to see that the probability that $\boldsymbol{\Theta}_{k, 1}$ includes one of these optimal features is at least $\gamma_{0}$; that is, 
	\begin{equation}
		\begin{split}\label{t7.6}
			&\mathbb{P}(\boldsymbol{\Theta}_{k, 1} \textnormal{ includes one of the optimal features}) \ge \gamma_{0} \quad (\textnormal{the good state}),\\
			& \mathbb{P}(\{\boldsymbol{\Theta}_{k, 1} \textnormal{ includes one of the optimal features}\}^{c}) < 1 - \gamma_{0} \quad (\textnormal{the bad state}).
	\end{split}\end{equation}
	It follows from the definition of $T^{\dagger}$ that $\sup_{j, c} (II)_{\boldsymbol{t}_{k-1}, \boldsymbol{t}_{k-1}(j, c) } > \alpha_{2}\varepsilon$. Then if $\Theta_{k, 1}$ is in the good state, by the  first item  of Condition~\ref{tree} it holds that $(II)_{\boldsymbol{t}_{k-1}, \boldsymbol{t}_{k}} > \varepsilon$. By this, the second item of Condition~\ref{tree}, and the fact that $\Theta_{k, 1}$ is in the good state, we have
	\begin{equation}
		\begin{split}\label{t.new.61}
			(I)_{\boldsymbol{t}_{k-1}, \boldsymbol{t}_{k}} &  = \text{Var}(m(\boldsymbol{X} \ \vert \ \boldsymbol{X} \in \boldsymbol{t}_{k-1}) - (II)_{\boldsymbol{t}_{k-1}, \boldsymbol{t}_{k}}\\
			& \le \text{Var}(m(\boldsymbol{X}) \ \vert \ \boldsymbol{X} \in \boldsymbol{t}_{k-1}) - \alpha_{2}^{-1}\sup_{j, c}(II)_{\boldsymbol{t}_{k-1}, \boldsymbol{t}_{k-1}(j, c)}.
		\end{split}	
	\end{equation}
	Moreover, it follows from Condition~\ref{P} that
	\[\textnormal{RHS of (\ref{t.new.61})}  \le \text{Var}(m(\boldsymbol{X})  \vert  \boldsymbol{X} \in \boldsymbol{t}_{k-1}) (1 - (\alpha_{1}\alpha_{2})^{-1}). \]
	On the other hand, if $\Theta_{k, 1}$ is in the bad state, it holds that 
	$$(I)_{\boldsymbol{t}_{k-1}, \boldsymbol{t}_{k}} \le  \textnormal{Var}(m(\boldsymbol{X})  \vert  \boldsymbol{X} \in \boldsymbol{t}_{k-1}).$$ 

	By the above observation, for the first tree branch,
	{\small\begin{equation}
			\begin{split}\label{new.bias.1}
				& \sum_{\Theta_{k, 1}}\mathbb{P}(\boldsymbol{\Theta}_{k, 1} = \Theta_{k, 1}) \\
				&\qquad\times\bigg(\sum_{\substack{ (\boldsymbol{t}_{1},\dots,\boldsymbol{t}_{k} )\in T^{\dagger}(\Theta_{1:k}) \textnormal{ where}\\ (\boldsymbol{t}_{1},\dots,\boldsymbol{t}_{k-1} ) \textnormal{ is  the first tree branch in } T(\Theta_{1:k}) }}\mathbb{P}( \boldsymbol{X} \in \boldsymbol{t}_{k-1}) \frac{(I)_{\boldsymbol{t}_{k-1}, \boldsymbol{t}_{k}}}{2} \bigg) \\
				& \le \sum_{\textnormal{good }\Theta_{k, 1}}\mathbb{P}(\boldsymbol{\Theta}_{k, 1} = \Theta_{k, 1}) \\
				&\qquad\times\bigg(\sum_{\substack{ (\boldsymbol{t}_{1},\dots,\boldsymbol{t}_{k} )\in T^{\dagger}(\Theta_{1:k}) \textnormal{ where}\\ (\boldsymbol{t}_{1},\dots,\boldsymbol{t}_{k-1} ) \textnormal{ is  the first tree branch in } T(\Theta_{1:k}) }}\mathbb{P}( \boldsymbol{X} \in \boldsymbol{t}_{k-1}) \frac{\text{V}( \boldsymbol{t}_{k-1}) (1 - (\alpha_{1}\alpha_{2})^{-1})}{2} \bigg) \\
				&+ \sum_{\textnormal{bad }\Theta_{k, 1}}\mathbb{P}(\boldsymbol{\Theta}_{k, 1} = \Theta_{k, 1}) \\
				&\qquad\times\bigg(\sum_{\substack{ (\boldsymbol{t}_{1},\dots,\boldsymbol{t}_{k} )\in T^{\dagger}(\Theta_{1:k}) \textnormal{ where}\\ (\boldsymbol{t}_{1},\dots,\boldsymbol{t}_{k-1} ) \textnormal{ is  the first tree branch in } T(\Theta_{1:k}) }}\mathbb{P}( \boldsymbol{X} \in \boldsymbol{t}_{k-1}) \frac{\text{V}( \boldsymbol{t}_{k-1})}{2} \bigg).
			\end{split}
	\end{equation}}

If $(\boldsymbol{t}_{1}, \dots, \boldsymbol{t}_{k-1})$ is the first tree branch in $T(\Theta_{1:k})$, due to the facts that there are two daughter cells of $\boldsymbol{t}_{k-1}$ and that the terms in the summations on the RHS of \eqref{new.bias.1} does not depends on $\boldsymbol{t}_{k}$, and \eqref{t7.6},
{\small\begin{equation}
		\begin{split}\label{new.bias.4}
			& \textnormal{RHS of \eqref{new.bias.1}} \\
			& \le \sum_{\textnormal{good }\Theta_{k, 1}}\mathbb{P}(\boldsymbol{\Theta}_{k, 1} = \Theta_{k, 1}) \mathbb{P}( \boldsymbol{X} \in \boldsymbol{t}_{k-1}) \text{V}( \boldsymbol{t}_{k-1}) (1 - (\alpha_{1}\alpha_{2})^{-1})  \\
			&\qquad + \sum_{\textnormal{bad }\Theta_{k, 1}}\mathbb{P}(\boldsymbol{\Theta}_{k, 1} = \Theta_{k, 1}) \mathbb{P}( \boldsymbol{X} \in \boldsymbol{t}_{k-1}) \text{V}( \boldsymbol{t}_{k-1})\\
			& \le \sup_{\gamma \ge \gamma_{0}}\bigg(\gamma\mathbb{P}( \boldsymbol{X} \in \boldsymbol{t}_{k-1}) \text{V}( \boldsymbol{t}_{k-1}) (1 - (\alpha_{1}\alpha_{2})^{-1})    + (1-\gamma) \mathbb{P}( \boldsymbol{X} \in \boldsymbol{t}_{k-1}) \text{V}( \boldsymbol{t}_{k-1})\bigg)\\
			&\le (1-\gamma_{0}(\alpha_{1}\alpha_{2})^{-1})\mathbb{P}(\boldsymbol{X}\in\boldsymbol{t}_{k-1})\textnormal{V}(\boldsymbol{t}_{k-1}).
		\end{split}
\end{equation}}

\noindent We notice that \eqref{new.bias.4} holds if the first tree branch is not included in $T^{\dagger}(\Theta_{1:k})$ since the summation would be zero.

We can apply the arguments for \eqref{new.bias.5}--\eqref{new.bias.4} to each tree branch in $T(\Theta_{1:k})$ to get
\begin{equation*}
	\begin{split}
	& \textnormal{RHS of \eqref{new.bias.3}} \le \sum_{ (\boldsymbol{t}_{1}, \dots, \boldsymbol{t}_{k-1}) \in T^{\dagger}(\Theta_{1:k-1}) }(1-\gamma_{0}(\alpha_{1}\alpha_{2})^{-1})\mathbb{P}(\boldsymbol{X}\in\boldsymbol{t}_{k-1})\textnormal{V}(\boldsymbol{t}_{k-1}).
\end{split}
\end{equation*}

Thus,
	{\small\begin{equation}
			\begin{split}\label{t7.2}
			&\textnormal{RHS of (\ref{t7.1})}\\
			& \le (1-\gamma_{0}(\alpha_{1}\alpha_{2})^{-1}) \sum_{\Theta_{1:k-1}} \mathop{{}\mathbb{P}}(\boldsymbol{\Theta}_{1:k-1} = \Theta_{1:k-1})  \sum_{(\boldsymbol{t}_{1:k-1}) \in T^{\dagger}(\Theta_{1:k-1}) }\mathbb{P}( \boldsymbol{X} \in \boldsymbol{t}_{k-1}) \textnormal{V} ( \boldsymbol{t}_{k-1}).
	\end{split}
\end{equation}}
	We can repeat the calculation in (\ref{t7.2}) $k$ times to conclude that  
	\begin{equation}\begin{split}\label{t7.3}
			&\textnormal{RHS of (\ref{t7.2})}  \le (1 - \gamma_{0}(\alpha_{1}\alpha_{2})^{-1} )^{k} \textnormal{Var} (m(\boldsymbol{X})).
	\end{split}\end{equation}

	Next, we bound the first term in (\ref{t6}). Let $(\boldsymbol{t}_{1}, \dots, \boldsymbol{t}_{k}) \in T_{\varepsilon} (\Theta_{1},\dots, \Theta_{k} )$ be a given tuple. By the second property in the definition of $T_{\varepsilon}$, there exists a smallest integer  $1 \le l\le k$ such that $\sup_{j, c} (II)_{\boldsymbol{t}_{l-1}, \boldsymbol{t}_{l-1}(j, c) } \le \alpha_{2}\varepsilon$. By Condition~\ref{P}, we have 
	\begin{equation}\label{E14}\textnormal{Var} (m(\boldsymbol{X})\ \vert \ \boldsymbol{X} \in \boldsymbol{t}_{l-1})\le \alpha_{1} \alpha_{2} \varepsilon.
	\end{equation} 
	Denote by $S$ the set of tuples in $T_{\varepsilon} (\Theta_{1},\dots, \Theta_{k})$ such that the first $l -1$ cells are $\boldsymbol{t}_{1}, \dots, \boldsymbol{t}_{l-1}$. For each $q \in \{l-1, \dots, k-1\}$, let $S_{q}$ be the set of distinct tuples in $\{( \boldsymbol{t}_{1}, \dots, \boldsymbol{t}_{q}) : (\boldsymbol{t}_{1}, \dots, \boldsymbol{t}_{k}) \in S\}$. Then we can deduce that 
	\begin{equation}
		\begin{split}
			\label{E13}
			& \sum_{(\boldsymbol{t}_{1}, \dots, \boldsymbol{t}_{k}) \in S} \mathbb{P} (\boldsymbol{X}\in \boldsymbol{t}_{k})   \textnormal{Var} (m(\boldsymbol{X})\ \vert\ \boldsymbol{X}\in\boldsymbol{t}_{k}) \\
			& = \sum_{(\boldsymbol{t}_{1}, \dots, \boldsymbol{t}_{k}) \in S} \mathbb{P}( \boldsymbol{X}\in \boldsymbol{t}_{k-1})\mathbb{P} (\boldsymbol{X}\in \boldsymbol{t}_{k} | \boldsymbol{X}\in \boldsymbol{t}_{k-1})   \textnormal{Var} (m(\boldsymbol{X})\ \vert\ \boldsymbol{X}\in\boldsymbol{t}_{k}) \\
			& \le \sum_{(\boldsymbol{t}_{1}, \dots, \boldsymbol{t}_{k-1}) \in S_{k-1}} \mathbb{P}( \boldsymbol{X}\in \boldsymbol{t}_{k-1})\textnormal{Var} (m(\boldsymbol{X})\ \vert\ \boldsymbol{X}\in\boldsymbol{t}_{k-1}) \\
			& \le  \sum_{(\boldsymbol{t}_{1}, \dots, \boldsymbol{t}_{l-1}) \in S_{l-1}}\mathbb{P} (\boldsymbol{X}\in \boldsymbol{t}_{l-1})  \textnormal{Var}(m(\boldsymbol{X})\ \vert \ \boldsymbol{X}\in \boldsymbol{t}_{l-1})\\
			& = \mathbb{P} (\boldsymbol{X}\in \boldsymbol{t}_{l-1})  \textnormal{Var}(m(\boldsymbol{X})\ \vert \ \boldsymbol{X}\in \boldsymbol{t}_{l-1})\\
			& \le \mathbb{P} (\boldsymbol{X}\in \boldsymbol{t}_{l-1})  \alpha_{1} \alpha_{2} \varepsilon.
	\end{split}\end{equation}
	Here, the first inequality in (\ref{E13}) follows from the fact that $\textnormal{Var}(m(\boldsymbol{X}) | \boldsymbol{X} \in \boldsymbol{t}_{k-1}) \ge (I)_{\boldsymbol{t}_{k-1}, \boldsymbol{t}_{k}} $. The second inequality is obtained by repeating the same argument for the first inequality. Moreover, the second equality is because $S_{l-1}$ contains exactly one tuple, while the last inequality follows from \eqref{E14}.

	Given $\Theta_{1}, \dots, \Theta_{k}$ and $\varepsilon$, it is seen that the summation summing the LHS of \eqref{E13} over all possible (and mutually exclusive) tuple sets $S$ is bounded by the summation over the probabilities of exclusive events multiplied by $\alpha_{1} \alpha_{2} \varepsilon$. Thus, it holds that 
	\[\sum_{(\boldsymbol{t}_{1}, \dots, \boldsymbol{t}_{k}) \in T_{\varepsilon}(\Theta_{1:k})}\mathbb{P} (\boldsymbol{X}\in \boldsymbol{t}_{k})  \textnormal{Var} (m(\boldsymbol{X})\ \vert\ \boldsymbol{X}\in\boldsymbol{t}_{k})\le \alpha_{1} \alpha_{2} \varepsilon. \]
	Since summing over the probabilities of $\Theta_{1:k}$ gives one, we have
	\begin{equation*}
		\sum_{\Theta_{1:k}}  \mathop{{}\mathbb{P}}(\boldsymbol{\Theta}_{1:k} = \Theta_{1:k})  \sum_{(\boldsymbol{t}_{1}, \dots, \boldsymbol{t}_{k}) \in T_{\varepsilon}(\Theta_{1:k} )} \mathbb{P} (\boldsymbol{X}\in \boldsymbol{t}_{k})  \textnormal{Var} (m(\boldsymbol{X})\ \vert\ \boldsymbol{X}\in\boldsymbol{t}_{k})\le \alpha_{1} \alpha_{2} \varepsilon.
	\end{equation*}
	Therefore, combining this inequality, and (\ref{t6.1})--(\ref{t7.1}), (\ref{t7.2})--(\ref{t7.3}) yields the desired conclusion of Theorem~\ref{theorem3} for the case without random splits.
	
	For the case where there are random splits, we can conditional on these random splits and apply the previous arguments to get the same conclusion. Specifically, let ``Random splits'' denote the random parameter of these random splits, we have
	\begin{equation*}
	    \begin{split}
	    & \mathbb{E}\Big(m(\boldsymbol{X}) - m_{T}^{*}\left(\boldsymbol{\Theta}_{1:k}, \boldsymbol{X}\right)\Big)^{2}\\
	        & = \mathbb{E}\Big(\mathbb{E}(\textnormal{RHS of } \eqref{t6.1} \ | \textnormal{ Random splits})\Big) \\
	        & \le \mathbb{E}\big(\alpha_{1} \alpha_{2} \varepsilon+ \left(1  -  \gamma_{0}(\alpha_{1}\alpha_{2})^{-1}  \right)^{k}\textnormal{Var}(m(\boldsymbol{X})) \big)\\
	        & = \alpha_{1} \alpha_{2} \varepsilon+ \left(1  -  \gamma_{0}(\alpha_{1}\alpha_{2})^{-1}  \right)^{k}\textnormal{Var}(m(\boldsymbol{X})),
	    \end{split}
	\end{equation*}
	where the inequality is due to the previous arguments. This completes the proof of Theorem~\ref{theorem3}.
	
	Finally, we note that the previous arguments also lead to the desired bound in  \eqref{t.new.2}.
	
	\subsection{Proof of Theorem~\ref{theorem4}} \label{SecA.5}
	
	Let us first briefly outline the proof idea for the main assertion of Theorem~\ref{theorem4}.  We argue that for each cell $\boldsymbol{t}$, the (sample) CART-split criterion in (\ref{new.eq.004}) gives results that are very close to those of the theoretical CART-split introduced in Section~\ref{Sec4.1}. More precisely, let $\hat{\boldsymbol{t}}$ be one of the daughter cells of $\boldsymbol{t}$ after the CART-split given a set of available features $\Theta$, and we argue that the value of $(II)_{\boldsymbol{t}, \hat{\boldsymbol{t}}}$  is very close to that of $\sup_{j\in \Theta, c }(II)_{\boldsymbol{t}, \boldsymbol{t}(j, c)}$. Since this argument involves quantities $(II)$'s, to obtain the desired result we need to control the differences between the theoretical and sample (conditional) moments. Thus, we rely on the grid introduced in Sections~\ref{Sec5} and \ref{SecA.1}, where we also introduce the ideas for the grid.

	The formal proof starts with constructing the $\mathcal{X}_{n}$-measurable  event $\boldsymbol{U}_{n}$ described in Theorem~\ref{theorem4}. Define for some $\Delta>0, s >0$ (further requirements on $\Delta, s$ will be specified shortly in \eqref{T4_0} below),
		\[\boldsymbol{U}_{n} \coloneqq \boldsymbol{C}_{n}\cap \mathcal{A}_{1}(k, \Delta)\cap\mathcal{A}_{2}(k , \Delta)\cap\mathcal{A}_{3}( k + 1, \Delta)\cap\mathcal{A}, \]
	where $k = \floor{ c\log{(n)}}$, $\boldsymbol{C}_{n}= \cap_{i=1}^{n} \{ |\varepsilon_{i}| \le n^{s} \}$, the event $\mathcal{A}$ is defined in \eqref{eventA.1}, and $\mathcal{A}_{i}(k, \Delta), i \in \{1, 2, 3\}$ are defined in Lemma~\ref{CI1} in Section \ref{SecE.1}; note that we let $k = \floor{ c\log{(n)}}$ in the proof of Theorem~\ref{theorem4} for simplicity. Briefly, the events $\mathcal{A}_{i}(k, \Delta), i \in \{1, 2, 3\}$ control the conditional moments, which include the conditional means and probabilities, and the numbers of 
	observations on each of the sufficiently large cells on the grid hyperplanes.  Since we have assumed Condition~\ref{abc} and Condition~\ref{ME} with sufficiently large $q$, it follows from Lemma~\ref{CI1} and \eqref{eventA} that for all large $n$, 
	\[ \mathbb{P}( \boldsymbol{U}_{n}^{c}) = o(n^{-1}), \] 
	which concludes the first assertion of Theorem~\ref{theorem4} regarding the event $\boldsymbol{U}_{n}$.

	It remains to show the second assertion of Theorem~\ref{theorem4}. Let us introduce some needed notation and parameter restrictions as follows. It is required that $\frac{1}{2} < \Delta < 1 -2\delta$, which is possible because $\delta <\frac{1}{4}$ ($\delta$ and $\eta$ in \eqref{T4_0} below are given by Theorem~\ref{theorem4}). In addition, we let  $\Delta^{'}$  and (a sufficiently small) $s > 0$ be such that $\frac{1}{2} < \Delta^{'} < \Delta$ and
	\begin{equation}\label{T4_0} 
		\eta < \min\{ \frac{\Delta^{'}}{4} - 2s, \delta - 2s, \frac{\delta}{2}\}.
	\end{equation}

	To better understand the technical arguments, we provide some useful intuitions first. For each cell $\boldsymbol{t}= \times_{j=1}^{p}t_{j}$  and a set of available features $\Theta$, let us fix a best cut $(j^{*}(\boldsymbol{t}), c^{*}(\boldsymbol{t})) \coloneqq \arg\sup_{j \in \Theta, c} (II)_{\boldsymbol{t}, \boldsymbol{t}(j, c) }$ and for simplicity, we do not specify the dependence of the cut on $\Theta$. Let $\boldsymbol{t}^{*}$ be one of the daughter cells of $\boldsymbol{t}$ after $(j^{*}(\boldsymbol{t}), c^{*}(\boldsymbol{t}))$. Our goal is to find the lower bound of  $(II)_{\boldsymbol{t}, \hat{\boldsymbol{t}}} \ -(II)_{\boldsymbol{t}, \boldsymbol{t}^{*}}$ in terms of the sample size $n$. The main idea of the proof is to find a semi-sample daughter cell of $\boldsymbol{t}$ denoted as $\boldsymbol{t}^{\dagger}$ such that $\boldsymbol{t}^{\dagger}$ is grown by a cut $(j^{*}(\boldsymbol{t}), c^{\dagger}(\boldsymbol{t}))$ with $c^{\dagger}(\boldsymbol{t}) = x_{i, j^{*}(\boldsymbol{t})}$ for some $i \in \{1,\dots, n\}$ and the value of $c^{\dagger}(\boldsymbol{t})$ is very close to $c^{*}(\boldsymbol{t})$ (recall that $\boldsymbol{x}_{i} = (x_{i, 1}, \dots, x_{i, p})^{\top}$'s are the observations in the sample). Intuitively, on one hand,  $(II)_{\boldsymbol{t}, \hat{\boldsymbol{t}}} - (II)_{\boldsymbol{t}, \boldsymbol{t}^{\dagger}}$ should be bounded from below because $\hat{\boldsymbol{t}}$  maximizes the sample counterpart of $(II)_{\boldsymbol{t}, \hat{\boldsymbol{t}}}$ and hence $\widehat{(II)}_{\boldsymbol{t}, \hat{\boldsymbol{t}}} \ge \widehat{(II)}_{\boldsymbol{t}, \boldsymbol{t}^{\dagger}}$ (the sample conditional bias decrease; a formal definition is in \eqref{hatI2} below), and the values of these sample counterparts are close to themselves, respectively, in a probabilistic sense. On the other hand, the difference between $c^{\dagger}(\boldsymbol{t})$ and $c^{*}(\boldsymbol{t})$ is very small and thus  $|(II)_{\boldsymbol{t}, \boldsymbol{t}^{\dagger}}- (II)_{\boldsymbol{t}, \boldsymbol{t}^{*}}|$ is controlled. Then by the use of the semi-sample daughter cell, we can complete the technical analysis.  
	
	We now introduce some necessary notation for the remaining proof. Let us fix an interval $I^{*}(\boldsymbol{t})$ such that $c^{*}(\boldsymbol{t})\in I^{*}(\boldsymbol{t})\subset t_{j^{*}(\boldsymbol{t})}$ and 
	\[ \mathbb{P} ( X_{j^{*}(\boldsymbol{t})} \in I^{*}(\boldsymbol{t}) \ \vert \ \boldsymbol{X} \in \boldsymbol{t})= n^{-\delta}. \]
	In view of Condition~\ref{abc}, such $I^{*}(\boldsymbol{t})$ is well defined. In addition, for the cell $\boldsymbol{t}$ and $\Theta$, we fix another cut  $(j^{*}(\boldsymbol{t}) , c^{\dagger}(\boldsymbol{t}))$ such that $ c^{\dagger}(\boldsymbol{t})$ is an element of the set
	\begin{equation}
		\label{cdagger1}
		\left\{ x_{i, j^{*}(\boldsymbol{t})} : \boldsymbol{x}_{i}\in\boldsymbol{t}, \  x_{i, j^{*}(\boldsymbol{t}) } \in I^{*}(\boldsymbol{t}) \right\}
	\end{equation}
	when the set is not empty, and otherwise $ c^{\dagger}(\boldsymbol{t})$ is a random value in $t_{j^{*}(\boldsymbol{t})}$. 
	
	
	Recall that $\widehat{\boldsymbol{t}}$,  $\boldsymbol{t}^{\dagger}$, and $\boldsymbol{t}^{*}$ denote, respectively, one of the daughter cells constructed by the CART-split (\ref{new.eq.004}), the cut $(j^{*}(\boldsymbol{t}), c^{\dagger}(\boldsymbol{t}))$, and the cut $(j^{*}(\boldsymbol{t}), c^{*}(\boldsymbol{t}))$. Particularly, due to the definition of $c^{\dagger}(\boldsymbol{t})$, we have ensured that
	\begin{equation}\label{control1}|  \mathbb{P}( \boldsymbol{X} \in \boldsymbol{t}^{\dagger}  \ \vert \ \boldsymbol{X} \in \boldsymbol{t}) -  \mathbb{P}( \boldsymbol{X} \in \boldsymbol{t}^{*}  \ \vert \ \boldsymbol{X} \in \boldsymbol{t}) | \le n^{-\delta}.
	\end{equation}
	Given the cell $\boldsymbol{t}$ and an arbitrary partition of $\boldsymbol{t}^{'}$ and $\boldsymbol{t}^{''}$, we can define the sample version of (\ref{I2}) as 
	\begin{equation}\begin{split}\label{hatI2}
			\widehat{(II)}_{\boldsymbol{t}, \boldsymbol{t}^{'}} & \coloneqq \frac{\# \{ i: \boldsymbol{x}_{i} \in\boldsymbol{t}^{'}\} }{\# \{ i: \boldsymbol{x}_{i} \in\boldsymbol{t}\}} \left( \sum_{\boldsymbol{x}_{i}\in \boldsymbol{t}^{'}} \frac{y_{i}}{\#\{   i: \boldsymbol{x}_{i} \in \boldsymbol{t}^{'}\} }  -  \sum_{\boldsymbol{x}_{i}\in \boldsymbol{t}} \frac{y_{i}}{\#\{   i: \boldsymbol{x}_{i} \in \boldsymbol{t}\} }    \right)^{2} \\
			&\quad+  \frac{\# \{ i: \boldsymbol{x}_{i} \in\boldsymbol{t}^{''}\} }{\# \{ i: \boldsymbol{x}_{i} \in\boldsymbol{t}\}} \left( \sum_{\boldsymbol{x}_{i}\in \boldsymbol{t}^{''}} \frac{y_{i}}{\#\{   i: \boldsymbol{x}_{i} \in \boldsymbol{t}^{''}\} }  -  \sum_{\boldsymbol{x}_{i}\in \boldsymbol{t}} \frac{y_{i}}{\#\{   i: \boldsymbol{x}_{i} \in \boldsymbol{t}\} }    \right)^{2}.
		\end{split}
	\end{equation}
	Here, we define a summation over an empty set as zero. In particular, $\widehat{(II)}_{\boldsymbol{t}, \boldsymbol{t}^{'}}$ is zero if $\boldsymbol{t}$ contains only one observation.
	
	To complete the proof for the second conclusion, we need Lemmas \ref{T4_1} and \ref{T4_2} in Sections \ref{SecD.1} and \ref{SecD.2}, respectively. Let a constant $c_{1}>0$ with $\alpha_{2} \ge 1 + c_{1}$ be given. It follows from Lemmas \ref{T4_1} and \ref{T4_2} and the definition of sample tree growing rule $\widehat{T}$ that on the event $\boldsymbol{U}_{n}$, there exists some  constant $C>0$ such that for all large $n$, each sequence of sets of available features $\Theta_{1}, \dots, \Theta_{k}$, each $(\boldsymbol{t}_{1}, \dots, \boldsymbol{t}_{k}) \in \widehat{T} (\Theta_{1}, \dots, \Theta_{k})$, and each $1\le l\le k $ with $\mathbb{P}(\boldsymbol{X}\in\boldsymbol{t}_{l-1}) \ge n^{-\delta}$, we have 
	\begin{equation}\begin{split} \label{T4_3}
			& (II)_{\boldsymbol{t}_{l-1}, \boldsymbol{t}_{l} } - (II)_{\boldsymbol{t}_{l-1}, \boldsymbol{t}^{*}} = \underbrace{  (II)_{\boldsymbol{t}_{l-1}, \boldsymbol{t}_{l} } - (II)_{\boldsymbol{t}_{l-1}^{\#}, \boldsymbol{t}_{l}^{\#}}  }_{(i)} +   \underbrace{  (II)_{\boldsymbol{t}_{l-1}^{\#}, \boldsymbol{t}_{l}^{\#}}-  \widehat{(II)}_{\boldsymbol{t}_{l-1}^{\#}, \boldsymbol{t}_{l}^{\#}}  }_{(ii)}  \\
			&\quad\quad+  \underbrace{ \widehat{(II)}_{\boldsymbol{t}_{l-1}^{\#}, \boldsymbol{t}_{l}^{\#}}  -  \widehat{(II)}_{\boldsymbol{t}_{l-1}, \boldsymbol{t}_{l}}  }_{(iii)}+  \underbrace{    \widehat{(II)}_{\boldsymbol{t}_{l-1}, \boldsymbol{t}_{l}}  -  \widehat{(II)}_{\boldsymbol{t}_{l-1}, \boldsymbol{t}_{l}^{\dagger} }  }_{(iv)}  \\
			& \quad\quad +  \underbrace{     \widehat{(II)}_{\boldsymbol{t}_{l-1}, \boldsymbol{t}_{l}^{\dagger} }  -  \widehat{(II)}_{(\boldsymbol{t}_{l-1})^{\#},( \boldsymbol{t}_{l}^{\dagger})^{\#} }  }_{(v)} 
			+ \underbrace{     \widehat{(II)}_{(\boldsymbol{t}_{l-1})^{\#},( \boldsymbol{t}_{l}^{\dagger})^{\#} }  -  (II)_{(\boldsymbol{t}_{l-1})^{\#},( \boldsymbol{t}_{l}^{\dagger})^{\#} }  }_{(vi)} \\
			& \quad\quad+ \underbrace{   (II)_{(\boldsymbol{t}_{l-1})^{\#},( \boldsymbol{t}_{l}^{\dagger})^{\#} }      -  (II)_{\boldsymbol{t}_{l-1}, \boldsymbol{t}_{l}^{\dagger} }  }_{(vii)} + \underbrace{   (II)_{\boldsymbol{t}_{l-1}, \boldsymbol{t}_{l}^{\dagger} }      -  (II)_{\boldsymbol{t}_{l-1}, \boldsymbol{t}_{l}^{*} }  }_{(viii)}\\
			& \quad \ge -C(n^{-\frac{\delta}{2}} + n^{-\frac{\Delta^{'}}{4} + 2s} + n^{-\delta + 2s} )\\
			& \quad \ge -c_{1}n^{-\eta},
		\end{split}
	\end{equation}
	where we suppress the dependence of all the daughter cells on the set of available features. In (\ref{T4_3}), terms (i)--(iii) and (v)--(vii) are bounded in Lemma \ref{T4_2}, while terms (iv) and (viii) are analyzed in Lemma \ref{T4_1}. To apply Lemma~\ref{T4_1}, notice that $\boldsymbol{t}_{l}$ in term (iv) is grown by the (sample) CART-split given $\boldsymbol{t}_{l-1}$ and the available features. The last inequality above is due to all large $n$ and (\ref{T4_0}).

	In view of (\ref{T4_3}) and the definition of $\hat{T}_{\zeta}$, on event $\boldsymbol{U}_{n}$ it holds that for all large $n$, each sequence of sets of available features $\Theta_{1}, \dots, \Theta_{k}$, and each $(\boldsymbol{t}_{1}, \dots, \boldsymbol{t}_{k}) \in \widehat{T}_{\zeta} (\Theta_{1}, \dots, \Theta_{k})$ with $\zeta = n^{-\delta}$, we have for $1\le l \le k$,
	\begin{equation}
		\label{T4_14}\sup_{j \in \Theta_{l}, c}(II)_{\boldsymbol{t}_{l-1}, \boldsymbol{t}_{l-1}(j, c)} \le (II)_{\boldsymbol{t}_{l-1}, \boldsymbol{t}_{l}} + c_{1}n^{-\eta} ,\end{equation}
	where we do not specify to which set of available features $j$ belongs in the supremum for simplicity as in Condition~\ref{tree}. Observe that because of the construction of the semi-sample tree growing rule, we do not require the condition of  $\mathbb{P}(\boldsymbol{X} \in \boldsymbol{t}_{l-1}) \ge n^{-\delta}$ in the statement of (\ref{T4_14}) as we do in (\ref{T4_3}).

	Finally, by the same conditions as for (\ref{T4_14}) and the choices of $\alpha_{1}$ and $c_{1}$, we have that for each $1\le l\le k $, if $(II)_{\boldsymbol{t}_{l-1}, \boldsymbol{t}_{l}} > n^{-\eta}$, it holds that
	\[  \sup_{j \in \Theta_{l}, c}(II)_{\boldsymbol{t}_{l-1}, \boldsymbol{t}_{l-1}(j, c)} \le \alpha_{2} (II)_{\boldsymbol{t} _{l-1}, \boldsymbol{t}_{l}},  \]
	and if $(II)_{\boldsymbol{t}_{l-1}, \boldsymbol{t}_{l}} \le  n^{-\eta}$, it holds that 
	\[  \sup_{j\in \Theta_{l}, c}(II)_{\boldsymbol{t}_{l-1}, \boldsymbol{t}_{l-1}(j, c)} \le \alpha_{2} n^{-\eta},  \]
	which concludes the proof of Theorem~\ref{theorem4}.

	\subsection{Proof of Theorem~\ref{theorem5}} \label{SecA.6}
	To outline the proof idea, let us rewrite the expectation and obtain a closed-form expression below. From (\ref{new.eq.015}), we can see that 
	{\small\begin{equation}\begin{split}\label{E3}
			& \mathbb{E} \left\{\sup_{T}  \mathop{{}\mathbb{E}}    \left[     \Big(m_{ T^{\#}}^{*}(\boldsymbol{\Theta}_{1}, \dots, \boldsymbol{\Theta}_{k}, \boldsymbol{X}) - \widehat{m}_{T^{\#}}(\boldsymbol{\Theta}_{1}, \dots, \boldsymbol{\Theta}_{k}, \boldsymbol{X}, \mathcal{X}_{n} ) \Big)^{2} \ \Big\vert \ \boldsymbol{\Theta}_{1:k}, \mathcal{X}_{n} \right] \right\}\\
			& = \mathbb{E}\Big[\sup_{T}\sum_{ (\boldsymbol{t}_{1}, \dots, \boldsymbol{t}_{k}) \in T^{\#} (\Theta_{1}, \dots, \Theta_{k}) }  \mathbb{P}(\boldsymbol{X}\in\boldsymbol{t}_{k}) \Big(  \mathbb{E}(m(\boldsymbol{X}) \ \vert \ \boldsymbol{X} \in \boldsymbol{t}_{k}) - \frac{\sum_{i\in \{i : \boldsymbol{x}_{i} \in \boldsymbol{t}_{k }\}} y_{i}} {\#\{i : \boldsymbol{x}_{i} \in \boldsymbol{t}_{k }\}}  \Big)^{2}\Big].
		\end{split}
	\end{equation}}
	To have a clearer picture of how to apply Hoeffding's inequality to our case, we utilize an even larger upper bound to get rid of $\Theta_{1}, \dots, \Theta_{k}$ (feature restrictions). Observe that the summation on the RHS of (\ref{E3}) is  over the partition $\{\boldsymbol{t}_{k} : (\boldsymbol{t}_{1}, \dots, \boldsymbol{t}_{k}) \in T^{\#}(\Theta_{1}, \dots, \Theta_{k})\}$. Thus, the RHS of (\ref{E3}) can be further bounded by considering the supremum of partitions over all $T$ and $\Theta_{1}, \dots, \Theta_{k}$; we use $T_{k}$ to denote a level $k$ tree such that $\{\boldsymbol{t}_{k}: (\boldsymbol{t}_{1}, \dots, \boldsymbol{t}_{k}) \in T_{k}^{\#}\}$ is an instance of such a partition to simplify the notation.
	
	 Then it follows that 
	\begin{equation}
		\begin{split}\label{E7}
			& \textnormal{The RHS of (\ref{E3})}\\ 
			& \le \mathbb{E}\left[\sup_{T_{k}^{\#} }\sum_{ (\boldsymbol{t}_{1}, \dots, \boldsymbol{t}_{k}) \in T_{k}^{\#} } \mathbb{P}(\boldsymbol{X}\in\boldsymbol{t}_{k}) \left(  \mathbb{E}(m(\boldsymbol{X}) \ \vert \ \boldsymbol{X} \in \boldsymbol{t}_{k}) - \frac{\sum_{i\in \{i : \boldsymbol{x}_{i} \in \boldsymbol{t}_{k }\}} y_{i}} {\#\{i : \boldsymbol{x}_{i} \in \boldsymbol{t}_{k }\}}  \right)^{2}\right].
		\end{split}
	\end{equation}
	Notice that there is no $\Theta$ on the RHS of (\ref{E7}), and hence the outer expectation is only over the sample.  In what follows, we bound the RHS of (\ref{E7}). The argument is based on the event $\mathcal{A}_{1}(k, \Delta)$ introduced in Lemma~\ref{CI1}  in Section \ref{SecE.1}, which in turn relies on Hoeffding's inequality. On such event, for each cell $\boldsymbol{t}$ on the grid constructed with at most $k$ cuts and satisfying $\mathbb{P}(\boldsymbol{X} \in \boldsymbol{t}) \ge n^{\Delta -1}$, the deviation between its sample and 
	population conditional means can be controlled. 
	
	Let $\Delta>0$, $\Delta^{'}>0$, and sufficiently small $0 < s <\frac{1}{4}$ be given such that $\frac{1}{2} <\Delta^{'}< \Delta < 1$ and
	\begin{equation}\label{eta1}
		\eta < \min\{1 - c - \Delta - 2s , \frac{\Delta^{'}}{2} \},
	\end{equation}
	where $\eta, c$ are given by Theorem~\ref{theorem5}. Moreover, let $\delta$ be such that $\delta -2s >  \frac{\Delta^{'}}{2}$. Assume that the moment condition parameter $q$ in Condition~\ref{ME} is sufficiently large with $q > \frac{5 + 2\delta}{s}$ and define 
	\[\mathcal{E}_{n,k}\coloneqq\sup_{T_{k}^{\#}}\sum_{(\boldsymbol{t}_{1}, \dots, \boldsymbol{t}_{k}) \in T_{k}^{\#}} \mathop{{}\mathbb{P}}(\boldsymbol{X} \in \boldsymbol{t}_{k})   \left(\mathop{{}\mathbb{E}}(m(\boldsymbol{X})\ \vert \ \boldsymbol{X}\in\boldsymbol{t}_{k}) - \sum_{\boldsymbol{x}_{i}\in\boldsymbol{t}_{k}} \frac{y_{i}}{ \#\{ i:\boldsymbol{x}_{i} \in \boldsymbol{t}_{k}\}}\right)^{2}.\]
	Then the RHS of (\ref{E7})  can be rewritten as 
	\begin{equation}\begin{split}\label{E4}
			\mathbb{E}\left(\mathcal{E}_{n,k} \boldsymbol{1}_{\cup_{i= 1}^{n} \{|\varepsilon_{i}| > n^{s} \}} \right) + \mathop{{}\mathbb{E}}\left(\mathcal{E}_{n,k} \boldsymbol{1}_{\cap_{i= 1}^{n} \{|\varepsilon_{i}| \le n^{s} \}} \right).
		\end{split}
	\end{equation}

	Let us bound the first term in (\ref{E4}). By Condition~\ref{BO}, which requires $\sup_{\boldsymbol{c} \in [0, 1]^{p}}|m(\boldsymbol{c})|\le M_{0}$, a simple upper bound for $\mathcal{E}_{n, k}$ is given by 
	\begin{equation}\label{u1}
		\mathcal{E}_{n, k} \le \Big(M_{0} +\sum_{i=1}^{n} |y_{i}|\Big)^{2}
	\end{equation}
	for each $n\ge 1$ and $k\ge 1$. 
	It follows from the Cauchy--Schwarz inequality, (\ref{u1}), Minkowski's inequality, Conditions~\ref{ME}--\ref{BO}, and the definition of $\delta$ that there exists some constant $C > 0$ such that for each $n \ge 1$ and $k\ge 1$,
	\begin{equation}\label{upper1}
		\begin{split}
			\mathop{{}\mathbb{E}}\left(\mathcal{E}_{n,k} \boldsymbol{1}_{\cup_{i= 1}^{n} \{|\varepsilon_{i}| > n^{s} \}} \right) & \le \sqrt{\mathop{{}\mathbb{E}} (\mathcal{E}_{n,k}^{2}) } \sqrt{\mathop{{}\mathbb{P}} \Big(\cup_{i= 1}^{n} \{|\varepsilon_{i}| > n^{s} \}\Big) } \\
			&  \le \sqrt{\mathop{{}\mathbb{E}} \Big( M_{0} + \sum_{i=1}^{n}|y_{i} |\Big)^{4} } \sqrt{\sum_{i=1}^{n}\mathop{{}\mathbb{P}} \Big( |\varepsilon_{i}| > n^{s} \Big) }\\
			& \le \Big(M_{0} + \sum_{i=1}^{n} \Big(\mathop{{}\mathbb{E}}|y_{i}|^{4}\Big)^{1/4}   \Big)^{2} \sqrt{\sum_{i=1}^{n}\mathop{{}\mathbb{P}} \Big( |\varepsilon_{i}| > n^{s} \Big) }\\
			& \le \Big((n + 1) M_{0} + n  (\mathbb{E}|\varepsilon_{1}|^{4})^{1/4}   \Big)^{2} \sqrt{\sum_{i=1}^{n}\mathop{{}\mathbb{P}} \Big( |\varepsilon_{i}| > n^{s} \Big) }\\
			& \le C n^{-\delta}.
		\end{split}
	\end{equation}
	
	We next deal with the second term in (\ref{E4}).  Let us define for each $\boldsymbol{t}$,
	\begin{equation*}\begin{split} 
			E_{\boldsymbol{t}, n}& \coloneqq \mathop{{}\mathbb{E}}(m(\boldsymbol{X})\ \vert \ \boldsymbol{X}\in\boldsymbol{t}) - \sum_{\boldsymbol{x}_{i}\in\boldsymbol{t}} \frac{y_{i}}{ \#\{ i:\boldsymbol{x}_{i} \in \boldsymbol{t}\}},\\			
			\mathcal{E}_{n,k}^{\dagger} & \coloneqq \sup_{T_{k}^{\#}}\sum_{ \substack{(\boldsymbol{t}_{1}, \dots, \boldsymbol{t}_{k}) \in T_{k}^{\#}, \\ \mathop{{}\mathbb{P}}(\boldsymbol{X} \in \boldsymbol{t}_{k}) \ge   n^{\Delta-1}} } \mathbb{P}(\boldsymbol{X} \in \boldsymbol{t}_{k})   \Big( E_{\boldsymbol{t}_{k}, n}  \Big)^{2}.
	\end{split}\end{equation*}
	We can make three useful observations:
	\begin{enumerate}\setlength{\itemsep}{-10pt} %
		\item[1)] Under Condition~\ref{BO} and on the event $\cap_{i= 1}^{n} \{|\varepsilon_{i}| \le n^{s} \}$, it holds that for each $\boldsymbol{t}$, all large $n$, and each $k \ge 1$, 
		$$\left(E_{\boldsymbol{t}, n}\right)^{2} \le 2n^{2s}.$$
		
		\item[2)] On the event $\mathcal{A}_{1}(k, \Delta)$, it holds that for all large $n$ and each $1 \le k \le c\log{n}$,
		\[\mathcal{E}_{n, k}^{\dagger} \le n^{-\frac{\Delta^{'}}{2}}.\]
		\item[3)] For each $n\ge 1$ and $k\ge 1$,
		\[\mathcal{E}_{n, k}^{\dagger} \le \sup_{\boldsymbol{t}} (E_{\boldsymbol{t}, n})^{2},\]
		where the supremum is over all possible cells.
	\end{enumerate}
	By observation 1) above and the definition of $\mathcal{E}_{n, k}^{\dagger}$, we have that for all large $n$ and each $1\le k \le c\log_{2}{n}$, 
	\begin{equation}
		\begin{split}\label{E5}
			& \mathbb{E} \left(\mathcal{E}_{n, k} \boldsymbol{1}_{\cap_{i= 1}^{n} \{|\varepsilon_{i}| \le n^{s} \} } \right)  \\
			& = \mathbb{E}\bigg(
			\sup_{T_{k}^{\#}}\bigg( \sum_{ \substack{(\boldsymbol{t}_{1}, \dots, \boldsymbol{t}_{k}) \in T_{k}^{\#}, \\ \mathop{{}\mathbb{P}}(\boldsymbol{X} \in \boldsymbol{t}_{k}) < n^{\Delta-1}} } \mathbb{P}(\boldsymbol{X} \in \boldsymbol{t}_{k})   \Big( E_{\boldsymbol{t}_{k}, n}  \Big)^{2} \\
			&\qquad\qquad+  \sum_{ \substack{(\boldsymbol{t}_{1}, \dots, \boldsymbol{t}_{k}) \in T_{k}^{\#}, \\ \mathbb{P}(\boldsymbol{X} \in \boldsymbol{t}_{k}) \ge   n^{\Delta-1}} } \mathbb{P}(\boldsymbol{X} \in \boldsymbol{t}_{k})   \Big( E_{\boldsymbol{t}_{k}, n}  \Big)^{2} \bigg)
			\boldsymbol{1}_{\cap_{i= 1}^{n} \{|\varepsilon_{i}| \le n^{s} \} } \bigg) \\
			& \le 2n^{c + \Delta + 2s -1 } + \mathbb{E}\left(\mathcal{E}_{n, k}^{\dagger}    \boldsymbol{1}_{\cap_{i= 1}^{n} \{|\varepsilon_{i}| \le n^{s} \} }\right),
		\end{split}
	\end{equation}
	where  the first term on the RHS of the inequality follows from the fact that the summation is over at most $2^{c\log_{2}{n}}$ cells.
	
	From the three observations above and Lemma~\ref{CI1} (with $\kappa$ in Lemma~\ref{CI1} set to $\delta$), we can deduce that for all large $n$ and each $1\le k\le c\log{n}$,
	\begin{equation}\begin{split}\label{E6}
			&\mathbb{E}  \left(\mathcal{E}_{n, k}^{\dagger}    \boldsymbol{1}_{\cap_{i= 1}^{n} \{|\varepsilon_{i}| \le n^{s} \} }\right) \\& =  \mathbb{E}\left(\mathcal{E}_{n, k}^{\dagger}\boldsymbol{1}_{\cap_{i= 1}^{n} \{|\varepsilon_{i}| \le n^{s} \} }  \boldsymbol{1}_{\big(\mathcal{A}_{1}(k, \Delta)\big)^{c} } \right)
			+ \mathbb{E} \left(\mathcal{E}_{n, k}^{\dagger} \boldsymbol{1}_{\cap_{i= 1}^{n} \{|\varepsilon_{i}| \le n^{s} \} } \boldsymbol{1}_{\mathcal{A}_{1}(k, \Delta)} \right) \\
			& \le  \mathbb{E} \left(\sup_{\boldsymbol{t}}(E_{\boldsymbol{t}, n})^{2} \boldsymbol{1}_{\cap_{i= 1}^{n} \{|\varepsilon_{i}| \le n^{s} \} }  \boldsymbol{1}_{\big(\mathcal{A}_{1}(k, \Delta)\big)^{c} } \right)
			+ \mathop{{}\mathbb{E}}\left(\mathcal{E}_{n, k}^{\dagger} \boldsymbol{1}_{\mathcal{A}_{1}(k, \Delta)} \right) \\
			& \le 2n^{2s} \mathop{{}\mathbb{P}}\left(\Big(\mathcal{A}_{1}(k, \Delta)\Big)^{c}\right)
			+ \mathop{{}\mathbb{E}}\left(\mathcal{E}_{n, k}^{\dagger}  \boldsymbol{1}_{\mathcal{A}_{1}(k, \Delta)} \right) \\
			& \le 3n^{-\frac{\Delta^{'}}{2}},
	\end{split}\end{equation}
where for the last inequality, recall that $\delta - 2s > \frac{\Delta^{'}}{2}$.

	Then in light of (\ref{eta1}) and (\ref{E5})--(\ref{E6}), it holds that for all large $n$ and each $1 \le k \le c\log{n}$, 
	\begin{equation}\label{upper2}
		\mathop{{}\mathbb{E}}\left(\mathcal{E}_{n, k} \boldsymbol{1}_{\cap_{i= 1}^{n} \{|\varepsilon_{i}| \le n^{s} \}} \right) \le  \frac{n^{-\eta}}{2}. \end{equation}
	Therefore, combining (\ref{E3})--(\ref{E7}), (\ref{E4}), (\ref{upper1}), (\ref{upper2}), and that $\delta>\eta$ completes the proof of Theorem~\ref{theorem5}.
	

	\section{Proofs of Corollaries~\ref{corollary1}--\ref{corollary2}, Proposition~\ref{proposition2}, and some key lemmas}
	\label{SecC}
	
	\subsection{Proof of Corollary~\ref{corollary1}}
	\label{SecC.1}
	The arguments for showing \eqref{main4} and \eqref{main5} in Corollary~\ref{corollary1} can be found in \eqref{t.new.eq.1} and \eqref{t.new.eq.3} in Section~\ref{SecA.2}, respectively.
	
	\subsection{Proof of Corollary~\ref{corollary2}}
	\label{SecC.5}

	First, we set $\eta = \frac{1}{8} - \epsilon$, $\delta = \frac{1}{4} - \epsilon$, $c = \frac{1}{8}$, and $k= \floor{\frac{1}{8}\log_{2}(n)}$ in Theorem~\ref{theorem1}.    Since $e^x\ge (1-\frac{x}{n})^{n}$ for $0\le x\le n$,  it holds that  for all large $n$,	$$(1-\gamma_{0}(\alpha_{1}\alpha_{2})^{-1})^{k}\le e^{-\frac{k\gamma_{0}}{\alpha_{1}\alpha_{2}} }\le  2^{ -\frac{\frac{1}{8}\log_{2}(e)\gamma_{0}}{\alpha_{1}\alpha_{2}}\log_{2}(n) }\times e^{\frac{\gamma_{0}}{\alpha_{1}\alpha_{2}} } = n^{-\frac{\log_{2}(e)}{8}\times\frac{\gamma_{0}}{\alpha_{1}\alpha_{2}}}\times e^{\frac{\gamma_{0}}{\alpha_{1}\alpha_{2}} }.$$
	
	By this, Theorem~\ref{theorem1}, we can show that there exist $N>0$ and $C>0$ such that for any $m(\boldsymbol{X})$ satisfies Condition~\ref{P} with $\alpha_{1}$ and all $n\ge N$,
	\begin{equation}\begin{split}
			\label{corollary2.1}
			& \mathbb{E} \Big(m(\boldsymbol{X}) - \mathbb{E}\Big(\widehat{m}_{ \widehat{T}}(\boldsymbol{\Theta}_{1:k}, \boldsymbol{X}, \mathcal{X}_{n} )\ \Big\vert \ \boldsymbol{X}, \mathcal{X}_{n} \Big) \Big)^{2} \le C\left(n^{-\frac{1}{8} +  \epsilon} + n^{-\frac{\log_{2}(e)}{8}\times\frac{\gamma_{0}}{\alpha_{1}\alpha_{2}}}\right).
	\end{split}\end{equation}
	
	To obtain \eqref{corollary2.1}, we note that the results in Lemmas~\ref{lemma1}--\ref{lemma3} and Theorems~\ref{theorem4}--\ref{theorem5} can be shown to be uniform over all $m(\boldsymbol{X})$ satisfying the respective requirements of these results. Particularly, the result in Theorem~\ref{theorem3} is already for all $m(\boldsymbol{X}) \in \textnormal{SID}(\alpha_{1})$. For simplicity, we omit the detailed analysis for \eqref{corollary2.1}. 
	
	By \eqref{corollary2.1} and the definition of SID($\alpha_{1}$), we conclude the desired result.

	\subsection{Proof of Proposition~\ref{proposition2}}
	\label{SecC.6}

	Let us deal with the first assertion first. A direct calculation shows that for every $\boldsymbol{t}=t_{1} \times \dots \times t_{p}$,
	$$	\begin{cases}
		\sup_{j\in \{1, \dots, p\}, c\in t_{j}}(II)_{\boldsymbol{t}, \boldsymbol{t}(j,c)} = \frac{\beta^2}{4},  &\textnormal{ if } \  \ \textnormal{Var}(m(\boldsymbol{X})|\boldsymbol{X} \in \boldsymbol{t})  >0,\\
		\sup_{j\in \{1, \dots, p\}, c\in t_{j}}(II)_{\boldsymbol{t}, \boldsymbol{t}(j,c)} = 0,   &\textnormal{ if } \ \ \textnormal{Var}(m(\boldsymbol{X})|\boldsymbol{X} \in \boldsymbol{t})  =0,
	\end{cases} $$
	and that $\textnormal{Var}(m(\boldsymbol{X})|\boldsymbol{X} \in \boldsymbol{t}) \le  s^* \frac{\beta^2}{4}$, which concludes that $m(\boldsymbol{X})\in  \textnormal{SID}(s^*)$.

	Next, we proceed to deal with the second assertion, and we begin with the bias-variance decomposition upper bound in \eqref{upper.new.1} and some details for  CART in \eqref{new.cart.split.1} below. By Jensen's inequality and triangular inequality,
	\begin{equation}\begin{split}\label{upper.new.1}
			& \mathop{{}\mathbb{E}} \Big(m(\boldsymbol{X}) - \mathop{{}\mathbb{E}}\Big(\widehat{m}_{ \widehat{T}}(\boldsymbol{\Theta}_{1}, \dots, \boldsymbol{\Theta}_{k}, \boldsymbol{X}, \mathcal{X}_{n} )\ \Big\vert \ \boldsymbol{X}, \mathcal{X}_{n} \Big) \Big)^{2} \\
			& \le 2\Big( \mathbb{E} \Big(m(\boldsymbol{X})  -m_{ \widehat{T} }^{*} (\boldsymbol{\Theta}_{1}, \dots, \boldsymbol{\Theta}_{k} , \boldsymbol{X})\Big)^{2} \\
			& \qquad+  \mathbb{E}\Big(m_{ \widehat{T} }^{*} (\boldsymbol{\Theta}_{1}, \dots, \boldsymbol{\Theta}_{k} , \boldsymbol{X})- \widehat{m}_{ \widehat{T}}(\boldsymbol{\Theta}_{1}, \dots, \boldsymbol{\Theta}_{k} , \boldsymbol{X}, \mathcal{X}_{n} ) \Big)^{2}  \Big).
	\end{split}\end{equation}
	
	As have mentioned in the remark before Proposition~\ref{proposition2}, for each feature restriction $\Theta$ and cell $\boldsymbol{t}$, the sample CART split in the case with binary features is $(\widehat{j}, 1)$ with
	\begin{equation}
	    \label{new.cart.split.1}
	    \widehat{j} \coloneqq \arg\max_{j\in \Theta} \widehat{(II)}_{\boldsymbol{t}, \boldsymbol{t}(j,1)},
	\end{equation} 
	where for $\boldsymbol{t}$ and its two daughter cells $\boldsymbol{t}_{1}, \boldsymbol{t}_{2}$,
 	\begin{equation*}
			\begin{split}
				\widehat{(II)}_{\boldsymbol{t}, \boldsymbol{t}_{1}} &= \left( \frac{\sum_{i=1}^{n}\boldsymbol{1}_{\boldsymbol{x}_{i} \in \boldsymbol{t}_{1}}}{ \sum_{i=1}^{n}\boldsymbol{1}_{\boldsymbol{x}_{i} \in \boldsymbol{t}} }\right) \left(\frac{\sum_{i=1}^{n}\boldsymbol{1}_{\boldsymbol{x}_{i} \in \boldsymbol{t}_{1}} (m(\boldsymbol{x}_{i}) + \varepsilon_{i}) }{\sum_{i=1}^{n}\boldsymbol{1}_{\boldsymbol{x}_{i} \in \boldsymbol{t}_{1}}} - \frac{\sum_{i=1}^{n}\boldsymbol{1}_{\boldsymbol{x}_{i} \in \boldsymbol{t}} (m(\boldsymbol{x}_{i}) + \varepsilon_{i}) }{\sum_{i=1}^{n}\boldsymbol{1}_{\boldsymbol{x}_{i} \in \boldsymbol{t}}}  \right)^2 \\
				& +\left( \frac{\sum_{i=1}^{n}\boldsymbol{1}_{\boldsymbol{x}_{i} \in \boldsymbol{t}_{2}}}{ \sum_{i=1}^{n}\boldsymbol{1}_{\boldsymbol{x}_{i} \in \boldsymbol{t}} }\right) \left(\frac{\sum_{i=1}^{n}\boldsymbol{1}_{\boldsymbol{x}_{i} \in \boldsymbol{t}_{2}} (m(\boldsymbol{x}_{i}) + \varepsilon_{i}) }{\sum_{i=1}^{n}\boldsymbol{1}_{\boldsymbol{x}_{i} \in \boldsymbol{t}_{2}}} - \frac{\sum_{i=1}^{n}\boldsymbol{1}_{\boldsymbol{x}_{i} \in \boldsymbol{t}} (m(\boldsymbol{x}_{i}) + \varepsilon_{i}) }{\sum_{i=1}^{n}\boldsymbol{1}_{\boldsymbol{x}_{i} \in \boldsymbol{t}}}  \right)^2,
			\end{split}
		\end{equation*}
	and the ties are broken randomly; the definition of splits here is the same as the one given in the remark before Proposition~\ref{proposition2}. 
	
	Additional remarks for splitting in this case with binary features are as follows. Due to the definition that $\frac{0}{0}=0$, for any trivial split $(j, c)$, which is a split gives a daughter cell $\boldsymbol{t}^{'}$ with $\mathbb{P}(\boldsymbol{X} \in \boldsymbol{t}^{'}) = 0$, it holds that  $\widehat{(II)}_{\boldsymbol{t}, \boldsymbol{t}(j,c)} = 0$ and $(II)_{\boldsymbol{t}, \boldsymbol{t}(j,c)} = 0$. If all coordinates in some $\Theta$ have already been split, the CART stops splitting; to have well-defined level $k$ trees, we allow CART to make trivial splits that give empty sets as daughter cells, and we define daughter cells of an empty set to be two empty sets. As a result, $\widehat{T}(\Theta_{1:k})$ may contain empty end cells. 
	
	To bound the two terms on the RHS of \eqref{upper.new.1}, our first step is to show that the the sample CART split $(\widehat{j}, 1)$ for each cell $\boldsymbol{t}$ is ``very close to'' its theoretical CART split counterpart $(j^*, c^*) = \arg\sup_{j\in \Theta, c\in t_{j}} (II)_{\boldsymbol{t}, \boldsymbol{t}(j,c)}$, in the sense as in iii) of Lemma~\ref{upper.lemma.1} below. To get Lemma~\ref{upper.lemma.1}, we need an event $U_{n}$ defined as follows.

	Denote by $G_{n}$ the collection of all end cells of trees of level lower than $\log_{2}(n)$, and the cells are formed by using the splits $(1, 1), \dots (p,1)$. A direct calculation shows that 
	\begin{equation}\label{upper.new.2}
		\# G_{n}\le \sum_{k=0}^{\floor{\log_{2}(n)}} {p\choose k} 2^k\le 1 + p^{\log_{2}(n)} n\log_{2}(n).
	\end{equation}

	Define events
	\begin{equation*}
		\begin{split}
			Q_{1}(\boldsymbol{t}) &= \Big\{\Big|\mathbb{E} ( m(\boldsymbol{X})\boldsymbol{1}_{\boldsymbol{X} \in \boldsymbol{t}}) - n^{-1}\sum_{i=1}^{n} \boldsymbol{1}_{\boldsymbol{x}_{i} \in \boldsymbol{t}} (m(\boldsymbol{x}_{i}) + \varepsilon_{i})\Big|\\
			& \qquad\qquad\le \big(\log_{e}({\max\{n, p\}})\big)^{\frac{2+\epsilon}{2}}\sqrt{2M_{0}^2+M_{\varepsilon}^2}\sqrt{\frac{\mathbb{P}(\boldsymbol{X} \in \boldsymbol{t})}{n}}\Big\},\\
			Q_{2}(\boldsymbol{t}) &= \Big\{\Big|\mathbb{P}(\boldsymbol{X}\in\boldsymbol{t}) - n^{-1}\sum_{i=1}^{n} \boldsymbol{1}_{\boldsymbol{x}_{i} \in \boldsymbol{t}}  \Big| \le \big(\log_{e}({\max\{n, p\}})\big)^{\frac{2+\epsilon}{2}}\sqrt{\frac{\mathbb{P}(\boldsymbol{X} \in \boldsymbol{t})}{n}}\Big\},\\
			U_{n} &= \big(\cap_{\boldsymbol{t}\in G_{n}} Q_{1}(\boldsymbol{t}) \big)\cap \big(\cap_{\boldsymbol{t}\in G_{n}} Q_{2}(\boldsymbol{t})\big).
		\end{split}
	\end{equation*}

	Note that $U_{n}$ depends only on the training data $\mathcal{X}_{n}$ and is independent of $\boldsymbol{X}$, which is the independent copy of $\boldsymbol{x}_{1}$. It holds that 
	\begin{equation}
		\begin{split}			
			\label{upper.new.4}
			\mathbb{P}(U_{n}^{c}) = o(n^{-1}),
		\end{split}
	\end{equation}
	whose proof is deferred to the end of the proof of Proposition~\ref{proposition2}.

	With event $U_{n}$, we introduce Lemma~\ref{upper.lemma.1} below, whose proof is also deferred to the end of the proof of Proposition~\ref{proposition2}. 
	\begin{lemma}\label{upper.lemma.1}
		\textnormal{i)} For every $\boldsymbol{t}$ and every split $(j, c)$, it is either $(II)_{\boldsymbol{t}, \boldsymbol{t}(j, c)} = \frac{\beta^2}{4}$ or $(II)_{\boldsymbol{t}, \boldsymbol{t}(j, c)} = 0$. In addition, on $U_{n}$, for all large $n$, it holds that for every end cell $\boldsymbol{t}$ of trees of level $k\le \eta\log_{2}(n)-1$,
		\begin{enumerate}
			\item[\textnormal{ii)}]  For every $1\le j\le p$, 
			\begin{equation*}
			    \begin{split}
					& \left|\left(\frac{\sum_{i=1}^{n}\boldsymbol{1}_{\boldsymbol{x}_{i} \in \boldsymbol{t}}}{n}\right)\widehat{(II)}_{\boldsymbol{t}, \boldsymbol{t}(j,1)} - \mathbb{P}(\boldsymbol{X} \in \boldsymbol{t}) (II)_{\boldsymbol{t}, \boldsymbol{t}(j, 1)}\right| \\
					& \le 18(M_{\varepsilon} + 2M_{0} )^2 \big(\log_{e}({\max\{n, p\}})\big)^{\frac{2+\epsilon}{2}}\sqrt{\frac{\mathbb{P}(\boldsymbol{X} \in \boldsymbol{t})}{n}}.
				\end{split}
			\end{equation*}
			\item[\textnormal{iii)}] For each feature restriction $\Theta$ and the sample CART split $(\widehat{j}, 1)$ given in \eqref{new.cart.split.1},
			$$(II)_{\boldsymbol{t}, \boldsymbol{t}(\widehat{j}, 1)}= \sup_{j\in \Theta, c\in t_{j}}(II)_{\boldsymbol{t}, \boldsymbol{t}(j, c)}.$$
		\end{enumerate}
	\end{lemma}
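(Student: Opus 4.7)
The plan is to prove the three parts in the order stated, leaning on the event $U_{n}$ to pass from sample to population quantities. For part i), I would compute $(II)_{\boldsymbol{t}, \boldsymbol{t}(j,c)}$ directly. Write $\boldsymbol{t} = t_{1}\times\cdots\times t_{p}$. If coordinate $j$ has already been fixed in $\boldsymbol{t}$ (so $t_{j}\subset\{0\}$ or $t_{j}\subset\{1\}$), or if $j > s^{*}$ (inactive), or if the split threshold $c$ produces one empty daughter cell, then the two daughter cells inherit the same conditional mean of $m(\boldsymbol{X})$ as $\boldsymbol{t}$, so $(II)=0$. Otherwise $c\in(0,1]$ with $j\leq s^{*}$ and $t_{j}=[0,1]$: by independence of the coordinates, $X_{j}\mid\boldsymbol{X}\in\boldsymbol{t}$ is still Bernoulli$(1/2)$ and is independent of the other unconstrained coordinates, so the only term of $m$ whose conditional mean shifts across the split is $\beta\boldsymbol{1}_{X_{j}=1}$. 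A one-line calculation then gives $(II)_{\boldsymbol{t},\boldsymbol{t}(j,c)} = \tfrac{1}{2}(\beta/2)^{2}+\tfrac{1}{2}(\beta/2)^{2}=\beta^{2}/4$.

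For part ii), I would use the algebraic identity
\begin{equation*}
\widehat{\mathbb{P}}(\boldsymbol{t})\,\widehat{(II)}_{\boldsymbol{t},\boldsymbol{t}(j,1)} \;=\; \frac{\widehat{a}_{1}^{\,2}}{\widehat{p}_{1}} + \frac{\widehat{a}_{2}^{\,2}}{\widehat{p}_{2}} - \frac{(\widehat{a}_{1}+\widehat{a}_{2})^{2}}{\widehat{p}_{1}+\widehat{p}_{2}},
\end{equation*}
where $\widehat{p}_{i} = n^{-1}\sum_{l}\boldsymbol{1}_{\boldsymbol{x}_{l}\in\boldsymbol{t}(j,1)_{i}}$ and $\widehat{a}_{i} = n^{-1}\sum_{l}(m(\boldsymbol{x}_{l})+\varepsilon_{l})\boldsymbol{1}_{\boldsymbol{x}_{l}\in\boldsymbol{t}(j,1)_{i}}$, and with the completely analogous identity in the population (replacing $\widehat{p}_{i},\widehat{a}_{i}$ by $p_{i}=\mathbb{P}(\boldsymbol{X}\in\boldsymbol{t}(j,1)_{i})$ and $a_{i}=\mathbb{E}[m(\boldsymbol{X})\boldsymbol{1}_{\boldsymbol{X}\in\boldsymbol{t}(j,1)_{i}}]$, noting that $\mathbb{E}\varepsilon=0$). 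Since each daughter cell of $\boldsymbol{t}$ still lies in $G_{n}$, both $Q_{1}(\cdot)$ and $Q_{2}(\cdot)$ hold for it on $U_{n}$, giving $|\widehat{a}_{i}-a_{i}|\leq\sqrt{2M_{0}^{2}+M_{\varepsilon}^{2}}\,\big(\log_{e}\max\{n,p\}\big)^{(2+\epsilon)/2}\sqrt{p_{i}/n}$ and $|\widehat{p}_{i}-p_{i}|\leq\big(\log_{e}\max\{n,p\}\big)^{(2+\epsilon)/2}\sqrt{p_{i}/n}$. With the lower bound $p_{i}\ge 2^{-\eta\log_{2}n}= n^{-\eta}$ (each meaningful split halves the uniform measure, and daughter cells of $\boldsymbol{t}$ lie at level $\leq\eta\log_{2}n$) combined with $(\log_{e}p)^{2+\epsilon}=o(n^{1-\eta})$, the denominators $\widehat{p}_{i}$ and $\widehat{p}$ stay uniformly bounded below by $p_{i}/2$ for all large $n$. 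Substituting these bounds term-by-term into the difference of the two identities, and uniformly controlling $|\widehat{a}_{i}|\leq(M_{0}+M_{\varepsilon})\widehat{p}_{i}$ and $|a_{i}|\leq M_{0}p_{i}$, yields the claimed bound with constant $18(M_{\varepsilon}+2M_{0})^{2}$.

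For part iii), I combine i) and ii). Since every non-empty end cell at level $k\leq\eta\log_{2}n-1$ has $\mathbb{P}(\boldsymbol{X}\in\boldsymbol{t})\ge 2n^{-\eta}$, and since the error bound from ii) is of order $(\log_{e}\max\{n,p\})^{(2+\epsilon)/2}\sqrt{\mathbb{P}(\boldsymbol{X}\in\boldsymbol{t})/n}$, the assumption $(\log_{e}p)^{2+\epsilon}=o(n^{1-\eta})$ forces the error to be strictly smaller than $\mathbb{P}(\boldsymbol{X}\in\boldsymbol{t})\cdot\beta^{2}/8$ for all large $n$. By part i), $\mathbb{P}(\boldsymbol{X}\in\boldsymbol{t})(II)_{\boldsymbol{t},\boldsymbol{t}(j,1)}$ takes only two values across $j\in\Theta$, namely $\mathbb{P}(\boldsymbol{X}\in\boldsymbol{t})\beta^{2}/4$ and $0$, so the ordering of $\widehat{\mathbb{P}}(\boldsymbol{t})\widehat{(II)}_{\boldsymbol{t},\boldsymbol{t}(j,1)}$ over $j\in\Theta$ must coincide with that of the population impurity decrease; hence the sample CART selection $\widehat{j}$ attains the population supremum, and iii) follows (the empty-cell case is vacuous since both sides equal zero). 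The main obstacle I anticipate is precisely the careful accounting in part ii): the three ratios in the algebraic identity each involve products and quotients of the $\widehat{a}_{i}$'s and $\widehat{p}_{i}$'s, and one must expand these into differences that are individually controlled by the $Q_{1},Q_{2}$ bounds without inflating the leading constant beyond $18(M_{\varepsilon}+2M_{0})^{2}$.
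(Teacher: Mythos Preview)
Your proposal is correct and, for parts i) and iii), coincides with the paper's argument: a direct computation for i), and for iii) the observation that the population impurity decrease takes only the two values $0$ and $\beta^{2}/4$, so the concentration bound from ii) combined with $\mathbb{P}(\boldsymbol{X}\in\boldsymbol{t})\ge n^{-\eta}$ and $(\log_{e}p)^{2+\epsilon}=o(n^{1-\eta})$ forces the sample argmax to land on a coordinate with population value $\beta^{2}/4$.

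For part ii) you take a genuinely different route. The paper keeps the impurity decrease in centered form, writing $\widehat{\mathbb{P}}(\boldsymbol{t})\widehat{(II)}=\widehat{p}_{1}(\bar{y}_{1}-\bar{y})^{2}+\widehat{p}_{2}(\bar{y}_{2}-\bar{y})^{2}$ and comparing each summand to its population analogue $p_{i}(\mu_{i}-\mu)^{2}$ by first inserting the mixed term $\widehat{p}_{i}(\mu_{i}-\mu)^{2}$, then invoking $a^{2}-b^{2}=(a-b)(a+b)$ together with bounds on $|\bar{y}_{i}-\mu_{i}|$ and $|\bar{y}-\mu|$. Your between-group sum-of-squares identity $\widehat{\mathbb{P}}(\boldsymbol{t})\widehat{(II)}=\sum_{i}\widehat{a}_{i}^{2}/\widehat{p}_{i}-(\widehat{a}_{1}+\widehat{a}_{2})^{2}/\widehat{p}$ works with uncentered moments instead. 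Both decompositions are valid and lead to the same order of bound; the paper's centered route has the advantage that the outer factor $\widehat{p}_{i}$ cancels the $1/\widehat{p}_{i}$ appearing in the bound for $|\bar{y}_{i}-\mu_{i}|$, so the constant emerges more transparently as $2\bigl((2M_{\varepsilon}+4M_{0})\cdot 2(\sqrt{2M_{0}^{2}+M_{\varepsilon}^{2}}+M_{0})+4M_{0}^{2}\bigr)\le 18(M_{\varepsilon}+2M_{0})^{2}$. Your route requires bounding three separate differences $|\widehat{a}_{i}^{2}/\widehat{p}_{i}-a_{i}^{2}/p_{i}|$ where the individual terms are of order $M_{0}^{2}p_{i}$ before cancellation, so the constant bookkeeping is a bit more delicate---exactly as you anticipated. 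One minor gap: you should explicitly dispose of the case where coordinate $j$ has already been split in $\boldsymbol{t}$ (so one daughter is empty and $p_{i}=0$); the paper handles this up front by noting that both sides of ii) are then zero.
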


	Now, we deal with the two terms on the RHS of \eqref{upper.new.1}, and begin with the first term. By the specific model setting assumed here, we have
	{\small\begin{equation}
			\begin{split}\label{upper.new.p.7}
				& \mathbb{E} \Big(m(\boldsymbol{X})  -m_{ \widehat{T} }^{*} (\boldsymbol{\Theta}_{1}, \dots, \boldsymbol{\Theta}_{k} , \boldsymbol{X})\Big)^{2}  \\
				& \le \mathbb{E} \Big((m(\boldsymbol{X})  -m_{ \widehat{T} }^{*} (\boldsymbol{\Theta}_{1}, \dots, \boldsymbol{\Theta}_{k} , \boldsymbol{X}))^2\boldsymbol{1}_{U_{n}} \Big) + \mathbb{E} \Big((m(\boldsymbol{X})  -m_{ \widehat{T} }^{*} (\boldsymbol{\Theta}_{1}, \dots, \boldsymbol{\Theta}_{k} , \boldsymbol{X}))^2\boldsymbol{1}_{U_{n}^c}\Big)\\
				& \le \mathbb{E} \Big((m(\boldsymbol{X})  -m_{ \widehat{T} }^{*} (\boldsymbol{\Theta}_{1}, \dots, \boldsymbol{\Theta}_{k} , \boldsymbol{X}))^2\boldsymbol{1}_{U_{n}} \Big) + 4M_{0}^2\mathbb{P} (U_{n}^{c}),
			\end{split}
	\end{equation}}
	
		\noindent where $m_{ T }^{*} (\boldsymbol{\Theta}_{1}, \dots, \boldsymbol{\Theta}_{k} , \boldsymbol{X})$ is defined  to be $\mathbb{E}(m(\boldsymbol{X}))$ when $k=0$, which is a trivial theoretical random forests model.

		To further deal with the first term on the RHS of \eqref{upper.new.p.7}, let us define $T^*(\Theta_{1:k})$ to be a tree of level $k$ grown by theoretical CART splits with sets of available features specified as in $\Theta_{1:k}$. We want to make a connection between $\widehat{T}$ and $T^*$ as in \eqref{upper.new.p.8} below. However, because theoretical CART and sample CART split the cells differently, it is unclear whether the equality in \eqref{upper.new.p.8} holds if ties are broken randomly. To ensure such an equality, we additionally require that for all large $n$ and $0\le k \le \eta\log_{2}(n)$, the theoretical CART breaks ties such that 
		$$(m(\boldsymbol{X})  -m_{ T^* }^{*} (\boldsymbol{\Theta}_{1}, \dots, \boldsymbol{\Theta}_{k} , \boldsymbol{X}))^2\boldsymbol{1}_{U_{n}} = (m(\boldsymbol{X})  -m_{ \widehat{T} }^{*} (\boldsymbol{\Theta}_{1}, \dots, \boldsymbol{\Theta}_{k} , \boldsymbol{X}))^2\boldsymbol{1}_{U_{n}},$$
		which is possible because of iii) of Lemma~\ref{upper.lemma.1}. Therefore,
	\begin{equation}
	\begin{split}\label{upper.new.p.8}
	&\mathbb{E} \Big((m(\boldsymbol{X})  -m_{ \widehat{T} }^{*} (\boldsymbol{\Theta}_{1}, \dots, \boldsymbol{\Theta}_{k} , \boldsymbol{X}))^2\boldsymbol{1}_{U_{n}} \Big) \\
	& = \mathbb{E} \Big((m(\boldsymbol{X})  -m_{ T^* }^{*} (\boldsymbol{\Theta}_{1}, \dots, \boldsymbol{\Theta}_{k} , \boldsymbol{X}))^2\boldsymbol{1}_{U_{n}}\Big)\\
	& \le \mathbb{E} \Big(m(\boldsymbol{X})  -m_{ T^* }^{*} (\boldsymbol{\Theta}_{1}, \dots, \boldsymbol{\Theta}_{k} , \boldsymbol{X})\Big)^{2}.
	\end{split}
	\end{equation}

	To deal with the first term on the RHS of \eqref{upper.new.p.8}, we need \eqref{upper.new.p.1} below. For each $k\ge 0$,
	\begin{equation}\label{upper.new.p.1}
		\mathbb{E} \Big(m(\boldsymbol{X})  -m_{ T^* }^{*} (\boldsymbol{\Theta}_{1}, \dots, \boldsymbol{\Theta}_{k} , \boldsymbol{X})\Big)^{2}\le (1-\gamma_{0}(s^*)^{-1})^k \textnormal{Var} (m(\boldsymbol{X})).
	\end{equation}
	In addition, if it is known that $\gamma_0 = 1$, a sharp squared bias upper bound can be obtained by
	\begin{equation}\label{upper.new.p.10}
		\mathbb{E} \Big(m(\boldsymbol{X})  -m_{ T^* }^{*} (\boldsymbol{\Theta}_{1}, \dots, \boldsymbol{\Theta}_{k} , \boldsymbol{X})\Big)^{2}\le \max\left\{(s^*-k)\frac{\beta^2}{4}, 0\right\}
	\end{equation}
	for each $k\ge 0$. On the other hand, the second term on the RHS of \eqref{upper.new.1} is bounded by \eqref{upper.new.p.6} below. For each $0\le k\le \eta\log_{2}(n)$,
	\begin{equation}
		\begin{split}\label{upper.new.p.6}
			&\mathbb{E}\Big(m_{ \widehat{T} }^{*} (\boldsymbol{\Theta}_{1:k} , \boldsymbol{X})- \widehat{m}_{ \widehat{T}}(\boldsymbol{\Theta}_{1:k} , \boldsymbol{X}, \mathcal{X}_{n} ) \Big)^{2} \\
			&\le  ( 3M_{0} + 2M_{\varepsilon})^2\frac{2^{k}\big(\log_{e}({\max\{n, p\}})\big)^{2+\epsilon}}{n} + (2M_{0} + M_{\varepsilon})^2\mathbb{P}(U_{n}^c).
		\end{split}
	\end{equation}
	The proofs of \eqref{upper.new.p.1}--\eqref{upper.new.p.6} are deferred to the end of the proof of Proposition~\ref{proposition2}. We also give some intuition in Remark~\ref{gamma.intuition} in the proof of \eqref{upper.new.p.6} for improving the estimation upper bound. The results of \eqref{upper.new.1} and \eqref{upper.new.4}--\eqref{upper.new.p.6} lead to the desired result, and hence we have finished the proof.

	Let us give a closing remark. It is seen that both estimation variance and squared bias analyses rely on the event $U_{n}$. A general version of such an event is used for random forests analysis for general cases, and the technique is called ``the grid.'' A brief introduction of the grid for general cases, which is far more complicated than the simple case here, can be found in Section~\ref{Sec5}. In addition, the bias analysis \eqref{upper.new.p.7}--\eqref{upper.new.p.1} is a simple version of the one in Section~\ref{Sec4}. As remarked after Proposition~\ref{proposition2}, the general bias analysis depends on the sample size since the optimal split on each coordinate is unknown and features are dependent.

	\begin{proof}[Proof of \eqref{upper.new.4}]
	    
	To bound the probabilities of the complements of the events $Q_{1}(\boldsymbol{t}), Q_{2}(\boldsymbol{t}), U_{n}$ with concentration inequalities, we need the variance upper and lower bounds and \eqref{binary.6} below. For each cell $\boldsymbol{t}$,
	\begin{equation}
		\begin{split}\label{binary.2}
			\textnormal{Var}(\boldsymbol{1}_{\boldsymbol{x}_{1} \in \boldsymbol{t}}) &= \mathbb{P}(\boldsymbol{X} \in \boldsymbol{t})(1-\mathbb{P}(\boldsymbol{X} \in \boldsymbol{t})),\\
			&\\
			\textnormal{Var}(\varepsilon_{1})\mathbb{P}(\boldsymbol{X} \in \boldsymbol{t})&\le \textnormal{Var}(\boldsymbol{1}_{\boldsymbol{x}_{1} \in \boldsymbol{t}} (m(\boldsymbol{x}_{1}) + \varepsilon_{1})) \\
			&= \textnormal{Var}(\boldsymbol{1}_{\boldsymbol{x}_{1} \in \boldsymbol{t}} m(\boldsymbol{x}_{1})) +  \textnormal{Var}(\varepsilon_{1})\mathbb{P}(\boldsymbol{X} \in \boldsymbol{t})\\
			&\le (2M_{0}^2 + M_{\varepsilon}^2 )\mathbb{P}(\boldsymbol{X} \in \boldsymbol{t}).
		\end{split}
	\end{equation}
	Since $t$ is constructed by at most $k\le \eta \log_{2}(n)$ cuts,
    \begin{equation}
        \label{binary.6}\mathbb{P}(\boldsymbol{X} \in \boldsymbol{t}) \ge n^{-\eta}.
    \end{equation}

	By Bernstein's inequality~\cite{bennett1962probability, bernstein1924modification}, \eqref{binary.2}, the assumptions of i.i.d. observations and bounded regression function and model errors, \eqref{binary.6} and that $(\log_{e}p)^{2+\epsilon} = o(n^{1-\eta})$, it holds that for all large $n$ and every $\boldsymbol{t}$,
	\begin{equation}\label{binary.3}
		\mathbb{P}((Q_{1}(\boldsymbol{t}))^c) \le 2\exp{\left(\frac{-(\log_{e}({\max\{n, p\}}))^{2+\epsilon}}{3}\right)},
	\end{equation}
	and if $\mathbb{P}(\boldsymbol{X} \in \boldsymbol{t}) <1$, for all large $n$,
	\begin{equation}\label{binary.4}
		\mathbb{P}((Q_{2}(\boldsymbol{t}))^c) \le 2\exp{\left(\frac{-(\log_{e}({\max\{n, p\}}))^{2+\epsilon}}{3}\right)},
	\end{equation}
	and if $\mathbb{P}(\boldsymbol{X} \in \boldsymbol{t}) =1$, for all $n\ge 1$,
	\begin{equation}\label{binary.5}
		\mathbb{P}((Q_{2}(\boldsymbol{t}))^c) =0,
	\end{equation}
	since $\mathbb{P}(\boldsymbol{X}\in\boldsymbol{t}) = n^{-1}\sum_{i=1}^{n} \boldsymbol{1}_{\boldsymbol{x}_{i} \in \boldsymbol{t}} = 1$.

	By \eqref{upper.new.2} and \eqref{binary.3}--\eqref{binary.5} and the assumptions of i.i.d. observations, a bounded regression function, and bounded model errors, it holds that
	\begin{equation}
		\begin{split}			
			\mathbb{P}(U_{n}^{c}) & \le  2\times \big(1 + p^{\log_{2}(n)} n\log_{2}(n)\big)\times 2\exp{\left(-\frac{1}{3}(\log_{e}({\max\{n, p\}}))^{2+\epsilon}\right)} \\
			&= o(n^{-1}),
		\end{split}
	\end{equation}
	which finishes the proof.
	
	\end{proof}

	\begin{proof}[Proof of Lemma~\ref{upper.lemma.1}]
	The first assertion can be shown by a direct calculation and hence we omit the detail. Let a feature restriction $\Theta$ be given. The third assertion is a result of the first two assertions and that 
		\begin{equation}
			\label{upper.new.19}
			\mathbb{P}(\boldsymbol{X} \in \boldsymbol{t}) \ge n^{-\eta},
		\end{equation}
		which is due to that $\boldsymbol{t}$ is constructed by $k\le \eta\log_{2}(n)-1$ cuts.
		Specifically, suppose the first two assertions hold and let $(j^*, c^*) = \arg\sup_{j\in \Theta, c\in t_{j}}(II)_{\boldsymbol{t}, \boldsymbol{t}(j, c)}$ and $j^{\dagger}$ such that $(II)_{\boldsymbol{t}, \boldsymbol{t}(j^{\dagger}, 1)} = 0$ be given. If $\sup_{j\in \Theta, c\in t_{j}} (II)_{\boldsymbol{t}, \boldsymbol{t}(j, c)} = 0$, the desired result is obviously true. Suppose otherwise $\sup_{j\in \Theta, c\in t_{j}} (II)_{\boldsymbol{t}, \boldsymbol{t}(j, c)} = \frac{\beta^2}{4}$ (by the first assertion). By the fact that features are binary, 
		\begin{equation*}
			\begin{split}
			(II)_{\boldsymbol{t}, \boldsymbol{t}(j^*,1)} =(II)_{\boldsymbol{t}, \boldsymbol{t}(j^*,c^*)},
			\end{split}
		\end{equation*}
		which in combination with $\sup_{j\in \Theta, c\in t_{j}} (II)_{\boldsymbol{t}, \boldsymbol{t}(j, c)} = \frac{\beta^2}{4}$ and the second assertion leads to
		\begin{equation*}
			\begin{split}
			&\left(\frac{\sum_{i=1}^{n}\boldsymbol{1}_{\boldsymbol{x}_{i} \in \boldsymbol{t}}}{n}\right)\widehat{(II)}_{\boldsymbol{t}, \boldsymbol{t}(j^*,1)} \\
				&\ge \frac{\beta^2}{4}\mathbb{P}(\boldsymbol{X}\in\boldsymbol{t}) - 18(M_{\varepsilon} + 2M_{0} )^2 \big(\log_{e}({\max\{n, p\}})\big)^{\frac{2+\epsilon}{2}}\sqrt{\frac{\mathbb{P}(\boldsymbol{X}\in\boldsymbol{t})}{n}}.
			\end{split}
		\end{equation*}
		
		Meanwhile, by $(II)_{\boldsymbol{t}, \boldsymbol{t}(j^{\dagger}, 1)} = 0$ and the second assertion,
		\begin{equation*}
			\begin{split}
			&18(M_{\varepsilon} + 2M_{0} )^2 \big(\log_{e}({\max\{n, p\}})\big)^{\frac{2+\epsilon}{2}}\sqrt{\frac{\mathbb{P}(\boldsymbol{X}\in\boldsymbol{t})}{n}} \ge \left(\frac{\sum_{i=1}^{n}\boldsymbol{1}_{\boldsymbol{x}_{i} \in \boldsymbol{t}}}{n}\right)\widehat{(II)}_{\boldsymbol{t}, \boldsymbol{t}(j^{\dagger},1)}.
			\end{split}
		\end{equation*}
		
		Combining these and \eqref{upper.new.19}, $(\log_{e}p)^{2+\epsilon} = o(n^{1-\eta})$, it holds that for all large $n$,
		\begin{equation*}
			\begin{split}
				& \left(\frac{\sum_{i=1}^{n}\boldsymbol{1}_{\boldsymbol{x}_{i} \in \boldsymbol{t}}}{n}\right)\widehat{(II)}_{\boldsymbol{t}, \boldsymbol{t}(j^*,1)} > \left(\frac{\sum_{i=1}^{n}\boldsymbol{1}_{\boldsymbol{x}_{i} \in \boldsymbol{t}}}{n}\right)\widehat{(II)}_{\boldsymbol{t}, \boldsymbol{t}(j^{\dagger},1)},
			\end{split}
		\end{equation*}
		which in combination with the fact that
		$$ \left(\frac{\sum_{i=1}^{n}\boldsymbol{1}_{\boldsymbol{x}_{i} \in \boldsymbol{t}}}{n}\right)\widehat{(II)}_{\boldsymbol{t}, \boldsymbol{t}(\widehat{j},1)}\ge \left(\frac{\sum_{i=1}^{n}\boldsymbol{1}_{\boldsymbol{x}_{i} \in \boldsymbol{t}}}{n}\right)\widehat{(II)}_{\boldsymbol{t}, \boldsymbol{t}(j^*,1)},$$ 
		which is due to $j^* \in \Theta$, implies that $\widehat{(II)}_{\boldsymbol{t}, \boldsymbol{t}(\widehat{j},1)} > \widehat{(II)}_{\boldsymbol{t}, \boldsymbol{t}(j^{\dagger}, 1)}$ for every such $(j^{\dagger}, 1)$. Therefore, we have $(II)_{\boldsymbol{t}, \boldsymbol{t}(\widehat{j},1)} > 0$ in this scenario. This together with the first assertion concludes the third assertion.

		In the following, we prove the second assertion. Let us consider a cell $\boldsymbol{t}$ and a split $(j, 1)$. If the $j$th coordinate has already been split, the desired result is obviously true. Therefore, we suppose the $j$th coordinate of $\boldsymbol{t}$ has not been split on. Let $\boldsymbol{t}_{1}$ and $\boldsymbol{t}_{2}$ denote the two daughter cells, respectively. Our goal is to deal with the difference
		\begin{equation}\label{upper.new.12}
			 \Big|\left(\frac{\sum_{i=1}^{n}\boldsymbol{1}_{\boldsymbol{x}_{i} \in \boldsymbol{t}}}{n}\right)\widehat{(II)}_{\boldsymbol{t}, \boldsymbol{t}(j,1)} - \mathbb{P}(\boldsymbol{X} \in \boldsymbol{t}) (II)_{\boldsymbol{t}, \boldsymbol{t}(j, 1)}\Big|,
		\end{equation}
		where 
		{\small\begin{equation}
			\begin{split}\label{upper.new.13}
				& \left(\frac{\sum_{i=1}^{n}\boldsymbol{1}_{\boldsymbol{x}_{i} \in \boldsymbol{t}}}{n}\right)\widehat{(II)}_{\boldsymbol{t}, \boldsymbol{t}(j,1)} \\
				& = \left( \frac{\sum_{i=1}^{n}\boldsymbol{1}_{\boldsymbol{x}_{i} \in \boldsymbol{t}_{1}}}{n}\right) \left(\frac{\sum_{i=1}^{n}\boldsymbol{1}_{\boldsymbol{x}_{i} \in \boldsymbol{t}_{1}} (m(\boldsymbol{x}_{i}) + \varepsilon_{i}) }{\sum_{i=1}^{n}\boldsymbol{1}_{\boldsymbol{x}_{i} \in \boldsymbol{t}_{1}}} - \frac{\sum_{i=1}^{n}\boldsymbol{1}_{\boldsymbol{x}_{i} \in \boldsymbol{t}} (m(\boldsymbol{x}_{i}) + \varepsilon_{i}) }{\sum_{i=1}^{n}\boldsymbol{1}_{\boldsymbol{x}_{i} \in \boldsymbol{t}}}  \right)^2 \\
				& \qquad+\left( \frac{\sum_{i=1}^{n}\boldsymbol{1}_{\boldsymbol{x}_{i} \in \boldsymbol{t}_{2}}}{n}\right) \left(\frac{\sum_{i=1}^{n}\boldsymbol{1}_{\boldsymbol{x}_{i} \in \boldsymbol{t}_{2}} (m(\boldsymbol{x}_{i}) + \varepsilon_{i}) }{\sum_{i=1}^{n}\boldsymbol{1}_{\boldsymbol{x}_{i} \in \boldsymbol{t}_{2}}} - \frac{\sum_{i=1}^{n}\boldsymbol{1}_{\boldsymbol{x}_{i} \in \boldsymbol{t}} (m(\boldsymbol{x}_{i}) + \varepsilon_{i}) }{\sum_{i=1}^{n}\boldsymbol{1}_{\boldsymbol{x}_{i} \in \boldsymbol{t}}}  \right)^2,
			\end{split}
		\end{equation}}
		and
		\begin{equation}
			\begin{split}\label{upper.new.14}
				\mathbb{P}(\boldsymbol{X} \in \boldsymbol{t}) (II)_{\boldsymbol{t}, \boldsymbol{t}(j, 1)} &= \mathbb{P}(\boldsymbol{X} \in \boldsymbol{t}_{1}) (\mathbb{E} (m(\boldsymbol{X})| \boldsymbol{X} \in \boldsymbol{t}_{1}) - \mathbb{E} (m(\boldsymbol{X})| \boldsymbol{X} \in \boldsymbol{t}))^2\\
				& \qquad+  \mathbb{P}(\boldsymbol{X} \in \boldsymbol{t}_{2}) (\mathbb{E} (m(\boldsymbol{X})| \boldsymbol{X} \in \boldsymbol{t}_{2}) - \mathbb{E} (m(\boldsymbol{X})| \boldsymbol{X} \in \boldsymbol{t}))^2.
			\end{split}
		\end{equation}
		We begin with the difference between the respective first terms of the RHS of  \eqref{upper.new.13}--\eqref{upper.new.14} as follows.
		\begin{equation}
			\begin{split}\label{upper.new.17}
				& \Big|\left( \frac{\sum_{i=1}^{n}\boldsymbol{1}_{\boldsymbol{x}_{i} \in \boldsymbol{t}_{1}}}{n}\right) \left(\frac{\sum_{i=1}^{n}\boldsymbol{1}_{\boldsymbol{x}_{i} \in \boldsymbol{t}_{1}} (m(\boldsymbol{x}_{i}) + \varepsilon_{i}) }{\sum_{i=1}^{n}\boldsymbol{1}_{\boldsymbol{x}_{i} \in \boldsymbol{t}_{1}}} - \frac{\sum_{i=1}^{n}\boldsymbol{1}_{\boldsymbol{x}_{i} \in \boldsymbol{t}} (m(\boldsymbol{x}_{i}) + \varepsilon_{i}) }{\sum_{i=1}^{n}\boldsymbol{1}_{\boldsymbol{x}_{i} \in \boldsymbol{t}}}  \right)^2\\
				& \qquad- \mathbb{P}(\boldsymbol{X} \in \boldsymbol{t}_{1}) (\mathbb{E} (m(\boldsymbol{X})| \boldsymbol{X} \in \boldsymbol{t}_{1}) - \mathbb{E} (m(\boldsymbol{X})| \boldsymbol{X} \in \boldsymbol{t}))^2\Big|\\
				& \le \Big|\left( \frac{\sum_{i=1}^{n}\boldsymbol{1}_{\boldsymbol{x}_{i} \in \boldsymbol{t}_{1}}}{n}\right) \left(\frac{\sum_{i=1}^{n}\boldsymbol{1}_{\boldsymbol{x}_{i} \in \boldsymbol{t}_{1}} (m(\boldsymbol{x}_{i}) + \varepsilon_{i}) }{\sum_{i=1}^{n}\boldsymbol{1}_{\boldsymbol{x}_{i} \in \boldsymbol{t}_{1}}} - \frac{\sum_{i=1}^{n}\boldsymbol{1}_{\boldsymbol{x}_{i} \in \boldsymbol{t}} (m(\boldsymbol{x}_{i}) + \varepsilon_{i}) }{\sum_{i=1}^{n}\boldsymbol{1}_{\boldsymbol{x}_{i} \in \boldsymbol{t}}}  \right)^2\\
				&\qquad - \left( \frac{\sum_{i=1}^{n}\boldsymbol{1}_{\boldsymbol{x}_{i} \in \boldsymbol{t}_{1}}}{n}\right) (\mathbb{E} (m(\boldsymbol{X})| \boldsymbol{X} \in \boldsymbol{t}_{1}) - \mathbb{E} (m(\boldsymbol{X})| \boldsymbol{X} \in \boldsymbol{t}))^2\\
				& \qquad + \left( \frac{\sum_{i=1}^{n}\boldsymbol{1}_{\boldsymbol{x}_{i} \in \boldsymbol{t}_{1}}}{n}\right) (\mathbb{E} (m(\boldsymbol{X})| \boldsymbol{X} \in \boldsymbol{t}_{1}) - \mathbb{E} (m(\boldsymbol{X})| \boldsymbol{X} \in \boldsymbol{t}))^2\\
				& \qquad - \mathbb{P}(\boldsymbol{X} \in \boldsymbol{t}_{1}) (\mathbb{E} (m(\boldsymbol{X})| \boldsymbol{X} \in \boldsymbol{t}_{1}) - \mathbb{E} (m(\boldsymbol{X})| \boldsymbol{X} \in \boldsymbol{t}))^2\Big|,
			\end{split}
		\end{equation}
		and that the difference between the first two terms on the RHS of \eqref{upper.new.17} is bounded by
		\begin{equation}
			\begin{split}\label{upper.new.15}
				& \Big|\left( \frac{\sum_{i=1}^{n}\boldsymbol{1}_{\boldsymbol{x}_{i} \in \boldsymbol{t}_{1}}}{n}\right) \left(\frac{\sum_{i=1}^{n}\boldsymbol{1}_{\boldsymbol{x}_{i} \in \boldsymbol{t}_{1}} (m(\boldsymbol{x}_{i}) + \varepsilon_{i}) }{\sum_{i=1}^{n}\boldsymbol{1}_{\boldsymbol{x}_{i} \in \boldsymbol{t}_{1}}} - \frac{\sum_{i=1}^{n}\boldsymbol{1}_{\boldsymbol{x}_{i} \in \boldsymbol{t}} (m(\boldsymbol{x}_{i}) + \varepsilon_{i}) }{\sum_{i=1}^{n}\boldsymbol{1}_{\boldsymbol{x}_{i} \in \boldsymbol{t}}}  \right)^2\\
				&\qquad - \left( \frac{\sum_{i=1}^{n}\boldsymbol{1}_{\boldsymbol{x}_{i} \in \boldsymbol{t}_{1}}}{n}\right) (\mathbb{E} (m(\boldsymbol{X})| \boldsymbol{X} \in \boldsymbol{t}_{1}) - \mathbb{E} (m(\boldsymbol{X})| \boldsymbol{X} \in \boldsymbol{t}))^2 \Big|\\
				&\le \left( \frac{\sum_{i=1}^{n}\boldsymbol{1}_{\boldsymbol{x}_{i} \in \boldsymbol{t}_{1}}}{n}\right) \Big(\Big|\frac{\sum_{i=1}^{n}\boldsymbol{1}_{\boldsymbol{x}_{i} \in \boldsymbol{t}_{1}} (m(\boldsymbol{x}_{i}) + \varepsilon_{i}) }{\sum_{i=1}^{n}\boldsymbol{1}_{\boldsymbol{x}_{i} \in \boldsymbol{t}_{1}}} -\mathbb{E} (m(\boldsymbol{X})| \boldsymbol{X} \in \boldsymbol{t}_{1}) \Big| \\
				&\qquad +   \Big| \mathbb{E} (m(\boldsymbol{X})| \boldsymbol{X} \in \boldsymbol{t}) -  \frac{\sum_{i=1}^{n}\boldsymbol{1}_{\boldsymbol{x}_{i} \in \boldsymbol{t}} (m(\boldsymbol{x}_{i}) + \varepsilon_{i}) }{\sum_{i=1}^{n}\boldsymbol{1}_{\boldsymbol{x}_{i} \in \boldsymbol{t}}} \Big| \Big) (2M_{\varepsilon} + 4M_{0}),
			\end{split}
		\end{equation}
		where we use the identity $a^2 - b^2 = (a-b)(a+b)$ and the assumptions of a bounded regression function and model errors. 
		
		Two terms of differences on the RHS of \eqref{upper.new.15} can be further bounded respectively as follows.
		\begin{equation}
			\begin{split}\label{upper.new.16}
				& \Big|\frac{\sum_{i=1}^{n}\boldsymbol{1}_{\boldsymbol{x}_{i} \in \boldsymbol{t}_{1}} (m(\boldsymbol{x}_{i}) + \varepsilon_{i}) }{\sum_{i=1}^{n}\boldsymbol{1}_{\boldsymbol{x}_{i} \in \boldsymbol{t}_{1}}} -\mathbb{E} (m(\boldsymbol{X})| \boldsymbol{X} \in \boldsymbol{t}_{1}) \Big| \\
				&=\Big| \frac{n^{-1}\sum_{i=1}^{n}\boldsymbol{1}_{\boldsymbol{x}_{i} \in \boldsymbol{t}_{1}} (m(\boldsymbol{x}_{i}) + \varepsilon_{i}) }{n^{-1}\sum_{i=1}^{n}\boldsymbol{1}_{\boldsymbol{x}_{i} \in \boldsymbol{t}_{1}}} - \frac{\mathbb{E} (m(\boldsymbol{X}) \boldsymbol{1}_{\boldsymbol{X} \in \boldsymbol{t}_{1}})
				}{ n^{-1}\sum_{i=1}^{n}\boldsymbol{1}_{\boldsymbol{x}_{i} \in \boldsymbol{t}_{1}}} \\
				&\qquad\qquad+ \frac{\mathbb{E} (m(\boldsymbol{X}) \boldsymbol{1}_{\boldsymbol{X} \in \boldsymbol{t}_{1}})
				}{ n^{-1}\sum_{i=1}^{n}\boldsymbol{1}_{\boldsymbol{x}_{i} \in \boldsymbol{t}_{1}}}  -  \frac{\mathbb{E} (m(\boldsymbol{X}) \boldsymbol{1}_{\boldsymbol{X} \in \boldsymbol{t}_{1}})
				}{ \mathbb{P}(\boldsymbol{X} \in \boldsymbol{t}_{1})}\Big|\\
				&\le \left(\frac{\sum_{i=1}^{n}\boldsymbol{1}_{\boldsymbol{x}_{i} \in \boldsymbol{t}_{1}}}{n}\right)^{-1}\Big( \Big|\frac{\sum_{i=1}^{n}\boldsymbol{1}_{\boldsymbol{x}_{i} \in \boldsymbol{t}_{1}} (m(\boldsymbol{x}_{i}) + \varepsilon_{i}) }{n} - \mathbb{E} (m(\boldsymbol{X}) \boldsymbol{1}_{\boldsymbol{X} \in \boldsymbol{t}_{1}})
				\Big| \\
				&\qquad + \frac{\mathbb{E} (m(\boldsymbol{X}) \boldsymbol{1}_{\boldsymbol{X} \in \boldsymbol{t}_{1}})
				}{ \mathbb{P} (\boldsymbol{X} \in \boldsymbol{t}_{1})}\Big|\mathbb{P} (\boldsymbol{X} \in \boldsymbol{t}_{1}) - \frac{\sum_{i=1}^{n}\boldsymbol{1}_{\boldsymbol{x}_{i} \in \boldsymbol{t}_{1}}  }{n}\Big|  \Big),
			\end{split}
		\end{equation}
		and similarly,
		\begin{equation}
			\begin{split}\label{upper.new.20}
				& \Big|\frac{\sum_{i=1}^{n}\boldsymbol{1}_{\boldsymbol{x}_{i} \in \boldsymbol{t}} (m(\boldsymbol{x}_{i}) + \varepsilon_{i}) }{\sum_{i=1}^{n}\boldsymbol{1}_{\boldsymbol{x}_{i} \in \boldsymbol{t}}} -\mathbb{E} (m(\boldsymbol{X})| \boldsymbol{X} \in \boldsymbol{t}) \Big| \\
				&\le \left(\frac{\sum_{i=1}^{n}\boldsymbol{1}_{\boldsymbol{x}_{i} \in \boldsymbol{t}}}{n}\right)^{-1}\Big( \Big|\frac{\sum_{i=1}^{n}\boldsymbol{1}_{\boldsymbol{x}_{i} \in \boldsymbol{t}} (m(\boldsymbol{x}_{i}) + \varepsilon_{i}) }{n} - \mathbb{E} (m(\boldsymbol{X}) \boldsymbol{1}_{\boldsymbol{X} \in \boldsymbol{t}})
				\Big| \\
				&\qquad + \frac{\mathbb{E} (m(\boldsymbol{X}) \boldsymbol{1}_{\boldsymbol{X} \in \boldsymbol{t}})
				}{ \mathbb{P} (\boldsymbol{X} \in \boldsymbol{t})}\Big| \mathbb{P} (\boldsymbol{X} \in \boldsymbol{t})- \frac{\sum_{i=1}^{n}\boldsymbol{1}_{\boldsymbol{x}_{i} \in \boldsymbol{t}}  }{n}\Big|  \Big).
			\end{split}
		\end{equation}
		
		On the other hand, by the assumption of a bounded $m(\cdot)$, the last two terms on the RHS of \eqref{upper.new.17} is bounded by
		\begin{equation}
			\begin{split}\label{upper.new.18}
				& \Big|\left( \frac{\sum_{i=1}^{n}\boldsymbol{1}_{\boldsymbol{x}_{i} \in \boldsymbol{t}_{1}}}{n}\right) (\mathbb{E} (m(\boldsymbol{X})| \boldsymbol{X} \in \boldsymbol{t}_{1}) - \mathbb{E} (m(\boldsymbol{X})| \boldsymbol{X} \in \boldsymbol{t}))^2\\
				& \qquad - \mathbb{P}(\boldsymbol{X} \in \boldsymbol{t}_{1}) (\mathbb{E} (m(\boldsymbol{X})| \boldsymbol{X} \in \boldsymbol{t}_{1}) - \mathbb{E} (m(\boldsymbol{X})| \boldsymbol{X} \in \boldsymbol{t}))^2\Big|\\
				& \le 4M_{0}^2\Big|\frac{\sum_{i=1}^{n}\boldsymbol{1}_{\boldsymbol{x}_{i} \in \boldsymbol{t}_{1}}}{n}- \mathbb{P}(\boldsymbol{X} \in \boldsymbol{t}_{1})\Big|.
			\end{split}
		\end{equation}
		
		By \eqref{upper.new.13}--\eqref{upper.new.18}, it holds that for all large $n$ and every end cell $\boldsymbol{t}$ of trees of level $k\le \eta\log_{2}(n)-1$, on $U_{n}$,
		\begin{equation*}
			\begin{split}			
				&\max_{j} \Big|\left(\frac{\sum_{i=1}^{n}\boldsymbol{1}_{\boldsymbol{x}_{i} \in \boldsymbol{t}}}{n}\right)\widehat{(II)}_{\boldsymbol{t}, \boldsymbol{t}(j,1)} - \mathbb{P}(\boldsymbol{X} \in \boldsymbol{t}) (II)_{\boldsymbol{t}, \boldsymbol{t}(j, 1)}\Big| \\
				&\le 2\Big((2M_{\varepsilon} + 4M_{0})\times 2(\sqrt{2M_{0}^2 + M_{\varepsilon}^2}+M_{0}) + 4M_{0}^2\Big) \big(\log_{e}({\max\{n, p\}})\big)^{\frac{2+\epsilon}{2}}\sqrt{\frac{\mathbb{P}(\boldsymbol{X} \in \boldsymbol{t})}{n}}\\
				&
				\le 18(M_{\varepsilon} + 2M_{0} )^2 \big(\log_{e}({\max\{n, p\}})\big)^{\frac{2+\epsilon}{2}}\sqrt{\frac{\mathbb{P}(\boldsymbol{X} \in \boldsymbol{t})}{n}}.
			\end{split}
		\end{equation*}
		
		This and the argument before \eqref{upper.new.12} conclude the second assertion, and hence we have finished the proof.

	\end{proof}

	\begin{proof}[Proof of \eqref{upper.new.p.1}]

	The proof idea follows that for proof of Theorem~\ref{theorem3}, but is much simplified as the theoretical tree growing rule $T^*$ is considered here. In what follows, we deal with the case where there are no random splits first (see the end of this proof for details). Let us start with an expression for the LHS of \eqref{upper.new.p.1} in \eqref{upper.new.5} below, which can be obtained by direct calculations when there are no random splits. It holds that
		\begin{equation}
			\begin{split}\label{upper.new.5}
				&\mathbb{E} \Big(m(\boldsymbol{X})  -m_{ T^* }^{*} (\boldsymbol{\Theta}_{1:k} , \boldsymbol{X})\Big)^{2} \\
				&= \sum_{\Theta_{1:k}} \mathbb{P} (\boldsymbol{\Theta}_{1:k} = \Theta_{1:k})\sum_{(\boldsymbol{t}_{1:k})\in T^*(\Theta_{1:k})}\mathbb{P}(\boldsymbol{X}\in\boldsymbol{t}_{k})V(\boldsymbol{t}_{k})\\
				&= \sum_{\Theta_{1:k}} \mathbb{P} (\boldsymbol{\Theta}_{1:k} = \Theta_{1:k})\sum_{(\boldsymbol{t}_{1:k})\in T^*(\Theta_{1:k})}\mathbb{P}(\boldsymbol{X}\in\boldsymbol{t}_{k-1}) \mathbb{P}(\boldsymbol{X}\in\boldsymbol{t}_{k}|\boldsymbol{X}\in \boldsymbol{t}_{k-1})V(\boldsymbol{t}_{k}) \\
				&= \sum_{\Theta_{1:k}} \mathbb{P} (\boldsymbol{\Theta}_{1:k} = \Theta_{1:k})\sum_{(\boldsymbol{t}_{1:k})\in T^*(\Theta_{1:k})}\mathbb{P}(\boldsymbol{X}\in\boldsymbol{t}_{k-1}) \frac{(I)_{\boldsymbol{t}_{k-1}, \boldsymbol{t}_{k}}}{2}\\
				&=  \sum_{\Theta_{1:k}} \mathbb{P} (\boldsymbol{\Theta}_{1:k} = \Theta_{1:k})\sum_{(\boldsymbol{t}_{1:k})\in T^*(\Theta_{1:k})}\mathbb{P}(\boldsymbol{X}\in\boldsymbol{t}_{k-1}) \frac{V(\boldsymbol{t}_{k-1}) - (II)_{\boldsymbol{t}_{k-1}, \boldsymbol{t}_{k}}}{2}\\
				&=  \sum_{\Theta_{1:k-1}} \mathbb{P} (\boldsymbol{\Theta}_{1:k-1} = \Theta_{1:k-1}) \\
				&\qquad\times \sum_{\Theta_{k}} \mathbb{P} (\boldsymbol{\Theta}_{k} = \Theta_{k})\sum_{(\boldsymbol{t}_{1:k})\in T^*(\Theta_{1:k})}\mathbb{P}(\boldsymbol{X}\in\boldsymbol{t}_{k-1}) \frac{V(\boldsymbol{t}_{k-1}) - (II)_{\boldsymbol{t}_{k-1}, \boldsymbol{t}_{k}}}{2},
			\end{split}
		\end{equation}
		\noindent where $V(\boldsymbol{t}) \coloneqq\textnormal{Var}(m(\boldsymbol{X})|\boldsymbol{X} \in \boldsymbol{t})$, the third equality follows from the definition of $(I)_{\boldsymbol{t}_{k-1}, \boldsymbol{t}_{k}}$ and the fact that  there are exactly two daughter cells after each $\boldsymbol{t}_{k-1}$, the fourth equality is due to the identity  $\textnormal{Var}(m(\boldsymbol{X}) | \boldsymbol{X} \in \boldsymbol{t}) = (I)_{\boldsymbol{t}, \boldsymbol{t}(j, c)} +(II)_{\boldsymbol{t}, \boldsymbol{t}(j, c)}$ for every $\boldsymbol{t}$ and $j, c\in t_{j}$, and the last equality is from the independence between column sets. In addition, we let $\textnormal{Var}(m(\boldsymbol{X})|\boldsymbol{X} \in \boldsymbol{t}) \coloneqq 0$, $(I)_{\boldsymbol{t}, \boldsymbol{t}^{'}} \coloneqq 0$, and $(II)_{\boldsymbol{t}, \boldsymbol{t}^{'}} \coloneqq 0$ if $\boldsymbol{t}=\boldsymbol{t}^{'} = \emptyset$.

		To proceed, we separately deal with tree branches in the tree $T^*(\Theta_{1:k})$ as follows. There are $2^k$ distinct tree branches $\boldsymbol{t}_{1:k}$ in $T^*(\Theta_{1:k})$, and we call the first two of these tree branches ``the first tree branch of $T^*(\Theta_{1:k})$,'' whose corresponding last column set restriction is $\Theta_{k, 1}$ (recall that $\Theta_{k} = \{\Theta_{k, 1}, \dots, \Theta_{k, 2^{k-1}}\}$). See Figure~\ref{fig:firstTreeBranch} for a graphical illustration. Since column sets are independent,
		{\small\begin{equation}
				\begin{split}\label{upper.new.8}
					&\textnormal{RHS of \eqref{upper.new.5}}\\
					& =  \sum_{\Theta_{1:k-1}} \mathbb{P} (\boldsymbol{\Theta}_{1:k-1} = \Theta_{1:k-1}) \\
					& \times\bigg(\sum_{ \Theta_{k}} \mathbb{P} (\boldsymbol{\Theta}_{k} = \Theta_{k})\sum_{ \textnormal{ The first tree branch of } T^*(\Theta_{1:k})}\mathbb{P}(\boldsymbol{X}\in\boldsymbol{t}_{k-1}) \frac{ V(\boldsymbol{t}_{k-1}) - (II)_{\boldsymbol{t}_{k-1}, \boldsymbol{t}_{k}} }{2}\\
					& + \sum_{\Theta_{k}} \mathbb{P} (\boldsymbol{\Theta}_{k} = \Theta_{k})  \sum_{ \textnormal{  Other tree branches of } T^*(\Theta_{1:k})}\mathbb{P}(\boldsymbol{X}\in\boldsymbol{t}_{k-1}) \frac{V(\boldsymbol{t}_{k-1}) - (II)_{\boldsymbol{t}_{k-1}, \boldsymbol{t}_{k}}}{2}	 \bigg) \\
					& =  \sum_{\Theta_{1:k-1}} \mathbb{P} (\boldsymbol{\Theta}_{1:k-1} = \Theta_{1:k-1}) \\
					& \times\bigg(\sum_{ \Theta_{k, 1}} \mathbb{P} (\boldsymbol{\Theta}_{k,1} = \Theta_{k,1})\sum_{ \textnormal{ The first tree branch of } T^*(\Theta_{1:k})}\mathbb{P}(\boldsymbol{X}\in\boldsymbol{t}_{k-1}) \frac{ V(\boldsymbol{t}_{k-1}) - (II)_{\boldsymbol{t}_{k-1}, \boldsymbol{t}_{k}} }{2}\\
					& + \sum_{\Theta_{k, 2}, \dots, \Theta_{k, 2^{k-1}}} \mathbb{P} ((\boldsymbol{\Theta}_{k, 2} , \dots, \boldsymbol{\Theta}_{k, 2^{k-1}})= (\Theta_{k, 2},\dots, \Theta_{k, 2^{k-1}}) ) \\
					&\qquad \qquad\times \bigg( \sum_{\textnormal{ Other tree branches of } T^*(\Theta_{1:k})}\mathbb{P}(\boldsymbol{X}\in\boldsymbol{t}_{k-1}) \frac{V(\boldsymbol{t}_{k-1}) - (II)_{\boldsymbol{t}_{k-1}, \boldsymbol{t}_{k}}}{2}	 \bigg)\bigg).
				\end{split}
		\end{equation}}
		
		Now, let us say a good column set restriction $\Theta$ w.r.t. a cell $\boldsymbol{t}$ is such that  $\sup_{j\in \Theta, c\in t_{j}}(II)_{\boldsymbol{t}, \boldsymbol{t}(j, c)} = \sup_{j\in\{1, \dots, p\}, c\in t_{j}}(II)_{\boldsymbol{t}, \boldsymbol{t}(j, c)}$. Because $m(\boldsymbol{X}) \in \textnormal{SID} (s^*)$, it holds that for a cell $\boldsymbol{t}$,
		\begin{equation}
			\begin{cases}\label{upper.new.p.9}
				V(\boldsymbol{t}) - \sup_{j\in \Theta, c\in t_{j}}(II)_{\boldsymbol{t}, \boldsymbol{t}(j, c)}  \le (1-(s^*)^{-1})V(\boldsymbol{t}), & \textnormal{ if } \Theta \textnormal{ is good,}\\V(\boldsymbol{t}) - \sup_{j\in \Theta, c\in t_{j}}(II)_{\boldsymbol{t}, \boldsymbol{t}(j, c)} \le V(\boldsymbol{t}), & \textnormal{ o.w.}
			\end{cases}
		\end{equation}
		
		By \eqref{upper.new.p.9}, we deal with the first tree branch as follows.
		{\small\begin{equation}
				\begin{split}\label{upper.new.6}
					& \sum_{ \Theta_{k, 1}} \mathbb{P} (\boldsymbol{\Theta}_{k,1} = \Theta_{k,1})\sum_{ \textnormal{ The first tree branch of } T^*(\Theta_{1:k})}\mathbb{P}(\boldsymbol{X}\in\boldsymbol{t}_{k-1}) \frac{ V(\boldsymbol{t}_{k-1}) - (II)_{\boldsymbol{t}_{k-1}, \boldsymbol{t}_{k}} }{2}\\
					& \le 
					 \bigg(\sum_{\textnormal{Good } \Theta_{k, 1}} \mathbb{P} (\boldsymbol{\Theta}_{k,1} = \Theta_{k,1})\sum_{ \textnormal{ The first tree branch of } T^*(\Theta_{1:k})}\mathbb{P}(\boldsymbol{X}\in\boldsymbol{t}_{k-1}) \frac{(1-(s^*)^{-1})V(\boldsymbol{t}_{k-1}) }{2}	\\
					& \qquad+ 	\sum_{\textnormal{Bad } \Theta_{k,1}} \mathbb{P} (\boldsymbol{\Theta}_{k,1} = \Theta_{k,1})\sum_{ \textnormal{ The first tree branch of } T^*(\Theta_{1:k})}\mathbb{P}(\boldsymbol{X}\in\boldsymbol{t}_{k-1}) \frac{V(\boldsymbol{t}_{k-1})}{2} \bigg).
				\end{split}
		\end{equation}}
		
		Notice that the end cell $\boldsymbol{t}_{k}$ is not needed on the RHS of \eqref{upper.new.6}, and that the first tree branch consists of exactly two daughter cells. Let $\boldsymbol{t}_{1},\dots, \boldsymbol{t}_{k-1}$ denote the first tree branch. Then,
		{\small\begin{equation}
				\begin{split}\label{upper.new.11}
					&\textnormal{RHS of \eqref{upper.new.6}}\\
					& = 
					\bigg(\sum_{\textnormal{Good } \Theta_{k, 1}} \mathbb{P} (\boldsymbol{\Theta}_{k,1} = \Theta_{k,1}) \mathbb{P}(\boldsymbol{X}\in\boldsymbol{t}_{k-1}) (1-(s^*)^{-1})V(\boldsymbol{t}_{k-1}) 	\\
					& \qquad+ 	\sum_{\textnormal{Bad } \Theta_{k,1}} \mathbb{P} (\boldsymbol{\Theta}_{k,1} = \Theta_{k,1})
					\mathbb{P}(\boldsymbol{X}\in\boldsymbol{t}_{k-1}) V(\boldsymbol{t}_{k-1}) \bigg).
				\end{split}
		\end{equation}}
		
		Furthermore, recall that the probability of having a good column set is at least $\gamma_{0}$ according to our model assumption. Specifically,
		the probability of having any active $j$ in $\boldsymbol{\Theta}$ is $\gamma_{0}$; if no active feature is left for $\boldsymbol{t}$, then $\boldsymbol{\Theta}$ is a good column set with probability one. By this and the fact that $1-(s^*)^{-1}\le 1$,
		\begin{equation}
				\begin{split}\label{upper.new.7}
					&\textnormal{RHS of \eqref{upper.new.11}}\\
					& \le 
					\bigg(\gamma_{0}\times \mathbb{P}(\boldsymbol{X}\in\boldsymbol{t}_{k-1}) (1-(s^*)^{-1})V(\boldsymbol{t}_{k-1}) + (1-\gamma_{0})\times \mathbb{P}(\boldsymbol{X}\in\boldsymbol{t}_{k-1}) V(\boldsymbol{t}_{k-1}) \bigg)\\
					& \le \bigg(	(1-\gamma_{0}(s^*)^{-1})\times \mathbb{P}(\boldsymbol{X}\in\boldsymbol{t}_{k-1}) V(\boldsymbol{t}_{k-1}) \bigg).
				\end{split}
		\end{equation}
		Next, we apply the argument for \eqref{upper.new.6}--\eqref{upper.new.7} to other tree branches in \eqref{upper.new.8} and obtain
		{\small\begin{equation}
			\begin{split}\label{upper.new.9}
				&\textnormal{RHS of \eqref{upper.new.8}}\\
				& \le (1-\gamma_{0}(s^*)^{-1})\sum_{\Theta_{1:k-1}} \mathbb{P} (\boldsymbol{\Theta}_{1:k-1} = \Theta_{1:k-1})  \sum_{(\boldsymbol{t}_{1:k-1}) \in T^*(\Theta_{1:k-1})}\mathbb{P}(\boldsymbol{X}\in\boldsymbol{t}_{k-1}) V(\boldsymbol{t}_{k-1}).
			\end{split}
		\end{equation}}
		
		\noindent To conclude, we recursively apply these arguments to show that
		\begin{equation*}
			\begin{split}
				&\textnormal{RHS of \eqref{upper.new.9}} \le (1-\gamma_{0}(s^*)^{-1})^k V(\boldsymbol{t}_{0}),
			\end{split}
		\end{equation*}
		which leads to the desired result. 
		
		Lastly, to consider random splits, we let ``Random splits'' denote the random parameter of these random splits, and hence
	\begin{equation*}
	    \begin{split}
	    & \mathbb{E}\Big(m(\boldsymbol{X}) - m_{T^*}^{*}\left(\boldsymbol{\Theta}_{1:k}, \boldsymbol{X}\right)\Big)^{2}\\
	        & =\mathbb{E}\Big(\mathbb{E}\Big(\big(m(\boldsymbol{X}) - m_{T^*}^{*}\left(\boldsymbol{\Theta}_{1:k}, \boldsymbol{X}\right)\big)^2 | \textnormal{ Random splits}\Big)\Big) \\
	        & \le \mathbb{E}\big( (1-\gamma_{0}(s^*)^{-1})^k V(\boldsymbol{t}_{0}) \big)\\
	        & = (1-\gamma_{0}(s^*)^{-1})^k V(\boldsymbol{t}_{0}),
	    \end{split}
	\end{equation*}
	where the inequality is due to the previous arguments. This concludes the proof.

	\end{proof}

	\begin{proof}[Proof of \eqref{upper.new.p.10}]
		Recall that $\gamma_{0} = 1$ means that the all column sets contain all features, and that the forest model is essentially a decision tree model. In addition, recall that the total variance is 
		\begin{equation}
			\label{upper.new.p.11}
			\textnormal{Var}(m(\boldsymbol{X})) = s^*\frac{\beta^2}{4}.
		\end{equation}
		
		Since the tree model is grown by using theoretical CART, by i) of Lemma~\ref{upper.lemma.1}, the first split is on one of the first $s^*$ coordinates; the total bias decrease for the first split is $\frac{\beta^2}{4}$.
		
		Next, we split the resulting two daughter cells by using theoretical CART. Each split results in conditional bias decrease of an amount of $\frac{\beta^2}{4}$, and that each daughter cell $\boldsymbol{t}$ is such that $\mathbb{P}(\boldsymbol{X}\in\boldsymbol{t}) = \frac{1}{2}.$ Hence, the total bias decrease for the second split is  $\frac{1}{2}\frac{\beta^2}{4} + \frac{1}{2}\frac{\beta^2}{4} = \frac{\beta^2}{4}$.

		These steps repeat until there are no active features to be split; we see that at each level $k\le s^*$, the total bias decrease is $\frac{\beta^2}{4}$. By \eqref{upper.new.p.11} and this argument, we conclude the proof.
	\end{proof}

	\begin{proof}[Proof of \eqref{upper.new.p.6}]
		By \eqref{eq.new.2}, \eqref{eq.new.1}, and the assumptions of a bounded regression function and model errors,
		{\small \begin{equation}
				\begin{split}		\label{upper.new.p.3}
					&\mathbb{E}\Big(m_{ \widehat{T} }^{*} (\boldsymbol{\Theta}_{1:k} , \boldsymbol{X})- \widehat{m}_{ \widehat{T}}(\boldsymbol{\Theta}_{1:k} , \boldsymbol{X}, \mathcal{X}_{n} ) \Big)^{2} \\
					&\le  \mathbb{E}\bigg(\boldsymbol{1}_{U_{n}}\sum_{(\boldsymbol{t}_{1:k})\in\widehat{T}(\boldsymbol{\Theta}_{1:k})}\boldsymbol{1}_{\boldsymbol{X} \in\boldsymbol{t}_{k}} \mathbb{E}(m(\boldsymbol{X}) |\boldsymbol{X} \in\boldsymbol{t}_{k}) \\
					& \qquad\qquad- \sum_{(\boldsymbol{t}_{1:k})\in\widehat{T}(\boldsymbol{\Theta}_{1:k})} \boldsymbol{1}_{\boldsymbol{X} \in\boldsymbol{t}_{k}} \frac{\sum_{i=1}^{n} \boldsymbol{1} _{\boldsymbol{x}_{i} \in\boldsymbol{t}_{k}} (m(\boldsymbol{x}_{i}) + \varepsilon_{i})}{\sum_{i=1}^{n} \boldsymbol{1} _{\boldsymbol{x}_{i} \in\boldsymbol{t}_{k}} } \bigg)^2+ (2M_{0}+M_{\varepsilon})^2\mathbb{P}(U_{n}^{c})\\
					& = \mathbb{E}\bigg(\boldsymbol{1}_{U_{n}}\sum_{(\boldsymbol{t}_{1:k})\in\widehat{T}(\boldsymbol{\Theta}_{1:k})}\boldsymbol{1}_{\boldsymbol{X} \in\boldsymbol{t}_{k}} \bigg(\mathbb{E}(m(\boldsymbol{X}) |\boldsymbol{X} \in\boldsymbol{t}_{k})
					-  \frac{\sum_{i=1}^{n} \boldsymbol{1} _{\boldsymbol{x}_{i} \in\boldsymbol{t}_{k}} (m(\boldsymbol{x}_{i}) + \varepsilon_{i})}{\sum_{i=1}^{n} \boldsymbol{1} _{\boldsymbol{x}_{i} \in\boldsymbol{t}_{k}}} \bigg)^2\bigg) \\
					&\qquad \qquad+ (2M_{0}+M_{\varepsilon})^2\mathbb{P}(U_{n}^{c}),
				\end{split}
		\end{equation}}
		where the second equality is due to the fact that $\boldsymbol{1}_{\boldsymbol{X} \in \boldsymbol{t}} \times \boldsymbol{1}_{\boldsymbol{X} \in \boldsymbol{t}^{'}} = 0$ if $\boldsymbol{t}\cap \boldsymbol{t}^{'} = \emptyset$.
		
		The RHS of \eqref{upper.new.p.3} can be further dealt with as follows. By the model assumptions, for every end cell $\boldsymbol{t}$ of trees of level $k$, 
		\begin{equation}\label{upper.new.3}
			\mathbb{P}(\boldsymbol{X} \in \boldsymbol{t})\ge 2^{-k}.
		\end{equation}
		
		For every end cell $\boldsymbol{t}_{k}$ in \eqref{upper.new.p.3} with $0\le k \le \eta\log_{2}(n)$, it holds that either $\boldsymbol{t}_{k}\in G_{n}$ or $\boldsymbol{t}_{k}$ is an empty set. If $\boldsymbol{t}_{k}$ is an empty set, by the definition that $\frac{0}{0} = 0$, 
		\begin{equation}
			\begin{split}\label{upper.new.21}
			\mathbb{E}(m(\boldsymbol{X}) |\boldsymbol{X} \in\boldsymbol{t}_{k}) -  \frac{\sum_{i=1}^{n} \boldsymbol{1} _{\boldsymbol{x}_{i} \in\boldsymbol{t}_{k}} (m(\boldsymbol{x}_{i}) + \varepsilon_{i})}{\sum_{i=1}^{n} \boldsymbol{1} _{\boldsymbol{x}_{i} \in\boldsymbol{t}_{k}}}=0.
			\end{split}
		\end{equation}	
		On the other hand, on $U_{n}$, for each cell $\boldsymbol{t}_{k}\in G_{n}$,
		\begin{equation}
			\begin{split}\label{upper.new.22}
			    &\left|\mathbb{E}(m(\boldsymbol{X}) |\boldsymbol{X} \in\boldsymbol{t}_{k}) -  \frac{\sum_{i=1}^{n} \boldsymbol{1} _{\boldsymbol{x}_{i} \in\boldsymbol{t}_{k}} (m(\boldsymbol{x}_{i}) + \varepsilon_{i})}{\sum_{i=1}^{n} \boldsymbol{1} _{\boldsymbol{x}_{i} \in\boldsymbol{t}_{k}}} \right|\\
				& \le \Big|\frac{\mathbb{E}(m(\boldsymbol{X}) \boldsymbol{1}_{\boldsymbol{X} \in \boldsymbol{t}_{k}} )}{\mathbb{P}(\boldsymbol{X} \in \boldsymbol{t}_{k})} - \frac{n^{-1}\sum_{i=1}^{n} \boldsymbol{1} _{\boldsymbol{x}_{i} \in\boldsymbol{t}_{k}} (m(\boldsymbol{x}_{i}) + \varepsilon_{i})}{\mathbb{P}(\boldsymbol{X} \in \boldsymbol{t}_{k})} \Big| \\
				&\qquad+ \Big| \frac{n^{-1}\sum_{i=1}^{n} \boldsymbol{1} _{\boldsymbol{x}_{i} \in\boldsymbol{t}_{k}} (m(\boldsymbol{x}_{i}) + \varepsilon_{i})}{\mathbb{P}(\boldsymbol{X} \in \boldsymbol{t}_{k})} - \frac{\sum_{i=1}^{n} \boldsymbol{1} _{\boldsymbol{x}_{i} \in\boldsymbol{t}_{k}} (m(\boldsymbol{x}_{i}) + \varepsilon_{i})}{\sum_{i=1}^{n} \boldsymbol{1} _{\boldsymbol{x}_{i} \in\boldsymbol{t}_{k}}} \Big|\\
				& \le \frac{1}{\mathbb{P}(\boldsymbol{X} \in \boldsymbol{t}_{k} )}\Big|\mathbb{E}(m(\boldsymbol{X}) \boldsymbol{1}_{\boldsymbol{X} \in \boldsymbol{t}_{k}} ) - \frac{\sum_{i=1}^{n} \boldsymbol{1} _{\boldsymbol{x}_{i} \in\boldsymbol{t}_{k}} (m(\boldsymbol{x}_{i}) + \varepsilon_{i})}{n} \Big| \\
				&\qquad+ \frac{1}{\mathbb{P}(\boldsymbol{X} \in \boldsymbol{t}_{k})}\Big(  \frac{\sum_{i=1}^{n} \boldsymbol{1} _{\boldsymbol{x}_{i} \in\boldsymbol{t}_{k}} (m(\boldsymbol{x}_{i}) + \varepsilon_{i})}{\sum_{i=1}^{n} \boldsymbol{1} _{\boldsymbol{x}_{i} \in\boldsymbol{t}_{k}}} \Big)\Big| \frac{\sum_{i=1}^{n} \boldsymbol{1} _{\boldsymbol{x}_{i} \in\boldsymbol{t}_{k}}}{n} - \mathbb{P}(\boldsymbol{X} \in \boldsymbol{t}_{k}) \Big|\\
				&\le \frac{1}{\mathbb{P}(\boldsymbol{X} \in \boldsymbol{t}_{k})} (\sqrt{2M_{0}^2 + M_{\varepsilon}^2} + M_{0} + M_{\varepsilon})\big(\log_{e}({\max\{n, p\}})\big)^{\frac{2+\epsilon}{2}}\sqrt{\frac{\mathbb{P}(\boldsymbol{X} \in \boldsymbol{t}_{k})}{n}} \\
				&\le 2^{\frac{k}{2}}(3M_{0} + 2M_{\varepsilon})\frac{\big(\log_{e}({\max\{n, p\}})\big)^{\frac{2+\epsilon}{2}}}{\sqrt{n}},
			\end{split}
		\end{equation}
		where the third inequality is due to event $U_{n}$ and the assumption that $\boldsymbol{t}_{k} \in G_{n}$, and the last equality is due to \eqref{upper.new.3} and the subadditivity inequality.

		By \eqref{upper.new.p.3}, \eqref{upper.new.21}--\eqref{upper.new.22}, 
		\begin{equation*}
			\begin{split}	
				&\mathbb{E}\Big(m_{ \widehat{T} }^{*} (\boldsymbol{\Theta}_{1:k} , \boldsymbol{X})- \widehat{m}_{ \widehat{T}}(\boldsymbol{\Theta}_{1:k} , \boldsymbol{X}, \mathcal{X}_{n} ) \Big)^{2} \\
				&\le  ( 3M_{0} + 2M_{\varepsilon})^2\frac{2^{k}\big(\log_{e}({\max\{n, p\}})\big)^{2+\epsilon} }{n} + (2M_{0} + M_{\varepsilon})^2\mathbb{P}(U_{n}^c),
			\end{split}
		\end{equation*}
		which concludes the desired result.
		
		\begin{remark}\label{gamma.intuition}
			Let us give some intuition for how to establish a sharper estimation upper bound that depends on $\gamma_{0}$.  The way we deal with the estimation variance here is to bound the squared differences in \eqref{upper.new.p.3} directly; essentially, we establish the estimation variance upper bound for each tree. Notice that the end cells for each tree $\widehat{T}(\Theta_{1:k})$ are exclusive, but end cells of distinct trees are not exclusive. Our intuition is that it may be possible to aggregate distinct trees and sharpen the estimation upper bound. 
			The new upper bound should depend on $\gamma_{0}$ since column aggregation (i.e., the expectation over $\boldsymbol{\Theta}_{1:k}$) depends on $\gamma_{0}$. An example of utilising column aggregation for analysis can be seen in our bias analysis in \eqref{upper.new.p.1} and Section~\ref{Sec4}. There, we argue that the overall squared bias of a forest is controlled instead of arguing that each tree's bias is controlled, which is not right since there are always trees with high bias and trees with low bias in a forest with $\gamma_{0}<1$  and all possible trees.				 
		\end{remark}
	\end{proof}

	\subsection{Proof of Lemma~\ref{lemma1}} \label{SecC.2}

	Recall that $\mathcal{X}_{n}$ denotes the $n$ i.i.d. observations and $\boldsymbol{X}$ is the independent copy of $\boldsymbol{x}_{1}$.
	Let $\zeta = n^{-\delta}$, and $\hat{T}_{\zeta}$ and $\boldsymbol{U}_{n}$ be as defined in Theorem~\ref{theorem4}. See Sections~\ref{Sec4.2} and~\ref{SecA.5} for the definitions of $\hat{T}_{\zeta}$ and event $\boldsymbol{U}_{n}$, respectively. The main idea of the proof is the same as that described in (\ref{decom3}), but in the formal proof, it is $\hat{T}_{\zeta}$ instead of $\hat{T}$ that satisfies Condition~\ref{tree}. For details, see Theorem~\ref{theorem4} and Remark~\ref{r4}. An application of the triangle inequality leads to 
	\begin{equation}\begin{split}\label{con1}					
			& \mathop{{}\mathbb{E}}  \left(  m(\boldsymbol{X}) - m_{\widehat{T} }^{*} (\boldsymbol{\Theta}_{1:k} , \boldsymbol{X})\right)^{2} \\
			& =  \mathop{{}\mathbb{E}}\left(  \Big(m(\boldsymbol{X}) - m_{\widehat{T}_{\zeta}}^{*} (\boldsymbol{\Theta}_{1:k} , \boldsymbol{X})\Big)  -  \Big(  m_{\widehat{T} }^{*} (\boldsymbol{\Theta}_{1:k} , \boldsymbol{X}) - m_{ \widehat{T}_{\zeta}}^{*} (\boldsymbol{\Theta}_{1:k} , \boldsymbol{X})\Big)\right)^{2}\\
			& \le 2 \left( \mathop{{}\mathbb{E}}\left(  m(\boldsymbol{X}) - m_{\widehat{T}_{\zeta}}^{*} (\boldsymbol{\Theta}_{1:k} , \boldsymbol{X})\right)^{2}  +   \mathop{{}\mathbb{E}}\left(  m_{\widehat{T} }^{*} (\boldsymbol{\Theta}_{1:k} , \boldsymbol{X}) - m_{ \widehat{T}_{\zeta}}^{*} (\boldsymbol{\Theta}_{1:k} , \boldsymbol{X})\right)^{2}\right).
	\end{split}\end{equation}
	
	From Theorem~\ref{theorem4}, we see that for all large $n$, on event $\boldsymbol{U}_{n}$, $ \widehat{T}_{\zeta}$ with $\zeta=n^{-\delta}$ satisfies Condition~\ref{tree} with  $ k = \floor{c\log_{2}{n}}$, $\varepsilon = n^{-\eta}$, and  $\alpha_{2}$. Observe that if a tree growing rule satisfies Condition~\ref{tree} with $k$, it satisfies Condition~\ref{tree} with each positive integer no larger than $k$. By that $\mathcal{X}_{n}$ is independent of $\boldsymbol{X}$ and $\boldsymbol{\Theta}$, that $\boldsymbol{U}_{n}$ is $\mathcal{X}_{n}$-measurable, Condition~\ref{BO} (which states that $\sup_{\boldsymbol{c} \in [0, 1]^{p}}|m(\boldsymbol{c})| \le M_{0}$), and Theorem~\ref{theorem3} with $\varepsilon = n^{-\eta}$, it holds that for each $1 \le k \le c\log{n}$,
	\begin{equation}
		\label{eq.new.11} \mathbb{E}\left[\left(  m(\boldsymbol{X}) - m_{ \widehat{T}_{\zeta}}^{*} (\boldsymbol{\Theta}_{1:k} , \boldsymbol{X})\right)^{2}  \ \Big\vert \ \mathcal{X}_{n} \right] \boldsymbol{1}_{ \boldsymbol{U}_{n}} \le \alpha_{1}\alpha_{2}n^{-\eta} + (1 - \gamma_{0}(\alpha_{1}\alpha_{2})^{-1})^{k} M_{0}^{2}.
	\end{equation}
	Then by (\ref{eq.new.11}) and Condition~\ref{BO}, it holds that for all large $n$ and each integer $1\le k \le c\log{n}$,
	\begin{equation}\begin{split} \label{new.eq.013}
			\mathbb{E}\left(  m(\boldsymbol{X}) - m_{ \widehat{T}_{\zeta}}^{*} (\boldsymbol{\Theta}_{1:k} , \boldsymbol{X})\right)^{2} & = \mathbb{E}\left[\left(  m(\boldsymbol{X}) - m_{ \widehat{T}_{\zeta}}^{*} (\boldsymbol{\Theta}_{1:k} , \boldsymbol{X})\right)^{2}  \boldsymbol{1}_{ \boldsymbol{U}_{n}}\right] \\
			& \qquad+ \mathbb{E}\left[\left(  m(\boldsymbol{X}) - m_{ \widehat{T}_{\zeta}}^{*} (\boldsymbol{\Theta}_{1:k} , \boldsymbol{X})\right)^{2}  \boldsymbol{1}_{ \boldsymbol{U}_{n}^{c}}\right]\\
			& \le \alpha_{1}\alpha_{2}n^{-\eta} + (1 - \gamma_{0}(\alpha_{1}\alpha_{2})^{-1})^{k} M_{0}^{2} + n^{-1}.
	\end{split}\end{equation}
	Here, to bound the second term on the RHS of the equality above, we utilize Condition~\ref{BO}, standard inequalities, and that $\mathbb{P}(\boldsymbol{U}_{n}^c) = o(n^{-1})$.

	On the other hand, by Condition~\ref{BO} we have $\sup_{\boldsymbol{c} \in [0, 1]^{p}}|m_{\hat{T}}^{*}(\boldsymbol{c}) - m_{\hat{T}_{\zeta}}^{*}(\boldsymbol{c})|\le 2M_{0}$. By this and the fact that there are at most  $2^{k}$ cells at level $k$, it holds for each $\Theta_{1}, \dots, \Theta_{k}$ that on $\cap_{l=1}^{k}\{\boldsymbol{\Theta}_{l} = \Theta_{l}\}$,
	{\small\begin{equation}\label{nke1}
		\mathop{{}\mathbb{E}}\Big[ \Big(m_{ \widehat{T}}^{*}(\boldsymbol{\Theta}_{1}, \dots, \boldsymbol{\Theta}_{k}, \boldsymbol{X})  -   m_{\widehat{T}_{\zeta} }^{*}(\boldsymbol{\Theta}_{1}, \dots, \boldsymbol{\Theta}_{k}, \boldsymbol{X})\Big)^{2}\ \Big\vert \ \boldsymbol{\Theta}_{1}, \dots, \boldsymbol{\Theta}_{k}, \mathcal{X}_{n} \Big] \le \zeta2^{k}(2M_{0})^{2}. 
	\end{equation}}
	Hence, we can conclude that  for each $1 \le k \le c\log{n}$,
	\begin{equation}
		\label{new.eq.014}\mathop{{}\mathbb{E}}\left(  m_{\widehat{T} }^{*} (\boldsymbol{\Theta}_{1:k} , \boldsymbol{X}) - m_{ \widehat{T}_{\zeta }}^{*} (\boldsymbol{\Theta}_{1:k} , \boldsymbol{X})\right)^{2} \le \zeta2^{k+2}M_{0}^{2}.
	\end{equation}
	Therefore, in view of  (\ref{con1}), (\ref{new.eq.013}), and (\ref{new.eq.014}), it holds that for all large $n$ and each integer $1\le k \le c\log{n}$,
	\begin{equation}
		\begin{split}
			\label{new.eq.007}
			& \mathbb{E}  \left(  m(\boldsymbol{X}) - m_{\widehat{T} }^{*} (\boldsymbol{\Theta}_{1:k} , \boldsymbol{X})\right)^{2} \\
			&  \le 2\left( 4M_{0}^{2}n^{-\delta} 2^{k} +\alpha_{1}\alpha_{2}n^{-\eta} + M_{0}^{2}(1 - \gamma_{0}(\alpha_{1}\alpha_{2})^{-1})^{k} \right) + 2n^{-1},
	\end{split}\end{equation}
	which concludes the proof of Lemma~\ref{lemma1}.

	\subsection{Proof of Lemma~\ref{con3}} \label{SecC.3}
	The main idea of the proof for this lemma is based on the grid and has been discussed in Section~\ref{Sec5}. Let the grid be defined with positive parameters $\rho_{1}$ and  $\rho_{2}$; see Section~\ref{SecA.1} for details of these parameters.  With the grid and by some simple calculations, we can write 
	\begin{equation}\begin{split}	
			\label{con4}
			&\mathop{{}\mathbb{E}}      \Big(m_{\widehat{T}}^{*}(\boldsymbol{\Theta}_{1:k} , \boldsymbol{X}) - \widehat{m}_{\widehat{T}}(\boldsymbol{\Theta}_{1:k} , \boldsymbol{X}, \mathcal{X}_{n} ) \Big)^{2} \\ 
			&=  \mathbb{E}\Big(m_{ \widehat{T}}^{*}(\boldsymbol{\Theta}_{1:k} , \boldsymbol{X})- m_{ \widehat{T}^{\#}}^{*} (\boldsymbol{\Theta}_{1:k}, \boldsymbol{X}) + m_{ \widehat{T}^{\#}}^{*} (\boldsymbol{\Theta}_{1:k}, \boldsymbol{X}) - \widehat{m}_{ \widehat{T}^{\#}} (\boldsymbol{\Theta}_{1:k}, \boldsymbol{X}, \mathcal{X}_{n}) \\
			& \hspace{1em}+ \widehat{m}_{ \widehat{T}^{\#}} (\boldsymbol{\Theta}_{1:k}, \boldsymbol{X}, \mathcal{X}_{n}) - \widehat{m}_{ \widehat{T}} (\boldsymbol{\Theta}_{1:k}, \boldsymbol{X}, \mathcal{X}_{n})\Big)^{2}\\
			& \le 3\Big( \mathbb{E}\Big(m_{ \widehat{T}}^{*}(\boldsymbol{\Theta}_{1:k} , \boldsymbol{X})- m_{ \widehat{T}^{\#}}^{*} (\boldsymbol{\Theta}_{1:k}, \boldsymbol{X})   \Big)^{2} \\
			&\qquad+ \mathbb{E}\Big(  m_{ \widehat{T}^{\#}}^{*} (\boldsymbol{\Theta}_{1:k}, \boldsymbol{X}) - \widehat{m}_{ \widehat{T}^{\#}} (\boldsymbol{\Theta}_{1:k}, \boldsymbol{X}, \mathcal{X}_{n})  \Big)^{2} \\
			& \qquad + \mathbb{E}\Big( \widehat{m}_{ \widehat{T}^{\#}} (\boldsymbol{\Theta}_{1:k}, \boldsymbol{X}, \mathcal{X}_{n}) - \widehat{m}_{ \widehat{T}} (\boldsymbol{\Theta}_{1:k}, \boldsymbol{X}, \mathcal{X}_{n})\Big)^{2}  \Big).
	\end{split}\end{equation}
	
	Let us choose $ \min\{1, \nu + 1/2\} > \Delta > 1/2$. Then it follows from Lemma~\ref{lemma3} that there exists some constant $C > 0$ such that
	{\small\begin{equation}
		\begin{split}
			\label{con41}
			&\mathbb{E}\Big(m_{ \widehat{T}}^{*}(\boldsymbol{\Theta}_{1:k} , \boldsymbol{X})- m_{ \widehat{T}^{\#}}^{*} (\boldsymbol{\Theta}_{1:k}, \boldsymbol{X})   \Big)^{2} + \mathbb{E}\Big(  \widehat{m}_{ \widehat{T}^{\#}} (\boldsymbol{\Theta}_{1:k}, \boldsymbol{X}) - \widehat{m}_{ \widehat{T}} (\boldsymbol{\Theta}_{1:k}, \boldsymbol{X}, \mathcal{X}_{n})  \Big)^{2} \\
			& \le C2^{k}n^{-\frac{1}{2} + \nu}.
	\end{split}\end{equation}}
	
	\noindent Recall that conditional on $\mathcal{X}_{n}$, $\hat{T}$ is essentially associated with a deterministic splitting criterion. By this fact and Theorem \ref{theorem5}, we can deduce that \begin{equation}\begin{split}\label{con42}
			&\mathop{{}\mathbb{E}}      \Big(m_{\widehat{T}^{\#}}^{*}(\boldsymbol{\Theta}_{1:k} , \boldsymbol{X}) - \widehat{m}_{\widehat{T}^{\#}}(\boldsymbol{\Theta}_{1:k} , \boldsymbol{X}, \mathcal{X}_{n} ) \Big)^{2} \\
			&= \mathbb{E}      \left[ \mathbb{E} \left(\Big(m_{ \widehat{T}^{\#}}^{*}(\boldsymbol{\Theta}_{1:k} , \boldsymbol{X}) - \widehat{m}_{\widehat{T}^{\#}}(\boldsymbol{\Theta}_{1:k} , \boldsymbol{X}, \mathcal{X}_{n} ) \Big)^{2}\ \ \Big\vert \ \boldsymbol{\Theta}_{1:k}, \mathcal{X}_{n}\right) \right]\\
			& \le\mathop{{}\mathbb{E}}\left\{\sup_{T}  \mathop{{}\mathbb{E}}    \left[     \Big(m_{ T^{\#}}^{*}(\boldsymbol{\Theta}_{1:k} , \boldsymbol{X}) - \widehat{m}_{ T^{\#}}(\boldsymbol{\Theta}_{1:k} , \boldsymbol{X}, \mathcal{X}_{n} ) \Big)^{2} \ \Big\vert \ \boldsymbol{\Theta}_{1:k}, \mathcal{X}_{n} \right] \right\} \\
			& \le n^{-\eta},
	\end{split}\end{equation}
	where the supremum is over all possible tree growing rules. Therefore, combining (\ref{con4})--(\ref{con42}) completes the proof of Lemma~\ref{con3}.

	\subsection{Proof of Lemma \ref{lemma3}} \label{SecC.4}
	Recall that $\mathcal{X}_{n}$ denotes the $n$ i.i.d. observations and $\boldsymbol{X}$ is the independent copy of $\boldsymbol{x}_{1}$. The parameters $\Delta, c, C$ are given by Lemma~\ref{SecC.4}. Essentially Lemma~\ref{lemma3} shows that the population means conditional on an arbitrary cell $\boldsymbol{t}$ is very close to those conditional on cell $\boldsymbol{t}^{\#}$ in terms of the $\mathbb{L}^{2}$ distance. In addition to the population means, Lemma~\ref{lemma3} also considers the deviations of the sample means. To control those deviations, we exploit the results in \eqref{grid3} and \eqref{grid1}, the moment bounds of the model errors, and Condition~\ref{BO} (which states that $\sup_{\boldsymbol{c} \in [0, 1]^{p}}|m(\boldsymbol{c})| \le M_{0}$).  In what follows, we first establish the bound in (\ref{consistency2}). By Condition~\ref{BO}, we have that for each $n\ge1$ and $k \ge 1$,
	{\small\begin{equation}\begin{split}\label{h21}
			& \mathop{{}\mathbb{E}} \left(m_{\widehat{T}^{\#}}^{*} (\boldsymbol{\Theta}_{1:k}, \boldsymbol{X}) - m_{\widehat{T}}^{*} (\boldsymbol{\Theta}_{1:k}, \boldsymbol{X}) \right)^{2} \\
			&  = \mathbb{E}\left[   \mathbb{E} \left( \sum_{(\boldsymbol{t}_{1}, \dots, \boldsymbol{t}_{k}) \in \widehat{T}(\boldsymbol{\Theta}_{1:k})}  \Big(  m_{\widehat{T}^{\#}}^{*} (\boldsymbol{\Theta}_{1:k}, \boldsymbol{X}) - m_{\widehat{T}}^{*} (\boldsymbol{\Theta}_{1:k}, \boldsymbol{X})  \Big)^{2}  \boldsymbol{1}_{\boldsymbol{X} \in \boldsymbol{t}_{k}}  \ \Bigg\vert  \ \boldsymbol{\Theta}_{1:k}, \mathcal{X}_{n} \right) \right] \\
			& \le \mathbb{E}\Bigg[   \mathbb{E} \Bigg( \sum_{ \substack{(\boldsymbol{t}_{1}, \dots, \boldsymbol{t}_{k}) \in \widehat{T}(\boldsymbol{\Theta}_{1:k})  \\ \mathbb{P}(\boldsymbol{X} \in \boldsymbol{t}_{k}) \ge n^{\Delta - 1} }}  \Big(  m_{\widehat{T}^{\#}}^{*} (\boldsymbol{\Theta}_{1:k}, \boldsymbol{X}) - m_{ \widehat{T}}^{*} (\boldsymbol{\Theta}_{1:k}, \boldsymbol{X})  \Big)^{2}  \boldsymbol{1}_{\boldsymbol{X} \in \boldsymbol{t}_{k}}  \ \Bigg\vert  \ \boldsymbol{\Theta}_{1:k}, \mathcal{X}_{n} \Bigg)  \\
			&\quad+ 2^{k}(n^{\Delta - 1})(2M_{0})^{2}\Bigg].
	\end{split}\end{equation}}
	
	It follows from Condition~\ref{BO} and the definitions of the sharp notation and the population tree model that for each $n\ge 1$ and $ k \ge 1$,
	{\small\begin{equation}\begin{split}\label{h22}	
			&\textnormal{RHS of } (\ref{h21}) \\
			&  \le \mathbb{E}\Bigg[   \mathbb{E} \Bigg( \sum_{ \substack{(\boldsymbol{t}_{1}, \dots, \boldsymbol{t}_{k}) \in \widehat{T}(\boldsymbol{\Theta}_{1:k})  \\ \mathbb{P}(\boldsymbol{X} \in \boldsymbol{t}_{k}) \ge n^{\Delta - 1} }}  \Big( \mathbb{E}(m(\boldsymbol{X}) \ \vert \ \boldsymbol{X} \in\boldsymbol{t}_{k}) - \mathbb{E}(m(\boldsymbol{X}) \ \vert \ \boldsymbol{X} \in\boldsymbol{t}_{k}^{\#}) \Big)^{2}  \boldsymbol{1}_{\boldsymbol{X} \in \boldsymbol{t}_{k} \cap \boldsymbol{t}_{k}^{\#}} \\
			& \qquad +   (2M_{0})^{2}\boldsymbol{1}_{\boldsymbol{X} \in \boldsymbol{t}_{k}\Delta\boldsymbol{t}_{k}^{\#}}  \ \Bigg\vert  \ \boldsymbol{\Theta}_{1:k}, \mathcal{X}_{n} \Bigg)\Bigg]  + 2^{k}(n^{\Delta - 1})(2M_{0})^{2}\\
			&  \le \mathbb{E}\Bigg[   \mathbb{E} \Bigg( \sum_{ \substack{(\boldsymbol{t}_{1}, \dots, \boldsymbol{t}_{k}) \in \widehat{T}(\boldsymbol{\Theta}_{1:k})  \\ \mathbb{P}(\boldsymbol{X} \in \boldsymbol{t}_{k}) \ge n^{\Delta - 1} }}  \Big( \mathbb{E}(m(\boldsymbol{X}) \ \vert \ \boldsymbol{X} \in\boldsymbol{t}_{k}) - \mathbb{E}(m(\boldsymbol{X}) \ \vert \ \boldsymbol{X} \in\boldsymbol{t}_{k}^{\#}) \Big)^{2}  \boldsymbol{1}_{\boldsymbol{X} \in \boldsymbol{t}_{k}} \\
			& \qquad +   (2M_{0})^{2}\boldsymbol{1}_{\boldsymbol{X} \in \boldsymbol{t}_{k}\Delta\boldsymbol{t}_{k}^{\#}}  \ \Bigg\vert  \ \boldsymbol{\Theta}_{1:k}, \mathcal{X}_{n} \Bigg)\Bigg]  + 2^{k}(n^{\Delta - 1})(2M_{0})^{2}.
		\end{split}
	\end{equation}}
	
	To deal with the RHS of \eqref{h22}, we need to establish an upper bound for $ \mathbb{E}(m(\boldsymbol{X}) \ \vert \ \boldsymbol{X} \in\boldsymbol{t}_{k}) - \mathbb{E}(m(\boldsymbol{X}) \ \vert \ \boldsymbol{X} \in\boldsymbol{t}_{k}^{\#})$. In light of Condition~\ref{abc} ($f(\cdot)$ is the density of the distribution of $\boldsymbol{X}$), (\ref{grid3}), and Condition~\ref{BO}, it holds that for $\boldsymbol{t}_{k}$ in \eqref{h22}  with $\mathbb{P}(\boldsymbol{X} \in \boldsymbol{t}_{k}) \ge n^{\Delta - 1}$ and $1 \le k \le c\log{n}$, 
	{\small\begin{equation}
		\begin{split}
			\label{h24}
			& \left|\mathbb{E}(m(\boldsymbol{X}) \ \vert \ \boldsymbol{X} \in\boldsymbol{t}_{k}) - \mathbb{E}(m(\boldsymbol{X}) \ \vert \ \boldsymbol{X} \in\boldsymbol{t}_{k}^{\#}) \right|\\
			&  = \left|\frac{\mathbb{E}(m(\boldsymbol{X}) \boldsymbol{1}_{\boldsymbol{X} \in\boldsymbol{t}_{k}})}{\mathbb{P}(\boldsymbol{X} \in\boldsymbol{t}_{k})} - \frac{\mathbb{E}(m(\boldsymbol{X}) \boldsymbol{1}_{\boldsymbol{X} \in\boldsymbol{t}_{k}^{\#} })}{\mathbb{P}(\boldsymbol{X} \in\boldsymbol{t}_{k})} + \frac{\mathbb{E}(m(\boldsymbol{X}) \boldsymbol{1}_{\boldsymbol{X} \in\boldsymbol{t}_{k}^{\#}})}{\mathbb{P}(\boldsymbol{X} \in\boldsymbol{t}_{k})} - \frac{\mathbb{E}(m(\boldsymbol{X}) \boldsymbol{1}_{\boldsymbol{X} \in\boldsymbol{t}_{k}^{\#}})}{\mathbb{P}(\boldsymbol{X} \in\boldsymbol{t}_{k}^{\#})} \right|\\
			& \le \frac{\mathbb{E}(|m(\boldsymbol{X})| \boldsymbol{1}_{\boldsymbol{X} \in\boldsymbol{t}_{k}\Delta \boldsymbol{t}^{\#} }) }{\mathbb{P}(\boldsymbol{X} \in\boldsymbol{t}_{k})} +  \frac{\mathbb{E}(|m(\boldsymbol{X})| \boldsymbol{1}_{\boldsymbol{X} \in\boldsymbol{t}_{k}^{\#}})}{\mathbb{P}(\boldsymbol{X} \in\boldsymbol{t}_{k}^{\#})}\left| 1 - \frac{\mathbb{P}(\boldsymbol{X} \in\boldsymbol{t}_{k}^{\#})}{\mathbb{P}(\boldsymbol{X} \in\boldsymbol{t}_{k})} \right|	\\
			&\le \frac{M_{0}\mathbb{P}( \boldsymbol{X} \in\boldsymbol{t}_{k}\Delta \boldsymbol{t}^{\#} ) }{\mathbb{P}(\boldsymbol{X} \in\boldsymbol{t}_{k})} +  M_{0}\left| 1 - \frac{\mathbb{P}(\boldsymbol{X} \in\boldsymbol{t}_{k}^{\#})}{\mathbb{P}(\boldsymbol{X} \in\boldsymbol{t}_{k})} \right| \\
			& \le 2\frac{M_{0}\mathbb{P}( \boldsymbol{X} \in\boldsymbol{t}_{k}\Delta \boldsymbol{t}_{k}^{\#} ) }{\mathbb{P}(\boldsymbol{X} \in\boldsymbol{t}_{k})}\\
			&\le 2M_{0} (\sup f)(c\log{n})\frac{n^{1-\Delta}}{n^{1+\rho_{1}}},
	\end{split}\end{equation}}

\noindent where the third inequality follows from $|\mathbb{P}(A) - \mathbb{P}(B)| \le \mathbb{P}(A\Delta B)$ for two events $A, B$.

	Then by \eqref{h22}--\eqref{h24},  we have that for all large $n$ and each $1 \le k \le c\log{n}$,  
	\begin{equation}
		\begin{split}
			\label{h23}
			&\textnormal{RHS of } (\ref{h22}) \le \mathbb{E}\Bigg[   \mathbb{E} \Bigg( \sum_{ \substack{(\boldsymbol{t}_{1}, \dots, \boldsymbol{t}_{k}) \in \widehat{T}(\boldsymbol{\Theta}_{1:k}) }}  \Big( 2M_{0} (\sup f) (c\log{n}) \frac{n^{ 1 - \Delta}}{ n^{1 + \rho_{1}}}  \Big)^{2}  \boldsymbol{1}_{\boldsymbol{X} \in \boldsymbol{t}_{k}} \\
			& \hspace{10em}+   (2M_{0})^{2}\boldsymbol{1}_{\boldsymbol{X} \in \boldsymbol{t}_{k}\Delta\boldsymbol{t}_{k}^{\#}}  \ \Bigg\vert  \ \boldsymbol{\Theta}_{1:k}, \mathcal{X}_{n} \Bigg)\Bigg]  + 2^{k}(n^{\Delta - 1})(2M_{0})^{2}\\
			& \quad \le 2^{k}\Bigg(\Big( 2M_{0} (\sup f) (c\log{n}) \frac{n^{1 - \Delta}}{ n^{1 + \rho_{1}}}  \Big)^{2} + \Big((2M_{0})^{2} (\sup f) (c\log{n}) \frac{1}{ n^{1 + \rho_{1}}}  \Big)  \Bigg) \\
			&\quad \quad+ 2^{k}(n^{\Delta - 1})(2M_{0})^{2},
		\end{split}
	\end{equation}
	which leads to (\ref{consistency2}).
	
	We next proceed to show the bound in (\ref{consistency1}). Let $\bar{\Delta}$ with $1/2 < \bar{\Delta} <\Delta$ and sufficiently small $s>0$ be given such that $\bar{\Delta} = \Delta - 2s$. Let us define $\boldsymbol{C}_{n} \coloneqq \cap_{i=1}^{n} \{ |\varepsilon_{i}| \le n^{s} \}$. Observe that for each $n\ge 1$, conditional on $\mathcal{X}_{n}$ we have 
	\begin{enumerate}
		\item[1)] For each $\Theta_{1}, \dots, \Theta_{k}$ and tree growing rule, $ \sup_{\boldsymbol{c}\in [0, 1]^{p}}  |\widehat{m}_{ T}(\Theta_{1:k} , \boldsymbol{c}, \mathcal{X}_{n} ) | \le \sum_{i=1}^{n} |y_{i}|$;
		\item[2)] For each $\Theta_{1}, \dots, \Theta_{k}$ and tree growing rule, $ \sup_{\boldsymbol{c}\in [0, 1]^{p}} |\widehat{m}_{ T}(\Theta_{1:k} , \boldsymbol{c}, \mathcal{X}_{n} ) |\boldsymbol{1}_{\boldsymbol{C}_{n}} \le M_{0} + n^{s}$.
	\end{enumerate}
	We further define an $\mathcal{X}_{n}$-measurable event
	\[\boldsymbol{E}_{n} \coloneqq \boldsymbol{C}_{n} \cap \mathcal{A}_{3} (\floor{c\log{n}}, \bar{\Delta}) \cap \mathcal{A},\]
	where the event $\mathcal{A}_{3} (\floor{c\log{n}}, \bar{\Delta})$ is given in Lemma~\ref{CI1} in Section \ref{SecE.1} and $\mathcal{A}$ is defined in \eqref{eventA.1}. In particular, event $\mathcal{A}_{3} (\floor{c\log{n}}, \bar{\Delta})$ says that the number of observations on each cell $\boldsymbol{t}$ on the grid constructed by at most $\floor{c\log{n}}$ cuts and with $\mathbb{P}(\boldsymbol{X} \in \boldsymbol{t}) > n^{\bar{\Delta} - 1}$  is no less than $n^{1/2}$.

	Then by property 1) above, the Cauchy--Schwarz inequality, and  Minkowski's inequality, it holds that for each $n\ge 1$ and $k \ge 1$,
	\begin{equation}\begin{split}\label{h31}
			& \mathop{{}\mathbb{E}}      \Big(\widehat{m}_{\widehat{T}^{\#}} (\boldsymbol{\Theta}_{1:k}, \boldsymbol{X}, \mathcal{X}_{n}) - \widehat{m}_{\widehat{T}} (\boldsymbol{\Theta}_{1:k}, \boldsymbol{X}, \mathcal{X}_{n}) \Big)^{2} \\
			& \le \mathop{{}\mathbb{E}}    \left(  (2\sum_{i=1}^{n} |y_{i}|)^{2}\boldsymbol{1}_{\boldsymbol{E}_{n}^{c}}  +  \Big(\widehat{m}_{ \widehat{T}^{\#}} (\boldsymbol{\Theta}_{1:k}, \boldsymbol{X}, \mathcal{X}_{n}) - \widehat{m}_{\widehat{T}} (\boldsymbol{\Theta}_{1:k}, \boldsymbol{X}, \mathcal{X}_{n}) \Big)^{2} \boldsymbol{1}_{\boldsymbol{E}_{n}}\right) \\
			& \le   4 \left(\sum_{i=1}^{n}\Big( \mathbb{E} |y_{i}|^{4} \Big)^{1/4}\right)^{2} \Big(\mathbb{P}(\boldsymbol{E}_{n}^{c})\Big)^{\frac{1}{2}}  \\
			&\quad +  \mathop{{}\mathbb{E}} \left(\Big(\widehat{m}_{\widehat{T}^{\#}} (\boldsymbol{\Theta}_{1:k}, \boldsymbol{X}, \mathcal{X}_{n}) - \widehat{m}_{\widehat{T}} (\boldsymbol{\Theta}_{1:k}, \boldsymbol{X}, \mathcal{X}_{n}) \Big)^{2} \boldsymbol{1}_{\boldsymbol{E}_{n}}\right).
		\end{split}
	\end{equation}
	To bound the second term above, from the aforementioned property 2) and some basic calculations, we can obtain that for each $n$ and $1\le k \le c\log{n}$,
	\begin{equation}
		\begin{split}\label{h33}
			& \mathbb{E} \left(\Big(\widehat{m}_{ \widehat{T}^{\#}} (\boldsymbol{\Theta}_{1:k}, \boldsymbol{X}, \mathcal{X}_{n}) - \widehat{m}_{ \widehat{T}} (\boldsymbol{\Theta}_{1:k}, \boldsymbol{X}, \mathcal{X}_{n}) \Big)^{2} \boldsymbol{1}_{\boldsymbol{E}_{n}} \right)\\
			&\le Q_{1}+ 2^{k}(2n^{\bar{\Delta} - 1}) (2(M_{0} + n^{s}))^{2},
		\end{split}
	\end{equation}
	where
	\[Q_{1} \coloneqq \mathbb{E}\Bigg[ \mathbb{E}\Bigg( \sum_{\substack{(\boldsymbol{t}_{1:k}) \in \widehat{T} (\boldsymbol{\Theta}_{1:k} ) \\  \mathbb{P} (\boldsymbol{X} \in \boldsymbol{t}_{k}) \ge 2 n^{\bar{\Delta}-1}}}  \left( \widehat{m}_{ \widehat{T}^{\#}} (\boldsymbol{\Theta}_{1:k}, \boldsymbol{X}, \mathcal{X}_{n}) - \widehat{m}_{ \widehat{T}} (\boldsymbol{\Theta}_{1:k}, \boldsymbol{X}, \mathcal{X}_{n}) \right)^{2}\boldsymbol{1}_{{\boldsymbol{X} \in \boldsymbol{t}_{k}}}  \boldsymbol{1}_{\boldsymbol{E}_{n}} \ \Bigg\vert \ \mathcal{X}_{n}, \boldsymbol{\Theta}_{1:k}\Bigg)\Bigg] \]
	and the second term on the right-hand side of 	\eqref{h33} is the upper bound for a term similar to $Q_1$ but summing over $\{\boldsymbol{t}_{k}:\boldsymbol{t}_{1:k} \in \widehat{T} (\boldsymbol{\Theta}_{1:k} )\}$ with $  \mathbb{P} (\boldsymbol{X} \in \boldsymbol{t}_{k}) < 2 n^{\bar{\Delta}-1}$.

	 To further deal with $Q_{1}$, we need the following results \eqref{new.h.1}--\eqref{new.h.2}. Due to Condition~\ref{abc}, \eqref{grid3}, and the fact that $k\le c\log_{2}(n)$, for all large $n$, it holds that if $\mathbb{P} (\boldsymbol{X} \in \boldsymbol{t}_{k}) \ge 2 n^{\bar{\Delta}-1}$, then	
	\begin{equation}
		\label{new.h.1}\mathbb{P} (\boldsymbol{X} \in \boldsymbol{t}_{k}^{\#}) \ge n^{\bar{\Delta}-1}.
	\end{equation} 	
	In addition, it follows from the definition of sharp notation and property 2) above that for each $(\boldsymbol{t}_{1}, \dots, \boldsymbol{t}_{k}) \in \widehat{T}(\Theta_{1:k})$,
	\begin{equation}
		\begin{split}\label{new.h.2}
			&\left( \widehat{m}_{ \widehat{T}^{\#}} (\Theta_{1:k}, \boldsymbol{X}, \mathcal{X}_{n}) - \widehat{m}_{ \widehat{T}} (\Theta_{1:k}, \boldsymbol{X}, \mathcal{X}_{n}) \right)^{2}\boldsymbol{1}_{{\boldsymbol{X} \in \boldsymbol{t}_{k}}}\boldsymbol{1}_{\boldsymbol{E}_{n}} \\
			& \le
			\left(\left(  \bar{y}(\boldsymbol{t}_{k}^{\#})- \bar{y}(\boldsymbol{t}_{k}) \right)^{2}\boldsymbol{1}_{\boldsymbol{X} \in \boldsymbol{t}_{k}\cap\boldsymbol{t}_{k}^{\#}} + (2(M_{0} + n^{s}))^{2}\boldsymbol{1}_{\boldsymbol{X} \in \boldsymbol{t}_{k} \backslash \boldsymbol{t}_{k}^{\#}}\right)\boldsymbol{1}_{\boldsymbol{E}_{n}}\\
			& \le
			\left(\left(  \bar{y}(\boldsymbol{t}_{k}^{\#})- \bar{y}(\boldsymbol{t}_{k}) \right)^{2}\boldsymbol{1}_{\boldsymbol{X} \in \boldsymbol{t}_{k}}  + (2(M_{0} + n^{s}))^{2}\boldsymbol{1}_{\boldsymbol{X} \in \boldsymbol{t}_{k} \Delta \boldsymbol{t}_{k}^{\#}}\right)\boldsymbol{1}_{\boldsymbol{E}_{n}},
		\end{split}
	\end{equation}
	where for each cell $\boldsymbol{t}$,
	\[\bar{y}(\boldsymbol{t}) \coloneqq \frac{\sum_{\boldsymbol{x}_{i} \in \boldsymbol{t}} y_{i}}{ \#\{ i: \boldsymbol{x}_{i} \in \boldsymbol{t}\}}, \]
	and $\bar{y}(\boldsymbol{t})$ is defined as zero if the denominator is zero.
	
	By \eqref{new.h.1}--\eqref{new.h.2}, for all large $n$ and $1\le k \le c\log{n}$,
	\begin{equation}
		\begin{split}\label{h34}
			& \textnormal{RHS of (\ref{h33}) } \le Q_{2}+ 2^{k}(2n^{\bar{\Delta} - 1}) (2(M_{0} + n^{s}))^{2}
		\end{split}
	\end{equation}
	with
	\begin{equation*}
		\begin{split}
			&Q_{2} \coloneqq \\
			&\mathbb{E}\Bigg[ \mathbb{E}\Bigg( \sum_{\substack{(\boldsymbol{t}_{1:k}) \in \widehat{T} (\boldsymbol{\Theta}_{1:k} ) \\  \mathbb{P} (\boldsymbol{X} \in \boldsymbol{t}_{k}) \ge 2 n^{\bar{\Delta}-1} \\ \mathbb{P} (\boldsymbol{X} \in \boldsymbol{t}_{k}^{\#}) \ge n^{\bar{\Delta}-1} }   }  \left(\left(  \bar{y}(\boldsymbol{t}_{k}^{\#}) - \bar{y}(\boldsymbol{t}_{k}) \right)^{2} \boldsymbol{1}_{\boldsymbol{X} \in \boldsymbol{t}_{k}}  + (2(M_{0} + n^{s}))^{2}\boldsymbol{1}_{\boldsymbol{X} \in \boldsymbol{t}_{k} \Delta \boldsymbol{t}_{k}^{\#}}
			\right)
			\boldsymbol{1}_{\boldsymbol{E}_{n}} \ \Bigg\vert \ \mathcal{X}_{n}, \boldsymbol{\Theta}_{1:k}\Bigg)\Bigg].
	\end{split}\end{equation*}
	
	In what follows, we deal with $  \bar{y}(\boldsymbol{t}_{k}^{\#}) - \bar{y}(\boldsymbol{t}_{k}) $. By simple calculations, for every $\boldsymbol{t}_{k}$,
	\begin{equation}
		\begin{split}\label{new.h.3}
			&\bar{y}(\boldsymbol{t}_{k}^{\#}) - \bar{y}(\boldsymbol{t}_{k}) \\
			& = \frac{\sum_{\boldsymbol{x}_{i} \in \boldsymbol{t}_{k}^{\#}} y_{i}}{ \#\{ i: \boldsymbol{x}_{i} \in \boldsymbol{t}_{k}^{\#}\}} - \frac{\sum_{\boldsymbol{x}_{i} \in \boldsymbol{t}_{k}} y_{i}}{ \#\{ i: \boldsymbol{x}_{i} \in \boldsymbol{t}_{k}\}}\\
			& = \frac{(\#\{ i: \boldsymbol{x}_{i} \in \boldsymbol{t}_{k}\}) \left(\sum_{\boldsymbol{x}_{i} \in \boldsymbol{t}_{k}^{\#}} y_{i}\right) - (\#\{ i: \boldsymbol{x}_{i} \in \boldsymbol{t}_{k}^{\#}\}) \left(\sum_{\boldsymbol{x}_{i} \in \boldsymbol{t}_{k}} y_{i} \right)}{ (\#\{ i: \boldsymbol{x}_{i} \in \boldsymbol{t}_{k}^{\#}\}) \times (\#\{ i: \boldsymbol{x}_{i} \in \boldsymbol{t}_{k}\} ) } \\
			& = \frac{ \left(\sum_{\boldsymbol{x}_{i} \in \boldsymbol{t}_{k}^{\#}} y_{i}\right) -  \left(\sum_{\boldsymbol{x}_{i} \in \boldsymbol{t}_{k}} y_{i} \right)}{ \#\{ i: \boldsymbol{x}_{i} \in \boldsymbol{t}_{k}^{\#}\}  } \\
			& \qquad\qquad+ \left(\frac{ \#\{ i: \boldsymbol{x}_{i} \in \boldsymbol{t}_{k}\} - \#\{ i: \boldsymbol{x}_{i} \in \boldsymbol{t}_{k}^{\#}\}  }{ \#\{ i: \boldsymbol{x}_{i} \in \boldsymbol{t}_{k}^{\#}\} }\right)  \frac{  \sum_{\boldsymbol{x}_{i} \in \boldsymbol{t}_{k}} y_{i} }{ \#\{ i: \boldsymbol{x}_{i} \in \boldsymbol{t}_{k}\}  }\\
			& \le \frac{ \sum_{\boldsymbol{x}_{i} \in \boldsymbol{t}_{k}^{\#}\Delta \boldsymbol{t}} (|m(\boldsymbol{x}_{i})| + |\varepsilon_{i}|) }{ \#\{ i: \boldsymbol{x}_{i} \in \boldsymbol{t}_{k}^{\#}\}  } + \frac{\#\{i: \boldsymbol{x}_{i} \in \boldsymbol{t}_{k}\Delta \boldsymbol{t}_{k}^{\#}\}}{ \#\{ i: \boldsymbol{x}_{i} \in \boldsymbol{t}_{k}^{\#}\}  } \frac{  \sum_{\boldsymbol{x}_{i} \in \boldsymbol{t}_{k}} (|m(\boldsymbol{x}_{i})| + |\varepsilon_{i}|) }{ \#\{ i: \boldsymbol{x}_{i} \in \boldsymbol{t}_{k}\} }.
		\end{split}
	\end{equation}

By the definition of $\mathcal{A}_{3}(k, \bar{\Delta})$ (in Lemma~\ref{CI1}; recall that $\bar{\Delta} > \frac{1}{2}$), for each $\boldsymbol{t}_{k}$ satisfying the conditions specified in $Q_{2}$,
$$\#\{ i: \boldsymbol{x}_{i} \in \boldsymbol{t}_{k}^{\#}\}\ge n^{\frac{1}{2}}.$$

By this, \eqref{grid1},  Condition~\ref{BO}, and the fact that $k\le c\log{n}$, it holds that on $\boldsymbol{E}_{n}$, for each $\boldsymbol{t}_{k}$ satisfying the conditions specified in $Q_{2}$,
\begin{equation}
	\begin{split}\label{new.h.4}
			\textnormal{RHS of \eqref{new.h.3}}&\le \frac{2(M_{0} + n^{s})c(\log{n})^{2+\rho_{2}}}{n^{\frac{1}{2}}},
\end{split}
\end{equation}
	where we recall that $\rho_{2}>0$ is defined in \eqref{eventA.1}.

	By \eqref{new.h.3}--\eqref{new.h.4}, \eqref{grid3}, and  $\Delta = \bar{\Delta}+2s$, there exists some constant $ C>0$ such that for all large $n$ and $1\le k \le c\log{n}$,
	{\small\begin{equation}
		\begin{split}
			\label{h32}
			& \textnormal{RHS of (\ref{h34}) }\\
			& \le 2^{k}\left(  \left( \frac{2(M_{0}+n^{s}) c(\log{n})^{2+\rho_{2}}  }{n^{\frac{1}{2}}   }\right)^{2} + \frac{(2(M_{0}+n^{s}))^{2}}{n}\right) + 2^{k}(2n^{\bar{\Delta} - 1}) (2(M_{0} + n^{s}))^{2}\\
			& \le 2^{k}  \times (2(M_{0} + n^{s}))^{2} \times \left(\frac{2c^{2}(\log{n})^{4+2\rho_{2}} }{n}+ 2n^{\bar{\Delta} - 1}\right)\\
			& \le C2^{k}n^{\Delta -1},
		\end{split}
	\end{equation}}
	
	\noindent which gives the bound for the second term on the RHS of \eqref{h31}. For the first term on the RHS of  \eqref{h31}, in view of Conditions \ref{ME}--\ref{BO}, we have 
	\[ \frac{(\sum_{i=1}^{n} (\mathbb{E}|y_{i}|^{4} )^{1/4})^{2} }{ n^{2}} = O(1),\]
	and by Lemma~\ref{CI1}, \eqref{eventA}, and Condition~\ref{ME} with sufficiently large $q$, it holds that 
	\[ \mathbb{P}(\boldsymbol{E}_{n}^{c}) = o(n^{-6}). \]
	Therefore, combining these results, (\ref{h31}), and (\ref{h32}) yields (\ref{consistency1}), which concludes the proof of Lemma \ref{lemma3}.

	
	\subsection{Lemma \ref{T4_1} and its proof} \label{SecD.1}
	
	All the assumptions and notation in Lemma~\ref{T4_1} below follow those in the proof of Theorem~\ref{theorem4}. In particular, we set $k = \floor{ c\log{(n)}}$.
	
	\begin{lemma} \label{T4_1}
		There exists some constant $C > 0$ such that on event $\mathcal{A}_{3}( k + 1, \Delta)\cap \mathcal{A}$, it holds that for all large $n$ and each set of available features, each $\boldsymbol{t}$ constructed by less than $k$ cuts with $\mathbb{P}(\boldsymbol{X}\in \boldsymbol{t}) \ge n^{-\delta}$ and its daughter cells $\widehat{\boldsymbol{t}}$, $\boldsymbol{t}^{\dagger}$, and $\boldsymbol{t}^{*}$ satisfy the following properties:
		\begin{enumerate}
			\item[1)] $c^{\dagger}(\boldsymbol{t})$ is not random (i.e., $c^{\dagger}(\boldsymbol{t})$ is an element in \eqref{cdagger1}).
			
			\item[2)] $|(II)_{\boldsymbol{t}, \boldsymbol{t}^{\dagger} } - (II)_{\boldsymbol{t}, \boldsymbol{t}^{*}}| \le Cn^{-\frac{\delta}{2}}$.
			\item[3)] $\widehat{(II)}_{\boldsymbol{t}, \widehat{\boldsymbol{t}} } \ge \widehat{(II)}_{\boldsymbol{t}, \boldsymbol{t}^{\dagger} }$.
		\end{enumerate}
	\end{lemma}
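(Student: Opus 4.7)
The three claims are essentially independent and I would treat them in the order $1)\to 3)\to 2)$. Part~1) reduces to showing that the sample set in \eqref{cdagger1} is non-empty on $\mathcal{A}_{3}(k+1,\Delta)\cap\mathcal{A}$; part~3) then follows immediately from the fact that $(j^{*}(\boldsymbol{t}),c^{\dagger}(\boldsymbol{t}))$ is a feasible candidate in the sample CART optimization \eqref{new.eq.004} (because $j^{*}(\boldsymbol{t})\in\Theta$ by construction and, by part~1), $c^{\dagger}(\boldsymbol{t})=x_{i,j^{*}(\boldsymbol{t})}$ for some $\boldsymbol{x}_{i}\in\boldsymbol{t}$), combined with the standard observation that \eqref{new.eq.004} minimizes within-cell SSE, which is equivalent to maximizing $\widehat{(II)}$. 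Part~2) will be a straightforward continuity estimate for $c\mapsto (II)_{\boldsymbol{t},\boldsymbol{t}(j^{*}(\boldsymbol{t}),c)}$ around $c^{*}(\boldsymbol{t})$, exploiting that both $c^{*}(\boldsymbol{t})$ and $c^{\dagger}(\boldsymbol{t})$ lie in the short interval $I^{*}(\boldsymbol{t})$.

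For part~1), consider the sub-cell $\tilde{\boldsymbol{t}}\coloneqq \boldsymbol{t}\cap\{X_{j^{*}(\boldsymbol{t})}\in I^{*}(\boldsymbol{t})\}$, which is defined by at most $k+1$ coordinate cuts and satisfies $\mathbb{P}(\boldsymbol{X}\in\tilde{\boldsymbol{t}})=\mathbb{P}(\boldsymbol{X}\in\boldsymbol{t})\cdot n^{-\delta}\ge n^{-2\delta}$. Since the proof of Theorem~\ref{theorem4} enforces $1/2<\Delta<1-2\delta$, we have $n^{-2\delta}\ge n^{\Delta-1}\cdot n^{\,1-2\delta-\Delta}\gg n^{\Delta-1}$, and by \eqref{grid3} the sharp version $\tilde{\boldsymbol{t}}^{\#}$ retains probability at least $n^{\Delta-1}$ for all large $n$. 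The event $\mathcal{A}_{3}(k+1,\Delta)$ then forces $\#\{i:\boldsymbol{x}_{i}\in\tilde{\boldsymbol{t}}^{\#}\}\ge n^{1/2}$, and event $\mathcal{A}$ bounds $\#\{i:\boldsymbol{x}_{i}\in\tilde{\boldsymbol{t}}\,\Delta\,\tilde{\boldsymbol{t}}^{\#}\}$ by $(k+1)(\log n)^{1+\rho_{2}}=o(n^{1/2})$ via \eqref{grid1}. Subtracting, at least one $\boldsymbol{x}_{i}$ lies in $\tilde{\boldsymbol{t}}$, making the set in \eqref{cdagger1} non-empty, so $c^{\dagger}(\boldsymbol{t})$ is chosen deterministically.

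For part~2), I expand $(II)_{\boldsymbol{t},\boldsymbol{t}(j^{*},c)}=P_{L}(c)(\mu_{L}(c)-\mu)^{2}+P_{R}(c)(\mu_{R}(c)-\mu)^{2}$ with $\mu=\mathbb{E}(m(\boldsymbol{X})\mid\boldsymbol{X}\in\boldsymbol{t})$ and compare at $c=c^{\dagger}(\boldsymbol{t})$ and $c=c^{*}(\boldsymbol{t})$. Since both lie in $I^{*}(\boldsymbol{t})$, \eqref{control1} gives $|P_{L}(c^{\dagger})-P_{L}(c^{*})|\le n^{-\delta}$, and the re-averaging identity $P_{L}(c^{\dagger})(\mu_{L}(c^{\dagger})-\mu_{L}(c^{*}))=\mathbb{E}[(m(\boldsymbol{X})-\mu_{L}(c^{*}))\boldsymbol{1}_{X_{j^{*}}\in[c^{*},c^{\dagger})}\mid\boldsymbol{X}\in\boldsymbol{t}]$ together with Condition~\ref{BO} yields $P_{L}(c^{\dagger})|\mu_{L}(c^{\dagger})-\mu_{L}(c^{*})|\le 2M_{0}n^{-\delta}$. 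Writing $P_{L}(c^{\dagger})a^{2}-P_{L}(c^{*})b^{2}=P_{L}(c^{\dagger})(a-b)(a+b)+(P_{L}(c^{\dagger})-P_{L}(c^{*}))b^{2}$ with $a=\mu_{L}(c^{\dagger})-\mu$ and $b=\mu_{L}(c^{*})-\mu$ (both bounded by $2M_{0}$), and using the analogous identity for the right daughter, every factor is controlled by $M_{0}$ or $n^{-\delta}$, giving $|(II)_{\boldsymbol{t},\boldsymbol{t}^{\dagger}}-(II)_{\boldsymbol{t},\boldsymbol{t}^{*}}|\le CM_{0}^{2}n^{-\delta}\le Cn^{-\delta/2}$ for large $n$. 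The main technical obstacle is part~1): $\tilde{\boldsymbol{t}}$ has off-grid boundaries whereas $\mathcal{A}_{3}$ is formulated for grid cells, and closing this gap uniformly over all admissible $\boldsymbol{t}$ is precisely where the sharp notation and event $\mathcal{A}$ do the crucial work.
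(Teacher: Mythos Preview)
Your proposal is correct and follows essentially the same route as the paper. For part~1), the paper packages your argument into a chain of set inclusions: it defines the ``bad'' event $\boldsymbol{B}(k)$ that some admissible sub-cell $\boldsymbol{I}(\boldsymbol{t},h,I)$ contains no sample point, and then shows through \eqref{T4_4}--\eqref{T4_15} that $\boldsymbol{B}(k)\subset(\mathcal{A}_{3}(k+1,\Delta))^{c}\cup\mathcal{A}^{c}$; unraveling those inclusions one finds exactly your steps (pass to the sharp cell, invoke $\mathcal{A}_{3}$ for the $n^{1/2}$ lower count, and invoke $\mathcal{A}$ via \eqref{grid1} to absorb the symmetric-difference loss). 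Parts~2) and~3) in the paper are dispatched in one line each (``simple calculations'' from \eqref{control1}, and the observation that $c^{\dagger}$ is a feasible sample split), so your more explicit telescoping for part~2) is a slight elaboration---and in fact your bound $Cn^{-\delta}$ is sharper than the $Cn^{-\delta/2}$ stated, though either suffices.
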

	
	\noindent \textit{Proof}. Let us assume that property 1) holds for the moment. 
	Then by the definition of $\widehat{(II)}$ and the definitions of $\hat{\boldsymbol{t}}$ and $\boldsymbol{t}^{\dagger}$ (they are both daughter cells of $\boldsymbol{t}$ and the corresponding cuts are along directions subject to the same set of available features), we have that if $c^{\dagger}(\boldsymbol{t})$ is not random, then it holds that  $\widehat{(II)}_{\boldsymbol{t}, \widehat{\boldsymbol{t}} } \ge \widehat{(II)}_{\boldsymbol{t}, \boldsymbol{t}^{\dagger} }$, which establishes property 3). 
	From (\ref{control1}), the assumptions of Theorem~\ref{theorem4}, and some simple calculations, we can see that there exists some constant $C >0$ such that for all large $n$ and each set of available features,
	\[|(II)_{\boldsymbol{t}, \boldsymbol{t}^{\dagger} } - (II)_{\boldsymbol{t}, \boldsymbol{t}^{*}}| \le  Cn^{-\frac{\delta}{2}}, \]
	which proves property 2). Note that (\ref{control1}) holds for each feature restriction $\Theta$.

	Now it remains to establish property 1), which means we have to show that the set \eqref{cdagger1} is not empty. For each cell $\boldsymbol{t} = \times_{j=1}^{p}t_{j}$, define a cell
	\[ \boldsymbol{I}(\boldsymbol{t}, h, I) \coloneqq t_{1} \times \dots \times t_{h-1}\times I\times t_{h+1} \times \dots \times t_{p} \]
	for $h\in \{1, \dots, p\}$ and an interval $I \subset [0, 1]$. Denote by $R(\boldsymbol{t}, h, \delta)$ a set containing all the intervals $J$ such that $J \subset t_{h}$ and $\mathbb{P}( X_{h} \in J | \boldsymbol{X} \in \boldsymbol{t}) = n^{-\delta}$. Observe that if cell $\boldsymbol{t}$ is constructed by less than $k$ cuts, then $\boldsymbol{I}(\boldsymbol{t}, h, I)$ with $I \in R(\boldsymbol{t}, h, \delta)$ is constructed by at most $k+1$ cuts. For each integer $k$, define $H_{k}$ as the set containing all cells constructed by at most $k$ arbitrary cuts (these cuts are not necessarily on the gridlines). Let us define an event 
	\[\boldsymbol{B}(k) \coloneqq \left\{ \inf_{\substack{   \mathbb{P}(\boldsymbol{X} \in \boldsymbol{t} ) \ge  n^{-\delta}, \ \boldsymbol{t}\in H_{k-1}, \\ h \in \{1, \dots, p\}, \ I \in R (\boldsymbol{t}, h, \delta)}  } \ \sum_{i=1}^{n} \boldsymbol{1}_{\boldsymbol{x}_{i} \in  \boldsymbol{I}(\boldsymbol{t}, h, I)} < 1  \right\},\]
	where the infimum is over all $\boldsymbol{t}$, $h$, and $I$ such that the conditions hold. Then we can see that on event $(\boldsymbol{B}(k))^{c}$, property 1) holds, where the superscript $c$ denotes the set complement. 
	
	Next, recall some notation related to the grid defined in Sections~\ref{Sec5} and~\ref{SecA.1}, including $\boldsymbol{t}^{\#}$, the event $\mathcal{A}$, and parameters $\rho_{1}>0, \rho_{2}>0$. By $\Delta < 1 - 2\delta$ (see \eqref{T4_0} for the definitions of $\delta, \Delta$) and Condition~\ref{abc}, for all large $n$ we have 
	\begin{equation}\label{T4_4}
		\boldsymbol{B}(k)\subset   \left\{ \inf_{\substack{   \mathbb{P}(\boldsymbol{X} \in \boldsymbol{t} ) \ge  \big(n^{\Delta - 1} +   (k+1) \frac{\sup f}{ \lceil n^{1+\rho_{1}} \rceil } \big)n^{\delta}, \\ \boldsymbol{t}\in H_{k-1}, \  h \in\{1, \dots, p\}, \ I \in R(\boldsymbol{t}, h, \delta) }  } \ \sum_{i=1}^{n} \boldsymbol{1}_{\boldsymbol{x}_{i} \in  \boldsymbol{I}(\boldsymbol{t}, h, I)} < 1  \right\},
	\end{equation}
where we use $n^{-\delta}\ge \Big(n^{\Delta - 1} +   (k+1) \frac{\sup f}{ \lceil n^{1+\rho_{1}} \rceil } \Big)n^{\delta}$ for all large $n$ because of $\Delta < 1 - 2\delta$ and $k = \floor{c\log{n}}$ (recall that $k$ is defined to be $\floor{c\log{n}}$ in this proof). Notice that the infimum on the RHS of \eqref{T4_4} is over the cells in 
{\small$$W\coloneqq \left\{\boldsymbol{I}(\boldsymbol{t}, h, I):\mathbb{P}(\boldsymbol{X} \in \boldsymbol{t} ) \ge  \Big(n^{\Delta - 1} +   (k +1) \frac{\sup f}{ \lceil n^{1+\rho_{1}} \rceil }\Big)n^{\delta}, \boldsymbol{t} \in H_{k-1}, h\in \{1, \dots, p\}, I \in R(\boldsymbol{t},h, \delta)\right\}.$$}

	Next, it follows from the definitions of $R(\boldsymbol{t}, h, \delta)$ and $H_{k}$ that for  every $n\ge1$,
	\begin{equation*}
		\begin{split}
		&W \subset  \left\{\boldsymbol{t}: \boldsymbol{t}\in H_{k+1}, \mathbb{P}(\boldsymbol{X} \in \boldsymbol{t} ) \ge   n^{\Delta - 1} +   (k +1) \frac{\sup f}{ \lceil n^{1+\rho_{1}} \rceil }\right\},
	\end{split}
\end{equation*}
	and hence for each $n\ge 1$,
	\begin{equation}
		\begin{split}\label{T4_5}					
		&\textnormal{RHS of } (\ref{T4_4}) \\
		& \subset \left\{ \inf_{\boldsymbol{t}:\  \boldsymbol{t}\in H_{k+1}, \  \mathbb{P}(\boldsymbol{X} \in \boldsymbol{t} ) \ge  n^{\Delta - 1} +   (k +1) \frac{\sup f}{ \lceil n^{1+\rho_{1}} \rceil }} \sum_{i=1}^{n} \boldsymbol{1}_{\boldsymbol{x}_{i} \in \boldsymbol{t}} < 1  \right\}.
		\end{split}
	\end{equation}

Moreover, from (\ref{grid3}) we can obtain that for each $n\ge 1$,
	\begin{equation}\begin{split}			
		\label{T4_6}
		\textnormal{RHS of } (\ref{T4_5}) &\subset   \left\{ \inf_{\substack{   \mathbb{P}(\boldsymbol{X} \in \boldsymbol{t}^{\#} ) \ge  n^{\Delta - 1} \\ \boldsymbol{t}\in H_{k+1} }  } \sum_{i=1}^{n} \boldsymbol{1}_{\boldsymbol{x}_{i} \in \boldsymbol{t}} < 1  \right\},
		\end{split}
	\end{equation}
	and by simple calculations,
	\begin{equation}\begin{split}			
		\label{T4_16}
		\textnormal{RHS of } (\ref{T4_6}) & = \left\{ \inf_{\substack{   \mathbb{P}(\boldsymbol{X} \in \boldsymbol{t}^{\#} ) \ge  n^{\Delta - 1} \\ \boldsymbol{t}\in H_{k+1} }  } \sum_{i=1}^{n} \Big(\boldsymbol{1}_{\boldsymbol{x}_{i}\in \boldsymbol{t}^{\#}} + \boldsymbol{1}_{\boldsymbol{x}_{i}\in \boldsymbol{t}\backslash\boldsymbol{t}^{\#}} - \boldsymbol{1}_{\boldsymbol{x}_{i}\in \boldsymbol{t}^{\#}\backslash\boldsymbol{t}}\Big) < 1  \right\}\\
		& \subset \left\{ \inf_{\substack{   \mathbb{P}(\boldsymbol{X} \in \boldsymbol{t}^{\#} ) \ge  n^{\Delta - 1} \\ \boldsymbol{t}\in H_{k+1} }  } \left(\sum_{i=1}^{n} \boldsymbol{1}_{\boldsymbol{x}_{i}\in \boldsymbol{t}^{\#}} - \sum_{i=1}^{n} \boldsymbol{1}_{\boldsymbol{x}_{i}\in \boldsymbol{t}^{\#}\Delta \boldsymbol{t}}\right) < 1  \right\}.
	\end{split}\end{equation}

	Then by \eqref{grid1}, which says $\sum_{i=1}^{n} \boldsymbol{1}_{\boldsymbol{x}_{i}\in \boldsymbol{t}^{\#}\Delta \boldsymbol{t}} < (k+1)(\log{n})^{1 + \rho_{2}}$ on $\mathcal{A}$, it holds that for each $n \ge 1$,
	\begin{equation}
		\begin{split}	
			\label{T4_7}
			& \textnormal{RHS of } (\ref{T4_6}) \\
			& \subset   \left(\left\{ \inf_{\substack{   \mathbb{P}(\boldsymbol{X} \in \boldsymbol{t}^{\#} ) \ge  n^{\Delta - 1} \\ \boldsymbol{t}\in H_{k+1} }  } \sum_{i=1}^{n} \boldsymbol{1}_{\boldsymbol{x}_{i} \in \boldsymbol{t}^{\#}} < 1  + (k + 1) (\log{n})^{1+\rho_{2}} \right\}\cap \mathcal{A}\right)\cup\mathcal{A}^{c}.
	\end{split}\end{equation}

	By $k = \floor{c\log{(n)}}$, for all large $n$, 
	\begin{equation}\begin{split}\label{T4_8}
			\textnormal{RHS of } (\ref{T4_7}) & \subset   \left\{ \inf_{\substack{   \mathbb{P}(\boldsymbol{X} \in \boldsymbol{t}^{\#} ) \ge  n^{\Delta - 1} \\ \boldsymbol{t}\in H_{k+1} }  } \sum_{i=1}^{n} \boldsymbol{1}_{\boldsymbol{x}_{i} \in \boldsymbol{t}^{\#}} < n^{\frac{1}{2}} \right\}\cup\mathcal{A}^{c},
	\end{split}\end{equation}
where we also remove the intersection of the event $\mathcal{A}$. By the definitions of $G_{n, k+1}(\Delta)$ and $\mathcal{A}_{3}(k + 1, \Delta)$ in Lemma~\ref{CI1} of Section~\ref{SecE.1},
\begin{equation*}
	\begin{split}
		\left\{ \inf_{\substack{   \mathbb{P}(\boldsymbol{X} \in \boldsymbol{t}^{\#} ) \ge  n^{\Delta - 1} \\ \boldsymbol{t}\in H_{k+1} }  } \sum_{i=1}^{n} \boldsymbol{1}_{\boldsymbol{x}_{i} \in \boldsymbol{t}^{\#}} < n^{\frac{1}{2}} \right\} &= \left\{ \inf_{ \boldsymbol{t}\in G_{n, k+1}(\Delta) }   \sum_{i=1}^{n} \boldsymbol{1}_{\boldsymbol{x}_{i} \in \boldsymbol{t}} < n^{\frac{1}{2}} \right\}\\
		&=\cup_{\boldsymbol{t}\in G_{n, k+1}(\Delta) } \{    \#\{i: \boldsymbol{x}_{i} \in \boldsymbol{t}\} < n^{\frac{1}{2}} \}\\
		& = (\mathcal{A}_{3}(k + 1, \Delta))^c.
\end{split}
\end{equation*}

By this,
\begin{equation}\begin{split}\label{T4_15}
			\textnormal{RHS of } (\ref{T4_8})  & \subset ( \mathcal{A}_{3}(k+1, \Delta) )^{c}\cup\mathcal{A}^{c}.
				\end{split}\end{equation}
	Therefore, in view of (\ref{T4_4})--(\ref{T4_15}), we can conclude that $\mathcal{A}_{3}(k + 1, \Delta)\cap \mathcal{A} \subset (\boldsymbol{B}(k))^{c}$, which leads to property 1). This completes the proof of Lemma \ref{T4_1}.

	\subsection{Lemma \ref{T4_2} and its proof} \label{SecD.2}
	All the assumptions and notation in Lemma~\ref{T4_2} below follow those in the proof of Theorem~\ref{theorem4}. In particular, we set $k = \floor{ c\log{(n)}}$.
	
	\begin{lemma} \label{T4_2}
		There exists some constant $C > 0$ such that on event $\boldsymbol{U}_{n}$, it holds that for all large $n$, each cell $\boldsymbol{t}$ constructed by less than $k$ cuts with $\mathbb{P}(\boldsymbol{X}\in\boldsymbol{t}) \ge n^{-\delta}$ and each daughter cell $\boldsymbol{t}^{'}$ of  $\boldsymbol{t}$ satisfy the following properties:
		\begin{enumerate}
			\item[1)] $|(II)_{\boldsymbol{t}, \boldsymbol{t}^{'} } - (II)_{\boldsymbol{t}^{\#}, (\boldsymbol{t}^{'})^{\#}}| \le Cn^{-\delta}$.
			\item[2)] $| (II)_{\boldsymbol{t}^{\#}, (\boldsymbol{t}^{'})^{\#}} -\widehat{ (II)}_{\boldsymbol{t}^{\#}, (\boldsymbol{t}^{'})^{\#}}| \le C(  n^{-\delta + 2s}+  n^{-\frac{\Delta^{'}}{4} + 2s} )$.
			\item[3)] $|\widehat{ (II)}_{\boldsymbol{t}^{\#}, (\boldsymbol{t}^{'})^{\#}}  - \widehat{(II)}_{\boldsymbol{t}, \boldsymbol{t}^{'} }| \le C(n^{-\delta + 2s} + n^{-\frac{\Delta^{'}}{4} + 2s} )$.
		\end{enumerate}
	\end{lemma}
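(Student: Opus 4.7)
\noindent\textit{Proof proposal for Lemma~\ref{T4_2}.} The plan is to handle the three parts in order, treating each as a telescoping estimate between triples of conditional moments (probability mass, conditional mean on daughter, conditional mean on parent) that define $(II)$ or $\widehat{(II)}$. Throughout, fix the cell $\boldsymbol{t}$ with $\mathbb{P}(\boldsymbol{X}\in\boldsymbol{t})\ge n^{-\delta}$ and a daughter $\boldsymbol{t}^{'}$; let $\boldsymbol{t}^{''}$ denote the sibling. Since $\boldsymbol{t}$ is built by at most $k=\floor{c\log_2 n}$ cuts and each daughter by at most one additional cut, inequality~\eqref{grid3} yields
\begin{equation*}
\mathbb{P}(\boldsymbol{X}\in \boldsymbol{t}\Delta\boldsymbol{t}^{\#}) \le (k+1)\frac{\sup f}{\lceil n^{1+\rho_1}\rceil},\qquad \mathbb{P}(\boldsymbol{X}\in \boldsymbol{t}^{'}\Delta(\boldsymbol{t}^{'})^{\#})\le (k+2)\frac{\sup f}{\lceil n^{1+\rho_1}\rceil},
\end{equation*}
and similarly for $\boldsymbol{t}^{''}$. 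Combined with the lower bound $\mathbb{P}(\boldsymbol{X}\in\boldsymbol{t})\ge n^{-\delta}$ and the choice $\delta<\tfrac{1}{4}$, this already gives that for all large $n$, $\mathbb{P}(\boldsymbol{X}\in\boldsymbol{t}^{\#})$ stays within a $(1\pm o(1))$ factor of $\mathbb{P}(\boldsymbol{X}\in\boldsymbol{t})$ and in particular exceeds $n^{\Delta-1}$.

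For Part 1, I will bound $|(II)_{\boldsymbol{t},\boldsymbol{t}^{'}}-(II)_{\boldsymbol{t}^{\#},(\boldsymbol{t}^{'})^{\#}}|$ by expanding the definition~\eqref{I2} and writing the difference as a telescoping sum over (i) the conditional probability $\mathbb{P}(\boldsymbol{X}\in\boldsymbol{t}^{'}|\boldsymbol{X}\in\boldsymbol{t})$, (ii) the conditional mean on the daughter, and (iii) the conditional mean on the parent, and likewise for $\boldsymbol{t}^{''}$. Each such difference is handled exactly as in \eqref{h24}: the numerator error is of order $\mathbb{P}(\boldsymbol{X}\in \cdot\Delta\cdot^{\#})$ and the denominator is at least of order $n^{-\delta}$, so after using Condition~\ref{BO} to bound $m$ by $M_0$, each piece is $O(n^{-\delta}\cdot n^{-1-\rho_1}\log n)$, hence $O(n^{-\delta})$ in total with room to spare. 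This part uses no random event.

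For Parts 2 and 3, I will work on the event $\boldsymbol{U}_n$ and split into two cases according to whether the grid-daughter probabilities are large or small. When $\mathbb{P}(\boldsymbol{X}\in(\boldsymbol{t}^{'})^{\#})\ge n^{\Delta-1}$ and $\mathbb{P}(\boldsymbol{X}\in(\boldsymbol{t}^{''})^{\#})\ge n^{\Delta-1}$, the events $\mathcal{A}_1(k,\Delta)$ and $\mathcal{A}_2(k,\Delta)$ (which uniformly control sample–population deviations of conditional means and probabilities on sufficiently massive grid cells) give each of the three constituent conditional quantities in $\widehat{(II)}_{\boldsymbol{t}^{\#},(\boldsymbol{t}^{'})^{\#}}$ accuracy at most $n^{-\Delta'/4+s}$ (after using $\boldsymbol{C}_n$ to convert $\varepsilon_i$-contributions at the $n^s$ scale), and a telescoping identity identical in shape to \eqref{upper.new.13}--\eqref{upper.new.16} in the binary-feature proof yields the claimed bound in Part 2. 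In the opposite case where one grid-daughter has mass below $n^{\Delta-1}$, both $(II)_{\boldsymbol{t}^{\#},(\boldsymbol{t}^{'})^{\#}}$ and $\widehat{(II)}_{\boldsymbol{t}^{\#},(\boldsymbol{t}^{'})^{\#}}$ are directly bounded by $O(n^{\Delta-1}(M_0+n^s)^2)$ using Condition~\ref{BO} and $\boldsymbol{C}_n$, which is absorbed into $n^{-\Delta'/4+2s}$ since $\Delta'<\Delta$. For Part 3, I will use the same case split, but the additional tool is event $\mathcal{A}$, which via~\eqref{grid1} gives $\#\{i:\boldsymbol{x}_i\in\boldsymbol{t}\Delta\boldsymbol{t}^{\#}\}<(k+1)(\log n)^{1+\rho_2}$; combined with the lower bound $\#\{i:\boldsymbol{x}_i\in\boldsymbol{t}^{\#}\}\ge n^{1/2}$ from $\mathcal{A}_3(k+1,\Delta)$ and $|y_i|\le M_0+n^s$ on $\boldsymbol{C}_n$, the arithmetic done in \eqref{new.h.3}--\eqref{new.h.4} shows that the empirical averages on $\boldsymbol{t}^{\#}$ and $\boldsymbol{t}$ differ by at most $O(n^{-1/2+s}\log^{1+\rho_2}n)$, which after plugging into the telescoping decomposition of $\widehat{(II)}$ delivers the advertised $n^{-\delta+2s}+n^{-\Delta'/4+2s}$ bound.

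The main obstacle will be Part 2 in the regime where the grid-daughter mass is only modestly above the threshold $n^{\Delta-1}$: here the ratio structure $\widehat{(II)} = (\text{sample fraction})\cdot(\text{difference of empirical means})^2$ means small additive errors in the denominator of the conditional means get amplified, and one must be careful to keep the factor of two in the exponent ($n^{-\Delta'/4}$ rather than $n^{-\Delta'/2}$) that accounts for the squaring. Once the right telescoping of the three factors is written down so that each difference appears only linearly and is multiplied by a uniformly bounded quantity (using Condition~\ref{BO} and the bound $|\varepsilon_i|\le n^s$), the rest is routine. The choice $\Delta'<\Delta$ and $s$ small, made in~\eqref{T4_0}, is precisely what lets the small-mass case be absorbed and gives the final exponents stated in the lemma.
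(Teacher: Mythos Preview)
Your plan is essentially the paper's: the paper proves property~2) in detail, via the same telescoping of each summand of $(II)$ into (weight)$\times$(mean difference)$^2$, the same case split on the daughter's mass, and the same invocation of $\mathcal{A}_1,\mathcal{A}_2$ (large-mass case) versus boundedness from $\boldsymbol{C}_n$ and Condition~\ref{BO} (small-mass case); it then declares 1) and 3) follow by ``similar arguments.''

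One slip to flag in your Part~1: the claim ``the denominator is at least of order $n^{-\delta}$'' holds only for the parent $\boldsymbol{t}$, not for the daughter $\boldsymbol{t}'$, so the \eqref{h24}-style bound on $\bigl|\mathbb{E}(m(\boldsymbol{X})\mid\boldsymbol{X}\in\boldsymbol{t}')-\mathbb{E}(m(\boldsymbol{X})\mid\boldsymbol{X}\in(\boldsymbol{t}')^{\#})\bigr|$ is unavailable when $\mathbb{P}(\boldsymbol{X}\in\boldsymbol{t}')$ is tiny. You need the same case split here that you already use in Parts~2--3: when $\mathbb{P}(\boldsymbol{X}\in\boldsymbol{t}'\mid\boldsymbol{X}\in\boldsymbol{t})<n^{-\delta}$, the paper's claim~c) gives $\mathbb{P}(\boldsymbol{X}\in(\boldsymbol{t}')^{\#}\mid\boldsymbol{X}\in\boldsymbol{t}^{\#})<2n^{-\delta}$, so the $\boldsymbol{t}'$-summand is individually $O(n^{-\delta}M_0^2)$ on both sides; otherwise $\mathbb{P}(\boldsymbol{X}\in\boldsymbol{t}')\ge n^{-2\delta}$ and your argument goes through. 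A related arithmetic point in the small-mass branch of your Part~2: the bound you want is $O(n^{-\delta+2s})$, not $O(n^{\Delta-1}(M_0+n^s)^2)$, because the prefactor appearing in $(II)$ is the \emph{conditional} probability, not the absolute one; this is exactly what claim~c) delivers and why the paper splits on the conditional threshold $n^{-\delta}$ rather than your absolute $n^{\Delta-1}$ (your threshold can be made to work too, via $\Delta<1-2\delta$ and $\mathcal{A}_2$, but the bookkeeping is cleaner the paper's way).
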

	
	\noindent \textit{Proof}.  We prove property 2) and the other two properties can be shown using similar arguments. Let $\boldsymbol{t}^{''}$ be the other daughter cell of $\boldsymbol{t}$. From the definition, we can deduce that 
	{\small\begin{equation}
		\begin{split}\label{T4_10}
			& | (II)_{\boldsymbol{t}^{\#}, (\boldsymbol{t}^{'})^{\#}} -\widehat{ (II)}_{\boldsymbol{t}^{\#}, (\boldsymbol{t}^{'})^{\#}}|\\
			& \le  \Bigg|   \frac{\# \{ i: \boldsymbol{x}_{i} \in (\boldsymbol{t}^{'})^{\#}\} }{\# \{ i: \boldsymbol{x}_{i} \in\boldsymbol{t}^{\#} \}} \left( \sum_{\boldsymbol{x}_{i}\in( \boldsymbol{t}^{'})^{\#}} \frac{y_{i}}{\#\{   i: \boldsymbol{x}_{i} \in (\boldsymbol{t}^{'})^{\#}\} }  -  \sum_{\boldsymbol{x}_{i}\in \boldsymbol{t}^{\#}  } \frac{y_{i}}{\#\{   i: \boldsymbol{x}_{i} \in \boldsymbol{t}^{\#}\} }    \right)^{2}    \\
			& \hspace{1em}-     \mathop{{}\mathbb{P}}(\boldsymbol{X}\in(\boldsymbol{t}^{'})^{\#}\ \vert \ \boldsymbol{X} \in \boldsymbol{t}^{\#} )\Big(\mathop{{}\mathbb{E}}(m(\boldsymbol{X}) \ \vert \ \boldsymbol{X}\in (\boldsymbol{t}^{'})^{\#}) - \mathop{{}\mathbb{E}}(m(\boldsymbol{X}) \ \vert \ \boldsymbol{X} \in \boldsymbol{t}^{\#} ) \Big)^{2} \Bigg|\\
			& \hspace{1em}+ \Bigg|   \frac{\# \{ i: \boldsymbol{x}_{i} \in (\boldsymbol{t}^{''})^{\#}\} }{\# \{ i: \boldsymbol{x}_{i} \in\boldsymbol{t}^{\#} \}} \left( \sum_{\boldsymbol{x}_{i}\in( \boldsymbol{t}^{''})^{\#}} \frac{y_{i}}{\#\{   i: \boldsymbol{x}_{i} \in (\boldsymbol{t}^{''})^{\#}\} }  -  \sum_{\boldsymbol{x}_{i}\in \boldsymbol{t}^{\#}  } \frac{y_{i}}{\#\{   i: \boldsymbol{x}_{i} \in \boldsymbol{t}^{\#}\} }    \right)^{2}    \\
			& \hspace{1em}-     \mathop{{}\mathbb{P}}(\boldsymbol{X}\in(\boldsymbol{t}^{''})^{\#}\ \vert \ \boldsymbol{X} \in \boldsymbol{t}^{\#} )\Big(\mathop{{}\mathbb{E}}(m(\boldsymbol{X}) \ \vert \ \boldsymbol{X}\in (\boldsymbol{t}^{''})^{\#}) - \mathop{{}\mathbb{E}}(m(\boldsymbol{X}) \ \vert \ \boldsymbol{X} \in \boldsymbol{t}^{\#} ) \Big)^{2} \Bigg|.
		\end{split}
	\end{equation}}
	Without loss of generality, we need only to deal with the first term on the RHS of (\ref{T4_10}). 
	
	By Condition~\ref{BO}, we have that for each $n\ge 1$, 
	\begin{equation}\label{T4_9}
		\left|     \frac{\sum_{\boldsymbol{x}_{i} \in \boldsymbol{t}} y_{i}}{  \#\{  i : \boldsymbol{x}_{i}\in\boldsymbol{t}\}} \right| \boldsymbol{1}_{\boldsymbol{U}_{n}} \le M_{0}+ n^{s}.
	\end{equation}
	Let us make use of three useful claims below, where we omit the presumption that $\boldsymbol{t}$ is constructed by less than $k$ cuts, and that $\boldsymbol{t}^{'}$ is the daughter cell of $\boldsymbol{t}$. By \eqref{grid3}, Condition~\ref{abc} ($f(\cdot)$ is the density of the distribution of $\boldsymbol{X}$, which is the independent copy of $\boldsymbol{x}_{1}$), and the choices of $\Delta$ and $\delta$ ($\Delta, \delta$ are defined in Theorem~\ref{theorem4}), it holds that 
	\begin{enumerate}
		\item[a)] For all large $n$ and each $\boldsymbol{t}$ with $\mathbb{P}(\boldsymbol{X}\in\boldsymbol{t}) \ge n^{-\delta}$,
		\[ \mathbb{P}(\boldsymbol{X}\in\boldsymbol{t}^{\#}) \ge n^{\Delta - 1};   \] 
		\item[b)] For all large $n$ and each $\boldsymbol{t}^{'}$ and $\boldsymbol{t}$ with $\mathbb{P} (\boldsymbol{X} \in \boldsymbol{t}^{'} \ \vert \ \boldsymbol{X} \in \boldsymbol{t}) \ge n^{-\delta}$ and $\mathbb{P}(\boldsymbol{X}\in\boldsymbol{t}) \ge n^{-\delta}$, 
		\[   \mathbb{P}(\boldsymbol{X}\in(\boldsymbol{t}^{'} )^{\#}) \ge n^{\Delta - 1}.  \]
		
		To show this result, note that by \eqref{grid3}, that $k = \floor{c\log{n}}$, and the assumption that $\boldsymbol{t}$ is constructed by less than $k$ cuts,
		\begin{equation*}
		    \label{new.explain.b1}
		    |\mathbb{P} (\boldsymbol{X} \in \boldsymbol{t}^{'}) - \mathbb{P} (\boldsymbol{X} \in (\boldsymbol{t}^{'})^{\#})| \le \floor{c\log{n}}\frac{\sup f}{ \ceil{n^{1+\rho_{1}}}},
		\end{equation*}
		and hence by the assumptions and that $\Delta  -1<  -2\delta$, it holds that for all large $n$,
		$$\mathbb{P} (\boldsymbol{X} \in (\boldsymbol{t}^{'})^{\#})\ge \mathbb{P} (\boldsymbol{X} \in \boldsymbol{t}^{'}) - \floor{c\log{n}}\frac{\sup f}{ \ceil{n^{1+\rho_{1}}}}\ge n^{-2\delta}- \floor{c\log{n}}\frac{\sup f}{ \ceil{n^{1+\rho_{1}}}} \ge n^{\Delta -1}.$$
		Recall that $\rho_{1} >0$ is defined for the gird in Section~\ref{Sec5}.
		\item[c)] For all large $n$ and each $\boldsymbol{t}^{'}$ and $\boldsymbol{t}$ with $\mathbb{P} (\boldsymbol{X} \in \boldsymbol{t}^{'} \ \vert \ \boldsymbol{X} \in \boldsymbol{t}) < n^{-\delta}$ and $\mathbb{P}(\boldsymbol{X}\in\boldsymbol{t}) \ge n^{-\delta}$, 
		\[   \mathbb{P}(\boldsymbol{X}\in(\boldsymbol{t}^{'} )^{\#} \ \vert \ \boldsymbol{X} \in \boldsymbol{t}^{\#})< 2 n^{-\delta}.\]
		To show this result, notice that
		\begin{equation*}
		    \begin{split}
		         &\Big| \frac{\mathbb{P}(\boldsymbol{X}\in(\boldsymbol{t}^{'} )^{\#})}{\mathbb{P}( \boldsymbol{X} \in \boldsymbol{t}^{\#})} - \frac{\mathbb{P}(\boldsymbol{X}\in(\boldsymbol{t}^{'} ))}{\mathbb{P}( \boldsymbol{X} \in \boldsymbol{t})} \Big|\\
		         & = \Big| \frac{\mathbb{P}(\boldsymbol{X}\in(\boldsymbol{t}^{'} )^{\#})}{\mathbb{P}( \boldsymbol{X} \in \boldsymbol{t}^{\#})} - \frac{\mathbb{P}(\boldsymbol{X}\in(\boldsymbol{t}^{'} )^{\#})}{\mathbb{P}( \boldsymbol{X} \in \boldsymbol{t})} + \frac{\mathbb{P}(\boldsymbol{X}\in(\boldsymbol{t}^{'} )^{\#})}{\mathbb{P}( \boldsymbol{X} \in \boldsymbol{t})} - \frac{\mathbb{P}(\boldsymbol{X}\in(\boldsymbol{t}^{'} ))}{\mathbb{P}( \boldsymbol{X} \in \boldsymbol{t})} \Big|\\
		         & \le  \Big|\frac{(\mathbb{P}( \boldsymbol{X} \in \boldsymbol{t}) - \mathbb{P}( \boldsymbol{X} \in \boldsymbol{t}^{\#})) \mathbb{P}(\boldsymbol{X}\in(\boldsymbol{t}^{'} )^{\#})}{\mathbb{P}( \boldsymbol{X} \in \boldsymbol{t}^{\#}) \mathbb{P}( \boldsymbol{X} \in \boldsymbol{t})}\Big| + n^{\delta}\mathbb{P}(\boldsymbol{X}\in(\boldsymbol{t}^{'})^{\#}\Delta\boldsymbol{t}^{'})\\
		         & \le 2n^{\delta}\mathbb{P}(\boldsymbol{X}\in(\boldsymbol{t}^{'})^{\#}\Delta\boldsymbol{t}^{'})\\
		         & = o(n^{-\delta}),
		    \end{split}
		\end{equation*}
		where the inequalities uses the assumption $\mathbb{P}(\boldsymbol{X}\in\boldsymbol{t}) \ge n^{-\delta}$ and the last equality follows from \eqref{grid3}. The desired result follows from this and the assumption $\mathbb{P} (\boldsymbol{X} \in \boldsymbol{t}^{'} \ \vert \ \boldsymbol{X} \in \boldsymbol{t}) < n^{-\delta}$.
	\end{enumerate}

	We first consider the case of $\boldsymbol{t}$ and $\boldsymbol{t}^{'}$ with $\mathbb{P} (\boldsymbol{X} \in \boldsymbol{t}^{'} \ \vert \ \boldsymbol{X} \in \boldsymbol{t}) \ge n^{-\delta}$.  In light of (\ref{T4_9}) and the above claims  a) and b), there exists some constant $C > 0$ such that on event $\boldsymbol{U}_{n}$ (specifically, on $\mathcal{A}_{1}(\floor{c\log{(n)}}, \Delta)\cap\mathcal{A}_{2}(\floor{c\log{(n)}}, \Delta)$), for all large $n$ and each such $\boldsymbol{t}$ and $\boldsymbol{t}^{'}$ we have 
	\begin{equation}\begin{split}\label{T4_12}
			&  \Bigg|   \frac{\# \{ i: \boldsymbol{x}_{i} \in (\boldsymbol{t}^{'})^{\#}\} }{\# \{ i: \boldsymbol{x}_{i} \in\boldsymbol{t}^{\#} \}} \left( \sum_{\boldsymbol{x}_{i}\in( \boldsymbol{t}^{'})^{\#}} \frac{y_{i}}{\#\{   i: \boldsymbol{x}_{i} \in (\boldsymbol{t}^{'})^{\#}\} }  -  \sum_{\boldsymbol{x}_{i}\in \boldsymbol{t}^{\#}  } \frac{y_{i}}{\#\{   i: \boldsymbol{x}_{i} \in \boldsymbol{t}^{\#}\} }    \right)^{2}    \\
			& \hspace{1em}-     \mathop{{}\mathbb{P}}(\boldsymbol{X}\in(\boldsymbol{t}^{'})^{\#}\ \vert \ \boldsymbol{X} \in \boldsymbol{t}^{\#} )\Big(\mathop{{}\mathbb{E}}(m(\boldsymbol{X}) \ \vert \ \boldsymbol{X}\in (\boldsymbol{t}^{'})^{\#}) - \mathop{{}\mathbb{E}}(m(\boldsymbol{X}) \ \vert \ \boldsymbol{X} \in \boldsymbol{t}^{\#} ) \Big)^{2} \Bigg| \\
			& \le Cn^{\frac{-\Delta^{'}}{4} + s}, 
		\end{split}
	\end{equation}
	where $\Delta^{'}$ is defined in Theorem~\ref{theorem4}.
	
	For the other case of $\boldsymbol{t}$ and $\boldsymbol{t}^{'}$ with $\mathbb{P} (\boldsymbol{X} \in \boldsymbol{t}^{'} \ \vert \ \boldsymbol{X} \in \boldsymbol{t}) < n^{-\delta}$, it follows from (\ref{T4_9}) and the above claim c) that there exists some constant $C>0$ such that on event $\boldsymbol{U}_{n}$, for all large $n$ and each such $\boldsymbol{t}$ and $\boldsymbol{t}^{'}$ we have 
	\begin{equation}\label{T4_13}
		\textnormal{LHS of } (\ref{T4_12}) \le  C(  n^{-\delta + 2s}+  n^{-\frac{\Delta^{'}}{4} + 2s} ). 
	\end{equation}
	Therefore, combining (\ref{T4_10}) and (\ref{T4_12})--(\ref{T4_13}), we can establish property 2), which concludes the proof of Lemma \ref{T4_2}.
	

	\section{Additional lemmas and technical details} \label{SecE}

	\subsection{Lemma \ref{CI1} and its proof} \label{SecE.1}
	Let the sample size $n$ and  tree level $k$ be given. Let $G_{n, k}$ be as defined in Section~\ref{SecA.1}; for the reader's convenience, $G_{n,k}$ is the set containing all cells on the grid constructed by at most $k$ cuts with cuts all on the grid hyperplanes defined in Section \ref{Sec5}. For $\Delta > 0$, we also define $G_{n, k}(\Delta)$ as the subset of $G_{n,k}$ such that if $\boldsymbol{t} \in G_{n, k}$ and
	$\mathbb{P}(\boldsymbol{X} \in \boldsymbol{t}) \ge n^{\Delta - 1}$, then $\boldsymbol{t} \in G_{n, k}(\Delta)$. To simplify the notation, the complement of an event that depends on some parameters such as  $\mathcal{A}(\cdot)$ is denoted as $\mathcal{A}^{c}(\cdot)$.
	
	\begin{lemma}\label{CI1}		
		Let $\frac{1}{2} < \Delta < 1$, $c>0$, $\kappa > 0$, and $0 <\Delta^{'} < \Delta$ be given and assume Condition~\ref{ME} with $q > \frac{4 + 4\kappa}{\Delta^{'}}$ and Condition~\ref{BO}. We define 
		\begin{equation*}
			\begin{split}
				\mathcal{A}_{1}^{c}(k, \Delta)& \coloneqq \cup_{\boldsymbol{t} \in G_{n, k}(\Delta) } \left\{\left|  \frac{\sum_{\boldsymbol{x}_{i} \in \boldsymbol{t}}  y_{i}  }{ \#\{ i: \boldsymbol{x}_{i} \in \boldsymbol{t} \}}  - \mathop{{}\mathbb{E}}(m(\boldsymbol{X}) \ \vert \ \boldsymbol{X} \in \boldsymbol{t})\right|  \ge n^{-\frac{\Delta^{'}}{4}} \right\},\\
				\mathcal{A}_{2}^{c}(k, \Delta)&\coloneqq  \cup_{\substack{  \boldsymbol{t} \in G_{n, k-1}(\Delta) \\ \boldsymbol{t}^{'} \in G_{n, k}\\ \boldsymbol{t}^{'} \subset \boldsymbol{t} } } \left\{\left|  \frac{  \#\{ i: \boldsymbol{x}_{i} \in \boldsymbol{t}^{'} \}   }{ \#\{ i: \boldsymbol{x}_{i} \in \boldsymbol{t} \}} - \mathop{{}\mathbb{P}}( \boldsymbol{X} \in \boldsymbol{t}^{'}\ \vert \ \boldsymbol{X} \in \boldsymbol{t}) \right| \ge n^{-\frac{\Delta^{'}}{4}} \right\},\\
				\mathcal{A}_{3}^{c}(k, \Delta)& \coloneqq \cup_{\boldsymbol{t} \in G_{n, k}(\Delta) } \Big\{\#\{ i: \boldsymbol{x}_{i} \in \boldsymbol{t} \}  < n^{\frac{1}{2}} \Big\}.
			\end{split}
		\end{equation*}
		Then, it holds that  for all large $n$ and $0\le k\le c\log{(n)} + 1$,
		\begin{equation}\label{3}
			\begin{split}
				\mathop{{}\mathbb{P}}\left( \mathcal{A}_{1}^{c}( k, \Delta)\right)& \le  n^{-\kappa}, \\
				\mathop{{}\mathbb{P}}\left(\mathcal{A}_{2}^{c}( k, \Delta)\right) &\le  n^{-\kappa},\\
				\mathop{{}\mathbb{P}}\left(\mathcal{A}_{3}^{c}( k, \Delta)\right) &\le  n^{-\kappa}.
		\end{split}\end{equation}
	\end{lemma}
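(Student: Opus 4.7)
The plan is to prove all three bounds by applying a concentration inequality cellwise and then taking a union bound over the indexing set. By (\ref{numberOf}), $\#G_{n,k}\le 2^{k}(p(\lceil n^{1+\rho_{1}}\rceil+1))^{k}$; combined with $p=O(n^{K_{0}})$ from Condition~\ref{ME} and the assumption $k\le c\log_{2}n+1$, this bound becomes $\#G_{n,k}\le n^{K_{1}}$ for some constant $K_{1}=K_{1}(c,K_{0},\rho_{1})$. Consequently, any cellwise failure probability that decays faster than every polynomial in $n$ will survive the union bound to give the stated $n^{-\kappa}$ rate.

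The event $\mathcal{A}_{3}^{c}$ is the simplest: for each fixed $\boldsymbol{t}\in G_{n,k}(\Delta)$, the count $\#\{i:\boldsymbol{x}_{i}\in\boldsymbol{t}\}$ is $\mathrm{Bin}(n,p)$ with $np\ge n^{\Delta}\gg n^{1/2}$ (since $\Delta>1/2$), so the multiplicative Chernoff bound gives a cellwise failure probability of order $\exp(-c_{0}n^{\Delta})$. For $\mathcal{A}_{2}^{c}$, I would use the algebraic identity
\begin{equation*}
\frac{\hat{p}'}{\hat{p}}-\frac{p'}{p}=\frac{\hat{p}'-p'}{\hat{p}}-\frac{p'}{p}\cdot\frac{\hat{p}-p}{\hat{p}},
\end{equation*}
where $p,p'$ and $\hat{p},\hat{p}'$ denote the population and empirical probabilities of $\boldsymbol{t}$ and $\boldsymbol{t}'$, respectively. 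Bernstein's inequality applied to the bounded i.i.d.\ indicator sums yields $|\hat{p}-p|+|\hat{p}'-p'|=O(\sqrt{p\log n/n})$ with super-polynomial probability, and in particular $\hat{p}\ge p/2$ after using the lower bound $np\ge n^{\Delta}$. Plugging into the identity produces cellwise deviations of order $n^{-\Delta/2}\sqrt{\log n}=o(n^{-\Delta'/4})$, since $\Delta'<\Delta$ forces $\Delta/2>\Delta'/4$.

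The main obstacle is $\mathcal{A}_{1}^{c}$, because $y_{i}$ is unbounded and only $\mathbb{E}|\varepsilon_{1}|^{q}<\infty$ is available. I would decompose $y_{i}=m(\boldsymbol{x}_{i})+\varepsilon_{i}$, splitting the target deviation into a bounded $m$-ratio term (handled exactly as in $\mathcal{A}_{2}$ using $|m|\le M_{0}$ and Bernstein) plus a noise average $N(\boldsymbol{t})=\sum_{\boldsymbol{x}_{i}\in\boldsymbol{t}}\varepsilon_{i}/\#\{i:\boldsymbol{x}_{i}\in\boldsymbol{t}\}$. To control $N(\boldsymbol{t})$, introduce a truncation threshold $n^{s}$ and the event $\boldsymbol{C}_{n}=\bigcap_{i=1}^{n}\{|\varepsilon_{i}|\le n^{s}\}$; Markov's inequality applied to the $q$-th moment gives $\mathbb{P}(\boldsymbol{C}_{n}^{c})\le n\cdot\mathbb{E}|\varepsilon_{1}|^{q}/n^{qs}=O(n^{1-qs})$, which is $o(n^{-\kappa})$ provided $s>(1+\kappa)/q$. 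On $\boldsymbol{C}_{n}\cap\mathcal{A}_{3}$, conditioning on $(\boldsymbol{x}_{i})_{i=1}^{n}$ and invoking Hoeffding's inequality for the truncated, zero-mean (by the symmetry hypothesis in Condition~\ref{ME}) independent summands bounded in $[-n^{s},n^{s}]$ gives
\begin{equation*}
\mathbb{P}\bigl(|N(\boldsymbol{t})|>n^{-\Delta'/4}\,\big|\,(\boldsymbol{x}_{i})_{i=1}^{n}\bigr)\le 2\exp\Bigl(-\tfrac{1}{8}\,\#\{i:\boldsymbol{x}_{i}\in\boldsymbol{t}\}\cdot n^{-\Delta'/2-2s}\Bigr),
\end{equation*}
which on $\mathcal{A}_{3}$ is at most $2\exp(-n^{1/2-\Delta'/2-2s}/8)$, super-polynomially small once $s$ is chosen strictly below a positive constant depending on $\Delta'$. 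The hypothesis $q>(4+4\kappa)/\Delta'$ is precisely the quantitative condition that makes the feasibility interval for $s$ nonempty, balancing the Markov lower bound $s>(1+\kappa)/q$ against the Hoeffding upper bound on $s$; after choosing such an $s$ and union-bounding over the $n^{K_{1}}$ cells in $G_{n,k}(\Delta)$, we obtain $\mathbb{P}(\mathcal{A}_{1}^{c})\le n^{-\kappa}$, completing the proof.
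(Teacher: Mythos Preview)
Your overall plan---truncate the noise, apply Hoeffding conditionally on the design, and union-bound over the at most polynomially many grid cells---is exactly the paper's strategy, and your treatments of $\mathcal{A}_{2}^{c}$ and $\mathcal{A}_{3}^{c}$ are fine. The paper also does not separate the $m$-part from the $\varepsilon$-part but applies Hoeffding directly to $m(\boldsymbol{x}_i)+\varepsilon_i\boldsymbol{1}_{|\varepsilon_i|\le n^{\Delta'/4}}$ after conditioning on $\{i:\boldsymbol{x}_i\in\boldsymbol{t}\}=\mathcal{B}$; this is a cosmetic difference.

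There is, however, a genuine quantitative gap in your noise analysis for $\mathcal{A}_{1}^{c}$. You invoke $\mathcal{A}_{3}$ to get only $\#\{i:\boldsymbol{x}_i\in\boldsymbol{t}\}\ge n^{1/2}$, which makes your Hoeffding exponent $n^{1/2-\Delta'/2-2s}$ and forces the upper bound $s<(1-\Delta')/4$. Combined with $s>(1+\kappa)/q$, the interval is nonempty iff $q>4(1+\kappa)/(1-\Delta')$, which is \emph{not} the hypothesis $q>4(1+\kappa)/\Delta'$; the two conditions coincide only at $\Delta'=1/2$. For $\Delta'>1/2$---precisely the regime used in the paper (e.g.\ the proof of Theorem~\ref{theorem4} takes $1/2<\Delta'<\Delta$)---your feasibility interval can be empty and the argument collapses. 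Concretely, with $\Delta'=0.6$, $\kappa=1$, $q=14$ (which satisfies the lemma's hypothesis), you would need $s\in(2/14,\,0.1)=\emptyset$.

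The fix is to use the sharper count lower bound you already established en route to $\mathcal{A}_{3}$: Chernoff gives $\#\{i:\boldsymbol{x}_i\in\boldsymbol{t}\}\ge n^{\Delta}/2$ with super-polynomially small failure probability for $\boldsymbol{t}\in G_{n,k}(\Delta)$. With that count the Hoeffding exponent becomes $n^{\Delta-\Delta'/2-2s}$, the upper constraint relaxes to $s<\Delta/2-\Delta'/4$, and since the hypothesis gives $(1+\kappa)/q<\Delta'/4<\Delta/2-\Delta'/4$ (the last inequality is just $\Delta'<\Delta$), the interval for $s$ is always nonempty. The paper makes the specific choice $s=\Delta'/4$ (so the Markov bound reads $n^{1-q\Delta'/4}\le n^{-\kappa}$, exactly the stated moment condition) together with the sharp count $\ge n^{\Delta}/2$, yielding the clean Hoeffding exponent $n^{\Delta-\Delta'}$.
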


	\noindent \textit{Proof}. The arguments for the three inequalities in \eqref{3} are similar, and we begin with showing the first one. The main idea of the proof is based on Hoeffding's inequality. Since 
	Hoeffding's inequality is for bounded random variables, we will consider the truncated model errors in order to apply this inequality. For each $n\ge 1$ and $k\ge 0$, we can deduce that 
	\begin{equation}
		\begin{split}\label{EE1}
			&\mathop{{}\mathbb{P}} (\mathcal{A}_{1}^{c}(k, \Delta)) \\
			& = 	\mathop{{}\mathbb{P}}\Big(\mathcal{A}_{1}^{c}(k, \Delta) \cap \Big( \cap_{i= 1}^{n}\{ |\varepsilon_{i} | \le n^{\frac{\Delta^{'}}{4}}   \} \Big)\Big) \\
			&\qquad+ \mathop{{}\mathbb{P}}\Big(\mathcal{A}_{1}^{c}(k,  \Delta) \cap \Big( \cup_{i= 1}^{n}\{ |\varepsilon_{i} | > n^{\frac{\Delta^{'}}{4}}   \} \Big)\Big) \\
			& = \mathbb{P} \left(  \left(  \cup_{\boldsymbol{t} \in G_{n, k}(\Delta)} \boldsymbol{E}(\boldsymbol{t}) \right) \cap \Big( \cap_{i= 1}^{n}\{ |\varepsilon_{i} | \le n^{\frac{\Delta^{'}}{4}}   \} \Big)\right) \\
			&\quad+\mathop{{}\mathbb{P}}\Big(\mathcal{A}_{1}^{c}(k,  \Delta) \cap \Big( \cup_{i= 1}^{n}\{ |\varepsilon_{i} | > n^{\frac{\Delta^{'}}{4}}   \} \Big)\Big) \\
			& \le \sum_{\boldsymbol{t} \in G_{n, k}(\Delta) } \mathbb{P}(\boldsymbol{E}(\boldsymbol{t})) + \sum_{i = 1}^{n}\mathop{{}\mathbb{P}}\Big( |\varepsilon_{i} | > n^{\frac{\Delta^{'}}{4}}   \Big)\\
			& = \sum_{\boldsymbol{t} \in G_{n, k}(\Delta) } \sum_{\mathcal{B}}\mathop{{}\mathbb{P}}\Big(\{  i : \boldsymbol{x}_{i} \in \boldsymbol{t}\} = \mathcal{B}\Big) \mathop{{}\mathbb{P}}\left(  \boldsymbol{E}(\boldsymbol{t}) \ \Big\vert \ \{  i : \boldsymbol{x}_{i} \in \boldsymbol{t}\} = \mathcal{B} \right) \\
			&\quad+ \sum_{i = 1}^{n}\mathop{{}\mathbb{P}}\Big( |\varepsilon_{i} | > n^{\frac{\Delta^{'}}{4}}   \Big),
		\end{split}
	\end{equation}
	where $\sum_{\mathcal{B}}$ represents the summation over all possible subsets of $\{1, \dots, n\}$ and
	\begin{equation*}\begin{split}
			\boldsymbol{E}(\boldsymbol{t})\coloneqq \left\{\left|  \frac{\sum_{\boldsymbol{x}_{i} \in \boldsymbol{t}} \Big(  m(\boldsymbol{x}_{i}) + \varepsilon_{i}\boldsymbol{1}_{|\varepsilon_{i}| \le n^{\frac{\Delta^{'} } {4}}}   \Big)  }{ \#\{ i: \boldsymbol{x}_{i} \in \boldsymbol{t} \}} - \mathop{{}\mathbb{E}}(m(\boldsymbol{X}) \ \vert \ \boldsymbol{X} \in \boldsymbol{t}) \right|  \ge n^{-\frac{\Delta^{'}}{4}} \right\}.
	\end{split}\end{equation*}
	
	Note that the summation for the first term on the RHS of (\ref{EE1}) can be further decomposed into two terms as 
	\begin{equation}\begin{split}
			\label{EE2}
			& \textnormal{RHS of (\ref{EE1})} \\
			&= \sum_{\boldsymbol{t} \in G_{n, k}(\Delta) } \sum_{\# \mathcal{B} \ge \frac{n^{\Delta} }{ 2 }} \mathbb{P}\Big(\{  i : \boldsymbol{x}_{i} \in \boldsymbol{t}\} = \mathcal{B}\Big) \mathop{{}\mathbb{P}}\left(  \boldsymbol{E}(\boldsymbol{t}) \ \Big\vert \ \{  i : \boldsymbol{x}_{i} \in \boldsymbol{t}\} = \mathcal{B} \right) \\
			& \qquad+ \sum_{\boldsymbol{t} \in G_{n, k}(\Delta) } \sum_{\# \mathcal{B} < \frac{n^{\Delta} }{ 2 } }\mathbb{P}\Big(\{  i : \boldsymbol{x}_{i} \in \boldsymbol{t}\} = \mathcal{B}\Big) \mathop{{}\mathbb{P}}\left(  \boldsymbol{E}(\boldsymbol{t}) \ \Big\vert \ \{  i : \boldsymbol{x}_{i} \in \boldsymbol{t}\} = \mathcal{B} \right) \\
			&\qquad+ \sum_{i = 1}^{n}\mathop{{}\mathbb{P}}\Big( |\varepsilon_{i} | > n^{\frac{\Delta^{'}}{4}}   \Big)\\
			& \le \sum_{\boldsymbol{t} \in G_{n, k}(\Delta) } \sum_{\# \mathcal{B} \ge \frac{n^{\Delta} }{ 2 }}  \mathbb{P}\Big(\{  i : \boldsymbol{x}_{i} \in \boldsymbol{t}\} = \mathcal{B}\Big) \mathbb{P}\left(  \boldsymbol{E}(\boldsymbol{t}) \ \Big\vert \ \{  i : \boldsymbol{x}_{i} \in \boldsymbol{t}\} = \mathcal{B} \right) \\
			& \qquad+ \sum_{\boldsymbol{t} \in G_{n, k}(\Delta) } \mathbb{P}\Big( \sum_{i=1}^{n} \boldsymbol{1}_{\boldsymbol{x}_{i} \in \boldsymbol{t}} <\frac{n^{\Delta}}{2}\Big) + \sum_{i = 1}^{n}\mathop{{}\mathbb{P}}\Big( |\varepsilon_{i} | > n^{\frac{\Delta^{'}}{4}}   \Big).
		\end{split}
	\end{equation}
	The last inequality above is due to
	\begin{equation*}\begin{split}
			\sum_{\# \mathcal{B} < \frac{n^{\Delta} }{ 2 }}  & \mathbb{P}\Big(\{  i : \boldsymbol{x}_{i} \in \boldsymbol{t}\} = \mathcal{B}\Big)\mathbb{P}\left(  \boldsymbol{E}(\boldsymbol{t}) \ \Big\vert \ \{  i : \boldsymbol{x}_{i} \in \boldsymbol{t}\} = \mathcal{B} \right) 
			\\
			&= \sum_{\# \mathcal{B} < \frac{n^{\Delta} }{ 2 }}  \mathbb{P}\left(  \boldsymbol{E}(\boldsymbol{t})\cap \{\{  i : \boldsymbol{x}_{i} \in \boldsymbol{t}\} = \mathcal{B}\} \right) \\
			& \le \sum_{\# \mathcal{B} < \frac{n^{\Delta} }{ 2 }}  \mathbb{P}(  \{ i : \boldsymbol{x}_{i} \in \boldsymbol{t}\} = \mathcal{B}) \\
			& = \mathbb{P}\Big( \sum_{i=1}^{n} \boldsymbol{1}_{\boldsymbol{x}_{i} \in \boldsymbol{t}} <\frac{n^{\Delta}}{2}\Big).
	\end{split}\end{equation*}
	
	Then by the definition of $G_{n, k}(\Delta)$, $\Delta > 1/2$, and Conditions~\ref{ME}--\ref{BO}, an application of Lemma~\ref{CI4} in Section \ref{SecE.2} shows that for all large $n$ and each $k \ge 0$,
	\begin{equation}
		\begin{split}	
			\label{EE3}
			\textnormal{RHS of (\ref{EE2})} &\le \sum_{\boldsymbol{t} \in G_{n, k}(\Delta) } \left( 2\exp{\left(\frac{-n^{\Delta - \Delta^{'}}}{8}\right)}  + 2\exp{\left( \frac{-(\log{n})^{2+ \Delta} }{2}\right)} \right) \\
			& \qquad + \sum_{i = 1}^{n}\mathop{{}\mathbb{P}}\Big( |\varepsilon_{i} | > n^{\frac{\Delta^{'}}{4}}   \Big).
		\end{split}
	\end{equation}
	Thus, it follows from $\Delta > \Delta^{'}$, $p = O( n^{K_{0}})$  in Condition~\ref{ME} with $q > \frac{4 + 4\kappa}{\Delta^{'}}$, and \eqref{numberOf} that for all large $n$ and each $0 \le k \le c\log{n} +1$,
	\[\textnormal{RHS of (\ref{EE3})} \le n^{-\kappa},\]
	which establishes the first inequality in \eqref{3}. The other two inequalities in \eqref{3} can be shown in a similar fashion, which completes the proof of Lemma \ref{CI1}.

	\subsection{Lemma \ref{CI4} and its proof} \label{SecE.2}
	
	\begin{lemma} \label{CI4}
		Assume that $\boldsymbol{x}_{i}$'s are independent copies of $\boldsymbol{X}$. Then for each $n \ge 1$, $\Delta>0$, and $\boldsymbol{t}$ such that $\mathop{{}\mathbb{P}}(\boldsymbol{X}\in\boldsymbol{t}) \ge n^{\Delta - 1}$, we have 
		\begin{equation}\label{CI2}
			\mathop{{}\mathbb{P}} \left(\sum_{i=1}^{n} \boldsymbol{1}_{\boldsymbol{x}_{i} \in \boldsymbol{t}} \le n^{\Delta }  - \sqrt{n}(\log{n})^{1 + \frac{\Delta}{2} } \right) \le 2\exp{\left( \frac{-(\log{n})^{2+ \Delta} }{2}\right)}.
		\end{equation}
		Assume further that $\sup_{\boldsymbol{c} \in [0, 1]^{p}}|m(\boldsymbol{c})| < \infty$, $\boldsymbol{x}_{i}$ and  $\varepsilon_{i}$'s are independent, $\varepsilon_{i}$'s are identically distributed, and $\varepsilon_{1}$ has a symmetric distribution around zero. Then for each $\mathcal{B} \subset\{1,\dots, n\}$, $\boldsymbol{t}$ and $\boldsymbol{t}^{'}$ with $\boldsymbol{t}^{'}\subset\boldsymbol{t}$, $\Delta^{''} > 0$, and $t>0$, it holds for all large $n$ that 
		\begin{equation}\begin{split}\label{CI3}
				\mathop{{}\mathbb{P}} & \left( \left| \frac{\sum_{i\in\mathcal{B}}\left( \mathop{{}\mathbb{E}}(m(\boldsymbol{X}) \ \vert \ \boldsymbol{X} \in \boldsymbol{t})  - \Big(m(\boldsymbol{x}_{i}) + \varepsilon_{i}\boldsymbol{1}_{|\varepsilon_{i}| \le n^{\Delta^{''}}}\Big) \right)} {\#\mathcal{B}}  \right| \ge t \ \bigg\vert\  \{i: \boldsymbol{x}_{i} \in \boldsymbol{t}\} = \mathcal{B}\right) \\
				&\quad \le  2\exp{\left( \frac{-t^{2} \#\mathcal{B}} {4n^{2\Delta^{''}}}\right)},\\
				\mathop{{}\mathbb{P}} & \left( \left| \frac{\sum_{i\in\mathcal{B}}\left( \mathop{{}\mathbb{P}}( \boldsymbol{X} \in \boldsymbol{t}^{'}\ \vert \ \boldsymbol{X} \in \boldsymbol{t})  -  \boldsymbol{1}_{\boldsymbol{x}_{i}\in\boldsymbol{t}^{'}}   \right)} {\#\mathcal{B}}  \right| \ge t\ \bigg\vert\  \{i: \boldsymbol{x}_{i} \in \boldsymbol{t}\} = \mathcal{B}  \right) \\
				& \quad \le 2\exp{\left( \frac{-t^{2} \#\mathcal{B}} {2}\right)}.
		\end{split}\end{equation}
	\end{lemma}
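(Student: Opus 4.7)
The plan is to deduce all three inequalities from Hoeffding's inequality for bounded, independent, mean-zero summands, after two preparatory observations: the symmetry of $\varepsilon_1$ around $0$ makes the truncated error $\varepsilon_i \boldsymbol{1}_{|\varepsilon_i|\le n^{\Delta''}}$ also mean-zero, and conditional on the indicator pattern $\{i:\boldsymbol{x}_i\in\boldsymbol{t}\}=\mathcal{B}$ the sub-sample $(\boldsymbol{x}_i)_{i\in\mathcal{B}}$ is i.i.d.\ from $\mathcal{L}(\boldsymbol{X}\mid\boldsymbol{X}\in\boldsymbol{t})$ and independent of $\{\varepsilon_i\}$.

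For \eqref{CI2}, set $Y_i\coloneqq\boldsymbol{1}_{\boldsymbol{x}_i\in\boldsymbol{t}}$, so that $Y_1,\dots,Y_n$ are i.i.d.\ Bernoulli with mean $p\ge n^{\Delta-1}$ and $\mathbb{E}\sum_i Y_i\ge n^{\Delta}$. The event in \eqref{CI2} is then contained in $\{\sum_i Y_i-\mathbb{E}\sum_i Y_i\le-\sqrt{n}(\log n)^{1+\Delta/2}\}$, and one-sided Hoeffding supplies the bound $\exp(-2(\log n)^{2+\Delta})$, which comfortably beats the stated $2\exp(-(\log n)^{2+\Delta}/2)$.

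For the first bound in \eqref{CI3}, after conditioning on $\{i:\boldsymbol{x}_i\in\boldsymbol{t}\}=\mathcal{B}$ the summands $Z_i\coloneqq\mathbb{E}(m(\boldsymbol{X})\mid\boldsymbol{X}\in\boldsymbol{t})-m(\boldsymbol{x}_i)-\varepsilon_i\boldsymbol{1}_{|\varepsilon_i|\le n^{\Delta''}}$ for $i\in\mathcal{B}$ are conditionally i.i.d.\ with mean zero (by symmetry of $\varepsilon_1$ together with independence of $\{\varepsilon_i\}$ from the conditioning event) and range at most $2(M_0+n^{\Delta''})$ with $M_0\coloneqq\sup_{\boldsymbol{c}\in[0,1]^p}|m(\boldsymbol{c})|$. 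For $n$ large enough that $M_0\le(\sqrt{2}-1)n^{\Delta''}$ the range is bounded by $2\sqrt{2}\,n^{\Delta''}$, and Hoeffding then yields $2\exp(-t^2\#\mathcal{B}/(4n^{2\Delta''}))$ as claimed. The second bound in \eqref{CI3} is analogous and strictly easier: the summands $\mathbb{P}(\boldsymbol{X}\in\boldsymbol{t}'\mid\boldsymbol{X}\in\boldsymbol{t})-\boldsymbol{1}_{\boldsymbol{x}_i\in\boldsymbol{t}'}$ are conditionally i.i.d.\ with mean zero and absolute value at most $1$, so Hoeffding gives $2\exp(-t^2\#\mathcal{B}/2)$ directly. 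The only step requiring genuine care is the conditional-i.i.d.\ identification underlying both bounds in \eqref{CI3}: for fixed $\mathcal{B}$ the conditional joint law of $(\boldsymbol{x}_1,\dots,\boldsymbol{x}_n)$ given $\{i:\boldsymbol{x}_i\in\boldsymbol{t}\}=\mathcal{B}$ factorizes as a product of $\mathcal{L}(\boldsymbol{X}\mid\boldsymbol{X}\in\boldsymbol{t})$ over $i\in\mathcal{B}$ and $\mathcal{L}(\boldsymbol{X}\mid\boldsymbol{X}\notin\boldsymbol{t})$ over $i\notin\mathcal{B}$, which supplies the conditional mean and independence needed for the Hoeffding step; everything else reduces to a routine check on constants.
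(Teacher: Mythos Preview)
Your proposal is correct and follows essentially the same route as the paper: both proofs reduce each inequality to a direct application of Hoeffding's inequality after verifying (i) the mean-zero property (using the symmetry of $\varepsilon_1$ for the truncated error) and (ii) that, conditional on $\{i:\boldsymbol{x}_i\in\boldsymbol{t}\}=\mathcal{B}$, the summands are i.i.d.\ with the right conditional law. Your treatment of the constants (in particular the $M_0\le(\sqrt{2}-1)n^{\Delta''}$ threshold that makes the range $\le 2\sqrt{2}\,n^{\Delta''}$) is slightly more explicit than the paper's, but the argument is otherwise identical.
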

	
	\noindent \textit{Proof}. Observe that by Hoeffding's inequality, we have that for each $\Delta>0$ and $n\ge 1$, 
	\[\mathop{{}\mathbb{P}} \left( \left|n^{-1}\sum_{i=1}^{n} \Big(\boldsymbol{1}_{\boldsymbol{x}_{i} \in \boldsymbol{t}}  - \mathop{{}\mathbb{P}}(\boldsymbol{X} \in\boldsymbol{t})  \Big) \right|\ge  \frac{ (\log{n})^{1 + \frac{\Delta}{2}  }}{\sqrt{n}}  \right)\le 2\exp{\left( \frac{-(\log{n})^{2+ \Delta} }{2}\right)}. \]
	Then by some algebraic calculations, we can establish the desired probability bound in (\ref{CI2}).

	On the other hand, it follows from the assumptions that for each $i\in \mathcal{B}$, $\Delta^{''} > 0$, and $\boldsymbol{t}\subset \boldsymbol{t}_{0}$, we have 
	\begin{align*}
		& \mathop{{}\mathbb{E}}  \Big(\mathop{{}\mathbb{E}}(m(\boldsymbol{X}) \ \vert \ \boldsymbol{X} \in \boldsymbol{t})  - \Big(m(\boldsymbol{x}_{i}) + \varepsilon_{i}\boldsymbol{1}_{|\varepsilon_{i}| \le n^{\Delta^{''}}}\Big) \ \Big\vert \  \{s: \boldsymbol{x}_{s} \in \boldsymbol{t} \} = \mathcal{B}\Big)  \\
		& =  \mathop{{}\mathbb{E}}  \Big( \mathop{{}\mathbb{E}}(m(\boldsymbol{X}) \ \vert \ \boldsymbol{X} \in \boldsymbol{t})  - \Big(m(\boldsymbol{x}_{i}) + \varepsilon_{i}\boldsymbol{1}_{|\varepsilon_{i}| \le n^{\Delta^{''}}}\Big) \ \Big\vert \  \boldsymbol{x}_{i}\in\boldsymbol{t}\Big)= 0, 
	\end{align*}
	which along with conditional Hoeffding's inequality leads to the first probability bound in (\ref{CI3}).  The second probability bound in (\ref{CI3}) can also be shown using similar arguments, which concludes the proof of Lemma \ref{CI4}.

	\subsection{Verifying Condition \ref{P} for Example~\ref{new.example1}} \label{SecE.3}

	For a cell $\boldsymbol{t} = \times_{j=1}^{p}t_{j}$, if $b\not \in t_{1}$, then $\textnormal{Var}( m(\boldsymbol{X})  \vert \boldsymbol{X} \in \boldsymbol{t}) = 0$. In this case, the desired result clearly holds. As for the other case with $b\in t_{1}$, a split at $b$ on the first coordinate leads to the desired result.

	\subsection{Verifying Condition \ref{P} for Example~\ref{new.example2}} \label{SecE.4}

	In this proof, we show that for each $\boldsymbol{t} = t_{1}\times \dots \times t_{p}$, $\sup_{j\in \{1, \dots, p\}, c\in t_{j}}(II)_{\boldsymbol{t}, \boldsymbol{t}(j,c)}$ is proportional to $\textnormal{Var}( m(\boldsymbol{X})  \vert \boldsymbol{X} \in \boldsymbol{t})$. A simple calculation shows that the conditional bias decrease $(II)_{\boldsymbol{t}, \boldsymbol{t}^{'}}$ given a split $(j, x)$ with $x\in t_{j}$ is bounded from below such that 
	\begin{equation}\label{new.example.4.1}
		(II)_{\boldsymbol{t}, \boldsymbol{t}^{'}} \ge \mathbb{P} (\boldsymbol{X}\in\boldsymbol{t}^{'}| \boldsymbol{X} \in \boldsymbol{t})\mathbb{P} (\boldsymbol{X}\in\boldsymbol{t}^{''}| \boldsymbol{X} \in \boldsymbol{t}) \big(H_{j}(x)\big)^2,
	\end{equation}
	where
	$$H_{j}(x) \coloneqq \mathbb{E} (m(\boldsymbol{X}) | \boldsymbol{X} \in \boldsymbol{t}^{''}) - \mathbb{E} (m(\boldsymbol{X}) | \boldsymbol{X} \in \boldsymbol{t}^{'}),$$
	and that  $\boldsymbol{t}^{'} = t_{1}\times \dots \times t_{j}^{'}\times \dots \times t_{p}$ and $\boldsymbol{t}^{''} = t_{1}\times \dots \times t_{j}^{''}\times \dots \times t_{p}$ with $t_{j}^{'} = [\inf t_{j}, x)$ and $t_{j}^{''} = [x, \sup t_{j} ]\cap t_{j}$. For the reader's convenience, recall that
	\begin{equation*}\begin{split}
			& (II)_{\boldsymbol{t}, \boldsymbol{t}^{'}}= \mathbb{P} (\boldsymbol{X}\in\boldsymbol{t}^{'}   \vert  \boldsymbol{X} \in \boldsymbol{t} )
			\Big(\mathbb{E} (m(\boldsymbol{X})  \vert  \boldsymbol{X} \in \boldsymbol{t}^{'}) - \mathbb{E} (m(\boldsymbol{X})  \vert  \boldsymbol{X} \in \boldsymbol{t} ) \Big)^{2} \\
			& +  \mathbb{P} (\boldsymbol{X}\in\boldsymbol{t}^{''}   \vert  \boldsymbol{X} \in \boldsymbol{t} )
			\Big(\mathbb{E} (m(\boldsymbol{X})  \vert  \boldsymbol{X} \in \boldsymbol{t}^{''}) - \mathbb{E} (m(\boldsymbol{X})  \vert  \boldsymbol{X} \in \boldsymbol{t} ) \Big)^{2}.
	\end{split}\end{equation*}

	In the following, we show that the RHS of \eqref{new.example.4.1} given the optimal split among  
	$$\underbrace{(j, \frac{1}{2}\inf t_{j} + \frac{1}{2}\sup t_{j})}_{\textnormal{Mid point on the jth coordinate}} \ \ \ \textnormal{ and } \ \ \ \underbrace{(j, \frac{1}{4}\inf t_{j} + \frac{3}{4}\sup t_{j})}_{\textnormal{Third quarter of the jth coordinate}},\ \ j = 1, \dots, s^*$$
	is at least a proportion of $\textnormal{Var}(m(\boldsymbol{X}) | \boldsymbol{X}\in \boldsymbol{t})$. Notice that for the splits $(j, \frac{1}{2}\inf t_{j} + \frac{1}{2}\sup t_{j})$ and $(j, \frac{1}{4}\inf t_{j} + \frac{3}{4}\sup t_{j})$, it holds that $\frac{\mathbb{P} (\boldsymbol{X}\in\boldsymbol{t}^{'})}{\mathbb{P}(   \boldsymbol{X} \in \boldsymbol{t} )}\frac{\mathbb{P} (\boldsymbol{X}\in\boldsymbol{t}^{''})}{\mathbb{P}(   \boldsymbol{X} \in \boldsymbol{t} )} = \frac{1}{4}$ and 
	$\frac{\mathbb{P} (\boldsymbol{X}\in\boldsymbol{t}^{'})}{\mathbb{P}(   \boldsymbol{X} \in \boldsymbol{t} )}\frac{\mathbb{P} (\boldsymbol{X}\in\boldsymbol{t}^{''})}{\mathbb{P}(   \boldsymbol{X} \in \boldsymbol{t} )} = \frac{3}{16}$ respectively for each $j$ due to the uniform distribution assumption on $\boldsymbol{X}$. We start with the following results \eqref{new.example.4.2}--\eqref{new.example.4.4}. Recall that $\sum_{l>s^*}^{s^*} (\cdots) = 0$ since it is a summation over an empty set.
	
	It holds that
	\begin{equation}\begin{split}\label{new.example.4.2}	
			&\textnormal{Var}(m(\boldsymbol{X}) | \boldsymbol{X}\in \boldsymbol{t}) \\
			&= \sum_{j=1}^{s^*}\Big(\frac{1}{3} \Big(H_{j} \Big(\frac{\inf t_{j} + \sup t_{j}}{2}\Big)\Big)^2 + \frac{\beta_{jj}^2}{180} (\sup t_{j} - \inf t_{j})^4 \\
			&\qquad\qquad + \sum_{l>j}^{s^*}\frac{\beta_{lj}^2}{144}(\sup t_{j} - \inf t_{j})^2(\sup t_{l}- \inf t_{l})^2\Big).
		\end{split}
	\end{equation}
	If $\big(H_{j}(\frac{\inf t_{j} + \sup t_{j}}{2})\big)^2 \le \frac{1}{1296}\beta_{jj}^2(\sup t_{j} - \inf t_{j})^4$, then
	\begin{equation}\label{new.example.4.3}
		\Big(H_{j}\Big(\frac{1}{4}\inf t_{j} + \frac{3}{4}\sup t_{j}\Big)\Big)^2 \ge \frac{\beta_{jj}^2(\sup t_{j} - \inf t_{j})^4}{648}.
	\end{equation}
	For each $j$,
	\begin{equation}\label{new.example.4.4}
		\sum_{l>j}^{s^*}\frac{\beta_{lj}^2}{144}(\sup t_{j} - \inf t_{j})^2(\sup t_{l}- \inf t_{l})^2 \le \frac{1}{3} \Big(H_{j} \Big(\frac{\inf t_{j} + \sup t_{j}}{2}\Big)\Big)^2.
	\end{equation}
	
	The results of \eqref{new.example.4.2}--\eqref{new.example.4.4} are proven after the proof for Example~\ref{new.example2}; the coefficient assumption of Example~\ref{new.example2} is used for deriving  \eqref{new.example.4.4}. Define 
	\begin{equation*}
		\begin{split}
			&T_{j}\coloneqq \underbrace{\frac{1}{3} \Big(H_{j} \Big(\frac{\inf t_{j} + \sup t_{j}}{2}\Big)\Big)^2}_{\textnormal{the first term}} + \underbrace{\frac{\beta_{jj}^2}{180} (\sup t_{j} - \inf t_{j})^4}_{\textnormal{the second term}} \\
			&\qquad\qquad + \underbrace{\sum_{l>j}^{s^*}\frac{\beta_{lj}^2}{144}(\sup t_{j} - \inf t_{j})^2(\sup t_{l}- \inf t_{l})^2}_{\textnormal{the third term}}.
		\end{split}
	\end{equation*}
	
	By \eqref{new.example.4.2}, we suppose without loss of generality that for some $j\le s^*$,
	\begin{equation}\label{new.example.4.8}
		T_{j} \ge \frac{\textnormal{Var} (m(\boldsymbol{X})|\boldsymbol{X} \in \boldsymbol{t})}{s^*}.
	\end{equation}
	By \eqref{new.example.4.4}, one of the first two terms of $T_{j}$ is the largest term in $T_{j}$ (ties are possible). If the largest term of $T_{j}$ is its first term, then by \eqref{new.example.4.8},
	$$\frac{1}{3}\Big(H_{j} \Big(\frac{\inf t_{j} + \sup t_{j}}{2}\Big)\Big)^2 \ge \frac{\textnormal{Var} (m(\boldsymbol{X})|\boldsymbol{X} \in \boldsymbol{t})}{3s^*}.$$
	Therefore, by \eqref{new.example.4.1}, the split $(j, \frac{\inf t_{j} + \sup t_{j}}{2})$ is such that 
	\begin{equation}\label{new.example.4.5}
		(II)_{\boldsymbol{t}, \boldsymbol{t}^{'}} \ge \frac{1}{4} \frac{\textnormal{Var} (m(\boldsymbol{X})|\boldsymbol{X} \in \boldsymbol{t})}{s^*}.
	\end{equation}
	If the second term of $T_{j}$ is the largest term of $T_{j}$ and that $\big(H_{j}(\frac{\inf t_{j} + \sup t_{j}}{2})\big)^2 > \frac{1}{1296}\beta_{jj}^2(\sup t_{j} - \inf t_{j})^4$, then by \eqref{new.example.4.8},
	$$\Big(H_{j} \Big(\frac{\inf t_{j} + \sup t_{j}}{2}\Big)\Big)^2 \ge \frac{180}{1296}\times \frac{\textnormal{Var} (m(\boldsymbol{X})|\boldsymbol{X} \in \boldsymbol{t})}{3s^*},$$
	which in combination with \eqref{new.example.4.1} concludes that the split $(j, \frac{\inf t_{j} + \sup t_{j}}{2})$ is such that 
	\begin{equation}\label{new.example.4.6}
		(II)_{\boldsymbol{t}, \boldsymbol{t}^{'}} \ge \frac{5}{432} \frac{\textnormal{Var} (m(\boldsymbol{X})|\boldsymbol{X} \in \boldsymbol{t})}{s^*}.
	\end{equation}
	Otherwise, if the second term of $T_{j}$ is the largest term of $T_{j}$ and that $\big(H_{j}(\frac{\inf t_{j} + \sup t_{j}}{2})\big)^2 \le \frac{1}{1296}\beta_{jj}^2(\sup t_{j} - \inf t_{j})^4$, then by \eqref{new.example.4.3} and \eqref{new.example.4.8},
	$$\Big(H_{j}\Big(\frac{1}{4}\inf t_{j} + \frac{3}{4}\sup t_{j}\Big)\Big)^2 \ge \frac{180}{648}\times \frac{\textnormal{Var} (m(\boldsymbol{X})|\boldsymbol{X} \in \boldsymbol{t})}{3s^*},$$
	which in combination with \eqref{new.example.4.1} concludes that the split $(j, \frac{1}{4}\inf t_{j} + \frac{3}{4}\sup t_{j})$ is such that 
	\begin{equation}\label{new.example.4.7}
		(II)_{\boldsymbol{t}, \boldsymbol{t}^{'}} \ge \frac{5}{288} \frac{\textnormal{Var} (m(\boldsymbol{X})|\boldsymbol{X} \in \boldsymbol{t})}{s^*}.
	\end{equation}
	
	The results of \eqref{new.example.4.5}--\eqref{new.example.4.7} concludes that $\sup_{j\in \{1, \dots, p\}, c\in t_{j}}(II)_{\boldsymbol{t}, \boldsymbol{t}(j,c)}$ is proportional to $\textnormal{Var}( m(\boldsymbol{X})  \vert \boldsymbol{X} \in \boldsymbol{t})$ for each $\boldsymbol{t}$, and that
	$$m(\boldsymbol{X}) \in \textnormal{SID}(86.4\times s^*),$$
	which is the desired result.
	
	\begin{proof}[Proofs of \eqref{new.example.4.2}--\eqref{new.example.4.4}]
		We start with proving \eqref{new.example.4.4}. Define 
		$$g_{j}(\boldsymbol{X}) = \beta_{jj}X_{j}^2 + \beta_{j}X_{j} + \sum_{\substack{l=1\\ l\not = j}}^{s^*}\beta_{lj}X_{l}X_{j}$$
		with $\beta_{lj}=\beta_{jl}$. Consider a cell $\boldsymbol{t}$, a split $(j, x)$, and two daughter cells $\boldsymbol{t}^{'}$ and $\boldsymbol{t}^{''}$ as in \eqref{new.example.4.1}. By the uniform distribution assumption on $\boldsymbol{X}$, 
		$$H_{j}(x) = \mathbb{E}(g_{j}(\boldsymbol{X})|\boldsymbol{X} \in \boldsymbol{t}^{''}) - \mathbb{E}(g_{j}(\boldsymbol{X})|\boldsymbol{X} \in \boldsymbol{t}^{'}),$$
		whose detailed derivation is omitted for simplicity. Recall that $\boldsymbol{t}^{'} = t_{1}\times \dots \times t_{j}^{'}\times \dots \times t_{p}$ and $\boldsymbol{t}^{''} = t_{1}\times \dots \times t_{j}^{''}\times \dots \times t_{p}$ with $t_{j}^{'} = [\inf t_{j}, x)$ and $t_{j}^{''} = [x, \sup t_{j} ]\cap t_{j}$.
		
		In what follows, we derive a closed-form expression for $H_{j}(x)$ in \eqref{new.example.4.13} below. By the uniform distribution assumption on the feature vector and the change of variables formula,
		\begin{equation}
			\begin{split}\label{new.example.4.15}
				&\mathbb{E}(g_{j}(\boldsymbol{X})|\boldsymbol{X} \in \boldsymbol{t}^{''})\\
				& = \frac{1}{|t_{j}^{''}|} \int_{z\in t_{j}^{''}} \beta_{jj}z^2 + \beta_{j}z + \sum_{l=1; l\not = j}^{s^*} \beta_{lj} \frac{R_{l}}{2} z dz\\
				&= \frac{1}{|t_{j}^{'}|} \int_{z\in t_{j}^{'}} \beta_{jj}(f(z))^2 + \left(\beta_{j} + \sum_{l=1; l\not = j}^{s^*} \beta_{lj} \frac{R_{l}}{2}\right) f(z) dz,
			\end{split}
		\end{equation}
		where $f(z) = \frac{(\sup t_{j} - x)(z - \inf t_{j})}{x-\inf t_{j}} + x$, $R_{j} \coloneqq \sup t_{j} + \inf t_{j}$, and $r_{j} \coloneqq \sup t_{j} - \inf t_{j}$. Since
		$$\frac{1}{|t_{j}^{'}|} \int_{z\in t_{j}^{'}} (f(z))^k dz = \frac{(f(x))^{k+1}}{(k+1) (\sup t_{j} - x)} -\frac{(f(\inf t_{j}))^{k+1}}{(k+1) (\sup t_{j} - x)} = \frac{ (\sup t_{j})^{k+1}- x^{k+1}}{(k+1) (\sup t_{j} - x)}, $$
		we have
		\begin{equation*}
			\begin{split}
				\textnormal{RHS of \eqref{new.example.4.15}}& = \frac{\beta_{jj}}{3} \big((\sup t_{j})^2 + (\sup t_{j}) x + x^2 \big) + \left(\beta_{j} + \sum_{l=1; l\not = j}^{s^*} \beta_{lj} \frac{R_{l}}{2}\right) \left(\frac{\sup t_{j} + x}{2}\right).
			\end{split}
		\end{equation*}

		 By this and similar calculations, it holds that 
		\begin{equation}\label{new.example.4.13}
			H_{j}(x) = \frac{\beta_{jj}r_{j}}{3} \big(R_{j} + x\big) + \left(\beta_{j} + \sum_{l=1; l\not = j}^{s^*} \frac{\beta_{lj} R_{l}}{2}\right) \frac{r_{j}}{2}.
		\end{equation}

		Hence, 
		\begin{equation}
			\begin{split}\label{new.example.4.11}
				H_{j}(\frac{1}{2}\inf t_{j} + \frac{1}{2}\sup t_{j}) & = \frac{r_{j}}{2}\left(\beta_{jj} R_{j} + \beta_{j} + \sum_{l=1; l\not = j}^{s^*} \frac{\beta_{lj} R_{l}}{2}\right), \\
				H_{j}(\frac{1}{4}\inf t_{j} + \frac{3}{4}\sup t_{j}) & = \frac{r_{j}}{2}\left(\beta_{jj} \left( \frac{5}{6}\inf t_{j} + \frac{7}{6}\sup t_{j}\right) + \beta_{j} + \sum_{l=1; l\not = j}^{s^*} \frac{\beta_{lj} R_{l}}{2}\right),
			\end{split}
		\end{equation}
		where the first equality concludes \eqref{new.example.4.4} given the coefficient assumption.

		Next, we proceed to show \eqref{new.example.4.2}. We have
		\begin{equation}\label{new.example.4.9}
			\textnormal{Var}(m(\boldsymbol{X})|\boldsymbol{X}\in\boldsymbol{t}) = \sum_{j=1}^{s^*} \textnormal{Var}(g_{j}(\boldsymbol{X})|\boldsymbol{X}\in\boldsymbol{t}) - \sum_{j=1}^{s^*-1}\sum_{l>j}^{s^*} \beta_{lj}^2\textnormal{Var}(X_{l}X_{j}|\boldsymbol{X}\in\boldsymbol{t}),
		\end{equation}
		where to justify the equality, we notice that when expanding $\textnormal{Var}(m(\boldsymbol{X})|\boldsymbol{X}\in\boldsymbol{t})$, all terms appearing are i) $2\mathbb{E} \big( (\beta_{i}X_{i} - \mathbb{E} (\beta_{i}X_{i} | \boldsymbol{X} \in \boldsymbol{t}) ) (\beta_{j}X_{j} - \mathbb{E} (\beta_{j}X_{j} | \boldsymbol{X} \in \boldsymbol{t}) ) |\boldsymbol{X} \in\boldsymbol{t}\big)$,  
		
		ii) $2\mathbb{E} \big( (\beta_{ij}X_{i}X_{j} - \mathbb{E} (\beta_{ij}X_{i}X_{j} | \boldsymbol{X} \in \boldsymbol{t}) ) (\beta_{kl}X_{k}X_{l} - \mathbb{E} (\beta_{kl}X_{k}X_{l} | \boldsymbol{X} \in \boldsymbol{t}) ) |\boldsymbol{X} \in\boldsymbol{t}\big)$, 
		
		iii) $2\mathbb{E} \big( (\beta_{j}X_{j} - \mathbb{E} (\beta_{j}X_{j} | \boldsymbol{X} \in \boldsymbol{t}) ) (\beta_{kl}X_{k}X_{l} - \mathbb{E} (\beta_{kl}X_{k}X_{l} | \boldsymbol{X} \in \boldsymbol{t}) ) |\boldsymbol{X} \in\boldsymbol{t}\big)$,

		iv) $2\mathbb{E} \big( (\beta_{jj}X_{j}^2 - \mathbb{E} (\beta_{jj}X_{j}^2 | \boldsymbol{X} \in \boldsymbol{t}) ) (\beta_{k}X_{k} - \mathbb{E} (\beta_{k}X_{k} | \boldsymbol{X} \in \boldsymbol{t}) ) |\boldsymbol{X} \in\boldsymbol{t}\big)$,

		v) $2\mathbb{E} \big( (\beta_{jj}X_{j}^2 - \mathbb{E} (\beta_{jj}X_{j}^2 | \boldsymbol{X} \in \boldsymbol{t}) ) (\beta_{kl}X_{k}X_{l} - \mathbb{E} (\beta_{kl}X_{k}X_{l} | \boldsymbol{X} \in \boldsymbol{t}) ) |\boldsymbol{X} \in\boldsymbol{t}\big)$,
		
		vi) $2\mathbb{E} \big( (\beta_{jj}X_{j}^2 - \mathbb{E} (\beta_{jj}X_{j}^2 | \boldsymbol{X} \in \boldsymbol{t}) ) (\beta_{kk}X_{k}^2 - \mathbb{E} (\beta_{kk}X_{k}^2 | \boldsymbol{X} \in \boldsymbol{t}) ) |\boldsymbol{X} \in\boldsymbol{t}\big)$,
		
		vii) $\textnormal{Var}(\beta_{lj}X_{l}X_{j}|\boldsymbol{X}\in\boldsymbol{t})$, 
		
		viii) $\textnormal{Var}(\beta_{jj}X_{j}^2| \boldsymbol{X} \in \boldsymbol{t})$, 
		
		ix)  $\textnormal{Var}(\beta_{j}X_{j}|\boldsymbol{X}\in\boldsymbol{t})$, 
		
		x) $2\mathbb{E} \big( (\beta_{j}X_{j} - \mathbb{E} (\beta_{j}X_{j} | \boldsymbol{X} \in \boldsymbol{t}) ) (\beta_{lj}X_{l}X_{j} - \mathbb{E} (\beta_{lj}X_{l}X_{j} | \boldsymbol{X} \in \boldsymbol{t}) ) |\boldsymbol{X} \in\boldsymbol{t}\big)$,
		
		xi) $2\mathbb{E} \big( (\beta_{ij}X_{i}X_{j} - \mathbb{E} (\beta_{ij}X_{i}X_{j} | \boldsymbol{X} \in \boldsymbol{t}) ) (\beta_{lj}X_{l}X_{j} - \mathbb{E} (\beta_{lj}X_{l}X_{j} | \boldsymbol{X} \in \boldsymbol{t}) ) |\boldsymbol{X} \in\boldsymbol{t}\big)$, 
		
		xii) $2\mathbb{E}\big( (\beta_{jj} X_{j}^2 - \mathbb{E}(\beta_{jj}X_{j}^2|\boldsymbol{X} \in \boldsymbol{t}) ) (\beta_{j} X_{j} - \mathbb{E}(\beta_{j}X_{j}|\boldsymbol{X} \in \boldsymbol{t}) )| \boldsymbol{X} \in \boldsymbol{t}\big)$, and 
		
		xiii) $2\mathbb{E}\big( (\beta_{jj} X_{j}^2 - \mathbb{E}(\beta_{jj}X_{j}^2|\boldsymbol{X} \in \boldsymbol{t}) ) (\beta_{lj} X_{l}X_{j} - \mathbb{E}(\beta_{lj}X_{l}X_{j}|\boldsymbol{X} \in \boldsymbol{t}) )| \boldsymbol{X} \in \boldsymbol{t}\big)$, where $i, j, k, l$ are distinct indices. Among them, i)--vi) are zeros since features are independent, viii)--xiii) are considered in $\sum_{j=1}^{s^*} \textnormal{Var}(g_{j}(\boldsymbol{X})|\boldsymbol{X}\in\boldsymbol{t})$, and vii) is considered twice (for each $l,j$) in  $\sum_{j=1}^{s^*} \textnormal{Var}(g_{j}(\boldsymbol{X})|\boldsymbol{X}\in\boldsymbol{t})$, which concludes  \eqref{new.example.4.9}. In addition, by the uniform distribution assumption on $\boldsymbol{X}$ and a direct calculation, 
		\begin{equation*}\begin{split}			
				\label{new.example.4.10}
				& \textnormal{Var}(g_{j}(\boldsymbol{X})|\boldsymbol{X}\in\boldsymbol{t}) = \beta_{jj}^2 \textnormal{Var}(X_{j}^2 | X_{j}\in t_{j}) + \beta_{j}^2 \textnormal{Var}(X_{j} | X_{j}\in t_{j}) \\
				&+ \sum_{l=1; l\not = j}^{s^*} \beta_{lj}^2 \textnormal{Var}(X_{l}X_{j} | \boldsymbol{X} \in \boldsymbol{t}) \\
				& + \sum_{\substack{k=1 \\ k\not= l\\k\not= j}}^{s^*}\sum_{\substack{l=1\\ l\not= j}}^{ s^*}\beta_{kj}\beta_{lj}\big(\mathbb{E}( X_{k}X_{l}X_{j}^2|\boldsymbol{X} \in \boldsymbol{t}) - \mathbb{E}(X_{k}X_{j}|\boldsymbol{X}\in\boldsymbol{t})\mathbb{E}(X_{l}X_{j}|\boldsymbol{X}\in\boldsymbol{t})\big) \\
				& +  2\beta_{jj}\beta_{j}\big(\mathbb{E}( X_{j}^3| X_{j} \in t_{j}) - \mathbb{E}(X_{j}^2|X_{j}\in t_{j})\mathbb{E}(X_{j}|X_{j}\in t_{j})\big)\\
				& + 2\sum_{\substack{l=1\\ l\not = j}}^{s^{*}}\beta_{jj}\beta_{lj}\big( \mathbb{E}(X_{j}^3X_{l}|\boldsymbol{X} \in \boldsymbol{t}) - \mathbb{E}(X_{j}^2|X_{j}\in t_{j})\mathbb{E}(X_{l}X_{j}| \boldsymbol{X} \in \boldsymbol{t})\big) \\
				& + 2\sum_{\substack{l=1\\ l\not = j}}^{s^{*}}\beta_{j}\beta_{lj}\big( \mathbb{E}(X_{j}^2X_{l}|\boldsymbol{X} \in \boldsymbol{t}) - \mathbb{E}(X_{j}|X_{j}\in t_{j})\mathbb{E}(X_{l}X_{j}| \boldsymbol{X} \in \boldsymbol{t})\big)\\
				& \eqqcolon (A) + (B) + (C) + (D) + (E) + (F) + (G),
			\end{split}
		\end{equation*}
		where  $(A) =  \frac{\beta_{jj}^2}{45} r_{j}^{2} \big(4(\sup t_{j})^2 + 7(\sup t_{j}) (\inf t_{j}) + 4(\inf t_{j})^2\big)$, $(B) = \beta_{j}^2 \frac{r_{j}^2}{12}$, 
		$$(C) = \sum_{\substack{l=1\\ l\not = j}}^{s^*}  \frac{\beta_{lj}^2}{144} \big( 3R_{j}^2 r_{l}^2 + 3R_{l}^2 r_{j}^2 + r_{j}^2 r_{l}^2\big), \qquad(D)= \sum_{\substack{k=1 \\ k\not= l\\k\not= j}}^{s^*}\sum_{\substack{l=1\\ l\not= j}}^{ s^*} \frac{\beta_{kj}\beta_{lj}}{48} R_{l} R_{k} r_{j}^2,\qquad (E) =  \frac{\beta_{jj}\beta_{j}}{6}r_{j}^2 R_{j},$$ 
		$$(F) = \sum_{\substack{l=1\\ l\not = j}}^{s^{*}}\frac{\beta_{jj}\beta_{lj}}{12} R_{l}r_{j}^2R_{j}, \textnormal{ and } (G) = \sum_{\substack{l=1\\ l\not = j}}^{s^{*}}\frac{\beta_{j}\beta_{lj}}{12} R_{l}r_{j}^2.$$

		By this, \eqref{new.example.4.11}, and a direct calculation, 
		\begin{equation}
			\begin{split}\label{new.example.4.12}
				&\textnormal{Var}(g_{j}(\boldsymbol{X})|\boldsymbol{X}\in\boldsymbol{t}) - \frac{1}{3}\left(H_{j}(\frac{1}{2}\inf t_{j} + \frac{1}{2}\sup t_{j})\right)^2 \\
				& = \frac{\beta_{jj}^2}{180} r_{j}^4 +  \sum_{l=1; l\not = j}^{s^*} \frac{\beta_{lj}^2 }{144} (3R_{j}^2r_{l}^2 + r_{j}^2r_{l}^2).
			\end{split}
		\end{equation}
		
		Also, by the uniform distribution assumption on $\boldsymbol{X}$, the above expression for (C), and that 
		$$\sum_{j=1}^{s^*}\sum_{\substack{l=1\\ l\not = j}}^{s^*} 1 = 2 \sum_{j=1}^{s^*-1}\sum_{l>j}^{s^*}1,$$
		it holds that 
		\begin{equation*}
			\sum_{j=1}^{s^*}\sum_{\substack{l=1\\ l\not = j}}^{s^*} \frac{\beta_{lj}^2 }{144} (3R_{j}^2r_{l}^2 + r_{j}^2r_{l}^2) = \sum_{j=1}^{s^*-1}\sum_{l>j}^{s^*} \beta_{lj}^2\textnormal{Var}(X_{l}X_{j}|\boldsymbol{X}\in\boldsymbol{t}) + \sum_{j=1}^{s^*-1}\sum_{l>j}^{s^*} \frac{\beta_{lj}^2}{144}r_{j}^2r_{l}^2,
		\end{equation*}
		which in combination with \eqref{new.example.4.9}--\eqref{new.example.4.12} leads to \eqref{new.example.4.2}.
		
		Lastly, we deal with \eqref{new.example.4.3}. By \eqref{new.example.4.11}, 
		\begin{equation*}
			\begin{split}
				H_{j}(\frac{1}{4}\inf t_{j} + \frac{3}{4}\sup t_{j})  - H_{j}(\frac{1}{2}\inf t_{j} + \frac{1}{2}\sup t_{j}) = \frac{\beta_{jj}r_{j}^2}{12}.
			\end{split}
		\end{equation*}
		Therefore, if $|H_{j}(\frac{1}{2}\inf t_{j} + \frac{1}{2}\sup t_{j})|\le \frac{|\beta_{jj}|r_{j}^2}{36}$, then
		\begin{equation*}
			\begin{split}
				& \left|\left(H_{j}(\frac{1}{4}\inf t_{j} + \frac{3}{4}\sup t_{j}) \right)^2 - \left(H_{j}(\frac{1}{2}\inf t_{j} + \frac{1}{2}\sup t_{j}) \right)^2\right|\\
				&= \frac{|\beta_{jj}| r_{j}^2}{12}\left| \frac{\beta_{jj} r_{j}^2}{12}+ 2H_{j}(\frac{1}{2}\inf t_{j} + \frac{1}{2}\sup t_{j}) \right|\\
				&\ge \frac{|\beta_{jj}| r_{j}^2}{12}\left( \frac{|\beta_{jj}| r_{j}^2}{12}- 2\left|H_{j}(\frac{1}{2}\inf t_{j} + \frac{1}{2}\sup t_{j})\right| \right) \ge \frac{\beta_{jj}^2 r_{j}^4}{432}.
			\end{split}
		\end{equation*}
		This implies that if $(H_{j}(\frac{1}{2}\inf t_{j} + \frac{1}{2}\sup t_{j}))^2\le (\frac{\beta_{jj}r_{j}^2}{36})^2 = \frac{\beta_{jj}^2r_{j}^4}{1296}$, then
		\begin{equation*}
			\Big(H_{j}\Big(\frac{1}{4}\inf t_{j} + \frac{3}{4}\sup t_{j}\Big)\Big)^2 \ge \frac{\beta_{jj}^2r_{j}^4}{648},
		\end{equation*}
		which concludes the desired result of \eqref{new.example.4.3}. We have finished the proofs for \eqref{new.example.4.2}--\eqref{new.example.4.4}.
		
	\end{proof}

	\subsection{Verifying Condition \ref{P} for Example~\ref{new.example3}} \label{SecE.5}

	The proof idea is straightforward: for each cell $\boldsymbol{t} = t_{1}\times \dots t_{p}$, we establish appropriate upper and lower bounds for $\textnormal{Var}( m(\boldsymbol{X})  \vert \boldsymbol{X} \in \boldsymbol{t})$ and $\sup_{j, c}(II)_{\boldsymbol{t}, \boldsymbol{t}(j,c)}$, respectively, to conclude the desired result, where we use $\sup_{j, c}(II)_{\boldsymbol{t}, \boldsymbol{t}(j,c)}$ to denote $\sup_{j\in \{1, \dots, p\}, c\in t_{j}}(II)_{\boldsymbol{t}, \boldsymbol{t}(j,c)}$ for simplicity. We begin with an upper bound of $\textnormal{Var}( m(\boldsymbol{X})  \vert \boldsymbol{X} \in \boldsymbol{t})$ as follows.
	\begin{equation}
		\begin{split}\label{variance.1}
			&\textnormal{Var}( m(\boldsymbol{X})  \vert \boldsymbol{X} \in \boldsymbol{t}) \\
			& = \mathbb{E}\Big(\big(m(\boldsymbol{X}) - \mathbb{E}(m(\boldsymbol{X})|\boldsymbol{X} \in \boldsymbol{t}) \big)^2| \boldsymbol{X} \in \boldsymbol{t}\Big)\\
			& \le \mathbb{E}\left(\left(m(\boldsymbol{X}) - \left( \frac{\sup_{\boldsymbol{z} \in \boldsymbol{t}} m(\boldsymbol{z})  + \inf_{\boldsymbol{z} \in \boldsymbol{t}} m(\boldsymbol{z})}{2}\right) \right)^2| \boldsymbol{X} \in \boldsymbol{t}\right)\\
			& \le \frac{1}{4}\mathbb{E}\left(\left(m(\boldsymbol{X}) - \sup_{\boldsymbol{z} \in \boldsymbol{t}} m(\boldsymbol{z}) - \inf_{\boldsymbol{z} \in \boldsymbol{t}} m(\boldsymbol{z}) + m(\boldsymbol{X}) \right)^2| \boldsymbol{X} \in \boldsymbol{t}\right)\\
			& \le \frac{1}{4}\mathbb{E}\left(\left(\sup_{\boldsymbol{z} \in \boldsymbol{t}} m(\boldsymbol{z}) - m(\boldsymbol{X})  + m(\boldsymbol{X}) - \inf_{\boldsymbol{z} \in \boldsymbol{t}} m(\boldsymbol{z}) \right)^2| \boldsymbol{X} \in \boldsymbol{t}\right)\\
			&= \frac{1}{4}(\sup_{\boldsymbol{z} \in \boldsymbol{t}} m(\boldsymbol{z})  - \inf_{\boldsymbol{z} \in \boldsymbol{t}} m(\boldsymbol{z}))^2.
		\end{split}
	\end{equation}
	
	Meanwhile, by the assumptions on $\frac{\partial m(\boldsymbol{z})}{\partial z_{j}}$'s, we can show that 
	$$|\sup_{\boldsymbol{z} \in \boldsymbol{t}} m(\boldsymbol{z})  - \inf_{\boldsymbol{z} \in \boldsymbol{t}} m(\boldsymbol{z})| \le \sum_{j\in S^*} |t_{j}|M_{2},$$
	and we omit the details for simplicity. By this and \eqref{variance.1}, 
	\begin{equation}
		\label{variance.2}
		\textnormal{Var}( m(\boldsymbol{X})  \vert \boldsymbol{X} \in \boldsymbol{t}) \le \frac{M_{2}^2}{4}(\sum_{j\in S^*} |t_{j}|)^2.
	\end{equation}
	
	Next, we deal with the lower bound of $\sup_{j, c}(II)_{\boldsymbol{t}, \boldsymbol{t}(j,c)}$, which is by definition larger than $(II)_{\boldsymbol{t}, \boldsymbol{t}(j^*, c^*)}$, where  
	$$j^* \coloneqq \arg\max_{j\in S^*} |t_{j}| \qquad\textnormal{ and } \qquad c^* \coloneqq \frac{\sup t_{j^*} + \inf t_{j^*}}{2}.$$ 
	Let $\boldsymbol{t}_{1}^*$ and $\boldsymbol{t}_{2}^*$ be the corresponding daughter cells of $\boldsymbol{t}$ such that the $j^*$th coordinate of $\boldsymbol{t}_{1}^*$ is $\big[\inf t_{j}, \frac{\sup t_{j} + \inf t_{j}}{2}\big)$. Recall that 
	\begin{equation*}\begin{split}
			& (II)_{\boldsymbol{t}, \boldsymbol{t}_{1}^*}= \mathbb{P} (\boldsymbol{X}\in\boldsymbol{t}_{1}^*  \ \vert \ \boldsymbol{X} \in \boldsymbol{t} )
			\Big(\mathbb{E} (m(\boldsymbol{X}) \ \vert \ \boldsymbol{X} \in \boldsymbol{t}_{1}^*)) - \mathbb{E} (m(\boldsymbol{X}) \ \vert \ \boldsymbol{X} \in \boldsymbol{t} )) \Big)^{2} \\
			& +  \mathbb{P} (\boldsymbol{X}\in\boldsymbol{t}_{2}^*  \ \vert \ \boldsymbol{X} \in \boldsymbol{t} )
			\Big(\mathbb{E} (m(\boldsymbol{X}) \ \vert \ \boldsymbol{X} \in \boldsymbol{t}_{2}^*)) - \mathbb{E} (m(\boldsymbol{X}) \ \vert \ \boldsymbol{X} \in \boldsymbol{t} )) \Big)^{2}.
	\end{split}\end{equation*}

	To establish a lower bound of $(II)_{\boldsymbol{t}, \boldsymbol{t}_{1}^*}$, it suffices to establish a lower bound on
	$$
	(\mathbb{E} (m(\boldsymbol{X}) | \boldsymbol{X} \in \boldsymbol{t}_{1}^{*}) - \mathbb{E} (m(\boldsymbol{X}) | \boldsymbol{X} \in \boldsymbol{t}_{2}^{*}) )^2.
	$$ 
	To this end, we first notice that by the fundamental theorem of calculus,
	$$m(\boldsymbol{z} + \frac{1}{2}|t_{j^*}| \boldsymbol{e}_{j^*}) = m(\boldsymbol{z}) + \int_{z_{j^*}}^{z_{j^*} + \frac{1}{2}|t_{j^*}|} \frac{\partial m(\boldsymbol{w})}{\partial w_{j^*}} d w_{j^*},$$
	where $\boldsymbol{e}_{j}$ is a unit vector with its $j$th coordinate being one and $\boldsymbol{z} = (z_{1}, \dots, z_{p})^{\top}$. 
	Now, suppose that the partial derivative along the $j^*$th coordinate is positive in the following arguments for simplicity; that is, $\frac{\partial m(\boldsymbol{w})}{\partial w_{j^*}} \ge M_{1}>0$. By these and the uniform distribution assumption on $\boldsymbol{X}$,
	\begin{equation}
		\begin{split}			
			\label{condi.expec.1}
			\mathbb{E} (m(\boldsymbol{X}) | \boldsymbol{X} \in \boldsymbol{t}_{2}^{*}) & = \frac{1}{|\boldsymbol{t}_{2}^*|} \int_{\boldsymbol{z} \in \boldsymbol{t}_{2}^*} m(\boldsymbol{z}) d\boldsymbol{z} \\
			&= \frac{1}{|\boldsymbol{t}_{1}^*|} \int_{\boldsymbol{z} \in \boldsymbol{t}_{1}^*} \left[ m(\boldsymbol{z}) + \int_{z_{j^*}}^{z_{j^*} + \frac{1}{2}|t_{j^*}|} \frac{\partial m(\boldsymbol{w})}{\partial w_{j^*}} d w_{j^*} \right] d\boldsymbol{z}\\
			& \ge \frac{1}{|\boldsymbol{t}_{1}^*|} \int_{\boldsymbol{x} \in \boldsymbol{t}_{1}^*}  m(\boldsymbol{z}) + \frac{1}{2}|t_{j^{*}}|M_{1} d\boldsymbol{z}.
		\end{split}
	\end{equation}

	With \eqref{condi.expec.1}, 
	$$\mathbb{E} (m(\boldsymbol{X}) | \boldsymbol{X} \in \boldsymbol{t}_{2}^{*}) - \mathbb{E} (m(\boldsymbol{X}) | \boldsymbol{X} \in \boldsymbol{t}_{1}^{*}) \ge \frac{1}{2}|t_{j^{*}}|M_{1},$$
	which along with simple calculations shows 
	{\small
		\begin{equation}\begin{split}\label{condi.expec.2}
				(II)_{\boldsymbol{t}, \boldsymbol{t}_{1}^*}&=\frac{1}{2}\Big(\Big(\mathbb{E} (m(\boldsymbol{X}) | \boldsymbol{X} \in \boldsymbol{t}) - \mathbb{E} (m(\boldsymbol{X}) | \boldsymbol{X} \in \boldsymbol{t}_{1}^{*}) \Big)^2 \\
				&\qquad+ \Big(\mathbb{E} (m(\boldsymbol{X}) | \boldsymbol{X} \in \boldsymbol{t}) - \mathbb{E} (m(\boldsymbol{X}) | \boldsymbol{X} \in \boldsymbol{t}_{2}^{*}) \Big)^2\Big)\\
				& \ge \frac{1}{16}|t_{j^{*}}|^2M_{1}^2.
		\end{split}\end{equation}
	}

	By \eqref{condi.expec.2}, \eqref{variance.2}, and $\frac{(\sum_{j\in S^*} |t_{j}|)^2}{|t_{j^*}|^2} \le (\# S^*)^2$ due to the definition of $j^*$, it holds that
	$$\left(4M_{2}^2M_{1}^{-2} (\# S^*)^2 \right)\sup_{j, c}(II)_{\boldsymbol{t}, \boldsymbol{t}(j,c)}\ge \left(4M_{2}^2M_{1}^{-2} (\# S^*)^2 \right) (II)_{\boldsymbol{t}, \boldsymbol{t}_{1}^*} \ge \textnormal{Var}( m(\boldsymbol{X})  \vert \boldsymbol{X} \in \boldsymbol{t}).$$
	
	This concludes the desired result for the case with a positive partial derivative. The same arguments apply to the other case, and hence we have finished the proof of the first assertion.

	As for the case with the additive model assumption, we have
	\begin{equation}
		\label{condi.expec.3}
		\textnormal{Var}( m(\boldsymbol{X})  \vert \boldsymbol{X} \in \boldsymbol{t})  =\sum_{j=1}^{s^*} \textnormal{Var}( m_{j}(X_{j})  \vert X_{j} \in t_{j}),
	\end{equation}
	where $\boldsymbol{t} = t_{1}\times \dots \times t_{p}$. By the arguments similar to those in \eqref{variance.1}--\eqref{variance.2}, 
	\begin{equation}
		\textnormal{Var}( m(\boldsymbol{X})  \vert \boldsymbol{X} \in \boldsymbol{t}) \le s^* \frac{1}{4}\max_{1\le j\le s^*}|t_{j}|^2 M_{2}^2,
	\end{equation}
	which in combination with \eqref{condi.expec.2} leads to
	$$\left(4M_{2}^2M_{1}^{-2} s^* \right)\sup_{j, c}(II)_{\boldsymbol{t}, \boldsymbol{t}(j,c)}\ge \left(4M_{2}^2M_{1}^{-2} s^* \right) (II)_{\boldsymbol{t}, \boldsymbol{t}_{1}^*} \ge \textnormal{Var}( m(\boldsymbol{X})  \vert \boldsymbol{X} \in \boldsymbol{t}).$$
	This concludes the second assertion and finishes the proof.

	\subsection{Verifying Condition \ref{P} for Example~\ref{new.example4}} \label{SecE.6}

	By the uniform distribution assumption on $\boldsymbol{X}$, for each cell $\boldsymbol{t} = t_{1} \times \dots \times t_{p}$,
	\begin{equation}
		\label{tool.5}
		\textnormal{Var}(m(\boldsymbol{X}) | \boldsymbol{X} \in \boldsymbol{t}) = \sum_{l=1}^{s^*} \textnormal{Var} (m_{l}(X_{l})|X_{l} \in t_{l}).
	\end{equation} 
	With \eqref{tool.5} and \eqref{tool.6}, to finish the proof of the first assertion, we show that the LHS of \eqref{tool.6} is proportional to the conditional bias decrease as follows. 
	
	A simple calculation shows that the conditional bias decrease $(II)_{\boldsymbol{t}, \boldsymbol{t}^{'}}$ given a split $(j, x)$ with $x\in t_{j}$ is bounded from below such that 
	\begin{equation}\label{tool.3}
		(II)_{\boldsymbol{t}, \boldsymbol{t}^{'}} \ge \mathbb{P} (\boldsymbol{X}\in\boldsymbol{t}^{'}| \boldsymbol{X} \in \boldsymbol{t})\mathbb{P} (\boldsymbol{X}\in\boldsymbol{t}^{''}| \boldsymbol{X} \in \boldsymbol{t}) \big(H_{j}(x)\big)^2,
	\end{equation}
	where
	$$H_{j}(x) \coloneqq \mathbb{E} (m(\boldsymbol{X}) | \boldsymbol{X} \in \boldsymbol{t}^{''}) - \mathbb{E} (m(\boldsymbol{X}) | \boldsymbol{X} \in \boldsymbol{t}^{'}),$$
	and that  $\boldsymbol{t}^{'} = t_{1}\times \dots \times t_{j}^{'}\times \dots \times t_{p}$ and $\boldsymbol{t}^{''} = t_{1}\times \dots \times t_{j}^{''}\times \dots \times t_{p}$ with $t_{j}^{'} = [\inf t_{j}, x)$ and $t_{j}^{''} = [x, \sup t_{j} ]\cap t_{j}$. For the reader's convenience, recall that
	\begin{equation*}\begin{split}
			& (II)_{\boldsymbol{t}, \boldsymbol{t}^{'}}= \mathbb{P} (\boldsymbol{X}\in\boldsymbol{t}^{'}   \vert  \boldsymbol{X} \in \boldsymbol{t} )
			\Big(\mathbb{E} (m(\boldsymbol{X})  \vert  \boldsymbol{X} \in \boldsymbol{t}^{'}) - \mathbb{E} (m(\boldsymbol{X})  \vert  \boldsymbol{X} \in \boldsymbol{t} ) \Big)^{2} \\
			& +  \mathbb{P} (\boldsymbol{X}\in\boldsymbol{t}^{''}   \vert  \boldsymbol{X} \in \boldsymbol{t} )
			\Big(\mathbb{E} (m(\boldsymbol{X})  \vert  \boldsymbol{X} \in \boldsymbol{t}^{''}) - \mathbb{E} (m(\boldsymbol{X})  \vert  \boldsymbol{X} \in \boldsymbol{t} ) \Big)^{2}.
	\end{split}\end{equation*}
	
	Recall that $m(\boldsymbol z) = \sum_{j=1}^p m_j(z_j)$.  Then
	$$
	\mathbb{E} (m(\boldsymbol{X}) | \boldsymbol{X} \in \boldsymbol{t}^{''}) = \frac{1}{\sup t_{j}-x} \int_{x}^{\sup t_{j}} m_j(z) dz + \sum_{l\not = j; l =1}^{s^*}\mathbb{E} (m_l(X_l) | \boldsymbol{X} \in \boldsymbol{t}^{''}),
	$$
	and
	$$\mathbb{E} (m(\boldsymbol{X}) | \boldsymbol{X} \in \boldsymbol{t}^{'}) = \frac{1}{x - \inf t_{j}} \int_{\inf t_{j}}^{x} m_j(z) dz + \sum_{l\not = j;l =1}^{s^*}\mathbb{E} (m_l(X_l) | \boldsymbol{X} \in \boldsymbol{t}^{''}).$$

	Thus, by the change of variables formula, 
	\begin{equation}
		\begin{split}			
			\label{tool.2}
			H_{j}(x)&= \frac{1}{\sup t_{j}-x} \int_{x}^{\sup t_{j}} m_j(z) dz - \frac{1}{x- \inf t_{j}} \int_{\inf t_{j}}^{x} m_j(z) dz \\
			& =  \frac{1}{x-\inf t_{j}} \int_{\inf t_{j}}^{x} \left(m_j\Big(\Big(\frac{\sup t_{j}-x}{x-\inf t_{j}}\Big)(z- \inf t_{j})+x\Big)-m_j(z)\right) dz .
		\end{split}
	\end{equation}
	
Suppose the  split is along $j\coloneqq \arg\max_{1\le l\le s^*} \textnormal{Var}( m(X_{l})|X_{l} \in t_{l})$. By \eqref{tool.2} and \eqref{tool.6}, 
	$$
	\sup_{x\in \Lambda(\inf t_{j}, \sup t_{j}) }(H_{j}(x))^2\geq c_{0}\max_{1\le l\le s^*}\textnormal{Var}(m_{l}(X_{l})|X_{l} \in t_{l}),
	$$
	which in combination with \eqref{tool.3},
	$$\sup_{l\in\{1, \dots, s^*\}, c\in t_{l}}(II)_{\boldsymbol{t}, \boldsymbol{t}(l,c)} \ge \sup_{c\in t_{j}}(II)_{\boldsymbol{t}, \boldsymbol{t}(j,c)} \ge  \lambda(1-\lambda)c_{0}\max_{1\le l\le s^*}\textnormal{Var}(m_{l}(X_{l})|X_{l} \in t_{l}).$$
	
		 By this and \eqref{tool.5}, it holds that
	$$m(\boldsymbol{X})\in \textnormal{SID}\left(\frac{s^*}{\lambda(1-\lambda)c_{0}} \right),$$
	which concludes the first assertion.
	
	For the second assertion, we note that if $m_{j}(z)$ is differentiable on $[0, 1]$, then
	\begin{equation*}
		\begin{split}
			&\textnormal{Var} (m_{j}(X_{j})|X_{j} \in t_{j}) \\
			& = \mathbb{E}\Big(\big(m_{j}(X_{j}) - \mathbb{E}(m_{j}(X_{j})|X_{j}\in t_{j}) \big)^2 \big| X_{j}\in t_{j} \Big)\\
			& \le \mathbb{E}\left(\left(m_{j}(X_{j}) - \left( \frac{\sup_{z \in t_{j}} m_{j}(z)  + \inf_{z \in t_{j}} m_{j}(z)}{2}\right) \right)^2\Big| X_{j}\in t_{j}\right)\\
			& \le \frac{1}{4}\mathbb{E}\left(\left(m_{j}(X_{j}) - \sup_{z\in t_{j}} m_{j}(z) - \inf_{z \in t_{j}} m_{j}(z) + m_{j}(X_{j}) \right)^2 \Big| X_{j} \in t_{j} \right)\\
			& \le \frac{1}{4}\mathbb{E}\left(\left(\sup_{z \in t_{j}} m_{j}(z) - m_{j}(X_{j})  + m_{j}(X_{j}) - \inf_{z\in t_{j}} m_{j}(z) \right)^2\Big| X_{j} \in t_{j}\right)\\
			&= \frac{1}{4}(\sup_{z\in t_{j}} m_{j}(z)  - \inf_{z\in t_{j}} m_{j}(z)  )^2\\
			&\le \frac{1}{4} (\sup_{t\in t_{j}}|m_{j}'(z)||t_{j}|)^2,
		\end{split}
	\end{equation*}
	where the last step is because of the mean  value theorem. This and \eqref{tool.7} lead to \eqref{tool.6} with some $c_{0}>0$, and hence we have finished the proof.


	\subsection{Proof for Remark~\ref{remark3}}\label{SecE.7}
	
	In this proof, we show that given the model 
	$$m(\boldsymbol{X}) = X_{1}X_{2} -0.5X_{1} - 0.5X_{2} + 0.25 = (X_{1} - 0.5)(X_{2} - 0.5)$$ 
	with uniformly distributed  $\boldsymbol{X}$, there exists a cell $\boldsymbol{t} = t_{1}\times \dots \times t_{p}$ such that a constant $\alpha_{1}\ge 1$ for SID does not exist. In fact, there are infinitely many such cells in this case. Consider a cell $\boldsymbol{t} = [0.5 - d_{1}, 0.5 + d_{1}]\times [0.5 - d_{2}, 0.5 + d_{2}]\times [a_{3}, b_{3}]\times \dots \times [a_{p}, b_{p }]$ for some positive $d_{1}\le0.5, d_{2}\le 0.5$ and $0\le a_{j}<b_{j}\le 1$. Let an arbitrary split be given and denote the two corresponding daughter cells by $\boldsymbol{t}^{'}$ and $\boldsymbol{t}^{''}$. Recall that 
	\begin{equation*}\begin{split}
			& (II)_{\boldsymbol{t}, \boldsymbol{t}^{'}}= \mathbb{P} (\boldsymbol{X}\in\boldsymbol{t}^{'}  \ \vert \ \boldsymbol{X} \in \boldsymbol{t} )
			\Big(\mathbb{E} (m(\boldsymbol{X}) \ \vert \ \boldsymbol{X} \in \boldsymbol{t}^{'})) - \mathbb{E} (m(\boldsymbol{X}) \ \vert \ \boldsymbol{X} \in \boldsymbol{t} )) \Big)^{2} \\
			& +  \mathbb{P} (\boldsymbol{X}\in\boldsymbol{t}^{''}  \ \vert \ \boldsymbol{X} \in \boldsymbol{t} )
			\Big(\mathbb{E} (m(\boldsymbol{X}) \ \vert \ \boldsymbol{X} \in \boldsymbol{t}^{''})) - \mathbb{E} (m(\boldsymbol{X}) \ \vert \ \boldsymbol{X} \in \boldsymbol{t} )) \Big)^{2}.
	\end{split}\end{equation*}
	
	Due to the assumption of independent features and the given regression model, 
	$$\mathbb{E} (m(\boldsymbol{X}) \ \vert \ \boldsymbol{X} \in \boldsymbol{t} ) = \mathbb{E} (m(\boldsymbol{X}) \ \vert \ \boldsymbol{X} \in \boldsymbol{t}^{'} )=\mathbb{E} (m(\boldsymbol{X}) \ \vert \ \boldsymbol{X} \in \boldsymbol{t}^{''} )= 0,$$
	which concludes that $(II)_{\boldsymbol{t}, \boldsymbol{t}^{'}} =0$. Since $\textnormal{Var}(m(\boldsymbol{X})|\boldsymbol{X}\in \boldsymbol{t})$ is obviously larger than zero for each cell, we conclude the desired result.

	\subsection{Verifying Condition \ref{P} for Example~\ref{new.example6}} \label{SecE.8}
	
	For the first case, 
	$\frac{\partial m(\boldsymbol{x})}{\partial x_{j}} = \frac{\beta_{j}\exp(\sum_{j\in S^*} \beta_{j}x_{j})}{(1+\exp(\sum_{j\in S^*} \beta_{j}x_{j}))^2}$ for each $j\in S^*$, and $\frac{\partial m(\boldsymbol{x})}{\partial x_{j}} = 0$ otherwise. It is seen that the vector-valued function $(\frac{\partial m(\boldsymbol{x})}{\partial x_{1}}, \dots, \frac{\partial m(\boldsymbol{x})}{\partial x_{p}})$ in this case is continuous in $[0, 1]^p$, and that $\frac{\partial m(\boldsymbol{x})}{\partial x_{j}}$ has the same sign as that of $\beta_{j}$. In light of $|\beta_{j}| \not= 0$ and $\boldsymbol{x} \in [0, 1]^p$, the parameters $M_{1}, M_{2}$ of Example~\ref{new.example3} are given as follows. If $\exp{(\sum_{j \in S^*} \beta_{j}x_{j})} \ge 1$, then 
	$$|\frac{\partial m(\boldsymbol{x})}{\partial x_{j}}| \ge \frac{(\min_{j\in S^*}|\beta_{j}|)\exp{(\sum_{j \in S^*} \beta_{j}x_{j})}}{(2\exp{(\sum_{j \in S^*} \beta_{j}x_{j})})^2} \ge \frac{\min_{j\in S^*}|\beta_{j}|}{ 4\exp{(\sum_{j\in S^*} |\beta_{j}|)}};$$
	otherwise, since $0<\exp{(\sum_{j \in S^*} \beta_{j}x_{j})} <1$, 
	$$|\frac{\partial m(\boldsymbol{x})}{\partial x_{j}}| \ge \frac{(\min_{j\in S^*}|\beta_{j}|)\exp{(\sum_{j \in S^*} \beta_{j}x_{j})}}{2^2} \ge \frac{\min_{j\in S^*}|\beta_{j}|}{4\exp{(\sum_{j\in S^*} |\beta_{j}|)}}.$$
	Therefore, we conclude that $M_{1} = \frac{\min_{j\in S^*}|\beta_{j}|}{4\exp{(\sum_{j\in S^*} |\beta_{j}|)}} \le \inf_{\boldsymbol{x} \in [0, 1]^p} |\frac{\partial m(\boldsymbol{x})}{\partial x_{j}}|$. Similarly, we have $M_{2} = \frac{1}{4}\max_{j\in S^*}|\beta_{j}|\ge \sup_{\boldsymbol{x} \in [0, 1]^p} |\frac{\partial m(\boldsymbol{x})}{\partial x_{j}}|$. By the results of Example~\ref{new.example3}, we concludes the proof for the first case.
	
	As for the second case, the partial derivatives of a polynomial are obviously continuous. Since all coefficients have the same sign and that $\boldsymbol{x}\in [0, 1]^p$, we let $M_{1} = \min_{j\in S^*}|\beta_{j}|$ and $M_{2} = (\max_{j,k}r_{jk}) \sum_{k=1}^{k_{1}}|\beta_{kk}| + \max_{j\in S^*}|\beta_{j}|$, which along with the results of Example~\ref{new.example3} concludes the proof of the second case, and hence the proof of Example~\ref{new.example6}.

	\subsection{Verifying Condition \ref{P} for Example~\ref{new.example7}} \label{SecE.9}

	Our goal is to show that the maximum conditional bias decrease $\sup_{j\in \{1, \dots, p\}, c\in t_{j}}(II)_{\boldsymbol{t}, \boldsymbol{t}(j,c)}$ is lower bounded in terms of $\textnormal{Var} (m(\boldsymbol{X})| \boldsymbol{X} \in \boldsymbol{t})$ for every cell $\boldsymbol{t} = t_{1}\times \dots \times t_{p}$. Since the case for $K=1$ is trivial, we consider the case with $K>1$ in the following.

	Let us start with an upper bound for $\textnormal{Var} (m(\boldsymbol{X})| \boldsymbol{X} \in \boldsymbol{t})$. By the assumptions of an additive model and a uniform distribution of $\boldsymbol{X}$, for every $\boldsymbol{t}$,
	\begin{equation}
		\label{piecewise.1}
		\textnormal{Var}(m(\boldsymbol{X})|\boldsymbol{X}\in\boldsymbol{t}) = \sum_{l=1}^{s^*} \textnormal{Var}(m_{l}(X_{l})|X_{l}\in t_{l}),
	\end{equation}
	and by the definition of $R$, for every $\boldsymbol{t} = t_{1} \times\dots \times t_{p}$ and $l\le s^*$,
	\begin{equation}
		\label{piecewise.12}
		\textnormal{Var}(m_{l}(X_{l})| X_{l} \in t_{l})\le (|t_{l}|R)^2.
	\end{equation}
	Define
	$$j \coloneqq \arg\max_{1\le l\le s^*} |t_{l}|,$$ 
	and hence by \eqref{piecewise.1}--\eqref{piecewise.12},
	\begin{equation}
		\label{piecewise.11}
		\textnormal{Var}(m(\boldsymbol{X})|\boldsymbol{X} \in\boldsymbol{t})\le s^*(|t_{j}|R)^2.
	\end{equation}

	Now, we proceed to deal with the lower bound of the conditional bias decrease. Let us introduce some notation for referring to the linear functions and splits on $t_{j}$. The rightmost linear function on $t_{j}$ is denoted by $h_{1}(x)$, with $l>0$ being the length of its domain on $t_{j}$; and the linear functin to the left of $h_{1}(x)$ is denoted by $h_{2}(x)$, with $L\ge0$ being the length of its domain on $t_{j}$. In addition, we consider four split points A, B, C, and D, which are respectively on the middle of the  domain of $h_{1}(x)$, the left-end of $h_{1}(x)$, the middle of the domain of $h_{2}(x)$, and the left-end of $h_{2}(x)$. A graphical illustration is in Figure~\ref{fig:RF2}. Moreover, we refer to the corresponding daughter cells of $t_{j}$ on the right hand side of the splits as $t_{jA}^{'}$, $t_{jB}^{'}$ $t_{jC}^{'}$, and $t_{jD}^{'}$, respectively. Let $\boldsymbol{t}_{s}^{'} = t_{1}\times \dots t_{j-1}\times t_{js}^{'}\times t_{j+1}\times \dots\times t_{p}$ for $s\in\{A, B, C, D\}$.
	\begin{figure}[t]		
		\centering
		\includegraphics[width=8cm]{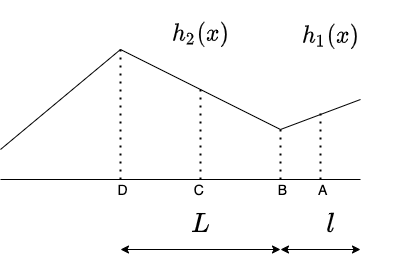}
		\caption{The part of $m_{j}(x)$ on $t_{j}$. In this example, there are three piecewise linear functions.}
		\label{fig:RF2}
	\end{figure}
	
	By the definition of $(II)_{\boldsymbol{t}, \boldsymbol{t}_{s}^{'}}$ and the assumption of a uniform distribution of $\boldsymbol{X}$,
	\begin{equation}
		\begin{split}
			\label{piecewise.3}
			& \max\{(II)_{\boldsymbol{t}, \boldsymbol{t}_{A}^{'}}, (II)_{\boldsymbol{t}, \boldsymbol{t}_{B}^{'}} \} \\
			& \ge \max\Big\{\frac{l}{2|t_{j}|} (\mathbb{E}(m_{j}(X_{j}) |X_{j}\in t_{jA}^{'}) - \mathbb{E}(m_{j}(X_{j}) |X_{j}\in t_{j}) )^2,\\ &\qquad\frac{l}{|t_{j}|} (\mathbb{E}(m_{j}(X_{j}) |X_{j}\in t_{jB}^{'}) - \mathbb{E}(m_{j}(X_{j}) |X_{j}\in t_{j}) )^2 \Big\}.
		\end{split}
	\end{equation}
	
	By the model assumptions and simple calculations, it holds that $|\mathbb{E}(m_{j}(X_{j}) |X_{j}\in t_{jA}^{'}) - \mathbb{E}(m_{j}(X_{j}) |X_{j}\in t_{jB}^{'})|\ge \frac{lr}{4}$. By this, if $|\mathbb{E}(m_{j}(X_{j}) |X_{j}\in t_{jA}^{'}) - \mathbb{E}(m_{j}(X_{j}) |X_{j}\in t_{j})  |\le \frac{lr}{8}$, then $|\mathbb{E}(m_{j}(X_{j}) |X_{j}\in t_{jB}^{'}) - \mathbb{E}(m_{j}(X_{j}) |X_{j}\in t_{j})  |\ge \frac{lr}{8}$; the other way round is also true. Hence
	\begin{equation}\label{piecewise.4}
		\textnormal{RHS of } \eqref{piecewise.3} \ge \frac{l}{2|t_{j}|}\times (\frac{lr}{8})^2 = \frac{l^3r^2}{128|t_{j}|}.
	\end{equation}
	Notice that the above arguments do not depend on the value of $L$.
	
	Next, by the definition of $(II)_{\boldsymbol{t}, \boldsymbol{t}_{s}^{'}}$ and the assumption of a uniform distribution of $\boldsymbol{X}$,
	\begin{equation}
		\begin{split}
			\label{piecewise.5}
			& \max\{(II)_{\boldsymbol{t}, \boldsymbol{t}_{C}^{'}}, (II)_{\boldsymbol{t}, \boldsymbol{t}_{D}^{'}} \} \\
			& \ge \max\Big\{\frac{1}{|t_{j}|}(\frac{L}{2} + l) (\mathbb{E}(m_{j}(X_{j}) |X_{j}\in t_{jC}^{'}) - \mathbb{E}(m_{j}(X_{j}) |X_{j}\in t_{j}) )^2,\\ &\qquad\frac{1}{|t_{j}|}(L+l) (\mathbb{E}(m_{j}(X_{j}) |X_{j}\in t_{jD}^{'}) - \mathbb{E}(m_{j}(X_{j}) |X_{j}\in t_{j}) )^2 \Big\}.
		\end{split}
	\end{equation}
	
	To lower bound the RHS of \eqref{piecewise.5}, we write
	$$\mathbb{E}(m_{j}(X_{j}) |X_{j}\in t_{jD}^{'}) = \frac{L}{2} (\frac{1}{L+l}) \mathbb{E}(m_{j}(X_{j}) |X_{j}\in [D, C)) + (\frac{L}{2}+l) (\frac{1}{L+l}) \mathbb{E}(m_{j}(X_{j}) |X_{j}\in t_{jC}^{'}),$$
	which follows from the uniform distribution assumption on the feature vector. Hence
	\begin{equation}
		\begin{split}\label{piecewise.6}
			&|\mathbb{E}(m_{j}(X_{j}) |X_{j}\in t_{jD}^{'}) - \mathbb{E}(m_{j}(X_{j}) |X_{j}\in t_{jC}^{'})|\\
			&= \frac{L}{2} (\frac{1}{L+l})\Big| \mathbb{E}(m_{j}(X_{j}) |X_{j}\in [D, C)) - \mathbb{E}(m_{j}(X_{j}) |X_{j}\in t_{jC}^{'})\Big|.
		\end{split}
	\end{equation}
	
	Since we have assumed that $m_{j}(x)$ is continuous with slope upper and lower bounds, if the value of $l$ is sufficiently small, then $|\mathbb{E}(m_{j}(X_{j}) |X_{j}\in [D, C)) - \mathbb{E}(m_{j}(X_{j}) |X_{j}\in t_{jC}^{'})|$ is sufficiently large. For example, if $lR\le \frac{Lr}{2}$, then 
	$\mathbb{E}(m_{j}(X_{j}) |X_{j}\in t_{jC}^{'}) \le m_{j}(C)$ and hence $\mathbb{E}(m_{j}(X_{j}) |X_{j}\in [D, C)) \ge \mathbb{E}(m_{j}(X_{j}) |X_{j}\in t_{jC}^{'}) + \frac{Lr}{4}$ in Figure~\ref{fig:RF2}. For general cases, if $lR\le \frac{Lr}{2}$, it holds that
	$$\textnormal{RHS of } \eqref{piecewise.6} \ge \frac{L}{2} (\frac{1}{L+l})\times \frac{Lr}{4} = \frac{L^2r}{8(L+l)}.$$
	By this and arguments similar to those for \eqref{piecewise.4},
	\begin{equation}
		\begin{split}\label{piecewise.7}
			\textnormal{RHS of } \eqref{piecewise.5} \ge \frac{1}{|t_{j}|}(\frac{L}{2} + l)(\frac{L^2r}{16(L+l)})^2\ge \frac{L^3r^2}{512|t_{j}|}.
		\end{split}
	\end{equation}
	
	To have our conclusion, we need an observation as follows. Due to the definition of $b^*$ and $K>1$, it holds that $b^*\le \frac{1}{2}.$ Also, if $\frac{L}{|t_{j}|}< b^*$, then $\frac{l}{|t_{j}|} \ge b^*$; to see this, notice that $\frac{L}{|t_{j}|}< b^*$ only if $h_{2}(\cdot)$ is the leftmost linear function on $t_{j}$, and recall that $h_{1}(\cdot)$ is the rightmost linear function on $t_{j}$. With this observation, we can establish the lower bound of the maximum conditional bias decrease by considering three cases in \eqref{piecewise.8}--\eqref{piecewise.10} below.
	
	If $\frac{L}{|t_{j}|}<b^*$, then by the observation and \eqref{piecewise.4},  \begin{equation}\label{piecewise.8}
		\sup_{j\in \{1, \dots, p\}, c\in t_{j}}(II)_{\boldsymbol{t}, \boldsymbol{t}(j,c)}\ge  \frac{(b^*)^3(r|t_{j}|)^2}{128}.
	\end{equation}
	If $\frac{L}{|t_{j}|}\ge b^*$ and $l> \frac{Lr}{2R}$, then by  \eqref{piecewise.4},
	\begin{equation}\label{piecewise.9}
		\sup_{j\in \{1, \dots, p\}, c\in t_{j}}(II)_{\boldsymbol{t}, \boldsymbol{t}(j,c)}\ge  \frac{(l)^3 r^2}{128|t_{j}|}\ge \frac{ r^5(b^*)^3(|t_{j}|)^2}{1024 R^3}.
	\end{equation}
	If $\frac{L}{|t_{j}|}\ge b^*$ and $l\le  \frac{Lr}{2R}$, then by \eqref{piecewise.7}, 
	\begin{equation}\label{piecewise.10}
		\sup_{j\in \{1, \dots, p\}, c\in t_{j}}(II)_{\boldsymbol{t}, \boldsymbol{t}(j,c)}\ge  \frac{(b^*)^3(r|t_{j}|)^2 }{512}.
	\end{equation}

	By \eqref{piecewise.11} and \eqref{piecewise.8}--\eqref{piecewise.10}, we conclude that 
	$$s^*(|t_{j}|R)^2\times \frac{1024 R^3}{ r^5(b^*)^3(|t_{j}|)^2}\sup_{j\in \{1, \dots, p\}, c\in t_{j}}(II)_{\boldsymbol{t}, \boldsymbol{t}(j,c)} \ge \textnormal{Var}(m(\boldsymbol{X})|\boldsymbol{X} \in\boldsymbol{t}),$$
	which leads to the desired result. 
	
	\subsection{Verifying Condition \ref{P} for Example~\ref{new.example8}} \label{SecE.10}

	The proof idea is to find an upper bound of $\textnormal{Var}(m(\boldsymbol{X})|\boldsymbol{X} \in\boldsymbol{t}) $ and a lower bound of maximum conditional bias decrease for each cell $\boldsymbol{t} = t_{1} \times\dots \times t_{p}$ such that $\sup_{j\in \{1, \dots, p\}, c\in t_{j}}(II)_{\boldsymbol{t}, \boldsymbol{t}(j,c)}$ is lower bounded in terms of $\textnormal{Var}(m(\boldsymbol{X})|\boldsymbol{X} \in\boldsymbol{t})$. We begin with noticing that if there are no jump points on $\boldsymbol{t}$, then $\textnormal{Var}(m(\boldsymbol{X})|\boldsymbol{X} \in\boldsymbol{t}) = 0$ and hence the proof is trivial. We therefore consider the case with at least one jump point on $\boldsymbol{t}$.

	In the following, we deal with the lower bound of the conditional bias decrease. Due to the regression function form, we can establish a lower bound for the conditional bias decrease on a cell $\boldsymbol{t}$ given a split at any jump point on $\boldsymbol{t}$ as follows. Let a cell $\boldsymbol{t}$ be given with a jump point on it; let $\boldsymbol{t}^{'},\boldsymbol{t}^{''}$ denote two daughter cells after the split at one of the jump points on $\boldsymbol{t}$. Then it holds that
	$$|\mathbb{E} (m(\boldsymbol{X}) | \boldsymbol{X} \in \boldsymbol{t}^{'}) - \mathbb{E} (m(\boldsymbol{X}) | \boldsymbol{X} \in \boldsymbol{t}^{''})|\ge \iota,$$
	which follows from the model assumptions, and that 
	\begin{equation}
		\begin{split}\label{linear.combi.2}
			(II)_{\boldsymbol{t}, \boldsymbol{t}^{'}} & = \zeta (\mathbb{E} (m(\boldsymbol{X}) | \boldsymbol{X} \in \boldsymbol{t}^{'}) - \mathbb{E} (m(\boldsymbol{X}) | \boldsymbol{X} \in \boldsymbol{t}))^2 \\
			&\qquad+ (1-\zeta) (\mathbb{E} (m(\boldsymbol{X}) | \boldsymbol{X} \in \boldsymbol{t}^{''}) - \mathbb{E} (m(\boldsymbol{X}) | \boldsymbol{X} \in \boldsymbol{t}))^2\\
			& \ge \inf_{x\in\mathbb{R}} \Big(\zeta (\mathbb{E} (m(\boldsymbol{X}) | \boldsymbol{X} \in \boldsymbol{t}^{'}) - x)^2 + (1-\zeta) (\mathbb{E} (m(\boldsymbol{X}) | \boldsymbol{X} \in \boldsymbol{t}^{''}) - x)^2\Big)\\
			&\ge \zeta(1 - \zeta) (\mathbb{E} (m(\boldsymbol{X}) | \boldsymbol{X} \in \boldsymbol{t}^{'}) - \mathbb{E} (m(\boldsymbol{X}) | \boldsymbol{X} \in \boldsymbol{t}^{''}))^2\\
			& = \zeta(1 - \zeta)  \iota^2,	\end{split}
	\end{equation}
	where $\zeta= \mathbb{P}(\boldsymbol{X} \in \boldsymbol{t}^{'}|\boldsymbol{X} \in \boldsymbol{t})$.

	On the other hand, to establish the upper bound for $\textnormal{Var}(m(\boldsymbol{X})|\boldsymbol{X} \in \boldsymbol{t})$, we separate the proof into two cases: (i) there are more than two jump points on some coordinates of $\boldsymbol{t}$, and (ii) there are at most two jump points on every coordinate of $\boldsymbol{t}$.

	We deal with the first case first. Write the regression function conditional on $\boldsymbol{t}$ as $\sum_{k=1}^{k_{0}}\beta^{(k)}\boldsymbol{1}_{\boldsymbol{X} \in \mathcal{C}^{(k)}}$ for some $k_0$, where $\mathcal{C}^{(1)}, \dots, \mathcal{C}^{(k_{0})}$ are all the subcells on $\boldsymbol{t}$, with their respective coefficients denoted by $\beta^{(1)}, \dots, \beta^{(k_{0})}$. Due to the coefficient assumptions,
	\begin{equation}
		\label{linear.combi.1}
		\max_{1\le l\le k_{0}}\beta^{(l)} - \min_{1\le l\le k_{0}}\beta^{(l)} \le 2M_{0}.
	\end{equation}
	Hence,
	\begin{equation}
		\begin{split}\label{linear.combi.3}
			\textnormal{Var}(m(\boldsymbol{X})|\boldsymbol{X} \in \boldsymbol{t}) &= \mathbb{E}\big((\sum_{k=1}^{k_{0}}\beta^{(k)}\boldsymbol{1}_{\boldsymbol{X} \in \mathcal{C}^{(k)}} - \mathbb{E} (m(\boldsymbol{X}) | \boldsymbol{X} \in \boldsymbol{t}))^2|\boldsymbol{X} \in \boldsymbol{t} \big)\\
			&\le \mathbb{E}\big((\sum_{k=1}^{k_{0}}\beta^{(k)}\boldsymbol{1}_{\boldsymbol{X} \in \mathcal{C}^{(k)}} - \beta^{(1)})^2|\boldsymbol{X} \in \boldsymbol{t} \big)\\
			& = \mathbb{E}\big((\sum_{k=1}^{k_{0}}\beta^{(k)}\boldsymbol{1}_{\boldsymbol{X} \in \mathcal{C}^{(k)}} - \sum_{k=1}^{k_{0}}\beta^{(1)}\boldsymbol{1}_{\boldsymbol{X} \in \mathcal{C}^{(k)}} )^2|\boldsymbol{X} \in \boldsymbol{t} \big)\\
			& = \mathbb{E}\big((\sum_{k=1}^{k_{0}}(\beta^{(k)} - \beta^{(1)})\boldsymbol{1}_{\boldsymbol{X} \in \mathcal{C}^{(k)}}  )^2|\boldsymbol{X} \in \boldsymbol{t} \big)\\
			& \le (2M_{0})^2,
		\end{split}
	\end{equation}
	where the last inequality is due to \eqref{linear.combi.1}.
	
	Suppose the $j$th coordinate has at least three jump points. Let us consider a split at any jump point in between the first and the last jump points on the $j$th coordinate; let $\boldsymbol{t}^{'}$ and $\boldsymbol{t}^{''}$ denote the resulting daughter cells. In light of the uniform distribution assumption on $\boldsymbol{X}$, $$\min\{\mathbb{P}(\boldsymbol{X}\in\boldsymbol{t}^{'}|\boldsymbol{X}\in\boldsymbol{t}), \mathbb{P}(\boldsymbol{X}\in\boldsymbol{t}^{''}|\boldsymbol{X}\in\boldsymbol{t})\}\ge \min_{j\le s^*, 1\le i\le k_{j}}c_{i}^{(j)} - c_{i-1}^{(j)}\eqqcolon c^*,$$
	which along with \eqref{linear.combi.2} and \eqref{linear.combi.3} shows that 
	\begin{equation}
		\begin{split}		
			\label{linear.combi.4}
			\left(\frac{2M_{0}}{\iota} \right)^2 \frac{1}{c^*(1-c^*)}\sup_{j\in \{1, \dots, p\}, c\in t_{j}}(II)_{\boldsymbol{t}, \boldsymbol{t}(j,c)} &\ge \left(\frac{2M_{0}}{\iota} \right)^2 \frac{1}{c^*(1-c^*)}(II)_{\boldsymbol{t}, \boldsymbol{t}^{'}} \\
			& \ge \textnormal{Var}(m(\boldsymbol{X})|\boldsymbol{X} \in \boldsymbol{t}),
		\end{split}
	\end{equation}
	which concludes the proof of case (i).

	Let us proceed to analyze case (ii). We again denote all the subcells and their respective coefficients by $\mathcal{C}^{(1)}, \dots, \mathcal{C}^{(k_{0})}$ and $\beta^{(1)}, \dots, \beta^{(k_{0})}$. In addition, define $\rho_{j}\ge 0$ such that
	\begin{enumerate}
	    \item $\rho_{j}= |t_{j}|^{-1}\max\{ a - \inf t_{j}, \sup t_{j} - b\}$ if the $j$th coordinate of $\boldsymbol{t}$ has two jump points at $a< b$,
	    \item $\rho_{j}= |t_{j}|^{-1}\min\{ a - \inf t_{j}, \sup t_{j} - a\}$ if the $j$th coordinate of $\boldsymbol{t}$ has only one jump point $a$,
	    \item $\rho_{j} = 0$ if the $j$th coordinate has no jumps.
	\end{enumerate}
	Next, we further separate the case (ii) into three subcases (ii.a)--(ii.c) as follows.
	
	The case (ii.a): there are two jump points on the $j$th coordinate and $\rho_{j}\ge \frac{1}{4}$. Then, let the split be at $(j, a)$ if $a - \inf t_{j} \ge \sup t_{j} - b$, and otherwise at $(j, b)$; due to the assumption of a uniform distribution of the feature vector,
	$$\min\{\mathbb{P} (\boldsymbol{X} \in \boldsymbol{t}^{'}|\boldsymbol{X} \in \boldsymbol{t}), \mathbb{P} (\boldsymbol{X} \in \boldsymbol{t}^{''}|\boldsymbol{X} \in \boldsymbol{t})\}\ge \min\{\frac{1}{4}, c^*\} = c^{\dagger},$$
	where $\boldsymbol{t}^{'}$ and $\boldsymbol{t}^{''}$ are the corresponding daughter cells. By this result, \eqref{linear.combi.2},  \eqref{linear.combi.3}, and the fact that $\zeta(1-\zeta)$ is increasing when $\zeta \le \frac{1}{2}$,
	\begin{equation}
		\begin{split}			
			\label{linear.combi.5}
			\left(\frac{2M_{0}}{\iota} \right)^2 \frac{1}{c^\dagger(1-c^\dagger)}\sup_{j\in \{1, \dots, p\}, c\in t_{j}}(II)_{\boldsymbol{t}, \boldsymbol{t}(j,c)} &\ge \left(\frac{2M_{0}}{\iota} \right)^2 \frac{1}{c^\dagger(1-c^\dagger)}(II)_{\boldsymbol{t}, \boldsymbol{t}^{'}} \\
			& \ge \textnormal{Var}(m(\boldsymbol{X})|\boldsymbol{X} \in \boldsymbol{t}).
		\end{split}
	\end{equation}
	
	The case (ii.b): there is only one jump point on the $j$th coordinate and $\rho_{j}\ge \frac{1}{4}$. Let us split at the only jump on the $j$th coordinate in this case, and denote the daughter cells of $\boldsymbol{t}$ by $\boldsymbol{t}^{'}$ and $\boldsymbol{t}^{''}$. By the assumption $\rho_{j}\ge \frac{1}{4}$ and the assumption of a uniform distribution of $\boldsymbol{X}$, 
	$$\min\{\mathbb{P} (\boldsymbol{X} \in \boldsymbol{t}^{'}|\boldsymbol{X} \in \boldsymbol{t}), \mathbb{P} (\boldsymbol{X} \in \boldsymbol{t}^{''}|\boldsymbol{X} \in \boldsymbol{t})\}\ge \frac{1}{4} \ge c^{\dagger}.$$
	We conclude similarly to \eqref{linear.combi.5} that 
	\begin{equation}
		\begin{split}			
			\label{linear.combi.6}
			\left(\frac{2M_{0}}{\iota} \right)^2 \frac{1}{c^\dagger(1-c^\dagger)}(II)_{\boldsymbol{t}, \boldsymbol{t}^{'}}  \ge \textnormal{Var}(m(\boldsymbol{X})|\boldsymbol{X} \in \boldsymbol{t}).
		\end{split}
	\end{equation}		
	Note that for a given $\boldsymbol{t}$, it is possible that there are two coordinates such that case (ii.a) holds for one coordinate and case (ii.b) hold for the other coordinate.
	
	Lastly, the case (ii.c) considers the remaining scenario, where there are only coordinates with at most two jump points and that $\max_{1\le l\le s^*}\rho_{l} < \frac{1}{4}$. Let $j =\arg\max_{1\le l\le s^*}\rho_{l}$. If there are two jump points on the $j$th coordinate, we consider a split the same as in the case (ii.a); otherwise, we consider the split the same as in the case (ii.b). Notice that $\max_{1\le l\le s^*}\rho_{l} > 0 $ since we have assumed a nontrivial case where there is at least one jump point on $\boldsymbol{t}$.
	
	Among $\mathcal{C}^{(1)}, \dots, \mathcal{C}^{(k_{0})}$, let us fix a subcell  $\mathcal{C}^{(k^*)}$  such that 1) if the $l$th coordinate  of $\boldsymbol{t}$ has two jump points at $0<a< b<1$, the $l$th coordinate of $\mathcal{C}^{(k^*)}$ is $[a, b)$, 2) if the $l$th coordinate  of $\boldsymbol{t}$ has only one jump point $a$, the $l$th coordinate of $\mathcal{C}^{(k^*)}$ is the longer one among $[\inf t_{l}, a)$ and $[a,  \sup t_{l}]\cap t_{l}$, and 3) if the $l$th coordinate has no jump points, the $l$th coordinate  of $\mathcal{C}^{(k^*)}$ is $t_{l}$. By the definition of $\mathcal{C}^{(k^*)}$ and $\rho_{j}$'s, it holds that
	$$\mathbb{P} (\boldsymbol{X}\not\in \mathcal{C}^{(k^*)}|\boldsymbol{X}\in\boldsymbol{t})\le 1 - (1 - 2\rho_{j})^{s^*}.$$
	We can use this result, \eqref{linear.combi.1}, and the definition of $\mathcal{C}^{(k^*)},\beta^{(k^*)}$ to refine the upper bound of $\textnormal{Var}(m(\boldsymbol{X})|\boldsymbol{X} \in \boldsymbol{t})$ derived in \eqref{linear.combi.3} as follows.
	\begin{equation}
		\begin{split}\label{linear.combi.7}
			\textnormal{Var}(m(\boldsymbol{X})|\boldsymbol{X} \in \boldsymbol{t}) 
			& \le \mathbb{E}\big((\sum_{k=1}^{k_{0}}(\beta^{(k)} - \beta^{(k^*)})\boldsymbol{1}_{\boldsymbol{X} \in \mathcal{C}^{(k)}}  )^2|\boldsymbol{X} \in \boldsymbol{t} \big)\\
			&=\mathbb{E}\big( \sum_{k\not= k^* }(\beta^{(k)} - \beta^{(k^*)})^2\boldsymbol{1}_{\boldsymbol{X} \in \mathcal{C}^{(k)}}  |\boldsymbol{X} \in \boldsymbol{t} \big)\\
			& \le (2M_{0})^2\times \big(1 - (1 - 2\rho_{j})^{s^*}\big),
		\end{split}
	\end{equation}
	where in the first equality, we use the fact that $\mathcal{C}^{(k)}\cap \mathcal{C}^{(l)} = \emptyset$ if $k\not=l$.

	With \eqref{linear.combi.2}, \eqref{linear.combi.7}, the choice of our split, the definition of $\rho_{j}$, and that $\rho_j < \frac{1}{4}$ in this scenario,
	\begin{equation*}
		\begin{split}			
			\textnormal{Var}(m(\boldsymbol{X})|\boldsymbol{X} \in \boldsymbol{t}) & \le \frac{\big(1 - (1 - 2\rho_{j})^{s^*}\big)}{\rho_{j}(1-\rho_{j})} \left(\frac{2M_{0}}{\iota}\right)^2(II)_{\boldsymbol{t}, \boldsymbol{t}^{'}} \\
			& \le \frac{4}{3}\left(\frac{1 - (1 - 2\rho_{j})^{s^*}}{\rho_{j}}\right) \left(\frac{2M_{0}}{\iota}\right)^2 (II)_{\boldsymbol{t}, \boldsymbol{t}^{'}}.
		\end{split}
	\end{equation*}
	
	In the following, we simplify the term $\frac{1 - (1 - 2\rho_{j})^{s^*}}{\rho_{j}}$. We need Bernoulli's inequality: for each $t\ge 1$ and $0\le x \le 1$,
	$$(1-x)^t \ge 1-tx.$$
	By this and that $0<\rho_{j} < \frac{1}{4}$,
	$$\frac{1 - (1 - 2\rho_{j})^{s^*}}{\rho_{j}}\le \frac{1 - (1 - s^* 2 \rho_{j})}{\rho_{j}} =  2s^*.$$
	
	Combining all these, we have 
	\begin{equation}
		\begin{split}\label{linear.combi.8}
			\textnormal{Var}(m(\boldsymbol{X})|\boldsymbol{X} \in \boldsymbol{t}) & \le \frac{8}{3} s^*\left(\frac{2M_{0}}{\iota}\right)^2(II)_{\boldsymbol{t}, \boldsymbol{t}^{'}}
		\end{split}
	\end{equation}
	in this scenario. 
	
	With \eqref{linear.combi.4}--\eqref{linear.combi.6} and \eqref{linear.combi.8} and the fact that $\frac{1}{c^{\dagger} (1-c^{\dagger})}\ge \frac{16}{3}$, we conclude that for every $\boldsymbol{t}$,
	$$\frac{s^*}{c^\dagger(1-c^\dagger)}  \left(\frac{2M_{0}}{\iota}\right)^2 \sup_{j\in\{1, \dots p\}, c\in t_{j}}(II)_{\boldsymbol{t}, \boldsymbol{t}(j, c)} \ge \textnormal{Var}(m(\boldsymbol{X})|\boldsymbol{X} \in \boldsymbol{t}).$$
	This implies the desired result
	$$m(\boldsymbol{X})\in\textnormal{SID}\Big(  \frac{s^*}{c^\dagger(1-c^\dagger)}  \left(\frac{2M_{0}}{\iota}\right)^2\Big).$$

\end{document}